\documentclass[a4paper,10pt]{amsart}
\usepackage{amsmath,amssymb,amsthm,graphicx}
\usepackage[all]{xy}
\usepackage[top=30truemm,bottom=30truemm,left=20truemm,right=20truemm]{geometry}

\allowdisplaybreaks[1]

\newtheorem{Th}{Theorem}[section]
\newtheorem{Cor}[Th]{Corollary}
\newtheorem{Prop}[Th]{Proposition}
\newtheorem{Lem}[Th]{Lemma}
\newtheorem*{Th*}{Theorem}

\theoremstyle{definition}
\newtheorem{Def}[Th]{Definition}

\theoremstyle{remark}
\newtheorem{Rem}[Th]{Remark}

\newcommand{\Z}{\boldsymbol{Z}}

\newcommand{\bS}{\boldsymbol{S}}

\newcommand{\rb}{\mathrm{b}}
\newcommand{\id}{\mathrm{id}}
\newcommand{\cZ}{\mathcal{Z}}
\newcommand{\cD}{\mathcal{D}}

\DeclareMathOperator{\module}{mod}
\DeclareMathOperator{\umod}{\underline{mod}}
\DeclareMathOperator{\proj}{proj}
\DeclareMathOperator{\add}{add}
\DeclareMathOperator{\Aut}{Aut_{tr}}
\DeclareMathOperator{\Hom}{Hom}
\DeclareMathOperator{\rad}{rad}
\DeclareMathOperator{\End}{End}
\DeclareMathOperator{\Ext}{Ext}

\DeclareMathOperator{\Mat}{Mat}
\DeclareMathOperator{\GL}{GL}

\DeclareMathOperator{\Coker}{Cok}
\DeclareMathOperator{\Ker}{Ker}
\DeclareMathOperator{\Image}{Im}

\renewcommand{\Gamma}{\varGamma}
\renewcommand{\Delta}{\varDelta}
\renewcommand{\Lambda}{\varLambda}
\renewcommand{\epsilon}{\varepsilon}
\renewcommand{\Phi}{\varPhi}
\renewcommand{\Psi}{\varPsi}
\renewcommand{\phi}{\varphi}
\renewcommand{\Im}{\Image}
\renewcommand{\mod}{\module}

\title{The Grothendieck groups and stable equivalences of mesh algebras}
\author{Sota Asai}
\address{Graduate School of Mathematics, Nagoya University, Furo-cho,
Chikusa-ku, Nagoya-shi, Aichi-ken, 464-8602, Japan}
\email{m14001v@math.nagoya-u.ac.jp}

\begin{document}

\begin{abstract}
We deal with the finite-dimensional mesh algebras given by stable translation quivers.
These algebras are self-injective, and thus the stable categories 
have a structure of triangulated categories. 
Our main result determines the Grothendieck groups of these stable categories. 
As an application, 
we give an complete classification of the mesh algebras up to stable equivalences.
\end{abstract}

\maketitle

\tableofcontents

\section{Introduction}\label{Intro}

%The author has been studying the representation theory of finite-dimensional algebras
%over a field.
%He read \cite{Happel}, \cite{Rickard-DS}, \cite{BIRS}, and so on, 
%as the texts in the small classes of his master's program.
%This paper consists of two parts;
%(i)
%Sections 2 and 6 are devoted to introduce 
%the basic knowledge and its application 
%on the representation theory obtained in the small classes,
%(ii) 
%Sections 3, 4, 5, and 7 are written on his own results 
%on a certain class of finite-dimensional algebras called mesh algebras. 
%   

Let $K$ be a field and $\Lambda$ be a finite-dimensional $K$-algebra.
The representation theory of finite-dimensional $K$-algebras investigates 
the category of finite-dimensional modules $\mod \Lambda$.
One of the useful methods is studying relationships 
between two finite-dimensional algebras
$\Lambda_1$ and $\Lambda_2$.

First, there is an important relationship called \textit{derived equivalence},
that is, the bounded derived categories 
$D^\mathrm{b}(\mod \Lambda_1)$ and $D^\mathrm{b}(\mod \Lambda_2)$ are
equivalent as triangulated categories.
Rickard characterized derived equivalence in terms of
\textit{tilting complexes} \cite{Rickard-Morita}.
A typical example of derived equivalences is given by reflections of quivers 
\cite{Happel-on}.
Derived equivalences have been actively studied,
see \cite{AHK,Happel,HJR,KZ},
and references therein.
 
In the rest, we assume that $\Lambda$ is self-injective.
Then the category $\mod \Lambda$ becomes a \textit{Frobenius category}, 
and thus the stable module category 
$\umod \Lambda$ has a structure of a triangulated category
with its shift $[1] \colon \umod \Lambda \to \umod \Lambda$
defined by taking cosyzygies (see \cite{Happel}).
For two self-injective finite-dimensional algebras $\Lambda_1,\Lambda_2$,
an important relationship is a \textit{stable equivalence},
that is, $\umod \Lambda_1 \cong \umod \Lambda_2$ as triangulated categories.
Rickard showed 
that $\umod \Lambda \cong D^\rb(\mod \Lambda)/K^\rb(\proj \Lambda)$ 
as triangulated categories,
and that derived equivalent self-injective algebras are stable equivalent
in \cite{Rickard-DS}.

In this paper, we deal with a certain class of finite-dimensional self-injective algebras
called \textit{mesh algebras (categories)} introduced by Riedtmann
associated with \textit{translation quivers}. 
The Auslander--Reiten quivers (AR quivers) of module categories or derived categories
are important examples, and  after Riedtmann, it is known that
many important categories are recovered from
their AR quivers as mesh categories.
For example, if $Q$ is a Dynkin quiver and $KQ$ is its path algebra,
it is shown that the bounded derived category $D^\rb(\mod KQ)$ is equivalent to 
the mesh category of the AR quiver $\Z Q$ \cite{Happel}.
 
It is known that $\Z Q$ is locally bounded 
if and only if $Q$ is a Dynkin quiver \cite{Riedtmann},
and in this case, 
$\Z Q$ does not depend on the orientation of arrows up to isomorphisms.
We write $\Z \Delta$ instead of $\Z Q$
if $\Delta$ is the underlying Dynkin graph of $Q$.
Considering an ``admissible'' automorphism $\rho \in \Aut \Z \Delta$,
the mesh algebra of $\Z \Delta/\langle \rho \rangle$ 
is a finite-dimensional $K$-algebra.

In this paper, we consider the mesh algebra of a \textit{stable translation quiver},
that is, a translation quivers such that the translation is a bijection 
on the vertices.
A stable translation quiver which has the finite-dimensional mesh algebra
is the form of $\Z \Delta/\langle \rho \rangle$, 
see \cite{Dugas,Riedtmann}.
In this case, the mesh algebra is self-injective.
More explicitly, these are all stable translation quivers 
with finite-dimensional mesh algebras.
\begin{center}
\begin{tabular}{c|l||c|l}
type & quiver &
type & quiver \\
\hline
I    & $\Z A_n/ \langle \tau^k \rangle$ &
II   & $\Z A_n/ \langle \tau^k \psi \rangle$ ($n \notin 2\Z$) \\
III  & $\Z A_n/ \langle \tau^k \phi \rangle$ ($n \in 2\Z$) &
IV   & $\Z D_n/ \langle \tau^k \rangle$ \\
V    & $\Z D_n/ \langle \tau^k \psi \rangle$ &
VI   & $\Z D_4/ \langle \tau^k \chi \rangle$ \\
VII  & $\Z E_6/ \langle \tau^k \rangle$ &
VIII & $\Z E_6/ \langle \tau^k \psi \rangle$ \\
IX   & $\Z E_7/ \langle \tau^k \rangle$ &
X    & $\Z E_8/ \langle \tau^k \rangle$
\end{tabular}
\end{center}
The symbol $\tau$ denotes the translation of $\Z \Delta$
and $\psi,\phi,\chi$ are automorphisms satisfying 
$\psi^2=\id$, $\phi^2=\tau^{-1}$, $\chi^3=\id$
(see Section 2 for the detail).
For example, the preprojective algebras of Dynkin type are included in the list above
as the mesh algebra of $\Z \Delta/\langle \tau \rangle$.  
 
Our main result is to determine
the \textit{Grothendieck groups} of the stable module categories of these mesh algebras.
The Grothendieck group is an important invariant of triangulated categories.
For a finite-dimensional algebra $\Lambda$, 
the Grothendieck group of the bounded derived category,
$K_0(D^\mathrm{b}(\mod \Lambda))$, is a free abelian group with its basis given by
the nonisomorphic simple $\Lambda$-modules. 
On the other hand, if $\Lambda$ is self-injective, 
the Grothendieck group of the stable module category, 
$K_0(\umod \Lambda)$, is isomorphic to the quotient $K_0(D^\mathrm{b}(\mod \Lambda))/H$,
where $H$ is the subgroup generated by 
the projective $\Lambda$-modules.
Using this description, 
we proved the following main result of this paper,
which will be shown in Section 3.

\begin{Th}\label{main}
Let $Q=\Z \Delta/\langle \rho \rangle$ be a stable translation quiver 
whose mesh algebra $\Lambda$ is finite-dimensional,
and $c$ be the Coxeter number of $\Delta$, and put
$d=\gcd(c,2k-1)/2$ if $\Z \Delta/\langle \rho \rangle=\Z A_n/\langle \tau^k \phi \rangle$ (i.e. $Q$ is type III) and 
$d=\gcd(c,k)$ otherwise,
and $r=c/d$.
Then we have 
\begin{align*}
K_0(\umod \Lambda) \cong \Z^a \oplus (\Z/2\Z)^b \oplus H,
\end{align*}
where $a,b,H$ are given in Table \ref{Grothendieck}.
\end{Th}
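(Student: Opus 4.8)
The plan is to translate the statement into a computation about the Cartan matrix of $\Lambda$ and then to carry this out for each of the ten families. As recalled above, $K_0(\umod \Lambda)\cong K_0(D^{\rb}(\mod \Lambda))/H$, where $K_0(D^{\rb}(\mod \Lambda))\cong \Z^{Q_0}$ is free on the classes $[S(\bar x)]$ of the simple $\Lambda$-modules and $H$ is the subgroup generated by the $[P(\bar x)]$. Since $[P(\bar x)]=\sum_{\bar y}[P(\bar x):S(\bar y)]\,[S(\bar y)]$, the group $H$ is exactly the image of the Cartan matrix $C_\Lambda=\bigl([P(\bar x):S(\bar y)]\bigr)_{\bar x,\bar y}$, regarded as an endomorphism of $\Z^{Q_0}$; hence $K_0(\umod \Lambda)\cong \Coker C_\Lambda$. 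As $C_\Lambda$ and $C_\Lambda^{T}$ have the same Smith normal form and hence isomorphic cokernels, the convention for $C_\Lambda$ is immaterial, and the whole theorem reduces to determining the Smith normal form of $C_\Lambda$ in each case.

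The next step is to make $C_\Lambda$ explicit. Writing a vertex of $\Z\Delta$ as a pair $(v,p)$ with $v$ a vertex of $\Delta$ and $p\in\Z$, and $\tau(v,p)=(v,p-1)$, the mesh relations let one read off the composition multiplicities of the projective--injective $\Lambda$-modules directly from the mesh category of $\Z\Delta$: for a fixed lift $x$ of $\bar x$ one has $[P(\bar x):S(\bar y)]=\sum_{y}\dim_K\Hom(x,y)$, the sum running over the lifts $y$ of $\bar y$, only finitely many of which contribute because $\Delta$ is Dynkin. The Hom-spaces in the mesh category of $\Z\Delta$ are the classical ones, supported in a band whose width is governed by the Coxeter number $c$, and summing a fixed $x$ against an entire $\tau$-orbit produces matrices expressible in terms of (powers of) the Coxeter transformation $\Phi_\Delta$ of $\Delta$, using that $\Phi_\Delta^{c}$ is a twist of the identity. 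Because $\tau$ is an automorphism of $\Z\Delta$, once the $n=|Q_0|$ vertices of $Q$ are ordered compatibly with the time coordinate $p$, the matrix $C_\Lambda$ acquires a block-circulant shape; this is where the period $r=c/d$ of the relevant cyclic group enters, and the integers $d=\gcd(c,k)$ (respectively $\gcd(c,2k-1)/2$ in type III) together with the automorphisms $\psi,\phi,\chi$ intervene precisely through the way the $\langle\rho\rangle$-orbits fold the strip $\Z\Delta$.

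Finally I would compute the Smith normal form of $C_\Lambda$. Block-diagonalizing the $\Q$-linearization of the block-circulant matrix $C_\Lambda$ produces blocks indexed by the $r$-th roots of unity $\zeta$, each being essentially $1-\zeta^{s}\Phi_\Delta$ for an appropriate exponent $s$ (twisted by the graph automorphism underlying $\psi$, $\phi$ or $\chi$ in types II, III, V and VI), whose determinant is a product of cyclotomic-type values attached to the exponents of $\Delta$. One then recovers the integral Smith normal form by a prime-by-prime analysis, which assembles into the stated $\Z^{a}\oplus(\Z/2\Z)^{b}\oplus H$: the free rank $a$ counts the blocks that become singular, the factor $(\Z/2\Z)^{b}$ accounts for the $2$-torsion, and the remaining elementary divisors make up $H$. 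I expect this last step to be the real difficulty: the matrices $C_\Lambda$, though explicit, are combinatorially involved, and pinning down their elementary divisors requires careful arithmetic --- parities of $n$ and $k$, the comparison of $\gcd(c,k)$ with $\gcd(c,2k-1)$, and the interplay of the folding automorphisms with $\Phi_\Delta$ --- so the substantive work concentrates in the types II, III, V and VI with nontrivial $\rho$, which forces Table \ref{Grothendieck} to break into subcases.
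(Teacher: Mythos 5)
Your reduction of the theorem to the cokernel of an integer matrix presenting $H=\langle [P_u]\rangle$ inside $K_0(D^{\rb}(\mod\Lambda))\cong\Z^{Q_0}$ is the correct starting point, and it agrees with Lemma \ref{quot_by_P} (3). However, from that point on your plan diverges from what can actually be carried out, and the divergence is a genuine gap rather than an alternative route. You propose to compute $\Coker C_\Lambda$ by exploiting the block-circulant structure of the Cartan matrix: block-diagonalize over $\Q$ via $r$-th roots of unity into blocks of the form $1-\zeta^{s}\Phi_\Delta$, take determinants, and then ``recover the integral Smith normal form by a prime-by-prime analysis.'' The root-of-unity change of basis is not defined over $\Z$, so this diagonalization determines only the rational invariants of $C_\Lambda$ --- its rank (hence the free rank $a$) and, up to sign, its determinant. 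It does not determine the elementary divisors, and the torsion part $(\Z/2\Z)^b\oplus H$ is the entire substance of Table \ref{Grothendieck}. The ``prime-by-prime analysis'' is exactly the step that is missing: nothing in the proposal explains how to pass from the eigenvalue data of $1-\zeta^{s}\Phi_\Delta$ to the $2$-torsion and the sporadic $\Z/4\Z$, $\Z/3\Z$, $\Z/r\Z$ summands, and there is no standard mechanism that does this. (As a sanity check: two integer matrices can have the same characteristic polynomial on every circulant block and yet have different Smith normal forms.)

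The idea you are missing, which is what makes the computation tractable in the paper, is to \emph{not} use the Cartan matrix at all. By Proposition \ref{proj_resol} (the Dugas bimodule resolution, i.e.\ closure of simples under third cosyzygies), each relation $[S_u]+[S_{\pi\tau^{-1}u}]=[P_{\tau^{-1}u}]-\sum_{v\in u^+}[P_v]+[P_u]$ lies in $H$, and Lemma \ref{change_gen} shows that these elements together with the classes $[P_u]$ for $u$ in one or two extremal $\tau$-orbits already generate $H$ (plus the folding relations $[S_u]-[S_{\rho^{-1}u}]$ in types II, III, V, VI). The resulting presentation matrix is a polynomial in the cyclic shift $X_l$ with tiny coefficients, and its cokernel can be computed by \emph{integral} elementary transformations over $\Z[x]/(1-x^l)$, which is how the paper actually pins down the torsion case by case. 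Without some such replacement of generators, the Cartan-matrix route you describe stalls precisely at the step you yourself flag as ``the real difficulty.''
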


\begin{table}
\begin{tabular}{cll|ccccc}
type & quiver & condition & $a$ & $b$ & $H$ \\
\hline
I & 
$\Z A_n/ \langle \tau^k \rangle$ & $r \in 2\Z$ & $(nd-3d+2)/2$ & $d-1$ & \\
&
& $r \notin 2\Z$ & $(nd-2d+2)/2$ & & \\
\hline
II & 
$\Z A_n/ \langle \tau^k \psi \rangle$ & $r \in 4\Z$ & $(nd-3d)/2$ & $d-1$ & $\Z/4\Z$ \\
&
& $r \in 2+4\Z$ & & $nd-2d+1$ & \\
&
& $r \notin 2\Z$ & $(nd-d)/4$ & & \\
\hline
III &
$\Z A_n/ \langle \tau^k \phi \rangle$ & & & $nd-2d+1$ \\
\hline
IV &
$\Z D_n/ \langle \tau^k \rangle$ & $k \in 2\Z$, $r \in 2\Z$ & $d-1$ & $nd-3d$ & $\Z/r\Z$ \\
&
& $k \in 2\Z$, $r \notin 2\Z$ & $(nd-d-2)/2$ & & $\Z/r\Z$ \\
&
& $k \notin 2\Z$, $r \in 4\Z$ & $d$ & $nd-3d$ & \\
&
& $k \notin 2\Z$, $r \notin 4\Z$ & & $nd-d-1$ & \\
\hline
V &
$\Z D_n/ \langle \tau^k \psi \rangle$ & $k \in 2\Z$, $r \in 4\Z$ & $d$ & $nd-3d$ & \\
&
& $k \in 2\Z$, $r \in 2+4\Z$ & & $nd-d-1$ & \\
&
& $k \in 2\Z$, $r \notin 2\Z$ & $(nd-2d)/2$ & & \\
&
& $k \notin 2\Z$ & $d-1$ & $nd-3d$ & $\Z/r\Z$ \\
\hline
VI &
$\Z D_4/ \langle \tau^k \chi \rangle$ & $k \in 2\Z$ & $4$ & \\
&
& $k \notin 2\Z$ & & $4$ \\
\hline
VII &
$\Z E_6/ \langle \tau^k \rangle$ & $d=1,3$ & $d+1$ & $d+1$ & $(\Z/4\Z)^{d-1}$ \\
&
& $d=2,6$ & $(3d+2)/2$ & $(3d+2)/2$ & \\
&
& $d=4,12$ & $(9d+12)/4$ & & \\
\hline
VIII &
$\Z E_6/ \langle \tau^k \psi \rangle$ & $d=1,3$ & $2d$ & $d+1$ & \\
&
& $d=2,6$ & & $(9d+6)/2$ & \\
&
& $d=4,12$ & $(3d+4)/2$ & & \\
\hline
IX &
$\Z E_7/ \langle \tau^k \rangle$ & $d=1$ & & $6$ & \\
&
& $d=3,9$ & & $6d+2$ & \\
&
& $d=2$ & $6$ & & $\Z/3\Z$ \\
&
& $d=6,18$ & $3d+2$ & & \\
\hline
X &
$\Z E_8/ \langle \tau^k \rangle$ & $d=1,3,5$ & & $8d$ \\
&
& $d=15$ & & $112$ \\
&
& $d=2,6,10$ & $4d$ & \\
&
& $d=30$ & $112$
\end{tabular}
\caption{The Grothendieck groups of the stable module categories}
\label{Grothendieck}
\end{table}

The key ingredient of the proof is a well-known property 
of mesh algebras, i.e. 
the simple modules are closed under taking 3rd cosyzygies (cf. \cite{AR,Dugas}),
see Proposition \ref{proj_resol}.
As an application of this result,
we give a complete classification of the mesh algebras up to stable equivalences
in the case $K$ is algebraically closed.

\begin{Th}\label{not_stable_eq}
Assume that $K$ is an algebraically closed field.
Let $Q=\Z\Delta/\langle \rho \rangle$, 
$Q'=\Z\Delta'/\langle \rho' \rangle$ 
be stable translation quivers whose mesh algebras 
$\Lambda$, $\Lambda'$ are finite-dimensional.
\begin{itemize}
\item[(1)]
If $\Lambda$ and $\Lambda'$ are stable equivalent, then we have
either $\Delta=A_1=\Delta'$ or $Q \cong Q'$ as translation quivers.
\item[(2)]
If $\Lambda$ and $\Lambda'$ are derived equivalent, then we have
$Q \cong Q'$ as translation quivers.
\end{itemize}
\end{Th}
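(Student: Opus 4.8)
The plan is to derive Theorem~\ref{not_stable_eq} from Theorem~\ref{main}, using the Grothendieck group $K_0(\umod(-))$ as the main invariant that tells the stable categories apart, and falling back on coarser triangle-equivalence invariants whenever the Grothendieck group alone is not enough.

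For part~(1) I would begin by isolating the trivial case: the quiver $\Z A_1/\langle\rho\rangle$ carries no arrows, so its mesh algebra is semisimple and $\umod\Lambda = 0$, and conversely $\umod\Lambda = 0$ forces $\Lambda$ semisimple, which among the ten families occurs exactly when $\Delta = A_1$. So we may assume $\Delta,\Delta'\ne A_1$ and must show that a triangle equivalence $\umod\Lambda\simeq\umod\Lambda'$ forces $Q\cong Q'$. Such an equivalence induces an isomorphism of abelian groups $K_0(\umod\Lambda)\cong K_0(\umod\Lambda')$, which by Theorem~\ref{main} is the datum recorded in Table~\ref{Grothendieck}; being an equivalence of categories it also preserves the representation type of the stable category and, when that type is finite, the whole Auslander--Reiten quiver of $\umod(-)$, in particular the number of indecomposable objects; and being an equivalence of triangulated categories it intertwines the automorphisms of $K_0$ induced by the shift $[1]$, which by Proposition~\ref{proj_resol} permute the simple modules after cubing and hence act with finite order. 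The argument then runs: (i) compare the triple $(a,b,H)$ of Table~\ref{Grothendieck} across the ten types, which already separates the overwhelming majority of pairs; (ii) for each remaining coincidence of abelian groups --- which affects only finitely many explicit pairs of quivers, lying in a single type or in a short list of type-pairs --- use the finer invariants above: in the representation-finite range compare the number of indecomposables of $\umod(-)$ (equivalently the size of the stable AR quiver), and in the representation-infinite range compare the $[1]$-action on $K_0$ and the component structure of the stable AR quiver. Running through (i)--(ii) type by type recovers first $\Delta\cong\Delta'$, then the numerical parameters $n$ and $k$, and finally the twist $\rho$, since within a fixed type the translation quiver is determined by $n$ and $k$; hence $Q\cong Q'$ as translation quivers.

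Part~(2) is then immediate from (1): derived equivalent self-injective algebras are stably equivalent as triangulated categories by Rickard~\cite{Rickard-DS}, so (1) gives either $Q\cong Q'$ --- and we are done --- or $\Delta = A_1 = \Delta'$. In the latter case $\Lambda\cong K^{k}$ and $\Lambda'\cong K^{k'}$ are semisimple, so $K_0(D^{\rb}(\mod\Lambda))\cong\Z^{k}$ and $K_0(D^{\rb}(\mod\Lambda'))\cong\Z^{k'}$; since the Grothendieck group of the bounded derived category is a derived invariant we conclude $k = k'$, and then $Q = \Z A_1/\langle\tau^{k}\rangle\cong\Z A_1/\langle\tau^{k'}\rangle = Q'$.

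The main obstacle is step~(ii). The Grothendieck group as an abstract abelian group does \emph{not} separate all pairs --- already the preprojective algebras of types $A_2$ and $A_3$ both have $K_0(\umod)\cong\Z$, and similar coincidences recur within several of the ten families --- so the real work is to pin down, for every apparent coincidence in Table~\ref{Grothendieck}, a concrete secondary invariant (the number of indecomposables, the stable AR quiver, or the shift action on $K_0$) that does the job, and to organise this finite but delicate case analysis uniformly; everything else is formal once Theorem~\ref{main} and the standard preservation properties of triangle equivalences are in hand.
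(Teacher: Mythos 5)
Your part (2) coincides with the paper's argument (Rickard's theorem plus comparing $K_0(D^{\rb}(\mod\Lambda))\cong\Z^k$ in the $A_1$ case), and your reduction of part (1) to the case $\Delta,\Delta'\ne A_1$ with $K_0(\umod(-))$ as the primary invariant is also how the paper starts. But the secondary invariants you fall back on do not close the argument, and this is where the real content of the paper lies. First, the ``action of $[1]$ on $K_0$'' is vacuous: in any triangulated category the triangle $X\to 0\to X[1]\to X[1]$ forces $[X[1]]=-[X]$, so the shift acts as $-\mathrm{id}$ on $K_0$ regardless of $\Lambda$. Second, almost none of these stable categories are representation-finite, so ``the number of indecomposables'' is unavailable, and you give no way to actually compute or compare stable AR quivers or their component structure. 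Third, and decisively, the coincidences in Table \ref{Grothendieck} are not ``finitely many explicit pairs'': within a single type the group frequently does not see $k$ at all --- e.g.\ for $\Z A_n/\langle\tau^k\rangle$ with $\gcd(n+1,k)=1$ one gets the same free abelian group for every such $k$ --- so infinitely many pairwise non-isomorphic quivers share the same Grothendieck group, and no amount of case-checking against Table \ref{Grothendieck} can recover $k$.

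The paper fills exactly this hole with three invariants you do not have. The parameter $l$ is recovered as the order of the functor $[-2]\circ\bS$ ($\bS$ the Serre functor), which commutes with triangle equivalences and is shown in Proposition \ref{l=l'} to agree with $\tau_*^{-1}$ up to a sign twist, hence to have order $l$ (or $2l$ in type III). The Dynkin type and the twist $t$ are then pinned down by (i) the number of nonprojective indecomposable summands of a maximal $([-2]\circ\bS)$-stable rigid object, computed in Theorems \ref{CT_not_III} and \ref{no_CT_III} via the Buan--Iyama--Reiten--Scott construction of cluster-tilting objects from reduced expressions of the longest Weyl group element together with push-down/pull-up functors --- this is all of Section 4; (ii) the existence or non-existence of a $([-2]\circ\bS)$-stable cluster-tilting object, which isolates type III (Corollary \ref{CT_iff}); and (iii) the order of the shift $[1]$ itself as a functor, taken from Andreu Juan--Saor\'{\i}n (Proposition \ref{shift_order}). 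Only after these invariants are combined with Table \ref{Grothendieck} does the finite case analysis of Lemmas \ref{detect_1}--\ref{detect_2_2} go through. As it stands your proposal identifies the right skeleton but is missing the ideas that make it work.
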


This theorem says that finite-dimensional mesh algebras
are stable equivalent (or derived equivalent)
only for trivial cases.
To prove Theorem \ref{not_stable_eq}, we compare
the Grothendieck groups given in Theorem \ref{main} and also 
the following invariants under stable equivalences of mesh algebras.

As it is well-known, the functor $[-2] \circ \bS$ commutes with stable equivalences
up to functorial isomorphisms,
where $\bS$ is the Serre functor of $\umod \Lambda$.
Thus we can use the order of $[-2] \circ \bS$
as an invariant under stable equivalences,
and actually, this coincide with the order of the functor 
$\tau_* \colon \umod \Lambda \to \umod \Lambda$
induced from the translation $\tau \in \Aut Q$ in most cases,
see Proposition \ref{l=l'}.

We also use the invariant 
given as the number of nonisomorphic indecomposable direct summands of 
a $([-2] \circ \bS)$-stable cluster-tilting object 
(or more generally, maximal $([-2] \circ \bS)$-stable rigid object) 
in the stable module category $\umod \Lambda$.
We generalize the method of \cite{BIRS},
which gives a construction of cluster-tilting objects 
for a preprojective algebra by
\textit{reduced expressions} of 
the longest element of the Coxeter group.
These invariants are given in Theorems \ref{CT_not_III} and \ref{no_CT_III}.
Especially, for a finite-dimensional mesh algebra $\Lambda$,
the stable module category $\umod \Lambda$ has 
$([-2] \circ \bS)$-stable cluster tilting objects 
if and only if $\Lambda$ is not type III, see Corollary \ref{CT_iff}.

The last invariant is the order of the shift $[1] \colon \umod \Lambda \to \umod \Lambda$
determined by Andreu Juan and Saor\'{i}n in \cite{AS}, 
which is given in Proposition \ref{shift_order}.

Using these invariants, we give a proof of Theorem \ref{not_stable_eq} in Section 5.

\subsection{Conventions}

In this paper, $K$ is a field.

The term ``Dynkin diagrams'' mean ``simply-laced Dynkin diagrams'',
$A_n,D_n,E_6,E_7,E_8$.

We denote by $\tau$ the translation of a stable translation quiver.
Note that we do not consider the Auslander--Reiten translation 
of the corresponding mesh algebra in this paper. 

If $f \colon X \to Y$ and $g \colon Y \to Z$ are maps,
the composition of these two maps are denoted by $gf \colon X \to Z$.

For a finite-dimensional algebra $\Lambda$,
$\mod \Lambda$ denotes the category of finite-dimensional right $\Lambda$-modules
and $\operatorname{proj} \Lambda$ denotes the category of 
finite-dimensional projective right $\Lambda$-modules.
We denote by $\umod \Lambda$ the stable module category $\mod \Lambda/{\proj \Lambda}$,
and it has a structure of a triangulated category if $\Lambda$ is self-injective.

For a quiver $Q$, the set of its vertices is denoted by $Q_0$,
and the set of its arrows is denoted by $Q_1$.
We denote by $KQ$ the \textit{path algebra} of $Q$.
We put $\Lambda=KQ/I$, 
where $I$ is an \textit{admissible ideal}.
We denote by $e_i$ the idempotent in $\Lambda$ corresponding the vertex $i \in Q_0$,
by $P_i=e_i \Lambda$ the indecomposable projective $\Lambda$-module, 
and by $S_i=e_i \Lambda/(e_i \operatorname{rad} \Lambda)$ the simple $\Lambda$-module.
Here, $\operatorname{rad} \Lambda$ is the \textit{Jacobson radical}.
%
%For a triangulated category $\mathcal{T}$,
%$K_0(\mathcal{T})$ denotes the Grothendieck group of $\mathcal{T}$.

\section{Preliminary}\label{Pre}

In this section, we recall some properties of Grothendieck groups and mesh algebras,
and define the quivers giving finite-dimensional mesh algebras. 

\subsection{Grothendieck groups}
For a triangulated category $\mathcal{T}$,
the Grothendieck group $K_0(\mathcal{T})$ is an abelian group defined as follows.

\begin{Def}
Let $\mathcal{T}$ be a triangulated category with its shift 
$[1] \colon \mathcal{T} \to \mathcal{T}$.
The Grothendieck group $K_0(\mathcal{T})$ is defined as 
$\mathcal{F}(\mathcal{T})/\mathcal{F}_0(\mathcal{T})$,
where $\mathcal{F}(T)$ is a free abelian group 
with its basis all isomorphic classes in $\mathcal{T}$,
and $\mathcal{F}_0(\mathcal{T})$ is the subgroup of $\mathcal{F}(T)$
generated by the set $\{ [X]-[Y]+[Z] \mid \mbox{$X \to Y \to Z \to X[1]$: a triangle}\}$.
\end{Def}

%We can easily deduce that $[X[1]]=-[X]$ in the Grothendieck group $K_0(\mathcal{T})$,
%because if $X \to Y \to Z \to X[1]$ is a triangle,
%then $Y \to Z \to X[1] \to Y[1]$ is also a triangle
%by the axioms of a triangulated category.
%
%Let $Q$ be a finite quiver with $Q_0=\{1,\ldots,m\}$,
%and $\mathcal{I}$ be an admissible ideal of the path algebra $KQ$.
%We put $\Lambda=KQ/\mathcal{I}$.
%For a $\Lambda$-module $X$ in $\mod \Lambda$ and $i \in Q_0$, 
%$\dim_K(\Hom_\Lambda(P_i,X))=\dim_K Xe_i$ is equal to the multiplicity of 
%the simple $\Lambda$-module $S_i$ in the composition series of $X$.
%
%If $\Lambda$ is self-injective, 
%the category $\mod \Lambda$ is an \textit{abelian Frobenius category}, 
%that is, $\mod \Lambda$ is enough projective and enough injective
%and the projective objects in $\mod \Lambda$ 
%exactly coincide with the injective objects in $\mod \Lambda$.
%By the discussion in \cite{Happel},
%we can give a structure of a triangulated category to 
%the stable category $\umod \Lambda$.
%The \textit{shift functor} $[1] \colon \umod \Lambda \to \umod \Lambda$ is 
%defined by taking an exact sequence $0 \to X \to P \to Z \to 0$ in $\mod \Lambda$
%for each object $X$ in $\mod \Lambda$ with $P$ projective and putting $X[1]=Z$.
%Thus, if $0 \to X \to P_{l-1} \to \cdots \to P_1 \to P_0 \to Z \to 0$ 
%is an exact sequence with $P_{l-1},\ldots,P_0$ projective in $\mod \Lambda$,
%then we have $X[l]=Z$ and $Z[-l]=X$ in $\umod \Lambda$.
%Especially, $[Z]=(-1)^l[X]$ holds in the Grothendieck group $\umod \Lambda$.
%
The facts in the following lemma are well-known and fundamental  
for the calculation of $K_0(\umod \Lambda)$.
The part (3) is deduced by (1) and (2).

\begin{Lem}\label{quot_by_P}
Let $Q$ be a finite quiver with $Q_0=\{1,\ldots,m\}$,
$I$ be an admissible ideal of the path algebra $KQ$,
and $\Lambda=KQ/I$.
\begin{itemize}
\item[(1)] \cite[III.1.2]{Happel}
The family of the simple $\Lambda$-modules 
$([S_1],\ldots,[S_m])$ is a $\Z$-basis of $K_0(D^\rb(\mod \Lambda))$.
If $X$ is a $\Lambda$-module in $\mod \Lambda$
and $0 \to X^0 \to \cdots \to X^l \to 0$ is exact in $\mod \Lambda$, then we have
\begin{align*}
[X]=\sum_{i=1}^m (\dim_K X e_i) [S_i], \quad 
\sum_{j=0}^l (-1)^j[X^j]=\sum_{j=0}^l \sum_{i=1}^m (-1)^j(\dim_K X^j e_i) [S_i]=0.
\end{align*} 
%More generally, if $X^\bullet$ is a bounded complex in $D^\rb(\mod \Lambda)$,
%the image of $[X^\bullet]$ in $K_0(D^\rb(\mod \Lambda))$ is
%\begin{align*}
%\sum_{j \in \Z} (-1)^j [H^j(X^\bullet)]
%=\sum_{j \in \Z} (-1)^j [X^j].
%\end{align*}
\item[(2)] \cite[Theorem 2.1]{Rickard-DS}
Assume that $\Lambda$ is self-injective.
Then 
$K^\rb(\proj \Lambda)$ can be considered as a thick subcategory of $D^\rb(\mod \Lambda)$, 
and we have $\umod \Lambda \cong D^\rb(\mod \Lambda)/K^\rb(\proj \Lambda)$
as triangulated categories.
\item[(3)] 
Assume that $\Lambda$ is self-injective. Then we have
\begin{align*}
K_0(\umod \Lambda)\cong
K_0(D^\rb(\mod \Lambda))/\langle [P_1], \ldots, [P_m] \rangle.
\end{align*}
\end{itemize} 
\end{Lem}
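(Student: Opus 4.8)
The plan is to prove part (3); parts (1) and (2) are quoted from \cite{Happel} and \cite{Rickard-DS}. By part (2) we may replace $\umod \Lambda$ with the Verdier quotient $D^\rb(\mod \Lambda)/K^\rb(\proj \Lambda)$, so everything reduces to the general principle that, for a thick subcategory $\mathcal{S}$ of a triangulated category $\mathcal{T}$, the localization functor $F \colon \mathcal{T} \to \mathcal{T}/\mathcal{S}$ induces an isomorphism
\[
K_0(\mathcal{T}/\mathcal{S}) \;\cong\; K_0(\mathcal{T})/\mathcal{I},
\]
where $\mathcal{I} \subseteq K_0(\mathcal{T})$ is the image of $K_0(\mathcal{S}) \to K_0(\mathcal{T})$, applied to $\mathcal{T} = D^\rb(\mod \Lambda)$, $\mathcal{S} = K^\rb(\proj \Lambda)$, together with the identification $\mathcal{I} = \langle [P_1], \dots, [P_m] \rangle$ in that case.

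For the general principle: since $F$ is essentially surjective and preserves distinguished triangles, it induces a surjection $\bar F \colon K_0(\mathcal{T}) \twoheadrightarrow K_0(\mathcal{T}/\mathcal{S})$, and $\bar F$ kills $\mathcal{I}$ because $F(S)=0$ for $S \in \mathcal{S}$, hence descends to a surjection $K_0(\mathcal{T})/\mathcal{I} \twoheadrightarrow K_0(\mathcal{T}/\mathcal{S})$. To invert it I would define a map $K_0(\mathcal{T}/\mathcal{S}) \to K_0(\mathcal{T})/\mathcal{I}$ on generators by $[X] \mapsto [X] + \mathcal{I}$ --- recall the Verdier quotient has the same objects as $\mathcal{T}$ --- and check that it respects the defining relations of $K_0(\mathcal{T}/\mathcal{S})$. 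By the construction of the triangulated structure on the Verdier quotient, every distinguished triangle of $\mathcal{T}/\mathcal{S}$ is isomorphic to $F$ of a distinguished triangle of $\mathcal{T}$, which already gives a relation in $K_0(\mathcal{T})$; and any isomorphism $X \cong X'$ in $\mathcal{T}/\mathcal{S}$ is represented by a roof $X \xleftarrow{s} W \xrightarrow{f} X'$ with $\operatorname{cone}(s), \operatorname{cone}(f) \in \mathcal{S}$, so that $[X] \equiv [W] \equiv [X'] \pmod{\mathcal{I}}$ in $K_0(\mathcal{T})$. Hence the map is well defined and inverse to $\bar F$. (Equivalently, this is the exactness of $K_0(\mathcal{S}) \to K_0(\mathcal{T}) \to K_0(\mathcal{T}/\mathcal{S}) \to 0$, which one may also simply quote.)

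It remains to compute $\mathcal{I}$ for $\mathcal{T}=D^\rb(\mod\Lambda)$, $\mathcal{S}=K^\rb(\proj\Lambda)$. Any object of $K^\rb(\proj \Lambda)$ is a bounded complex $X^\bullet$ of finitely generated projectives; iterating the triangles coming from the brutal truncations shows $[X^\bullet] = \sum_j (-1)^j [X^j]$ in $K_0(D^\rb(\mod \Lambda))$, and since each $X^j$ lies in $\add(P_1 \oplus \cdots \oplus P_m)$ this class lies in $\langle [P_1], \dots, [P_m] \rangle$; conversely each $[P_i]$ is realized by the stalk complex $P_i \in K^\rb(\proj \Lambda)$. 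Thus $\mathcal{I} = \langle [P_1], \dots, [P_m] \rangle$, and part (1) makes this concrete: $K_0(D^\rb(\mod \Lambda))$ is free on $[S_1], \dots, [S_m]$ and the Euler characteristic formula there gives $[P_i] = \sum_j (\dim_K P_i e_j)[S_j]$, so the quotient is $\Z^m$ modulo the Cartan matrix. Combining with part (2) yields the desired isomorphism
\[
K_0(\umod \Lambda) \cong K_0(D^\rb(\mod \Lambda))/\langle [P_1], \dots, [P_m] \rangle.
\]

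There is no deep obstacle here, since the statement is foundational, but the point that genuinely requires care is the well-definedness of the inverse map, i.e.\ the assertion that isomorphic objects of the Verdier quotient have classes in $K_0(\mathcal{T})$ differing by an element of $\mathcal{I}$; this is the only place the calculus of fractions enters, and if preferred, the entire second paragraph may be replaced by a reference to the standard localization sequence for Grothendieck groups of triangulated categories.
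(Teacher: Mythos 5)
Your argument is correct, and it is exactly the standard localization argument the paper implicitly relies on: the paper offers no written proof of (3), stating only that it "is deduced by (1) and (2)," and your write-up (the right-exact sequence $K_0(K^\rb(\proj \Lambda)) \to K_0(D^\rb(\mod \Lambda)) \to K_0(\umod \Lambda) \to 0$ together with the identification of the image as $\langle [P_1],\ldots,[P_m]\rangle$ via brutal truncations) supplies precisely the intended details, including the one point that genuinely needs care, namely well-definedness on isomorphism classes in the Verdier quotient. No gaps.
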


%From this lemma, there is a canonical isomorphism 
%between $K_0(D^\rb(\mod \Lambda))$ and a free abelian group $\Z^m$
%such that the coefficients of $[S_i]$ of elements of $K_0(D^\rb(\mod \Lambda))$ correspond 
%to the $i$th components of vectors in $\Z^m$.
%For $j=1,\ldots,m$,
%the element $[P_j] \in K_0(D^\rb(\mod \Lambda))$ is associated to the (column) vector
%$\begin{bmatrix} c_{1j} & \cdots & c_{mj} \end{bmatrix}^\mathrm{t} \in \Z^m$, 
%where $c_{ij}=\dim_K e_j \Lambda e_i$
%is the dimension of the vector spaces generated by the paths from $j$ to $i$ in $Q$.

If $\Lambda$ is self-injective, 
the Grothendieck group $K_0(\umod \Lambda)$ is isomorphic to 
the cokernel of the \textit{Cartan matrix} $C=(c_{ij})$,
where $c_{ij}=\dim_K e_j \Lambda e_i$.
However, the entries in the Cartan matrix are often complicated,
and the straightforward calculation of the cokernel is very nasty.
For mesh algebras, we will give another set of generators of 
the subgroup $\langle [P_1], \ldots, [P_m] \rangle$
having ``simple'' coefficients than the Cartan matrix in Section 3.

\subsection{Mesh algebras}

A quiver $Q$ is called \textit{locally finite} 
if each vertex $u \in Q_0$ has only finitely many arrows from $u$ and to $u$.
A locally finite quiver $Q=(Q,\tau)$ 
with an automorphism $\tau \colon Q_0 \to Q_0$ on the set of vertices
is called a \textit{stable translation quiver} 
if the number of arrows from $u$ to $v$ coincides with
the number of arrows from $v$ to $\tau^{-1} u$ for any $u,v \in Q_0$, 
and then $\tau$ is called the \textit{translation} of $Q$.

For the convinience, we assume that $Q$ has no multiple arrows; 
that is, for $u,v \in Q_0$, 
there exists at most one arrow from $u$ to $v$.
The translation quivers appearing in this paper satisfy this condition.

For $u \in Q_0$,
let $u^+ \subset Q_0$ be the set of direct successors of $u$ and 
$v_1,\ldots,v_m$ be all distinct elements of $u^+$.
Then the fullsubquiver
\begin{align*}
\begin{xy}
( 0,  0) *+{u}           = "1",
(16, 12) *+{v_1}         = "2",
(16,  4) *+{v_2}         = "3",
(16, -4) *+{\vdots}      = "4",
(16,-12) *+{v_m}         = "5",
(32,  0) *+{\tau^{-1} u} = "6"
\ar ^{\alpha_1} "1"; "2"
\ar _{\alpha_2} "1"; "3"
\ar _{\alpha_m} "1"; "5"
\ar ^{\beta_1}  "2"; "6"
\ar _{\beta_2}  "3"; "6"
\ar _{\beta_m}  "5"; "6"
\end{xy}
\end{align*}
of $Q$ is called a \textit{mesh} and
the relation $\alpha_1 \beta_1 + \cdots + \alpha_m \beta_m=0$ is 
called the \textit{mesh relation} of each mesh.

We can construct a \textit{mesh algebra}
from a stable translation quiver $Q$.
It is the quotient of the path algebra $KQ$
by the all mesh relations.
Note that the mesh algebra may be infinite-dimensional even 
if $Q$ is a finite stable translation quiver.
In the next subsection, we define the stable translation quivers
such that the mesh algebra is finite-dimensional. 

\subsection{Definitions of quivers}

We define a translation quiver $\Z \Delta$ for a Dynkin diagram $\Delta$.

Let $Q$ be an acyclic finite quiver with no multiple arrows.
We define a translation quiver $\Z Q$ as follows (see \cite{ASS,Happel});
the set of vertices $(\Z Q)_0$ is $Q_0 \times \Z$,
the set of arrows $(\Z Q)_1$ is 
\begin{align*}
\{ (i,a) \to (j,a) \mid (i \to j) \in Q_1, a \in \Z \}
\amalg \{ (j,a) \to (i,a+1) \mid (j \to i) \in Q_1, a \in \Z \},
\end{align*}
and the translation $\tau$ is given by $\tau(i,a)=(i,a-1)$. 

Because Dynkin diagrams are trees,
for two quivers $Q,Q'$ such that their underlying diagrams are 
the same Dynkin diagram $\Delta$,
we have $\Z Q \cong \Z Q'$ up to isomorphisms of translation quivers.
Thus, we can write $\Z \Delta$ for these quivers.
However, we would like to
fix the numbering of the vertices of $\Z \Delta$ in this paper,
so we assume that each Dynkin diagram is oriented and numbered as follows; 
\begin{align*}
A_n \colon & 
\begin{xy}
( 0,  0) *+{1}      = "1",
(16,  0) *+{2}      = "2",
(32,  0) *+{\cdots} = "3",
(48,  0) *+{n}      = "4",
\ar "1"; "2"
\ar "2"; "3"
\ar "3"; "4"
\end{xy}, \\
D_n \colon & 
\begin{xy}
( 0,  0) *+{1}      = "1",
(16,  0) *+{2}      = "2",
(32,  0) *+{\cdots} = "3",
(48,  0) *+{n-2}    = "4",
(64,  0) *+{n-1}    = "5",
(48, -8) *+{n}      = "6",
\ar "1"; "2"
\ar "2"; "3"
\ar "3"; "4"
\ar "4"; "5"
\ar "4"; "6"
\end{xy}, \\
E_6 \colon & 
\begin{xy}
( 0,  0) *+{1} = "1",
(16,  0) *+{2} = "2",
(32,  0) *+{3} = "3",
(48,  0) *+{4} = "4",
(64,  0) *+{5} = "5",
(32, -8) *+{6} = "6",
\ar "1"; "2"
\ar "2"; "3"
\ar "3"; "4"
\ar "4"; "5"
\ar "3"; "6"
\end{xy}, \\
E_7 \colon & 
\begin{xy}
( 0,  0) *+{1} = "1",
(16,  0) *+{2} = "2",
(32,  0) *+{3} = "3",
(48,  0) *+{4} = "4",
(64,  0) *+{5} = "5",
(80,  0) *+{6} = "6",
(48, -8) *+{7} = "7",
\ar "1"; "2"
\ar "2"; "3"
\ar "3"; "4"
\ar "4"; "5"
\ar "5"; "6"
\ar "4"; "7"
\end{xy}, \\
E_8 \colon & 
\begin{xy}
( 0,  0) *+{1} = "1",
(16,  0) *+{2} = "2",
(32,  0) *+{3} = "3",
(48,  0) *+{4} = "4",
(64,  0) *+{5} = "5",
(80,  0) *+{6} = "6",
(96,  0) *+{7} = "7",
(64, -8) *+{8} = "8",
\ar "1"; "2"
\ar "2"; "3"
\ar "3"; "4"
\ar "4"; "5"
\ar "5"; "6"
\ar "6"; "7"
\ar "5"; "8"
\end{xy}.
\end{align*}
The symbol $\Z \Delta$ denotes the translation quiver 
based on these orientations and numberings.

First, the translation $\tau$ can be extended to an automorphism on $\Z \Delta$.
We can construct finite stable translation quivers using $\tau$.

\begin{Def}
Let $\Delta$ be a Dynkin diagram with $n$ vertices and $k \ge 1$ be an integer. 
Then we can consider a finite stable translation quiver $\Z \Delta/\langle \tau^k \rangle$.
We set the indices of the vertices of $\Z \Delta/\langle \tau^k \rangle$
as the elements of $\{ 1,\ldots,n \} \times (\Z/k\Z)$.
\end{Def}

%If $\Delta$ has $n$ vertices, 
%then the points of $\Z \Delta$ are the elements of $\{1, \ldots, n\} \times \Z$,
%and the arrows of $\Z \Delta$ are
%\begin{align*}
%A_n \colon & (i,a) \to (i+1,a), \quad (i+1,a) \to (i,a+1) 
%\quad (i \in \{ 1,\ldots,n-1 \}), \\
%D_n \colon & (i,a) \to (i+1,a), \quad (i+1,a) \to (i,a+1) 
%\quad (i \in \{ 1,\ldots,n-2 \}), \\
%& (n-2,a) \to (n,a), \quad (n,a) \to (n-2,a+1) \\
%E_6 \colon & (i,a) \to (i+1,a), \quad (i+1,a) \to (i,a+1) 
%\quad (i \in \{ 1,\ldots,4 \}), \\
%& (3,a) \to (6,a), \quad (6,a) \to (3,a+1) \\
%E_7 \colon & (i,a) \to (i+1,a), \quad (i+1,a) \to (i,a+1) 
%\quad (i \in \{ 1,\ldots,5 \}), \\
%& (4,a) \to (7,a), \quad (7,a) \to (4,a+1) \\
%E_8 \colon & (i,a) \to (i+1,a), \quad (i+1,a) \to (i,a+1) 
%\quad (i \in \{ 1,\ldots,6 \}), \\
%& (5,a) \to (8,a), \quad (8,a) \to (5,a+1)
%\end{align*}
%and the translation $\tau$ is given by $(i,a) \mapsto (i,a-1)$.
%We have the following proposition easily.
%
%\begin{Lem}\label{no_torsion}
%Let $\Delta$ have $n$ vertices and oriented as above, then
%$\Z \Delta/ \langle \tau^k \rangle$ is isomorphic to 
%the translation quiver defined in the following way.
%The vertices are the elements of $\{1,\ldots,n\} \times (\Z/k\Z)$, 
%the arrows are the elements of
%\begin{align*}
%\{ \alpha \colon (i,a+k\Z) \to (j,b+k\Z) 
%\mid (\alpha \colon (i,a) \to (j,b)) \in (\Z \Delta)_1 \}
%\end{align*}
%and the translation $\tau$ is given by $(i,a+k\Z) \mapsto (i,a-1+k\Z)$.
%\end{Lem}

For some Dynkin diagram $\Delta$, 
$\tau$ does not generate the automorphism group $\Aut \Z \Delta$
as a translation quiver,
so we define other automorphisms on $\Z \Delta$.

\begin{Def}\label{torsion_aut}
We define $\psi,\phi,\chi \in \Aut \Z \Delta$ as the following.
\begin{itemize}
\item[(1)]
If $\Delta$ is $A_n$ with $n \notin 2\Z$, $D_n$ or $E_6$, 
$\psi$ is given as follows, and then $\psi$ satisfies $\psi^2=\id$;
\begin{align*}
A_n \ (n \notin 2\Z) \colon & (i,a) \mapsto (n+1-i,a+i-(n+1)/2), \\
D_n \colon & (i,a) \mapsto (i,a) \quad (i \in \{1, \ldots, n-2\}), \quad 
(n-1,a) \mapsto (n,a), \quad (n,a) \mapsto (n-1,a), \\
E_6 \colon & (i,a) \mapsto (6-i,a+i-3) \quad (i \in \{1, \ldots, 5\}), \quad
(6,a) \mapsto (6,a).
\end{align*}
\item[(2)]
If $\Delta$ is $A_n$ with $n \in 2\Z$, 
$\phi$ is given as $(i,a) \mapsto (n+1-i,a+i-n/2)$,
and then $\phi$ satisfies $\phi^2=\tau^{-1}$.
\item[(3)]
If $\Delta$ is $D_4$, 
$\chi$ is given as 
$(1,a) \mapsto (3,a-1)$,  
$(2,a) \mapsto (2,a)$, 
$(3,a) \mapsto (4,a)$,
$(4,a) \mapsto (1,a+1)$,
and then $\chi$ satisfies $\chi^3=\id$.
\end{itemize}
\end{Def}

We can consider an automorphism 
$\tau^k \psi$, $\tau^k \phi$ or $\tau^k \chi$ on $\Z \Delta$
and a finite translation quiver 
$\Z \Delta/ \langle \tau^k \psi \rangle$,
$\Z \Delta/ \langle \tau^k \phi \rangle$, or
$\Z \Delta/ \langle \tau^k \chi \rangle$ for $k \ge 1$ in each case above. 
Each quiver automorphism of $\tau,\psi,\chi,\phi$ on $\Z \Delta$ 
can uniquely induce the quiver automorphism on $\Z \Delta/\langle \tau^k \rangle$,
and the induced automorphism is also denoted by the same symbol.
By the definition of the quivers, we have the following lemma.

\begin{Lem}\label{torsion}
Let $k \ge 1$ be an integer.
\begin{itemize}
\item[(1)]
If $\Delta$ is $A_n$ with $n \notin 2\Z$, $D_n$ or $E_6$, 
the translation quiver $\Z \Delta/ \langle \tau^k \psi \rangle$ is isomorphic to 
the quotient of $\Z \Delta/ \langle \tau^{2k} \rangle$ 
by $\tau^k \psi \in \Aut(\Z \Delta/ \langle \tau^{2k} \rangle)$.
\item[(2)]
If $\Delta$ is $A_n$ with $n \in 2\Z$, 
the translation quiver $\Z \Delta/ \langle \tau^k \phi \rangle$ is isomorphic to 
the quotient of $\Z \Delta/ \langle \tau^{2k-1} \rangle$ 
by $\tau^k \phi \in \Aut(\Z \Delta/ \langle \tau^{2k-1} \rangle)$.
\item[(3)]
If $\Delta$ is $D_4$, 
the translation quiver $\Z \Delta/ \langle \tau^k \chi \rangle$ is isomorphic to 
the quotient of $\Z \Delta/ \langle \tau^{3k} \rangle$ 
by $\tau^k \chi \in \Aut(\Z \Delta/ \langle \tau^{3k} \rangle)$.
\end{itemize}
\end{Lem}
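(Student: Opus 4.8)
The statement is really an instance of the orbit-space identity $X/G\cong (X/N)/(G/N)$ for a group $G$ acting on a translation quiver $X$ with a normal subgroup $N\trianglelefteq G$, so the plan is to pin down the correct $G$ and $N$ in each of the three cases and then invoke this principle. First I would check, directly from Definition \ref{torsion_aut}, that each of $\psi,\phi,\chi$ commutes with $\tau$ as an automorphism of $\Z\Delta$; this is a one-line computation on the formulas for the action on vertices (e.g.\ $\psi\tau(i,a)=\psi(i,a-1)=(n+1-i,a-1+i-(n+1)/2)=\tau\psi(i,a)$ in type $A_n$, and likewise in the remaining cases). Combining this with $\psi^2=\id$, $\phi^2=\tau^{-1}$, $\chi^3=\id$ gives
\begin{align*}
(\tau^k\psi)^2=\tau^{2k},\qquad (\tau^k\phi)^2=\tau^{2k-1},\qquad (\tau^k\chi)^3=\tau^{3k}.
\end{align*}

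Next, since $\tau$ has infinite order on $\Z\Delta$ (it shifts the $\Z$-coordinate), the group $\langle\tau^k\psi\rangle$ is infinite cyclic and $\langle\tau^{2k}\rangle=\langle(\tau^k\psi)^2\rangle$ is a subgroup of index $2$; similarly $\langle\tau^{2k-1}\rangle\leq\langle\tau^k\phi\rangle$ has index $2$ and $\langle\tau^{3k}\rangle\leq\langle\tau^k\chi\rangle$ has index $3$. Because $\tau^k\psi$ commutes with $\tau^{2k}$ it normalizes $\langle\tau^{2k}\rangle$, hence descends to an automorphism of $\Z\Delta/\langle\tau^{2k}\rangle$, which is exactly the one denoted $\tau^k\psi\in\Aut(\Z\Delta/\langle\tau^{2k}\rangle)$ in the statement; moreover, since the $\langle\tau^k\psi\rangle$-action on $\Z\Delta$ is free on vertices, the kernel of $\langle\tau^k\psi\rangle\to\Aut(\Z\Delta/\langle\tau^{2k}\rangle)$ is precisely $\langle\tau^{2k}\rangle$, so the image is $\langle\tau^k\psi\rangle/\langle\tau^{2k}\rangle\cong\Z/2\Z$. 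Applying $X/G\cong (X/N)/(G/N)$ with $X=\Z\Delta$, $G=\langle\tau^k\psi\rangle$, $N=\langle\tau^{2k}\rangle$ then yields part (1), and parts (2) and (3) follow verbatim with $N=\langle\tau^{2k-1}\rangle$ and $N=\langle\tau^{3k}\rangle$.

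The routine part is the commutation check; the point that needs a little care is the legitimacy of the quotients at the level of translation quivers — namely that $\Z\Delta/\langle\tau^{2k}\rangle$ is again a (finite, stable) translation quiver on which the induced $\tau^k\psi$ acts admissibly, and that the orbit-space identity respects arrows and the translation, not just vertex sets. This is supplied by the covering theory of translation quivers; alternatively, one can avoid citing it by writing down the bijection on vertices coming from $(i,a)\mapsto(i,\,a\bmod 2k)$ and checking by inspection that it carries the mesh relations and the translation of $\Z\Delta/\langle\tau^k\psi\rangle$ onto those of $(\Z\Delta/\langle\tau^{2k}\rangle)/\langle\tau^k\psi\rangle$, but the conceptual argument above is cleaner and sidesteps the case-by-case bookkeeping.
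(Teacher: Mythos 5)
Your proposal is correct, and it supplies essentially the argument the paper intends: the paper states Lemma \ref{torsion} with no proof beyond the remark ``by the definition of the quivers,'' and your formalization via $X/G\cong (X/N)/(G/N)$ with $G=\langle\tau^k\psi\rangle$ (resp.\ $\langle\tau^k\phi\rangle$, $\langle\tau^k\chi\rangle$) and $N=\langle(\tau^k\psi)^2\rangle=\langle\tau^{2k}\rangle$ (resp.\ $\langle\tau^{2k-1}\rangle$, $\langle\tau^{3k}\rangle$) is exactly the right way to make that precise, with the needed inputs (commutation of $\psi,\phi,\chi$ with $\tau$ and the relations $\psi^2=\id$, $\phi^2=\tau^{-1}$, $\chi^3=\id$) all verified directly from Definition \ref{torsion_aut}. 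The only point worth making explicit is the freeness of the $\langle\tau^k\psi\rangle$-action on vertices, which you invoke to identify $G/N$ with the cyclic group generated by the induced automorphism: note that $\psi$ (and $\chi$) do fix some vertices of $\Z\Delta$, but every element of $G\setminus N$ shifts the $\Z$-coordinate of any vertex it does not move in the first coordinate by a nonzero amount, so the action is indeed free and your identification of the kernel with $N$ holds.
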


\begin{Def}\label{quiver_def}
We denote the stable translation quivers defined above as follows.
\begin{center}
\begin{tabular}{c|ll||c|ll}
type & symbol           & quiver &
type & symbol           & quiver \\
\hline
I    & $Q_{A_n,k,1}$    & $\Z A_n/ \langle \tau^k \rangle$ &
II   & $Q_{A_n,2k,2}$   & $\Z A_n/ \langle \tau^k \psi \rangle$ ($n \notin 2\Z$) \\
III  & $Q_{A_n,2k-1,2}$ & $\Z A_n/ \langle \tau^k \phi \rangle$ ($n \in 2\Z$) &
IV   & $Q_{D_n,k,1}$    & $\Z D_n/ \langle \tau^k \rangle$ \\
V    & $Q_{D_n,2k,2}$   & $\Z D_n/ \langle \tau^k \psi \rangle$ &
VI   & $Q_{D_4,3k,3}$   & $\Z D_4/ \langle \tau^k \chi \rangle$ \\
VII  & $Q_{E_6,k,1}$    & $\Z E_6/ \langle \tau^k \rangle$ &
VIII & $Q_{E_6,2k,2}$   & $\Z E_6/ \langle \tau^k \psi \rangle$ \\
IX   & $Q_{E_7,k,1}$    & $\Z E_7/ \langle \tau^k \rangle$ &
X    & $Q_{E_8,k,1}$    & $\Z E_8/ \langle \tau^k \rangle$
\end{tabular}
\end{center}
The corresponding mesh algebra for $Q_{\Delta,l,t}$ is denoted by $\Lambda_{\Delta,l,t}$.
\end{Def}

Now we can state Riedtmann's structure theorem,
see also \cite[Theorem 3.1]{Dugas}.

\begin{Th}
Let $Q$ be a connected stable translation quiver.
\begin{itemize}
\item[(1)]
\cite[1.5, STRUKTURASATZ]{Riedtmann}
Assume that $Q$ has no multiple arrows.
Then there exist an oriented tree $B$ and a subgroup $G \subset \Aut \Z B$
such that $Q \cong \Z B/G$. 
\item[(2)]
\cite[2.1, SATZ 2]{Riedtmann}
Let $B$ be an oriented tree, and assume that there exists an integer $n$ such that 
any path in $\Z B$ with its length greater than or equal to $n$ 
is zero in the mesh algebra $K(\Z B)$.
Then the underlying graph $\bar{B}$ is a Dynkin diagram, 
namely $A_n,D_n,E_6,E_7,E_8$.
\item[(3)]
If $Q$ gives a finite-dimensional mesh algebra, then
$Q$ is isomorphic to one of the quivers in Definition \ref{quiver_def}.
\end{itemize}
\end{Th}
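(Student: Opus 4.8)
The plan is to take parts (1) and (2) essentially from Riedtmann's work and to derive (3) from them together with an analysis of the automorphism groups $\Aut \Z\Delta$.

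For (1) I would follow \cite[1.5]{Riedtmann}: since $Q$ is connected, locally finite and has no multiple arrows, one forms the universal cover $\widetilde Q \to Q$ in the category of translation quivers. Then $\widetilde Q$ is connected and simply connected, and choosing a section one identifies $\widetilde Q$ with $\Z B$ for an oriented tree $B$; the fundamental group $G$ acts admissibly on $\Z B$ as a group of translation-quiver automorphisms, and $Q \cong \Z B/G$. Part (2) is \cite[2.1]{Riedtmann}: the hypothesis that every sufficiently long path in $\Z B$ vanishes in $K(\Z B)$ says precisely that $\Z\bar B$ is locally bounded, and a non-Dynkin tree admits a positive additive function (equivalently, its Coxeter transformation has infinite order), which produces arbitrarily long nonzero paths in $K(\Z B)$; hence $\bar B$ must be $A_n$, $D_n$, $E_6$, $E_7$ or $E_8$. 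I would only recall these two arguments rather than reprove them.

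For (3), suppose the mesh algebra $\Lambda=KQ/(\text{mesh relations})$ is finite-dimensional. Each $e_u\Lambda\neq 0$, so $Q_0$ is finite and $Q$ is a finite quiver; moreover $\Lambda$ is non-negatively graded by path length with semisimple degree-zero part $KQ_0$, so $\rad\Lambda$ is nilpotent and there is $N$ with every path of length $\ge N$ zero in $\Lambda$. Writing $Q\cong \Z B/G$ as in (1), the induced map $k(\Z B)\to k(Q)$ is a Galois covering with group $G$, whence for each vertex $x$ of $\Z B$ one has $\bigoplus_{y}\Hom_{k(\Z B)}(x,y)\cong \bigoplus_{z\in Q_0}\Hom_{k(Q)}(\bar x,z)=e_{\bar x}\Lambda$, which is finite-dimensional; thus $\Z B$ is locally bounded, the hypothesis of (2) holds, and $\bar B$ is a Dynkin diagram $\Delta$. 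So $Q\cong \Z\Delta/G$ with $G\subset\Aut\Z\Delta$. It then remains to list the admissible subgroups $G$ with $\Z\Delta/G$ finite. One checks that $\langle\tau\rangle$ has finite index in $\Aut\Z\Delta$ and describes the latter via the graph symmetries of $\Delta$: $\Aut\Z\Delta$ is generated by $\tau$ together with $\psi$ (for $A_n$ with $n$ odd, $D_n$, $E_6$), $\phi$ (for $A_n$ with $n$ even) or $\chi$ (for $D_4$), subject to the relations of Definition \ref{torsion_aut}, while $\Aut\Z E_7=\Aut\Z E_8=\langle\tau\rangle$. Since $\Z\Delta/G$ is finite we need $G\cap\langle\tau\rangle\neq 1$, and since $\Z\Delta/G$ must again be a stable translation quiver with admissible quotient, $G$ must act freely on $(\Z\Delta)_0$; but $\psi$, $\chi$ and $\phi^{\pm1}$ themselves have fixed vertices, so $G$ contains none of them, and the structure of $\Aut\Z\Delta$ then forces $G$ to be infinite cyclic generated by $\tau^{k}$, $\tau^{k}\psi$, $\tau^{k}\phi$ or $\tau^{k}\chi$ with $k\ge 1$, i.e.\ one of the ten quivers of Definition \ref{quiver_def}; cf.\ \cite[Theorem 3.1]{Dugas} and Lemma \ref{torsion}.

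The main obstacle is the final step: determining $\Aut\Z\Delta$ precisely for every Dynkin type and verifying that the freeness (admissibility) condition excludes every subgroup except the listed cyclic ones. This is elementary but case-heavy, and it is exactly the content of \cite{Riedtmann} and \cite{Dugas}, on which I would lean; the covering argument above is what lets us invoke part (2) cleanly from the sole hypothesis that $\Lambda$ is finite-dimensional.
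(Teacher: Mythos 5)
The paper does not prove this theorem at all: parts (1) and (2) are quoted from Riedtmann, and part (3) is covered by the pointer to \cite[Theorem 3.1]{Dugas}, so there is nothing in the paper to compare your argument against line by line. Your outline for (3) — use (1) to write $Q\cong\Z B/G$, use the Galois covering $K(\Z B)\to K(Q)$ to transport finite-dimensionality of $\Lambda$ into the hypothesis of (2), conclude $\bar B$ is Dynkin, and then classify the admissible subgroups $G\subset\Aut\Z\Delta$ — is the standard route and is exactly what the cited sources do, so the overall strategy is sound and goes beyond what the paper records. (Note that your appeal to (1) silently uses the paper's blanket convention that $Q$ has no multiple arrows.)

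There is, however, one concrete error in the piece of the case analysis you do spell out: $\phi$ has \emph{no} fixed vertices. For $A_n$ with $n$ even, $\phi(i,a)=(n+1-i,a+i-n/2)$, and a fixed vertex would require $i=(n+1)/2\notin\Z$; alternatively, $\phi^2=\tau^{-1}$ is fixed-point free, so $\phi$ must be too. Your exclusion criterion ``$G$ contains none of $\psi,\chi,\phi^{\pm1}$'' is therefore both false and inconsistent with the target list: since $(\tau\phi)^2=\tau$, one has $\langle\tau\phi\rangle=\langle\phi\rangle\ni\phi$, yet $\Z A_n/\langle\tau\phi\rangle$ is precisely the type III quiver with $k=1$. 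The correct statement is only that $G$ must act freely on $(\Z\Delta)_0$ and have finite quotient, which rules out the elements $\psi$, $\chi$, $\chi^2$ (and more generally any element with a fixed vertex) but places no such restriction on powers of $\phi$; the remaining classification of free finite-coindex subgroups of $\Aut\Z\Delta$ is the case-by-case content of Riedtmann and Dugas that you are entitled to cite, but the shortcut you offer in its place does not work as written.
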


In the rest, 
the term ``mesh algebra'' means the mesh algebra of the form $\Lambda_{\Delta,l,t}$
unless otherwise stated.
From the next section, we begin the calculation of the Grothendieck groups
$K_0(\umod \Lambda_{\Delta,l,t})$.

\section{The Grothendieck groups of mesh algebras}

Let $\Lambda_{\Delta,l,t}$ be a finite-dimensional mesh algebra given by 
the stable translation quiver $Q_{\Delta,l,t}$.
Now we start the calculation of the Grothendieck groups of the stable categories
$K_0(\umod \Lambda_{\Delta,l,t})$.

\subsection{The main proposition and some definitions}

In this section, we describe the Grothendieck group $K_0(\umod \Lambda_{\Delta,l,t})$
using the cokernels of matrices on $\Z$.

We define some notations first.
For a ring $R$ and positive integers $m,n$,
we denote $\Mat_{m,n}(R)$ by the set of $m \times n$ matrices with entries in $R$, 
and by $\GL_m(R)$ the set of $m \times m$ invertible matrices in $\Mat_{m,m}(R)$,
and by $1_m$ the identity matrix in $\Mat_{m,m}(R)$.
Each $M \in \Mat_{m,n}(R)$ is regarded as an $R$-homomorphism $M \colon R^n \to R^m$,
and $\Ker M$, $\Im M$, $\Coker M$ mean the kernel, the image, and the cokernel of 
the map.
The symbol $M^{\oplus l}$ denotes the $ml \times nl$ matrix
obtained by placing $M$ diagonally $l$ times.
Let $M_i \in \Mat_{m,n_i}(R)$ ($i=1,\ldots,l$) and $n=n_1+\cdots+n_l$.   
The symbol $\begin{bmatrix} M_1 & \cdots & M_l \end{bmatrix}$ denotes 
a matrix in $\Mat_{m,n}(R)$, 
where $M_1,\ldots,M_l$ are seen as blocks of 
$\begin{bmatrix} M_1 & \cdots & M_l \end{bmatrix}$.

We use the following polynomials and matrices.

\begin{Def}
Let $m \ge 1$ be an integer.
\begin{itemize}
\item[(1)]
We define $s_m(x)=1-x+x^2-\cdots+(-1)^{m-1}x^{m-1} \in \Z[x]$.
\item[(2)]
We define $X_m \in \GL_m(\Z)$ as the permutation matrix of 
the cyclic permutation $(1,2,\ldots,m)$ in the symmetric group $\mathfrak{S}_m$;
that is,
\begin{align*}
X_m=\begin{bmatrix}
0       & 1 \\
1_{m-1} & 0
\end{bmatrix}
\end{align*}
\end{itemize}
\end{Def}

We show the following proposition in this section.

\begin{Prop}\label{tanninsi}
Let $n,k \ge 1$ be integers.
The Grothendieck group $K_0(\umod \Lambda_{\Delta,l,t})$ is isomorphic to the following;
\begin{align*}
& (\mathrm{I} \colon \Lambda_{A_n,k,1}) &&
\begin{cases}
(\Coker (1_k-X_k^{n+1}))^{(n-3)/2} \oplus \Coker (1_k-X_k)(1_k+X_k^{(n+1)/2}) 
& (n \notin 2\Z) \\
(\Coker (1_k-X_k^{n+1}))^{(n-2)/2} \oplus \Coker (1_k-X_k) 
& (n \in 2\Z)
\end{cases}, \\
& (\mathrm{II} \colon \Lambda_{A_n,2k,2}) &&
(\Coker (1_{2k}+X_{2k}^{k-(n+1)/2}))^{(n-3)/2} 
\oplus \Coker \begin{bmatrix} (1_{2k}-X_{2k})(1_{2k}+X_{2k}^{(n+1)/2}) & 
1_{2k}+X_{2k}^{k-(n+1)/2} \end{bmatrix}, \\
& (\mathrm{III} \colon \Lambda_{A_n,2k-1,2}) &&
(\Coker (1_{2k-1}+X_{2k-1}^{k-(n+2)/2}))^{(n-2)/2} \oplus 
\Coker \begin{bmatrix} 1_{2k-1}-X_{2k-1} & 2 \cdot 1_{2k-1} \end{bmatrix}, \\
& (\mathrm{IV} \colon \Lambda_{D_n,k,1}) &&
\begin{cases}
(\Coker(1_k+X_k^{n-1}))^{n-3} 
\oplus \Coker s_{2n-2}(X_k) & (n \notin 2\Z)\\
(\Coker(1_k+X_k^{n-1}))^{n-2} 
\oplus \Coker s_{n-1}(X_k) & (n \in 2\Z)
\end{cases}, \\
& (\mathrm{V} \colon \Lambda_{D_n,2k,2}) &&
\begin{cases}
(\Coker \begin{bmatrix} 1_{2k}\!+\!X_{2k}^{n-1} & 1_{2k}\!-\!X_{2k}^k \end{bmatrix})^{n-3}
\oplus \Coker \begin{bmatrix} s_{2n-2}(X_{2k}) 
& 1_{2k}\!+\!X_{2k}^{k-(n-1)} \end{bmatrix} & (n \notin 2\Z) \\
(\Coker \begin{bmatrix} 1_{2k}\!+\!X_{2k}^{n-1} & 1_{2k}\!-\!X_{2k}^k \end{bmatrix})^{n-3}
\oplus \Coker \begin{bmatrix} 1_{2k}\!+\!X_{2k}^{n-1}
& (1\!-\!X_{2k}^k)s_{n-1}(X_{2k}) \end{bmatrix} & (n \in 2\Z) \\
\end{cases}, \\
& (\mathrm{VI} \colon \Lambda_{D_4,3k,3}) &&
\Coker (1_{3k}+X_{3k}^3) \oplus \Coker (1_{3k}+X_{3k}), \\
& (\mathrm{VII} \colon \Lambda_{E_6,k,1}) && 
\Coker (1_k-X_k^{12}) \oplus \Coker ((1_k-X_k)(1_k+X_k^3+X_k^6+X_k^9)) 
\oplus \Coker (1_k+X_k^6) \oplus \Coker (1_k+X_k^2), \\
& (\mathrm{VIII} \colon \Lambda_{E_6,2k,2}) &&
\Coker (1_{2k}+X_{2k}^{k-6}) \oplus \Coker \begin{bmatrix} 
(1_{2k}-X_{2k})(1_{2k}+X_{2k}^3+X_{2k}^6+X_{2k}^9) & 1_{2k}+X_{2k}^{k-6} \end{bmatrix}\\
&&& \quad \oplus \Coker \begin{bmatrix} 1_{2k}+X_{2k}^6 & 1_{2k}+X_{2k}^{k-6} \end{bmatrix} 
\oplus \Coker \begin{bmatrix} 1_{2k}+X_{2k}^2 & 1_{2k}+X_{2k}^{k-6} \end{bmatrix}, \\
& (\mathrm{IX} \colon \Lambda_{E_7,k,1}) &&
(\Coker (1_k+X_k^9))^6 \oplus \Coker (1_k-X_k+X_k^2), \\
& (\mathrm{X} \colon \Lambda_{E_8,k,1}) &&
(\Coker (1_k+X_k^{15}))^7 \oplus \Coker ((1_k-X_k+X_k^2)(1_k+X_k^5)).
\end{align*}
\end{Prop}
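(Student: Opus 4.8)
The plan is to derive, from Lemma~\ref{quot_by_P}(3), the identification
\begin{align*}
K_0(\umod \Lambda_{\Delta,l,t}) \cong \Z^m/H,
\end{align*}
where $m$ is the number of vertices of $Q_{\Delta,l,t}$, the classes $[S_u]$ of the simple modules form the standard $\Z$-basis of $\Z^m=K_0(D^\rb(\mod\Lambda_{\Delta,l,t}))$, and $H:=\langle [P_1],\dots,[P_m]\rangle$ is the subgroup generated by the indecomposable projectives; the whole point is then to produce a generating set of $H$ whose members have far smaller coefficients in the basis $\{[S_u]\}$ than the Cartan matrix. To this end I would first combine the mesh relations with Proposition~\ref{proj_resol}: the mesh relation at a vertex $u$ gives the minimal projective presentation of $S_u$, and since $\Omega^3 S_u\cong S_{\sigma u}$ is again simple there is an exact sequence
\begin{align*}
0\to S_{\sigma u}\to P_{\tau^{-1}u}\to \bigoplus_{v\in u^+}P_v\to P_u\to S_u\to 0
\end{align*}
for an explicit permutation $\sigma$ of the vertices of $Q_{\Delta,l,t}$. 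Applying Lemma~\ref{quot_by_P}(1) to this four-term exact sequence gives, inside $\Z^m$,
\begin{align*}
r_u:=[S_u]+[S_{\sigma u}]=[P_u]-\sum_{v\in u^+}[P_v]+[P_{\tau^{-1}u}]\in H,
\end{align*}
so all the $r_u$, which have coefficients in $\{0,1,2\}$, lie in $H$.

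Next I would use these identities as an elimination along the tree $\Delta$. Writing a vertex of $Q_{\Delta,l,t}$ as a pair $(i,a)$ with $i$ a vertex of $\Delta$ and $a$ in the cyclic group of order $l$ ($l=k,2k,2k-1$ or $3k$ according to the type), the set $(i,a)^+$ consists of vertices lying over the $\Delta$-neighbours of $i$ with shifted $a$-coordinate, so the identity at $(i,a)$ relates the classes $[P_{(i,\cdot)}]$ to the $[P_{(j,\cdot)}]$ for the neighbours $j$ of $i$ and to $r_{(i,\cdot)}$. Starting from a leaf of $\Delta$ and working inwards, one can express $[P_{(i,\cdot)}]$ for every $i$ outside a fixed leaf (and, in the $D$- and $E$-cases, outside the leaf of one further arm as well) in terms of the $r$'s and of the classes over the retained leaves, whose composition series are much simpler than those of a general projective. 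This realises $H$ as generated by the $r_u$ together with one or two extra families $[P_{(i_0,\cdot)}]$, so that $K_0(\umod\Lambda_{\Delta,l,t})$ is the cokernel of an explicit integer matrix $R$ whose rows are these generators.

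To reach the block form in the statement I would then group the basis $\{[S_u]\}$ by the $\Delta$-coordinate $i$: the simple classes over a fixed $i$ span a free $\Z[\Z/l\Z]$-module on which the translation acts by the cyclic permutation matrix $X_l$ (in types II, V, VI, VIII two, respectively three, $\Delta$-coordinates merge, under $\psi$ or $\chi$, into one such module of order $2k$ or $3k$), so $R$ becomes a matrix of polynomials in $X_l$. The permutation $\sigma$ pairs each $\Delta$-vertex $i$ with a partner $j$ (possibly $j=i$, and possibly with a $\psi$-, $\phi$- or $\chi$-twist on the $a$-coordinate), together with a shift of $a$ by an integer dividing the Coxeter number $c$ of $\Delta$. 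For a pair $\{i,j\}$ with $i\neq j$ and neither meeting the retained leaves, the corresponding rows $r_{(i,\cdot)},r_{(j,\cdot)}$ assemble into a block $\left[\begin{smallmatrix}1_l & X_l^{a}\\ X_l^{b} & 1_l\end{smallmatrix}\right]$, which integer row and column operations reduce to $1_l\oplus(1_l-X_l^{a+b})$ with $a+b\equiv\pm c$, contributing a factor $\Coker(1_l-X_l^{c})$ (for $A_n$ this produces the powers $(n-3)/2$, $(n-2)/2$, and so on); for a $\sigma$-fixed vertex not meeting the retained leaves one gets a factor $\Coker(1_l+X_l^{s_i})$ for a suitable $s_i\mid c$; and the remaining blocks — those meeting the retained leaves, where the extra families $[P_{(i_0,\cdot)}]$ and the so-far unused mesh identities (including the one at the trivalent vertex of $D_n$ and $E_n$) come into play — reduce to the more intricate matrices listed, such as $(1_l-X_l)(1_l+X_l^{c/2})$, $\begin{bmatrix}1_l-X_l & 2\cdot 1_l\end{bmatrix}$, $s_c(X_l)$, or the cyclotomic-type products in the $E_7$ and $E_8$ rows, each involving at most two $\Delta$-coordinates. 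Carrying this out type by type and using additivity of $\Coker$ over block-diagonal matrices yields exactly the ten expressions.

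The main obstacle is precisely this case-by-case reduction. One must first pin down the permutation $\sigma$ and the exact $a$-shifts in each of the ten families — they interlace the translation $\tau$, the Nakayama permutation of the mesh algebra, and $\psi$, $\phi$ or $\chi$ — and then propagate the elimination correctly around the trivalent vertex in types IV through X, where it cannot be completed along all three arms and a residual $2\times 2$ or $3\times 3$ system over $\Z[X_l]/(X_l^{l}-1)$ must be diagonalised by hand. In the twisted types II, III, V, VI, VIII the bookkeeping of how $\psi$, $\phi$, $\chi$ re-index the $a$-coordinate is what turns certain blocks into two-column matrices of the form $\begin{bmatrix}\ast & 1_{2k}+X_{2k}^{k-c/2}\end{bmatrix}$, and getting all the exponents of $X_l$ right — they encode $c$ and its divisors, and the quantity $\gcd(c,2k-1)$ in type III — is where the genuine work lies; everything else is formal.
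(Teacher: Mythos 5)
Your proposal follows essentially the same route as the paper: the relations $[S_u]+[S_{\pi\tau^{-1}u}]\in H$ extracted from Proposition \ref{proj_resol}, the elimination along the tree replacing the projective generators by these relations together with the projectives over one or two retained leaves (the paper's Lemma \ref{change_gen}, supplemented in the twisted types by the orbit relations $[S_u]-[S_{\rho^{-1}u}]$), the regrouping of the simples by $\Delta$-coordinate so that $H$ becomes the column span of a matrix of polynomials in the cyclic permutation matrix $X_l$, and the final row/column reduction over $\Z[x]/(1-x^l)$. The only part you do not execute is the explicit type-by-type diagonalisation of these polynomial matrices, which is precisely the computation occupying the paper's Subsection 3.3, but your description of how the $2\times 2$ Nakayama-paired blocks collapse to $1_l-X_l^{c}$, how $\sigma$-fixed vertices give $1_l+X_l^{s}$, and how the retained-leaf columns produce the remaining factors matches what the paper does.
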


As mentioned after Lemma \ref{quot_by_P}, 
using the isomorphism in Lemma \ref{quot_by_P} (3)
straightforwardly is not a good way to calculate the Grothendieck group.
Thus, we will give a simpler set of generators of 
$\langle [P_u] \mid u \in (Q_{\Delta,l,t})_0 \rangle$ 
in the next subsection.
The proof of Proposition \ref{tanninsi} is based on the new generators.

\subsection{Another set of generators and polynomial matrices}

In the proof of Proposition \ref{tanninsi}, 
we use \textit{Nakayama permutations} effectively. 
Let $\pi$ be the Nakayama permutation of $\Lambda_{\Delta,l,t}$, 
namely defined as $e_u \Lambda_{\Delta,l,t} 
\cong \Hom_K(\Lambda_{\Delta,l,t} e_{\pi(u)},K)$ 
in $\mod \Lambda_{\Delta,l,t}$.
We can write $\pi$ as follows.
\begin{center}
\begin{tabular}{c|ccccccc}
$\Delta$ & $A_n$ ($n \notin 2\Z$) & $A_n$ ($n \in 2\Z$) & 
$D_n$ ($n \notin 2\Z$) & $D_n$ ($n \in 2\Z$) & $E_6$ & $E_7$ & $E_8$ \\
\hline
$\pi$ & $\tau^{-(n-1)/2} \psi$ & $\phi^{n-1}$ & 
$\tau^{-(n-2)} \psi$ & $\tau^{-(n-2)}$ & $\tau^{-5} \psi$ & 
$\tau^{-8}$ & $\tau^{-14}$
\end{tabular}
\end{center}
To make the calculation easier, 
the following proposition by Dugas is very crucial.
The part (2) is proved by applying $(S_u \otimes_\Lambda {?})$ to (1).
Note that we define 
the right action of $\lambda \in \Lambda$ 
on a twisted bimodule ${_1 \Lambda_\mu}$ 
as $\lambda' \cdot \lambda = \lambda' \mu(\lambda)$ for $\lambda' \in {_1 \Lambda_\mu}$,
whereas $\lambda' \cdot \lambda = \lambda' \mu^{-1}(\lambda)$ in \cite{Dugas}.

\begin{Prop}\label{proj_resol}
Let $\Delta$ be a Dynkin diagram and $k \ge 1$ be an integer.
Put $Q=Q_{\Delta,k,1}$, $\Lambda=\Lambda_{\Delta,k,1}$.
\begin{itemize}
\item[(1)]\cite[(4.1)--(4.3), Corollary 4.3]{Dugas}
A projective resolution of $\Lambda$ as
a $\Lambda$-$\Lambda$-bimodule is given by 
$0 \to L \to U_2 \to U_1 \to U_0 \to \Lambda \to 0$, 
where
\begin{align*}
U_2=\bigoplus_{u \in Q_0}(\Lambda e_u \otimes_K e_{\tau^{-1}u} \Lambda), \quad
U_1=\bigoplus_{u \in Q_0, \, v \in u^+}(\Lambda e_u \otimes_K e_v \Lambda), \quad
U_0=\bigoplus_{u \in Q_0}(\Lambda e_u \otimes_K e_u \Lambda),
\end{align*}
and $L$ is a $\Lambda$-$\Lambda$ twisted bimodule ${_1 \Lambda_\mu}$, 
and $\mu \in \operatorname{Aut}_K(\Lambda)$ satisfies $\mu^{-1}(e_u)=e_{\pi \tau^{-1}u}$
for $u \in Q_0$.
\item[(2)]
For $u \in Q_0$,
a projective resolution of the simple $\Lambda$-module $S_u$ is given by
\begin{align*}
& 0 \to S_{\pi \tau^{-1}u} \to P_{\tau^{-1}u} \to 
\bigoplus_{v \in u^+} P_v \to P_u \to S_u \to 0.
\end{align*}
\end{itemize}
\end{Prop}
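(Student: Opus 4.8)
The plan is to prove part (2) by applying the functor $S_u \otimes_\Lambda (-)$ to the projective bimodule resolution $0 \to L \to U_2 \to U_1 \to U_0 \to \Lambda \to 0$ of part (1), identifying each of the resulting terms and checking that exactness is preserved. (Throughout, $Q=Q_{\Delta,k,1}$ and $\Lambda=\Lambda_{\Delta,k,1}$, so only type I is involved.)

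For the identification of terms, note that for any $w,w' \in Q_0$ there is a natural isomorphism $S_u \otimes_\Lambda (\Lambda e_w \otimes_K e_{w'} \Lambda) \cong (S_u e_w) \otimes_K e_{w'}\Lambda$, and $S_u e_w$ equals $K$ if $w=u$ and $0$ otherwise. Hence $S_u \otimes_\Lambda U_0 \cong e_u\Lambda = P_u$, $S_u \otimes_\Lambda U_1 \cong \bigoplus_{v \in u^+} e_v\Lambda = \bigoplus_{v\in u^+} P_v$, and $S_u \otimes_\Lambda U_2 \cong e_{\tau^{-1}u}\Lambda = P_{\tau^{-1}u}$, while trivially $S_u \otimes_\Lambda \Lambda \cong S_u$. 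For the remaining term, $S_u \otimes_\Lambda {_1\Lambda_\mu}$ is isomorphic to $S_u$ as a $K$-vector space (via $s \mapsto s \otimes 1$), but its right $\Lambda$-action becomes $s \cdot \lambda = s\,\mu(\lambda)$ by the convention for the right action on ${_1\Lambda_\mu}$ fixed just before the Proposition; using $\mu^{-1}(e_w) = e_{\pi\tau^{-1}w}$, i.e. $\mu(e_w)=e_{\tau\pi^{-1}w}$, one finds that this twisted module is supported exactly at the vertex $\pi\tau^{-1}u$, so $S_u \otimes_\Lambda L \cong S_{\pi\tau^{-1}u}$. This last identification, the one place where the convention difference with \cite{Dugas} actually matters, is the step that requires care.

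Next I would check that $S_u \otimes_\Lambda (-)$ keeps the sequence exact. Break the bimodule resolution into short exact sequences $0 \to K_0 \to U_0 \to \Lambda \to 0$, $0 \to K_1 \to U_1 \to K_0 \to 0$, $0 \to L \to U_2 \to K_1 \to 0$, regarded now as sequences of left $\Lambda$-modules. Each $U_i$ is projective as a left module (a direct sum of modules $\Lambda e_w$) and $\Lambda$ is free as a left module, so the long exact sequences for $\operatorname{Tor}^\Lambda_\bullet(S_u,-)$ give, by an induction along these three short exact sequences, that $\operatorname{Tor}^\Lambda_i(S_u, K_j) = 0$ for all $i \ge 1$, and therefore that $S_u \otimes_\Lambda (-)$ carries each of the three to a short exact sequence. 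Splicing these back together yields the exact sequence $0 \to S_u \otimes_\Lambda L \to S_u \otimes_\Lambda U_2 \to S_u \otimes_\Lambda U_1 \to S_u \otimes_\Lambda U_0 \to S_u \to 0$, which by the previous paragraph is exactly $0 \to S_{\pi\tau^{-1}u} \to P_{\tau^{-1}u} \to \bigoplus_{v\in u^+} P_v \to P_u \to S_u \to 0$, as asserted.

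The only genuine difficulty I anticipate is the bookkeeping of the twist in $S_u \otimes_\Lambda {_1\Lambda_\mu}$ and of the resulting index $\pi\tau^{-1}u$; everything else is formal homological algebra. If a cross-check is wanted, one can instead verify the outcome by hand: the map $\bigoplus_{v\in u^+} P_v \to P_u$ is induced by the arrows $\alpha_v \colon u \to v$ and the map $P_{\tau^{-1}u} \to \bigoplus_{v\in u^+} P_v$ by the arrows $\beta_v \colon v \to \tau^{-1}u$, their composite is the mesh relation and hence zero, $\Coker(\bigoplus_{v\in u^+} P_v \to P_u) = S_u$ since the image is $e_u\rad\Lambda$, and $\Ker(P_{\tau^{-1}u} \to \bigoplus_{v\in u^+} P_v) = \operatorname{soc} P_{\tau^{-1}u} = S_{\pi\tau^{-1}u}$ by the description of the Nakayama permutation $\pi$ given above; a dimension count then shows there is no extra homology in the middle.
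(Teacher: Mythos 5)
Your proposal is correct and is exactly the paper's argument: the paper proves part (2) in one line by applying $(S_u \otimes_\Lambda {?})$ to the bimodule resolution of part (1), after noting the same convention for the right action on ${_1\Lambda_\mu}$ that you use to identify $S_u \otimes_\Lambda L$ with $S_{\pi\tau^{-1}u}$. Your identifications of the terms and the Tor-vanishing argument for exactness are the correct details behind that one line.
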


\begin{Rem}
Let $c$ be the Coxeter number of $\Delta$.
It is well-known and easy to see 
that $(\pi \tau^{-1})^2=\tau^{-c}$.
The part (2) of Proposition \ref{proj_resol} implies that 
$[S_u]=-[S_{\pi \tau^{-1}u}]$ and thus $[S_u]=[S_{\tau^{-c}u}]$ in $K_0(\umod \Lambda)$.
This observation also holds 
for the locally bounded mesh algebra $K(\Z \Delta)$ of $\Z \Delta$,
and we have $K_0(\umod K(\Z \Delta)) \cong K_0(\umod \Lambda_{\Delta,1,c})$.
This is isomorphic to $\Z^{n(c-2)/2}$ by Theorem \ref{main},
where $n$ is the number of vertices of $\Delta$.
\end{Rem}

Now, we can prove the following lemma, which gives 
``simpler'' generators of $\langle [P_u] \mid u \in Q_0 \rangle$,
and is the key ingredient of the calculation.
Though the number of generators may increase,
the elements of the new generators are much easier to 
express as linear combinations of $[S_u]$'s than the original ones.

\begin{Lem}\label{change_gen}
Let $\Delta$ be a Dynkin diagram and $k \ge 1$ be an integer.
Define $H_{\Delta,k} = \langle [P_u] \mid u \in (Q_{\Delta,k})_0 \rangle$
as a subgroup of $K_0(D^\rb(\mod \Lambda_{\Delta,k,1}))$. 
Then the following conditions hold.
\begin{itemize}
\item[(1)]
\begin{itemize}
\item[(i)]
Let $H'_{\Delta,k},H''_{\Delta,k}$ be subgroups of 
$K_0(D^\rb(\mod \Lambda_{\Delta,k,1}))$ defined by
\begin{align*}
H'_{\Delta,k}=
\langle [S_u]+[S_{\pi\tau^{-1}u}] 
\mid u \in (Q_{\Delta,k,1})_0 \rangle, \quad
H''_{\Delta,k}=
\langle [P_u] \mid u \in \{1,n\} \times (\Z/k\Z) \rangle.
\end{align*}
Then we have $H_{\Delta,k}=H'_{\Delta,k}+H''_{\Delta,k}$, and especially,
\begin{align*}
K_0(\umod \Lambda_{\Delta,k,1}) \cong 
K_0(D^\rb(\mod \Lambda_{\Delta,k,1}))/(H'_{\Delta,k}+H''_{\Delta,k}).
\end{align*}
\item[(ii)]
If $\Delta=A_n$, (i) holds even if 
\begin{align*}
H''_{\Delta,k}=\langle [P_u] \mid u \in \{1\} \times (\Z/k\Z) \rangle.
\end{align*}
\item[(iii)]
If $\Delta=D_n$, (i) holds even if
\begin{align*}
H'_{\Delta,k}=\langle [S_u]+[S_{\pi\tau^{-1}u}] 
\mid u \in \{1,\ldots,n-2\} \times (\Z/k\Z) \rangle.
\end{align*}
\end{itemize}
\item[(2)]
\begin{itemize}
\item[(i)]
If $\Delta=A_n \ (n \notin 2\Z), D_n, E_6$,
let $H^\psi_{\Delta,2k}$ be a subgroup of $K_0(D^\rb(\mod \Lambda_{\Delta,2k,1}))$
defined by
\begin{align*}
H^\psi_{\Delta,2k}&=\langle [S_u]-[S_{(\tau^k \psi)^{-1} u}] 
\mid u \in (Q_{\Delta,2k,1})_0 \rangle.
\end{align*}
Then we have
$K_0(\umod \Lambda_{\Delta,2k,2}) \cong 
K_0(D^\rb(\mod \Lambda_{\Delta,2k,1}))/
(H'_{\Delta,2k}+H''_{\Delta,2k}+H^\psi_{\Delta,2k})$.
Moreover, if $\Delta=D_n$, 
we have 
\begin{align*}
H^\psi_{\Delta,2k}=\langle [S_u]-[S_{(\tau^k \psi)^{-1} u}] 
\mid u \in \{1,\ldots,n-2,n\} \times (\Z/2k\Z) \rangle.
\end{align*}
\item[(ii)]
If $\Delta=A_n \ (n \in 2\Z)$,
let $H^\phi_{\Delta,2k-1}$ be a subgroup of $K_0(D^\rb(\mod \Lambda_{\Delta,2k-1,1}))$
defined by
\begin{align*}
H^\phi_{\Delta,2k-1}&=\langle [S_u]-[S_{(\tau^k \phi)^{-1} u}] 
\mid u \in (Q_{\Delta,2k-1,1})_0 \rangle.
\end{align*}
Then we have
$K_0(\umod \Lambda_{\Delta,2k-1,2}) \cong 
K_0(D^\rb(\mod \Lambda_{\Delta,2k-1,1}))/
(H'_{\Delta,2k-1}+H''_{\Delta,2k-1}+H^\phi_{\Delta,2k-1})$.
\item[(iii)]
If $\Delta=D_4$,
let $H^\chi_{\Delta,3k}$ be a subgroup of $K_0(D^\rb(\mod \Lambda_{\Delta,3k,1}))$
defined by
\begin{align*}
H^\chi_{\Delta,3k}&=\langle [S_u]-[S_{(\tau^k \chi)^{-1} u}] 
\mid u \in (Q_{\Delta,3k,1})_0 \rangle \\
&=\langle [S_u]-[S_{(\tau^k \chi)^{-1} u}] 
\mid u \in \{3,2,4\} \times (\Z/3k\Z) \rangle.
\end{align*}
Then we have
$K_0(\umod \Lambda_{\Delta,3k,3}) \cong 
K_0(D^\rb(\mod \Lambda_{\Delta,3k,1}))
/({H'_{\Delta,3k}+H''_{\Delta,3k}+H^\chi_{\Delta,3k}})$.
\end{itemize} 
\end{itemize}
\end{Lem}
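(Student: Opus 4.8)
The plan is to obtain part (1) from the projective resolutions of the simple modules in Proposition \ref{proj_resol}~(2), and then to deduce part (2) from part (1) using the identifications of $Q_{\Delta,2k,2}$, $Q_{\Delta,2k-1,2}$ and $Q_{D_4,3k,3}$ with orbit quivers in Lemma \ref{torsion}.

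For part (1), the starting point is that the five-term exact sequence $0\to S_{\pi\tau^{-1}u}\to P_{\tau^{-1}u}\to\bigoplus_{v\in u^+}P_v\to P_u\to S_u\to 0$ in $\mod\Lambda_{\Delta,k,1}$ gives, by Lemma \ref{quot_by_P}~(1), the identity $[S_u]+[S_{\pi\tau^{-1}u}]=[P_u]-\sum_{v\in u^+}[P_v]+[P_{\tau^{-1}u}]$ in $K_0(D^\rb(\mod\Lambda_{\Delta,k,1}))$ for every vertex $u$; call it $R(u)$. From $R(u)$ one reads off $H'_{\Delta,k}\subseteq H_{\Delta,k}$, and together with the trivial inclusion $H''_{\Delta,k}\subseteq H_{\Delta,k}$ this gives $H'_{\Delta,k}+H''_{\Delta,k}\subseteq H_{\Delta,k}$. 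For the reverse inclusion I would work modulo $H'_{\Delta,k}$, where each $R(u)$ becomes a $\Z$-linear relation among the $[P_v]$, and ``propagate'' these relations through the tree starting from the leaves collected in $H''_{\Delta,k}$: using $(1,a)^+=\{(2,a)\}$ and $u^+=\{(i+1,a),(i-1,a+1)\}$ at an interior chain vertex, induction on the distance from vertex $1$ expresses $[P_{(i,a)}]$ modulo $H'_{\Delta,k}$ in terms of the $[P_{(1,b)}]$ along the main chain; at the unique branch vertex (if any), $u^+$ has three elements, two of which are already known — one from the chain and, for $D_n$, one supplied by $\{n\}\times(\Z/k\Z)$ — so the relation $R(u)$ at the branch vertex $u$ together with the remaining chain relations finishes the job. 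This yields (1)(i). In type $A$ there is no branch vertex, so propagation from $\{1\}\times(\Z/k\Z)$ alone already reaches all vertices, which is (1)(ii); and one checks that the $D_n$ propagation uses $R(u)$ only for $u\in\{1,\dots,n-2\}\times(\Z/k\Z)$, which is (1)(iii).

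For part (2), set $G=\langle\tau^k\psi\rangle$ (resp. $\langle\tau^k\phi\rangle$, $\langle\tau^k\chi\rangle$), a cyclic group acting on $Q_{\Delta,2k,1}$ (resp. $Q_{\Delta,2k-1,1}$, $Q_{D_4,3k,1}$) with orbit quiver $Q_{\Delta,2k,2}$ (resp. $Q_{\Delta,2k-1,2}$, $Q_{D_4,3k,3}$) by Lemma \ref{torsion}, so that $\Lambda_{\Delta,2k,2}$ is the orbit algebra of $\Lambda_{\Delta,2k,1}$, and similarly in the other two cases. Since $k\ge 1$, the relations $(\tau^k\psi)^2=\tau^{2k}=\id$, $(\tau^k\phi)^2=\tau^{2k-1}=\id$ and $(\tau^k\chi)^3=\tau^{3k}=\id$ in the respective quivers show that $G$ acts freely on vertices, so the exact pushdown functor sends $S_u\mapsto S_{\bar u}$ and the indecomposable projective $P_u\mapsto P_{\bar u}$. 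It therefore induces a surjection $p\colon K_0(D^\rb(\mod\Lambda_{\Delta,2k,1}))\to K_0(D^\rb(\mod\Lambda_{\Delta,2k,2}))$; since the source is free on the classes of simples and $p$ just identifies $G$-orbits, $\ker p$ is generated by the differences of the classes of simples lying in a common $G$-orbit, i.e. $\ker p=H^\psi_{\Delta,2k}$ (and analogously for $\phi$ and $\chi$). Using $p([P_u])=[P_{\bar u}]$ and Lemma \ref{quot_by_P}~(3) applied to $\Lambda_{\Delta,2k,2}$, we get
\begin{align*}
K_0(\umod\Lambda_{\Delta,2k,2})
&\cong K_0(D^\rb(\mod\Lambda_{\Delta,2k,2}))/\langle[P_{\bar u}]\rangle \\
&\cong K_0(D^\rb(\mod\Lambda_{\Delta,2k,1}))/(H_{\Delta,2k}+\ker p),
\end{align*}
which by part (1) is $K_0(D^\rb(\mod\Lambda_{\Delta,2k,1}))/(H'_{\Delta,2k}+H''_{\Delta,2k}+H^\psi_{\Delta,2k})$, as claimed; the cases $\Delta=A_n$ ($n\in 2\Z$) and $\Delta=D_4$ are the same argument. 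The reduced forms of $H^\psi_{\Delta,2k}$ for $D_n$ and of $H^\chi_{D_4,3k}$ follow by writing out the $G$-orbits of vertices and checking that the omitted generators agree, up to sign and reindexing, with kept ones.

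The step I expect to be the main obstacle is the propagation argument in part (1): it must be carried out uniformly over all five Dynkin types, with care at the branch vertex about which of the three summands $[P_v]$ ($v\in u^+$) are already computed, which come from $H''_{\Delta,k}$, and — for (1)(ii) and (1)(iii) — exactly which $R(u)$ are invoked. The covering-theoretic ingredients for part (2) — freeness of the $G$-action, the pushdown of indecomposable projectives, and the description of $\ker p$ — are standard once Lemma \ref{torsion} supplies the orbit quivers.
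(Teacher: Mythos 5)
Your proposal is correct and follows essentially the same route as the paper: part (1) is obtained from the relation $[S_u]+[S_{\pi\tau^{-1}u}]=[P_{\tau^{-1}u}]-\sum_{v\in u^+}[P_v]+[P_u]$ of Proposition \ref{proj_resol} together with an induction along the Dynkin diagram starting from the end vertices collected in $H''_{\Delta,k}$, with the same case analysis at the branch vertex. Your part (2) — the pushdown-induced surjection $p$ on Grothendieck groups with kernel $H^\psi_{\Delta,2k}$ and the identification $p([P_u])=[P_{\bar u}]$ via $e_{\bar u}\Lambda e_{\bar v}\cong e_u\Lambda e_v\oplus e_u\Lambda e_{\tau^k\psi v}$ — is the same argument the paper carries out with an explicit commutative diagram and diagram chase.
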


\begin{proof}
(1)
We prove (i) first.

We show $H'_{\Delta,k}+H''_{\Delta,k} \subset H_{\Delta,k}$.
It is clear that $H''_{\Delta,k} \subset H_{\Delta,k}$.
We prove $H'_{\Delta,k} \subset H_{\Delta,k}$.
By Proposition \ref{proj_resol}, we have
\begin{align*}
[S_{\pi\tau^{-1}u}]+[S_u]=[P_{\tau^{-1}u}]-\sum_{v \in u^+}[P_v]+[P_u] \in H_{\Delta,k}
\end{align*}
in $K_0(D^\rb(\mod \Lambda_{\Delta,k,1}))$.
Thus $H'_{\Delta,k} \subset H_{\Delta,k}$ holds and 
we have $H'_{\Delta,k}+H''_{\Delta,k} \subset H_{\Delta,k}$.

Next, we show $H_{\Delta,k} \subset H'_{\Delta,k}+H''_{\Delta,k}$.
It is sufficient to show $[P_{i,a+k\Z}] \in H'_{\Delta,k}+H''_{\Delta,k}$.
%
%Consider the case $\Delta=A_n$ first.
%We prove it by induction on $i=1,\ldots,n$.
%If $i=1$, we have $[P_{1,a+k\Z}] \in H''_{\Delta,k}$.
%We assume $i=2,\ldots,n$.
%By Corollary \ref{proj_resol}, we have
%\begin{align*}
%[S_{\nu\tau^{-1}x}]-[P_{\tau^{-1}x}]+\sum_{y \in x^+}[P_y]-[P_x]+[S_u]=0.
%\end{align*}
%Put $x=(i-1,a+k\Z)$ in this equation.
%Then we have
%\begin{align*}
%[S_{\nu\tau^{-1}x}]-[P_{i-1,a+1+k\Z}]
%+([P_{i-2,a+1+k\Z}]+[P_{i,a+k\Z}])-[P_{i-1,a+k\Z}]+[S_u]=0,
%\end{align*}
%where $P_{0,*}=0$.
%Thus
%\begin{align*}
%[P_{i,a+k\Z}] = -([S_{\nu\tau^{-1}x}]+[S_u])
%+([P_{i-1,a+1+k\Z}]-[P_{i-2,a+1+k\Z}]+[P_{i-1,a+k\Z}]).
%\end{align*}
%The first term of the right-hand side is the element of $H'_{\Delta,k}$,
%and the second term belongs to $H'_{\Delta,k}+H''_{\Delta,k}$ by the induction hypothesis.
%Therefore, we have $[P_{i,a+k\Z}] \in H'_{\Delta,k}+H''_{\Delta,k}$.
%The induction is completed for $A_n$.
%
%Consider the case $\Delta=D_n$ next.
If $i=n$, we have $[P_{n,a+k\Z}] \in H''_{\Delta,k}$.
Thus we prove the remained assertion by induction on $i=1,\ldots,n-1$.
If $i=1$, we have $[P_{1,a+k\Z}] \in H''_{\Delta,k}$.
We assume that $2 \le i \le n-1$.
Put $u=(i-1,a+k\Z)$ and let $m$ be the number of the elements of $u^+$
(it does not depend on $a$).
We can deduce that $m=1,2,3$.

If $m=1$, we can deduce $i-1=1$ because $1 \le i-1 \le n-2$, and we have $u^+=\{(2,a+k\Z)\}$.
By Proposition \ref{proj_resol}, we have
\begin{align*}
[S_{\pi\tau^{-1}u}]-[P_{1,a+1+k\Z}]
+[P_{2,a+k\Z}]-[P_{1,a+k\Z}]+[S_u]=0 
\end{align*}
and 
\begin{align*}
[P_{2,a+k\Z}]=-([S_{\pi\tau^{-1}u}]+[S_u])
+([P_{1,a+1+k\Z}]+[P_{1,a+k\Z}]).
\end{align*}
Thus we can deduce $[P_{i,a+k\Z}] \in H'_{\Delta,k}+H''_{\Delta,k}$.

If $m=2$, we can deduce $2 \le i-1 \le n-2$ and $u^+=\{(i-2,a+1+k\Z),(i,a+k\Z)\}$. 
By Proposition \ref{proj_resol}, we have
\begin{align*}
[S_{\pi\tau^{-1}u}]-[P_{i-1,a+1+k\Z}]
+([P_{i-2,a+1+k\Z}]+[P_{i,a+k\Z}])-[P_{i-1,a+k\Z}]+[S_u]=0 
\end{align*}
and 
\begin{align*}
[P_{i,a+k\Z}]=-([S_{\pi\tau^{-1}u}]+[S_u])
+([P_{i-1,a+1+k\Z}]-[P_{i-2,a+1+k\Z}]+[P_{i-1,a+k\Z}]).
\end{align*}
The first term of the right-hand side is the element of $H'_{\Delta,k}$,
the second term belongs to $H'_{\Delta,k}+H''_{\Delta,k}$ by the induction hypothesis.
Thus we can deduce $[P_{i,a+k\Z}] \in H'_{\Delta,k}+H''_{\Delta,k}$.

If $m=3$, we can deduce $2 \le i-1 \le n-2$ and 
$u^+=\{(i-2,a+1+k\Z),(i,a+k\Z),(n,a+k\Z)\}$.
We have similarly 
%\begin{align*}
%[S_{\nu\tau^{-1}x}]-[P_{n-2,a+1+k\Z}]
%+([P_{n-3,a+1+k\Z}+[P_{n-1,a+k\Z}]+[P_{n,a+k\Z}])
%-[P_{n-2,a+k\Z}]+[S_u]=0. 
%\end{align*}
%Thus
\begin{align*}
[P_{i,a+k\Z}]=-([S_{\pi\tau^{-1}u}]+[S_u])
+([P_{i-1,a+1+k\Z}]-[P_{i-2,a+1+k\Z}]+[P_{i-1,a+k\Z}])-[P_{n,a+k\Z}].
\end{align*}
The first term of the right-hand side is the element of $H'_{\Delta,k}$,
the second term belongs to $H'_{\Delta,k}+H''_{\Delta,k}$ by the induction hypothesis,
and the third term is the element of $H''_{\Delta,k}$.
Thus we can deduce $[P_{i,a+k\Z}] \in H'_{\Delta,k}+H''_{\Delta,k}$.

The induction is completed, 
and we have $H_{\Delta,k} \subset H'_{\Delta,k}+H''_{\Delta,k}$.
The part (i) has been proved.

If $\Delta=A_n$, in the proof of 
$H_{\Delta,k} \subset H'_{\Delta,k}+H''_{\Delta,k}$ in (i), 
we can add 
\begin{align*} 
[P_{n,a+k\Z}]=-([S_{\pi\tau^{-1}u}]+[S_u])
+([P_{n-1,a+1+k\Z}]-[P_{n-2,a+1+k\Z}]+[P_{n-1,a+k\Z}]).
\end{align*}
to the case of $m=2$, where $u=(n-1,a+k\Z)$.
The case $m=3$ does not occur.
Thus we can remove $[P_{n,a+k\Z}]$ from the generators of $H''_{\Delta,k}$.
The part (ii) is proved.

If $\Delta=D_n$, in the proof of 
$H_{\Delta,k} \subset H'_{\Delta,k}+H''_{\Delta,k}$ in (i), 
the fact $[S_{\pi \tau^{-1}u}]+[S_u] \in H'_{\Delta,k}$ is used 
only in the case $u \in \{ 1,\ldots,n-2 \} \times (\Z/k\Z)$.
The part (iii) is proved.

(2)
We only prove (i). The remained parts (ii) and (iii) are shown similarly. 
The natural quiver epimorphism $Q_{\Delta,2k,1} \to Q_{\Delta,2k,2}$
by $\tau^k \psi$ induces the natural epimorphism
$K_0(D^\rb(\mod \Lambda_{\Delta,2k,1})) \to K_0(D^\rb(\mod \Lambda_{\Delta,2k,2}))$,
and it has $H^\psi_{\Delta,2k}$ as its cokernel.
Now we put 
$H_{\Delta,2k,2}=\langle [e_{\bar{u}} \cdot \Lambda_{\Delta,2k,2}] 
\mid \bar{u} \in (Q_{\Delta,2k,2})_0 \rangle
\subset K_0(D^\rb(\mod \Lambda_{\Delta,2k,2}))$.
For $u,v \in (Q_{\Delta,2k,1})_0$, we have
$e_{\bar{u}} \cdot \Lambda_{\Delta,2k,2} \cdot e_{\bar{v}} \cong
e_u \cdot \Lambda_{\Delta,2k,1} \cdot e_v 
\oplus e_u \cdot \Lambda_{\Delta,2k,1} \cdot e_{\tau^k \psi v}$
as $K$-vector spaces,
where $\bar{u}=\{u,\tau^k\psi u\}$ and $\bar{v}=\{v,\tau^k\psi v\}$ are 
the $\tau^k \psi$-orbits of $u$ and $v$, respectively.
Therefore, we have the following exact sequences and the commutative diagram
\begin{align*}
\begin{xy}
(-56,16) *+{0} ="1",
(-32,16) *+{H_{\Delta,2k} \cap H^\psi_{\Delta,2k}} ="2",
(  0,16) *+{H_{\Delta,2k}} ="3",
( 48,16) *+{H_{\Delta,2k,2}} ="4",
( 80,16) *+{0} ="5",
(-56, 0) *+{0} ="6",
(-32, 0) *+{H^\psi_{\Delta,2k}} ="7",
(  0, 0) *+{K_0(D^\rb(\mod \Lambda_{\Delta,2k,1}))} ="8",
( 48, 0) *+{K_0(D^\rb(\mod \Lambda_{\Delta,2k,2}))} ="9",
( 80, 0) *+{0} ="10"
\ar "1";"2"
\ar "2";"3"
\ar "3";"4"
\ar "4";"5"
\ar "6";"7"
\ar "7";"8"
\ar "8";"9"
\ar "9";"10"
\ar "2";"7"
\ar "3";"8"
\ar "4";"9"
\end{xy}.
\end{align*}
By diagram chasings, we have
\begin{align*}
K_0(\umod \Lambda_{\Delta,2k,2}) 
\cong \frac{K_0(D^\rb(\mod \Lambda_{\Delta,2k,2}))}{H_{\Delta,2k,2}} 
\cong \frac{K_0(D^\rb(\mod \Lambda_{\Delta,2k,1}))}{H_{\Delta,2k}+H^\psi_{\Delta,2k}}.
\end{align*}
The first assertion is proved by using the part (1),
and the second assertion for $\Delta=D_n$ is easy to prove by the definition of $\psi$.
\end{proof}

Our task is moved to express the generators of 
the subgroups appearing in the previous lemma.
To do this, we define some matrices on $\Z[x]$ and $\Z$.

\begin{Def}
For an integer $n \ge 1$, we define the following.
\begin{itemize}
\item[(1)]
We define $T_n(x) \in \Mat_{n,n}(\Z[x])$, $U_n(x) \in \Mat_{n,1}(\Z[x])$ as
\begin{align*}
T_n(x)=\begin{bmatrix}
  &     &                              & x^n \\
  &     & \mbox{\reflectbox{$\ddots$}} &     \\   
  & x^2 &                              &     \\
x &     &                              &     
\end{bmatrix}, \quad
U_n(x)=\begin{bmatrix} 1 \\ 1 \\ \vdots \\ 1 \end{bmatrix}.
\end{align*}
\item[(2)]
Assume $n \ge 4$.
We define $V_n(x) \in \Mat_{n-2,1}(\Z[x])$, $W_n(x) \in \Mat_{n-2,1}(\Z[x])$ as
\begin{align*}
V_n(x)=\begin{bmatrix} 1+x^{n-2} \\ \vdots \\ 1+x^2 \\ 1+x \end{bmatrix}, \quad
W_n(x)=\begin{bmatrix} 
x^{n-2} \\
\vdots \\
x^2+\cdots+x^{n-2} \\
x+x^2+\cdots+x^{n-2} 
\end{bmatrix}.
\end{align*}
\item[(3)]
Assume $n \ge 4$.
We define $f_n(x),g_n(x) \in \Z[x]$ as 
\begin{align*}  
& f_n(x)=\begin{cases}
1+x^2+\cdots+x^{n-3} & (n \notin 2\Z) \\
1+x^2+\cdots+x^{n-2} & (n \in    2\Z)
\end{cases}, \quad
g_n(x)=\begin{cases}
x+x^3+\cdots+x^{n-2} & (n \notin 2\Z) \\
x+x^3+\cdots+x^{n-3} & (n \in    2\Z)
\end{cases}.
\end{align*}
\end{itemize}
\end{Def}

\begin{Lem}
The Grothendieck group
$K_0(\umod \Lambda_{\Delta,l,t})$ is isomorphic to $\Coker M_{\Delta,l,t}(X_l)$,
where $M_{\Delta,l,t}(x)$ is a matrix on $\Z[x,x^{-1}]$ defined as follows;
\begin{align*}
& (\mathrm{I}) & M_{A_n,k,1}(x)&=\begin{bmatrix} 1_n+T_n(x) & U_n(x) \end{bmatrix}, \\
& (\mathrm{II}) & M_{A_n,2k,2}(x)&=\begin{bmatrix} 
1_n+T_n(x) & U_n(x) & 1_n-x^{k-(n+1)/2} T_n(x) 
\end{bmatrix} \quad (n \notin 2\Z), \\
& (\mathrm{III}) & M_{A_n,2k-1,2}(x)&=
\begin{bmatrix} 1_n+T_n(x) & U_n(x) & 1_n-x^{k-(n+2)/2} T_n(x) \end{bmatrix}
\quad (n \in 2\Z), \\
& (\mathrm{IV}) & M_{D_n,k,1}(x)&=\begin{bmatrix}
(1+x^{n-1}) \cdot 1_{n-2} & V_n(x) & W_n(x) \\
                          & 1      & g_n(x) \\
                          & 1      & f_n(x)
\end{bmatrix}, \\
& (\mathrm{V}) & M_{D_n,2k,2}(x)&=\begin{bmatrix}
(1+x^{n-1}) \cdot 1_{n-2} & V_n(x) & W_n(x) & (1-x^k) \cdot 1_{n-2} &      \\
                          & 1      & g_n(x) &                       & -x^k \\
                          & 1      & f_n(x) &                       & 1    
\end{bmatrix}, \\
& (\mathrm{VI}) & M_{D_n,3k,3}(x)&=\begin{bmatrix}
1+x^3 &       & 1  +x^2 &     x^2 & -x^{k+1} &       &       \\
      & 1+x^3 & 1+x     &   x+x^2 &          & 1-x^k &       \\
      &       & 1       &   x     & 1        &       & -x^k  \\
      &       & 1       & 1  +x^2 &          &       & 1    
\end{bmatrix}, \\
& (\mathrm{VII}) & M_{E_6,k,1}(x)&=\begin{bmatrix} \begin{bmatrix}
1_5+x^3 \cdot T_5(x) &       \\
                     & 1+x^6
\end{bmatrix} & \left[ \begin{smallmatrix}
1      +x^3 &          x^3+    x^5 \\
1  +x^2+x^3 &     x^2+ x^3+x^4+x^5 \\
1+x+x^2+x^3 &   x+x^2+2x^3+x^4+x^5 \\
1+x    +x^3 &   x+x^2+ x^3+x^4     \\
1      +x^3 &   x    + x^3         \\
1  +x^2     & 1  +x^2+ x^3    +x^5  
\end{smallmatrix} \right] \end{bmatrix}, \\
& (\mathrm{VIII}) & M_{E_6,2k,2}(x)
&=\begin{bmatrix}M_{E_6,2k,1}(x) & 
\begin{bmatrix}
1_5-x^{k-3} \cdot T_5(x) &       \\
                         & 1-x^k
\end{bmatrix} \end{bmatrix}, \\
& (\mathrm{IX}) & M_{E_7,k,1}(x)&=\begin{bmatrix} (1+x^9) \cdot 1_7 & \left[ \begin{smallmatrix}
1          +x^4            +x^8 &               x^4     + x^6    +x^8 \\ 
1      +x^3+x^4        +x^7     &          x^3+ x^4+ x^5+ x^6+x^7+x^8 \\
1  +x^2+x^3+x^4    +x^6         &     x^2+ x^3+2x^4+ x^5+2x^6+x^7+x^8 \\
1+x+x^2+x^3+x^4+x^5             &   x+x^2+2x^3+2x^4+2x^5+2x^6+x^7+x^8 \\
1+x    +x^3+x^4                 &   x+x^2+ x^3+2x^4+ x^5+ x^6+x^7     \\
1      +x^3                     &   x    + x^3+ x^4     + x^6         \\
1  +x^2    +x^4                 & 1  +x^2+ x^3+ x^4+ x^5+ x^6    +x^8
\end{smallmatrix} \right] \end{bmatrix}, \\
& (\mathrm{X}) & M_{E_8,k,1}(x)&=\begin{bmatrix} (1+x^{15}) \cdot 1_8 & (1+x^5) \cdot 
\left[ \begin{smallmatrix}
1                              +x^9 &                    x^5     + x^7    +x^9 \\
1          +x^4            +x^8     &               x^4+ x^5+ x^6+ x^7+x^8+x^9 \\
1      +x^3+x^4        +x^7         &          x^3+ x^4+2x^5+ x^6+2x^7+x^8+x^9 \\
1  +x^2+x^3+x^4    +x^6             &     x^2+ x^3+2x^4+2x^5+2x^6+2x^7+x^8+x^9 \\
1+x+x^2+x^3+x^4+x^5                 &   x+x^2+2x^3+2x^4+3x^5+2x^6+2x^7+x^8+x^9 \\
1+x    +x^3+x^4                     &   x+x^2+ x^3+2x^4+2x^5+ x^6+ x^7+x^8     \\
1      +x^3                         &   x    + x^3+ x^4+ x^5     + x^7         \\
1  +x^2    +x^4                     & 1  +x^2+ x^3+ x^4+ x^5+ x^6+ x^7    +x^9
\end{smallmatrix} \right] \end{bmatrix}.
\end{align*}
\end{Lem}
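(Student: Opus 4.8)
The plan is to deduce the statement from Lemma \ref{change_gen} by making every generator appearing there explicit as a column of a matrix over $\Z[x,x^{-1}]$. First I would fix the integer $l$ prescribed by Lemma \ref{change_gen} in each case ($l=k$ for types I, IV, VII, IX, X; $l=2k$ for II, V, VIII; $l=2k-1$ for III; $l=3k$ for VI). By Lemma \ref{quot_by_P}(1), $K_0(D^\rb(\mod\Lambda_{\Delta,l,1}))$ is free with basis $\{[S_{(i,a)}]\}$ over $i\in\{1,\dots,n\}$, $a\in\Z/l\Z$; I would identify it with the free $R$-module $R^n$, $R=\Z[x]/(x^l-1)$, via $x^a e_i\mapsto[S_{(i,a)}]$, so that multiplication by $x$ (i.e.\ relabelling by $\tau^{-1}$ in the $\Z/l\Z$-coordinate) is realised by $X_l$. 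Under this identification $\Coker N(X_l)$ is exactly $R^n$ modulo the $R$-submodule spanned by the columns of $N$; hence it suffices to exhibit the generators of $H'_{\Delta,l}$, $H''_{\Delta,l}$, and (for types II, III, V, VI, VIII) of $H^\psi_{\Delta,l}$, $H^\phi_{\Delta,l}$, $H^\chi_{\Delta,l}$ as the columns of $M_{\Delta,l,t}(x)$, up to column operations over $\Z[x,x^{-1}]$ and a relabelling of the basis.

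The generators of $H'_{\Delta,l}$ and of the twisted subgroups can be read off directly. Using $(\pi\tau^{-1})^2=\tau^{-c}$ together with the explicit $\pi$ from the table before Proposition \ref{proj_resol}, one computes $\pi\tau^{-1}$ (e.g.\ $(i,a)\mapsto(n+1-i,a+i)$ for $A_n$, $\tau^{-(n-1)}$ possibly composed with $\psi$ for $D_n$, $\tau^{-6}\psi$ for $E_6$, $\tau^{-9}$ for $E_7$, $\tau^{-15}$ for $E_8$); the generators $[S_u]+[S_{\pi\tau^{-1}u}]$ of $H'_{\Delta,l}$ then become exactly the blocks $1_n+T_n(x)$ (types I, II), $1_5+x^3T_5(x)$ with an extra $1+x^6$ (VII, VIII), $(1+x^9)1_7$ (IX), $(1+x^{15})1_8$ (X), and---using Lemma \ref{change_gen}(1)(iii) to keep only the first $n-2$ vertices---$(1+x^{n-1})1_{n-2}$ for $D_n$ (IV, V, VI). Similarly, substituting the formulas of Definition \ref{torsion_aut} for $\psi$, $\phi$, $\chi$ turns $[S_u]-[S_{\rho^{-1}u}]$ into the remaining columns of $M_{\Delta,l,t}(x)$, namely $1_n-x^{k-(n+1)/2}T_n(x)$ (II), $1_n-x^{k-(n+2)/2}T_n(x)$ (III), the $D_n$/$E_6$ analogues (V, VIII), and the three $\chi$-columns of $M_{D_4,3k,3}(x)$ (VI).

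For the generators of $H''_{\Delta,l}=\langle[P_{(1,a)}]\rangle$ (type I) or $\langle[P_{(1,a)}],[P_{(n,a)}]\rangle$ (otherwise) I would use that, for a mesh algebra, $[P_u]=\sum_v(\dim_K e_u\Lambda e_v)[S_v]$, where $\dim_K e_u\Lambda e_v$ counts the paths $v\rightsquigarrow u$ surviving the mesh relations; equivalently, $[P_{(1,a)}]\in R^n$ is the dimension vector of the projective of the locally bounded mesh algebra $K(\Z\Delta)$ at the leaf vertex, reduced modulo $x^l-1$. For $A_n$ and $D_n$ the predecessor region of a leaf is a triangle (wrapped once around the fork for $D_n$), and after adding suitable $\Z[x,x^{-1}]$-multiples of the columns of the $H'$-block---which, for $A_n$, amounts to substituting $e_i=-x^ie_{n+1-i}$---these columns reduce to $U_n(x)$, resp.\ to $\begin{bmatrix}V_n(x)\\1\\1\end{bmatrix}$ and $\begin{bmatrix}W_n(x)\\g_n(x)\\f_n(x)\end{bmatrix}$. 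For $D_4$, $E_6$, $E_7$, $E_8$ the predecessor regions are finite and are computed directly; their reduced dimension vectors are the explicit tall columns displayed in $M_{D_4,3k,3}(x)$, $M_{E_6,k,1}(x)$, $M_{E_7,k,1}(x)$, $M_{E_8,k,1}(x)$. Juxtaposing the block $H'$, the block $H''$ and, when present, the twisted block, and using that $\Coker$ is invariant under column operations over $\Z[x,x^{-1}]$, yields a matrix equal to $M_{\Delta,l,t}(X_l)$ whose cokernel is $K_0(D^\rb(\mod\Lambda_{\Delta,l,1}))/(H'+H''+H^\rho)\cong K_0(\umod\Lambda_{\Delta,l,t})$ by Lemma \ref{change_gen}.

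I expect the main obstacle to be the construction of the $H''$-columns for the exceptional types: one must determine \emph{all} Hom-space dimensions of the mesh category of $\Z E_6$, $\Z E_7$, $\Z E_8$ (and $\Z D_4$), including the multiplicities larger than one responsible for entries such as $2x^3$ and $3x^5$, and then verify that the reduced dimension vectors of $P_{(1,a)}$ and $P_{(n,a)}$ match the displayed polynomial columns exactly---in particular that the $E_8$ columns carry the common factor $1+x^5$. This is a finite but lengthy case analysis, whereas the rest of the argument is the formal, computation-free reduction above.
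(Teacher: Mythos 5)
Your proposal follows the same route as the paper's (very terse) proof: identify $K_0(D^\rb(\mod\Lambda_{\Delta,l,1}))$ with $(\Z[x]/(x^l-1))^n$ so that rows correspond to the simple modules, and verify that the columns of $M_{\Delta,l,t}(X_l)$ realize the generators of $H'$, $H''$ and the twisted subgroups from Lemma \ref{change_gen}; your explicit computations of $\pi\tau^{-1}$ and of the $\psi$-, $\phi$-, $\chi$-twisted generators all agree with the displayed matrices. The one step you defer to a finite case analysis --- the dimension vectors of $P_{(1,a)}$ and $P_{(n,a)}$ giving the $H''$-columns --- is precisely the step the paper also leaves to the reader (``one can straightly check''), so the two arguments are essentially identical in both strategy and level of detail.
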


\begin{proof}
We correspond the $((i-1)l+(a+1))$th row of the matrix $M_{\Delta,l,t}(X_l)$
to $[S_{i,a+l\Z}] \in K_0(D^\rb(\mod \Lambda_{\Delta,l,1}))$
for $i \in \{1,\ldots,n\}$ and $a \in \{0,\ldots,l-1\}$.
Calculating the dimension vectors of the indecomposable projective modules
appearing in the generators of $H''_{\Delta,l}$,
one can straightly check the columns of $M_{\Delta,l,t}(X_l)$ correspond to 
the generators of the subgroups
$H'_{\Delta,l}, H''_{\Delta,l}, H^\psi_{\Delta,l}, H^\phi_{\Delta,l}, H^\chi_{\Delta,l}$
given in Lemma \ref{change_gen}.
Now the assertion is proved by Lemma \ref{quot_by_P}.
\end{proof}

\subsection{Transformations of polynomial matrices}

Now we can finish the proof of Proposition \ref{tanninsi}.
The method of the proof is the transformation of the polynomial matrices 
$M_{\Delta,l,t}(x)$.

\begin{proof}[Proof of Proposition \ref{tanninsi}]
We can transform $M_{\Delta,l,t}(x)$ as a matrix on $\Z[x]/(1-x^l)$.
In such transformations, we can put $x=X_l$, because $X_l^l=1_l$.
We use the following fact (*).
\begin{quote}
Let $A \in \Mat_{m,*}(R)$ and $B \in \Mat_{m,m}(R)$ be matrices on a ring $R$ 
with the numbers of their rows are the same.
Assume that $B$ is scalar; 
that is, there exists $\lambda \in R$ such that $B=\lambda \cdot 1_m$.
If the matrix $A$ can be transformed into $A'$ as matrices on $R$,
then the matrix $\begin{bmatrix} A & B \end{bmatrix}$ can be transformed into
$\begin{bmatrix} A' & B \end{bmatrix}$.
\end{quote}
Now we start transformations.

(I: $\Lambda_{A_n,k,1}$)
We consider the case $n \notin 2\Z$ first.
If $n=1$, it is clear, so we assume $n \ge 3$.
$\begin{bmatrix} 1_n+T_n(x) & U_n(x) \end{bmatrix}$ is of the form
\begin{align*}
\begin{bmatrix}
1 &        &             &               &             &        & x^n & 1      \\
  & \cdots &             &               &             & \cdots &     & \cdots \\
  &        & 1           &               & x^{(n+3)/2} &        &     & 1      \\
  &        &             & 1+x^{(n+1)/2} &             &        &     & 1      \\
  &        & x^{(n-1)/2} &               & 1           &        &     & 1      \\
  & \cdots &             &               &             & \cdots &     & \cdots \\
x &        &             &               &             &        & 1   & 1      \\
\end{bmatrix}.
\end{align*}
Using the left-upper ``1''s, it can be transformed into
\begin{align*}
\begin{bmatrix}
1_{(n-1)/2} &               &           &        &           &               \\
            & 1+x^{(n+1)/2} &           &        &           & 1             \\
            &               & 1-x^{n+1} &        &           & 1-x^{(n-1)/2} \\
            &               &           & \cdots &           & \cdots        \\
            &               &           &        & 1-x^{n+1} & 1-x           \\
\end{bmatrix}
\end{align*}
and
\begin{align*}
\begin{bmatrix}
1_{(n-1)/2} &               &                               &           &     \\
            & 1+x^{(n+1)/2} &                               &           & 1   \\
            &               & (1-x^{n+1}) \cdot 1_{(n-3)/2} &           &     \\
            &               &                               & 1-x^{n+1} & 1-x \\
\end{bmatrix}.
\end{align*}
Finally, we get 
\begin{align*}
\begin{bmatrix}
1_{(n-1)/2} &                       &                               &           &     \\
            & 0                     &                               &           & 1   \\
            &                       & (1-x^{n+1}) \cdot 1_{(n-3)/2} &           &     \\
            & -(1-x)(1+x^{(n+1)/2}) &                               & 1-x^{n+1} &     \\
\end{bmatrix}.
\end{align*}
Because $1-x^{n+1}$ can be divided by $(1-x)(1+x^{(n+1)/2})$, 
we have the assertion.

If $n \in 2\Z$, omitting the middle row and the middle column, 
similar transformations give
\begin{align*}
\begin{bmatrix}
1_{n/2} &                               &           &     \\
        & (1-x^{n+1}) \cdot 1_{(n-2)/2} &           &     \\
        &                               & 1-x^{n+1} & 1-x
\end{bmatrix}
\end{align*}
Because $1-x^{n+1}$ can be divided by $1-x$,
the assertion is proved.

(II: $\Lambda_{A_n,2k,2}$)
The matrix $M_{A_n,2k,2}(x)$ can be transformed into
\begin{align*}
\begin{bmatrix} 1_n+T_n(x) & U_n(x) & (-1-x^{k-(n+1)/2})T_n(x) \end{bmatrix}.
\end{align*}
Taking into account that $T_n(x) \in \GL_n(\Z[x]/(1-x^{2k}))$, 
the above matrix can be transformed into 
\begin{align*}
\begin{bmatrix} 1_n+T_n(x) & U_n(x) & (1+x^{k-(n+1)/2}) \cdot 1_n \end{bmatrix}.
\end{align*}
From the proof for $\Lambda_{A_n,k,1}$, 
the matrix $\begin{bmatrix} 1_n+T_n(x) & U_n(x) \end{bmatrix}$
is transformed into 
\begin{align*}
N(x)=\begin{bmatrix}
1_{(n-1)/2} &                       &                               &   &   \\
            & 0                     &                               &   & 1 \\
            &                       & (1-x^{n+1}) \cdot 1_{(n-3)/2} &   &   \\
            & -(1-x)(1+x^{(n+1)/2}) &                               & 0 &   \\
\end{bmatrix}.
\end{align*}
Therefore, from the fact (*), $M_{A_n,2k,2}(x)$ can be transformed into 
$\begin{bmatrix} N(x) & (1+x^{k-(n+1)/2}) \cdot 1_n\end{bmatrix}$,
and we have
\begin{align*}
K_0(\umod \Lambda_{A_n,2k,2})
& \cong (\Coker 
\begin{bmatrix} 1_{2k} & 1_{2k}+X_{2k}^{k-(n+1)/2} \end{bmatrix})^{(n+1)/2} \\
&\quad \oplus (\Coker \begin{bmatrix} 
1_{2k}-X_{2k}^{n+1} & 1+X_{2k}^{k-(n+1)/2} \end{bmatrix})^{(n-3)/2} \\
&\quad \oplus \Coker \begin{bmatrix} (1_{2k}-X_{2k})(1_{2k}+X_{2k}^{(n+1)/2}) & 
1_{2k}+X_{2k}^{k-(n+1)/2} \end{bmatrix}.
\end{align*}
The first component is clearly 0,
and the second one is isomorphic to
$(\Coker (1_{2k}+X_{2k}^{k-(n+1)/2}))^{(n-3)/2}$,
because we have $1-x^{n+1}=-x^{n+1}(1+x^{k-(n+1)/2})(1-x^{k-(n+1)/2})$ 
in $\Z[x]/(1-x^{2k})$.
Thus we have the assertion.

(III: $\Lambda_{A_n,2k-1,2}$)
Similarly to the proof for $\Lambda_{A_n,2k,2}$,
the matrix $M_{A_n,2k-1,2}(x)$ can be transformed into 
$\begin{bmatrix} N(x) & (1+x^{k-(n+2)/2}) \cdot 1_n\end{bmatrix}$, where 
\begin{align*}
N(x)=\begin{bmatrix}
1_{n/2} &                               &   &     \\
        & (1-x^{n+1}) \cdot 1_{(n-2)/2} &   &     \\
        &                               & 0 & 1-x
\end{bmatrix}.
\end{align*}
We have 
\begin{align*}
K_0(\umod \Lambda_{A_n,2k-1,2}) 
&\cong(\Coker \begin{bmatrix} 
1_{2k-1} & 1_{2k-1}+X_{2k-1}^{k-(n+2)/2} \end{bmatrix})^{n/2} \\
&\quad \oplus (\Coker \begin{bmatrix} 
1_{2k-1}-X_{2k-1}^{n+1} & 1_{2k-1}+X_{2k-1}^{k-(n+2)/2} \end{bmatrix})^{(n-2)/2} \\
&\quad \oplus \Coker \begin{bmatrix} 1_{2k-1}-X_{2k-1} & 1_{2k-1}+X_{2k-1}^{k-(n+2)/2} 
\end{bmatrix}.
\end{align*}
The first component is clearly 0,
and the second one is isomorphic to $(\Coker (1_{2k-1}+X_{2k-1}^{k-(n+2)/2}))^{(n-2)/2}$,
because we have $1-x^{n+1}=-x^{n+1}(1+x^{k-(n+2)/2})(1-x^{k-(n+2)/2})$ 
in $\Z[x]/(1-x^{2k-1})$.
The last summand is isomorphic to
$\Coker \begin{bmatrix} 1_{2k-1}-X_{2k-1} & 2 \cdot 1_{2k-1}
\end{bmatrix}$.
Thus we have the assertion.

(IV: $\Lambda_{D_n,k,1}$)
Multiplying the matrix below (invertible on $\Z[x]$) to $M_{D_n,k,1}(x)$ from the left,
\begin{align*}
\begin{bmatrix}
1_{n-2} & -U_{n-2}(x) & -U_{n-2}(x) \\
        & 1           &             \\
        &             & 1 
\end{bmatrix}
\end{align*} 
we have
\begin{align*}
\begin{bmatrix}
1+x^{n-1} &        &           &           & -1+x^{n-2} & -1-x-\cdots-x^{n-3} \\
          & \cdots &           &           & \cdots     & \cdots              \\
          &        & 1+x^{n-1} &           & -1+x^2     & -1-x                \\
          &        &           & 1+x^{n-1} & -1+x       & -1                  \\
          &        &           &           &  1         & g_n(x)              \\
          &        &           &           &  1         & f_n(x)
\end{bmatrix}.
\end{align*} 
This matrix can be transformed into
\begin{align*}
\begin{bmatrix}
(1+x^{n-1}) \cdot 1_{n-3} &           &            &                     \\
                          & 1+x^{n-1} & -1+x       & -1                  \\
                          &           &  1         & g_n(x)              \\
                          &           &  1         & f_n(x)
\end{bmatrix}.
\end{align*} 
Thus we have
$\Coker M_{D_n,k,1}(X_k) \cong (\Coker (1+X_k^{n-1}))^{n-3} \oplus \Coker M_1(X_k)$,
where
\begin{align*}
M_1(x)=\begin{bmatrix}
1+x^{n-1} & -1+x & -1     \\
          &  1   & g_n(x) \\
          &  1   & f_n(x) 
\end{bmatrix}.
\end{align*}
$M_1(x)$ can be transformed into
\begin{align*}
M_2(x)=\begin{bmatrix}
1+x^{n-1} &  1+x & -1+g_n(x)+f_n(x) \\
          &  1   & g_n(x)           \\
          &  1   & f_n(x) 
\end{bmatrix}.
\end{align*}

If $n \notin 2\Z$, considering the $(3,2)$ entry and the equations
\begin{align*}
-1+g_n(x)+f_n(x)=-1+(1+x)f_n(x), \quad g_n(x)-f_n(x)=-s_{n-1}(x),
\end{align*}
$M_2(x)$ can be transformed into
\begin{align*}
\begin{bmatrix}
1+x^{n-1} &      & -1          \\
          &      & -s_{n-1}(x) \\
          &  1   & 
\end{bmatrix} 
\end{align*}
and using $(1+x^{n-1})s_{n-1}(x)=s_{2n-2}(x)$, we have
\begin{align*} 
\begin{bmatrix}
             &      & -1 \\
-s_{2n-2}(x) &      &    \\
             &  1   & 
\end{bmatrix}. 
\end{align*} 
The assertion is proved for $n \notin 2\Z$.

If $n \in 2\Z$, considering the $(2,2)$ entry and the equations
\begin{align*}
-1+g_n(x)+f_n(x)=(1+x)g_n(x), \quad f_n(x)-g_n(x)=s_{n-1}(x),
\end{align*}
$M_2(x)$ can be transformed into
\begin{align*}
\begin{bmatrix}
1+x^{n-1} &      &            \\
          &  1   &            \\
          &      & s_{n-1}(x)
\end{bmatrix}.
\end{align*}
The assertion is proved.

(V: $\Lambda_{D_n,2k,2}$)
By similar calculations to the proof for $\Lambda_{D_n,k,1}$, we have
\begin{align*}
\Coker M_{D_n,2k,2}(X_{2k}) \cong (\Coker \begin{bmatrix}
1+X_{2k}^{n-1} & 1-X_{2k}^k
\end{bmatrix})^{n-3} \oplus \Coker M_1(X_{2k}),
\end{align*}
where
\begin{align*}
M_1(x)=\begin{bmatrix}
1+x^{n-1} & -1+x & -1     & 1-x^k &      \\
          &  1   & g_n(x) &       & -x^k \\
          &  1   & f_n(x) &       &  1   
\end{bmatrix}.
\end{align*}
$M_1(x)$ can be transformed into
\begin{align*}
M_2(x)=\begin{bmatrix}
1+x^{n-1} &  1+x & -1+g_n(x)+f_n(x) & 1-x^k &      \\
          &  1   & g_n(x)           &       & -x^k \\
          &  1   & f_n(x)           &       &  1   
\end{bmatrix}.
\end{align*}

If $n \notin 2\Z$, considering the $(3,2)$ entry and the equations
\begin{align*}
-1+g_n(x)+f_n(x)=-1+(1+x)f_n(x), \quad g_n(x)-f_n(x)=-s_{n-1}(x),
\end{align*}
$M_2(x)$ can be transformed into
\begin{align*}
\begin{bmatrix}
1+x^{n-1} &      & -1          & 1-x^k & -1-x   \\
          &      & -s_{n-1}(x) &       & -1-x^k \\
          &  1   & 
\end{bmatrix} 
\end{align*}
and using $(1+x^{n-1})s_{n-1}(x)=s_{2n-2}(x)$ and $(1+x)s_{n-1}(x)=1-x^{n-1}$, we have
\begin{align*}
\begin{bmatrix}
             &      & -1 &                    &              \\
-s_{2n-2}(x) &      &    & -(1-x^k)s_{n-1}(x) & -x^k-x^{n-1} \\
             &  1   &    &                    &  
\end{bmatrix}. 
\end{align*} 
Because $n$ is odd, we have
\begin{align*}
(1-x^k)s_{n-1}(x)=(1+x^{n-1})s_{n-1}(x)+(-x^k-x^{n-1})s_{n-1}(x)
=s_{2n-2}(x)+(-x^k-x^{n-1})s_{n-1}(x).
\end{align*}
The assertion is proved.

If $n \in 2\Z$, considering the $(2,2)$ entry and the equations
\begin{align*}
-1+g_n(x)+f_n(x)=(1+x)g_n(x),\quad
f_n(x)-g_n(x)=s_{n-1}(x),
\end{align*}
$M_2(x)$ can be transformed into
\begin{align*}
\begin{bmatrix}
1+x^{n-1} &      &            & 1-x^k & x^k+x^{k+1} \\
          &  1   &            &       &             \\
          &      & s_{n-1}(x) &       & 1+x^k     
\end{bmatrix}
\end{align*}
and we have
\begin{align*}
\begin{bmatrix}
 1+x^{n-1} &      &            &  1-x^k & x^k+x^{k+1} \\
           &  1   &            &        &             \\
-1-x^{n-1} &      & s_{n-1}(x) & -1+x^k & 1-x^{k+1}     
\end{bmatrix}.
\end{align*}
Now that $s_{n-1}(x)$ divides $1+x^{n-1}$, thus we have
\begin{align*}
\begin{bmatrix}
 1+x^{n-1} &      &            &  1-x^k & x+x^k \\
           &  1   &            &        &       \\
           &      & s_{n-1}(x) & -1+x^k & 1-x        
\end{bmatrix}.
\end{align*}
Because $n$ is even, $s_{n-1}(x)=1-(1-x)g_n(x)$ holds.
Thus, transformations lead to
\begin{align*}
\begin{bmatrix}
 1+x^{n-1} &      & (x+x^k)g_n(x) &  1-x^k & x+x^k \\
           &  1   &               &        &       \\
           &      & 1             & -1+x^k & 1-x        
\end{bmatrix}
\end{align*}
and 
\begin{align*}
\begin{bmatrix}
 1+x^{n-1} &      &   &  (1-x^k)(1+(x+x^k)g_n(x)) & (x+x^k)(1-(1-x)g_n(x)) \\
           &  1   &   &                           &                        \\
           &      & 1 &                           &         
\end{bmatrix}.
\end{align*}
Here, in $\Z[x]/(1-x^{2k})$, the equations
\begin{align*}
(1-x^k)(1+(x+x^k)g_n(x)) &= (1-x^k)+(x+x^k-x^{k+1}-x^{2k})g_n(x) \\
&= (1-x^k)+(-1+x+x^k-x^{k+1})g_n(x) \\
& = (1-x^k)(1-(1-x)g_n(x)) = (1-x^k)s_{n-1}(x), \\
(x+x^k)(1-(1-x)g_n(x)) &= (x+x^k)s_{n-1}(x) \\
&= (1+x)s_{n-1}(x)-(1-x^k)s_{n-1}(x) \\
&= (1+x^{n-1})-(1-x^k)s_{n-1}(x)
\end{align*}
hold.
Thus, as the matrix on $\Z[x]/(1-x^{2k})$,
the above matrix can be transformed into
\begin{align*}
\begin{bmatrix}
 1+x^{n-1} &      &   &  (1-x^k)s_{n-1}(x) & 0 \\
           &  1   &   &                    &   \\
           &      & 1 &                    &         
\end{bmatrix}.
\end{align*}
The assertion is proved.

(VI: $\Lambda_{D_4,3k,3}$)
Considering the $(4,7)$ entry, $M_{D_4,3k,3}(x)$ can be transformed into
\begin{align*}
\begin{bmatrix}
1+x^3 &       & 1+x^2 & x^2           & -x^{k+1} &       &   \\
      & 1+x^3 & 1+x   & x+x^2         &          & 1-x^k &   \\
      &       & 1+x^k & x+x^k+x^{k+2} & 1        &       &   \\
      &       &       &               &          &       & 1    
\end{bmatrix}
\end{align*}
and considering the $(3,5)$ entry, we have
\begin{align*}
\begin{bmatrix}
1+x^3 &       & 1+x^2+x^{k+1}+x^{2k+1} & x^2+x^{k+2}+x^{2k+1}+x^{2k+3} &   &       &   \\
      & 1+x^3 & 1+x                    & x+x^2                         &   & 1-x^k &   \\
      &       &                        &                               & 1 &       &   \\
      &       &                        &                               &   &       & 1    
\end{bmatrix}.
\end{align*}
In $\Z[x]/(1-x^{3k})$, the equations
\begin{align*}
1+x^2+x^{k+1}+x^{2k+1} &= (1+x^{k+1})(1+x^{2k+1}) 
= (1+x^{k+1})(1+x)s_{2k+1}(x), \\
1-x^k &= -x^k(1-x^{2k}) = -x^k(1+x)s_{2k}(x)  
\end{align*}
hold.
Thus as a matrix on $\Z[x]/(1-x^{3k})$, 
considering the $(2,3)$ entry, the above matrix is transformed into
\begin{align*}
\begin{bmatrix}
1+x^3 & h_1(x) & 0   & h_2(x) &   & h_3(x) & \\
      &        & 1+x &        &   &        & \\
      &        &     &        & 1 &        & \\
      &        &     &        &   &        & 1    
\end{bmatrix},
\end{align*}
where
\begin{align*}
h_1(x) &= -(1+x^3)(1+x^{k+1})s_{2k+1}(x), \\
h_2(x) &= (x^2+x^{k+2}+x^{2k+1}+x^{2k+3})-(x+x^2)(1+x^{k+1})s_{2k+1}(x) \\ 
&= (x^2+x^{k+2}+x^{2k+1}+x^{2k+3})-x(1+x^{k+1})(1+x^{2k+1}) \\
&= -x(1-x+x^2)(1-x^{2k}) = -x(1+x^3)s_{2k}(x), \\
h_3(x) &= -(1-x^k)(1+x^{k+1})s_{2k+1}(x) \\
&= x^k(1-x^{2k})(1+x^{k+1})s_{2k+1}(x) 
= x^k s_{2k}(x) (1+x^{k+1})(1+x^{2k+1}) = x^{k+2} s_{2k}(x) (1+x^{2k-1})(1+x^{k-1}).
\end{align*}
As elements of $\Z[x]/(1-x^{3k})$, $h_1(x)$ and $h_2(x)$ can be divided by $1+x^3$ and
$h_3(x)$ can be divided by $1+x^{2k-1}$, $1-x^{2k}$, and $1+x^{2k+1}$.
The polynomial $1+x^3$ can divide $1-x^{2k}$ if $k \in 3\Z$, 
can divide $1+x^{2k+1}$ if $k \in 1+3\Z$,
and can divide $1+x^{2k-1}$ if $k \in 2+3\Z$.
Thus $h_3(x)$ also can be divided by $1+x^3$ in $\Z[x]/(1-x^{3k})$.
The assertion is proved.

(VII: $\Lambda_{E_6,k,1}$)
Using the $(1,1)$ entry and the $(2,2)$ entry, 
$M_{E_6,k,1}(x)$ can be transformed into
\begin{align*}
\begin{bmatrix}
1_2 &        \\
    & M_1(x) 
\end{bmatrix},
\end{align*}
where $M_1(x)$ is  
\begin{align*}
\begin{bmatrix}
1+x^6 &          &          &       &
(1+x^2)( 1+x)                     & (1+x^2)(   x+x^2+x^3)                     \\
      & 1-x^{12} &          &       & 
(1+x^2)( 1+x-x^2    +x^4-x^5-x^6) & (1+x^2)(   x+x^2                -x^7-x^8) \\
      &          & 1-x^{12} &       &
(1+x^2)( 1  -x^2+x^3    -x^5)     & (1+x^2)(   x                    -x^7)     \\
      &          &          & 1+x^6 &
 1+x^2                            & (1+x^2)( 1+x^3)                           
\end{bmatrix}.
\end{align*}
Thus, we have $\Coker M_{E_6,k,1}(X_k)\cong\Coker M_1(X_k)$.
Next, considering $(4,5)$ entry of $M_1(x)$, we have
\begin{align*}
\begin{bmatrix}
1+x^6 &          &          &
(1+x^6)(-1-x)                     &       &-(1+x^6)                        \\
      & 1-x^{12} &          &
(1+x^6)(-1-x+x^2    -x^4+x^5+x^6) &       & (1+x^6)(-1  +x^2- x^3    +x^5) \\
      &          & 1-x^{12} &
(1+x^6)(-1  +x^2-x^3    +x^5)     &       & (1+x^6)(-1+x    - x^3+x^4)     \\
      &          &          &
0                                 & 1+x^2 & 0                              
\end{bmatrix}
\end{align*}
and 
\begin{align*}
\begin{bmatrix}
1+x^6 &          &          &
                                  &       &                                \\
      & 1-x^{12} &          &
(1+x^6)(  -x+x^2    -x^4+x^5)     &       & (1+x^6)(-1  +x^2- x^3    +x^5) \\
      &          & 1-x^{12} &
(1+x^6)(-1  +x^2-x^3    +x^5)     &       & (1+x^6)(-1+x    - x^3+x^4)     \\
      &          &          &
0                                 & 1+x^2 & 0                              
\end{bmatrix}.
\end{align*}
Now, $\Coker M_{E_6,k,1}(X_k) \cong 
\Coker (1+X_k^2) \oplus \Coker (1+X_k^6) \oplus \Coker M_2(x)$,
where
\begin{align*}
M_2(x) &= \begin{bmatrix}
1-x^{12} &          & (1+x^6)(  -x+x^2    -x^4+x^5) & (1+x^6)(-1  +x^2-x^3    +x^5) \\
         & 1-x^{12} & (1+x^6)(-1  +x^2-x^3    +x^5) & (1+x^6)(-1+x    -x^3+x^4)
\end{bmatrix} \\
&= (1-x)(1+x^3+x^6+x^9) \begin{bmatrix}
1+x+x^2 &         &   -x & -1-x \\
        & 1+x+x^2 & -1-x & -1\\ 
\end{bmatrix}.
\end{align*}
Considering its $(2,4)$ entry, it can be transformed into
\begin{align*}
(1-x)(1+x^3+x^6+x^9) \begin{bmatrix}
1+x+x^2 & -1+x^3  & 1+x+x^2 &    \\
        & 1+x+x^2 &         & -1 \\ 
\end{bmatrix}
\end{align*}
and
\begin{align*}
(1-x)(1+x^3+x^6+x^9) \begin{bmatrix}
1+x+x^2 &         & 0 &    \\
        & 1+x+x^2 &   & -1 \\ 
\end{bmatrix}.
\end{align*}
Thus, 
$\Coker M_2(X_k) \cong  
\Coker ((1-X_k)(1+X_k^3+X_k^6+X_k^9)) \oplus \Coker (1-X_k^{12})$
and the assertion has been proved.

(VIII: $\Lambda_{E_6,2k,2}$)
The matrix $M_{E_6,2k,2}(x)$ can be transformed into
\begin{align*}
\begin{bmatrix}
M_{E_6,2k,1}(x) & 
\begin{bmatrix}
-(x^3+x^{k-3}) \cdot T_5(x) &          \\
                            & -x^6-x^k
\end{bmatrix}
\end{bmatrix}
\end{align*}
and taking into account that $T_5(x) \in \GL_n(\Z[x]/(1-x^{2k}))$, 
the above matrix can be transformed into 
\begin{align*}
\begin{bmatrix} M_{E_6,2k,1}(x) & (1+x^{k-6}) \cdot 1_n \end{bmatrix}.
\end{align*}
From the proof for $\Lambda_{E_6,k,1}$,
the matrix $M_{E_6,2k,1}(x)$ can be transformed into 
\begin{align*}
N(x)=\begin{bmatrix}
1 &   &       &          &                      &       \\
  & 1 &       &          &                      &       \\
  &   & 1+x^6 &          &                      &       \\
  &   &       & 1-x^{12} &                      &       \\
  &   &       &          & (1-x)(1+x^3+x^6+x^9) &       \\
  &   &       &          &                      & 1+x^2 \\
\end{bmatrix},
\end{align*}
and thus $M_{E_6,2k,2}(x)$ can be transformed into 
$\begin{bmatrix} N(x) & (1+x^{k-6}) \cdot 1_n\end{bmatrix}$.
We have $K_0(\umod \Lambda_{E_6,2k,2})$ is isomorphic to
\begin{align*}
&(\Coker \begin{bmatrix} 1_{2k} & 1_{2k}+X_{2k}^{k-6} \end{bmatrix})^2 
\oplus \Coker \begin{bmatrix} 
1_{2k}+X_{2k}^6 & 1+X_{2k}^{k-6} \end{bmatrix} 
\oplus \Coker \begin{bmatrix} 
1_{2k}-X_{2k}^{12} & 1+X_{2k}^{k-6} \end{bmatrix} \\
&\quad \oplus \Coker \begin{bmatrix} (1_{2k}-X_{2k})(1_{2k}+X_{2k}^3+X_{2k}^6+X_{2k}^9) & 
1_{2k}+X_{2k}^{k-6} \end{bmatrix}
\oplus \Coker \begin{bmatrix} 1_{2k}+X_{2k}^2 & 1_{2k}+X_{2k}^{k-6} \end{bmatrix}.
\end{align*}
The first component is clearly 0,
and the third one is isomorphic to $\Coker (1+X_{2k}^{k-6})$,
because we have $1-x^{12}=(1+x^{k-6})(1-x^{k-6})$ in $\Z[x]/(1-x^{2k})$.
Thus we have the assertion.

(IX: $\Lambda_{E_7,k,1}$)
Considering the $(6,8)$ entry and the fact (*), 
$M_{E_7,k,1}(x)$ can be transformed into
\begin{align*}
\begin{bmatrix}
(1+x^9) \cdot 1_7 &
\begin{bmatrix}
1-x+x^2     &   -x    -x^3+x^4-x^5+x^6-x^7+x^8-x^9       -x^{11} \\
0           &   -x                                -x^{10}        \\
1-x+x^2     &   -x+x^2-x^3+x^4-x^5+x^6-x^7+x^8-x^9               \\
0           &  0                                                 \\
0           &  0                                                 \\
1      +x^3 &  0                                                 \\
1-x+x^2     &  1-x+x^2-x^3+x^4-x^5+x^6-x^7+x^8                   
\end{bmatrix}
\end{bmatrix}
\end{align*}
and using the $(7,9)$ entry,
\begin{align*}
\begin{bmatrix}
(1+x^9) \cdot 1_7 & 
\begin{bmatrix}
0           & -1  -x^2                        -x^9       -x^{11} \\
0           &   -x                                -x^{10}        \\
0           & -1                              -x^9               \\
0           &  0                                                 \\
0           &  0                                                 \\
0           & -1                              -x^9               \\
1-x+x^2     &  0                   
\end{bmatrix}
\end{bmatrix}.
\end{align*}
Because the entries in the rightest column are divided by $1+x^9$
and $1+x^9$ are divided by $1-x+x^2$,
the assertion is proved.

(X: $\Lambda_{E_8,k,1}$)
The matrix $M_{E_8,k,1}(x)$ is transformed into
\begin{align*}
\begin{bmatrix}
(1+x^{15}) \cdot 1_8 &
(1+x^5) \cdot \begin{bmatrix}
0           &   -x    -x^3        +x^6     +x^8           -x^{11}       -x^{13} \\
1-x+x^2     &     -x^2+x^3    +x^5    +2x^7-x^8+x^9-x^{10}       -x^{12}        \\
0           &   -x                +x^6                    -x^{11}               \\
1-x+x^2     &          x^3    +x^5    + x^7-x^8+x^9-x^{10}                      \\
0           &  0                                                                \\
0           &  0                                                                \\
1      +x^3 &    x    +x^3+x^4+x^5    + x^7                                     \\
1-x+x^2     &  1      +x^3            + x^7-x^8+x^9
\end{bmatrix}
\end{bmatrix}.
\end{align*}
Considering the $(8,9)$ entry, this matrix can be transformed into
\begin{align*}
\begin{bmatrix}
(1+x^{15}) \cdot 1_8 &
(1+x^5) \cdot \begin{bmatrix}
0           &   -x    -x^3        +x^6     +x^8           -x^{11}       -x^{13} \\
0           & -1  -x^2        +x^5    + x^7        -x^{10}       -x^{12}        \\
0           &   -x                +x^6                    -x^{11}               \\
0           & -1              +x^5                 -x^{10}                      \\
0           &  0                                                                \\
0           &  0                                                                \\
0           & -1              +x^5                 -x^{10}                      \\
1-x+x^2     &  0                                                                \\ 
\end{bmatrix}
\end{bmatrix}.
\end{align*}
Because the entries in the rightest column are divided by $(1+x^5)(1-x^5+x^{10})=1+x^{15}$
and $1+x^{15}$ are divided by $(1+x^5)(1-x+x^2)$,
the assertion is proved.
\end{proof}

\subsection{Proof of Theorem \ref{main}}

Now, the remained task is to calculate the summands 
appearing in Proposition \ref{tanninsi}.
The processes of the calculations are written in the next subsection. 
Using the results in Subsection \ref{calc_coker},
we can prove Theorem \ref{main}.

\begin{proof}[Proof of Theorem \ref{main}]
We state each cokernel in Proposition \ref{tanninsi}.
One can easily check that Theorem \ref{main} holds.

(I: $\Lambda_{A_n,k,1}$)
By Lemmas \ref{basic_coker} (2) and \ref{coker_I}, we have
\begin{align*}
& \Coker (1_k-X_k^{n+1}) \cong \Z^d, \quad \Coker (1_k-X_k) \cong \Z, \\
& \Coker ((1_k-X_k)(1_k+X_k^{(n+1)/2}))
\cong \begin{cases}
\Z \oplus (\Z/2\Z)^{d-1} & (r \in 2\Z) \\
\Z^{(d+2)/2}       & (r \notin 2\Z) \\
\end{cases} \quad (n \notin 2\Z).
\end{align*}

(II: $\Lambda_{A_n,2k,2}$)
By Lemmas \ref{basic_coker} (2) and \ref{coker_II}, we have
\begin{align*}
&\Coker (1_{2k}+X_{2k}^{k-(n+1)/2}) 
\cong \begin{cases}
\Z^d           & (r \in 4\Z) \\
(\Z/2\Z)^{2d}  & (r \in 2+4\Z) \\
\Z^{d/2}       & (r \notin 2\Z) 
\end{cases}, \\
&\Coker \begin{bmatrix} (1_{2k}-X_{2k})(1_{2k}+X_{2k}^{(n+1)/2}) & 
1_{2k}+X_{2k}^{k-(n+1)/2} \end{bmatrix} 
\cong \begin{cases}
(\Z/2\Z)^{d-1} \oplus (\Z/4\Z) & (r \in 4\Z) \\
(\Z/2\Z)^{d+1}                 & (r \in 2+4\Z) \\
\Z^{d/2}                       & (r \notin 2\Z) 
\end{cases}.
\end{align*}

(III: $\Lambda_{A_n,2k-1,2}$)
By Lemma \ref{basic_coker} (2), we have
\begin{align*}
\Coker (1_{2k-1}+X_{2k-1}^{k-(n+2)/2}) 
\cong (\Z/2\Z)^{2d}, \quad
\Coker \begin{bmatrix} 1_{2k-1}-X_{2k-1} & 
2 \cdot 1_{2k-1} \end{bmatrix} 
\cong \Z/2\Z.
\end{align*}

(IV: $\Lambda_{D_n,k,1}$)
By Lemmas \ref{basic_coker} (2) and \ref{coker_s}, we have
\begin{align*}
&\Coker (1_k+X_k^{n-1}) \cong \begin{cases}
(\Z/2\Z)^d     & (k \in 2\Z, \ r \in 2\Z) \\
\Z^{d/2}       & (k \in 2\Z, \ r \notin 2\Z) \\
(\Z/2\Z)^d     & (k \notin 2\Z) 
\end{cases}, \\
&\Coker s_{2n-2}(X_k) \cong \begin{cases}
\Z^{d-1}   \oplus (\Z/r\Z) & (k \in    2\Z) \\
\Z^d                       & (k \notin 2\Z)
\end{cases} \quad (n \notin 2\Z), \\
&\Coker s_{n-1}(X_k) \cong \begin{cases}
\Z^{(d-2)/2} \oplus (\Z/r\Z) & (k \in    2\Z) \\
(\Z/2\Z)^{d-1}             & (k \notin 2\Z)
\end{cases} \quad (n \in 2\Z).
\end{align*}

(V: $\Lambda_{D_n,2k,2}$)
By Lemmas \ref{coker_V_1}, \ref{coker_V_2} and \ref{coker_V_3}, we have
\begin{align*}
&\Coker \begin{bmatrix} 1_{2k}+X_{2k}^{n-1} & 1_{2k}-X_{2k}^k \end{bmatrix}
\cong \begin{cases}
(\Z/2\Z)^d     & (k \in 2\Z, \ r \in 2\Z) \\
\Z^{d/2}       & (k \in 2\Z, \ r \notin 2\Z) \\
(\Z/2\Z)^d     & (k \notin 2\Z) 
\end{cases}, \\
&\Coker \begin{bmatrix} s_{2n-2}(X_{2k}) & 1_{2k}+X_{2k}^{k-(n-1)} \end{bmatrix}
\cong \begin{cases}
\Z^d                      & (k \in    2\Z, \ r \in    4\Z) \\
(\Z/2\Z)^{2d-1}           & (k \in    2\Z, \ r \in    2+4\Z) \\
\Z^{d/2}                  & (k \in    2\Z, \ r \notin 2\Z) \\
\Z^{d-1} \oplus (\Z/r\Z)  & (k \notin 2\Z)
\end{cases} \quad (n \notin 2\Z), \\
&\Coker \begin{bmatrix} 1_{2k}+X_{2k}^{n-1} & (1-X_{2k}^k)s_{n-1}(X_{2k}) \end{bmatrix}
\cong \begin{cases}
\Z^{d/2}                  & (k \in    2\Z) \\
\Z^{d-1} \oplus (\Z/r\Z)  & (k \notin 2\Z)
\end{cases} \quad (n \in 2\Z).
\end{align*}

(VI: $\Lambda_{D_4,3k,3}$)
By Lemma \ref{basic_coker} (2), we have
\begin{align*}
\Coker (1_{3k}+X_{3k}^{3}) \cong \begin{cases}
\Z^3       & (k \in 2\Z) \\
(\Z/2\Z)^3 & (k \notin 2\Z) 
\end{cases}, \quad
\Coker (1_{3k}+X_{3k}) \cong \begin{cases}
\Z         & (k \in 2\Z) \\
\Z/2\Z     & (k \notin 2\Z) 
\end{cases}.
\end{align*}

(VII: $\Lambda_{E_6,k,1}$)
By Lemmas \ref{basic_coker} (2) and \ref{coker_VII}, we have
\begin{align*}
& \Coker (1_k-X_k^{12}) \cong \Z^d, \quad
\Coker ((1_k-X_k)(1_k+X_k^3+X_k^6+X_k^9)) \cong \begin{cases}
\Z \oplus (\Z/4\Z)^{d-1}     & (d=1,3) \\
\Z^{(d+2)/2} \oplus (\Z/2\Z)^{(d-2)/2} & (d=2,6) \\
\Z^{(3d+4)/4}                & (d=4,12) \\
\end{cases}, \\
& \Coker (1_k+X_k^6) \cong \begin{cases}
(\Z/2\Z)^d & (d=1,3,2,6) \\
\Z^{d/2}   & (d=4,12)    \\
\end{cases}, \quad
\Coker (1_k+X_k^2) \cong \begin{cases}
 \Z/2\Z    & (d=1,3)  \\
(\Z/2\Z)^2 & (d=2,6)  \\
\Z^2       & (d=4,12) \\
\end{cases}.
\end{align*}

(VIII: $\Lambda_{E_6,2k,2}$)
By Lemmas \ref{basic_coker} (2), \ref{coker_VIII_1}, and \ref{coker_VIII_2}, we have
\begin{align*}
&\Coker (1_{2k}+X_{2k}^{k-6})
\cong \begin{cases}
\Z^d          & (d=1,3)  \\
(\Z/2\Z)^{2d} & (d=2,6)  \\
\Z^{d/2}      & (d=4,12) \\
\end{cases}, \\
&\Coker \begin{bmatrix} (1_{2k}-X_{2k})(1_{2k}+X_{2k}^3+X_{2k}^6+X_{2k}^9) 
& 1_{2k}+X_{2k}^{k-6} \end{bmatrix} 
\cong \begin{cases}
\Z^d                & (d=1,3)  \\
(\Z/2\Z)^{(3d+2)/2} & (d=2,6)  \\
\Z^{d/2}            & (d=4,12) \\
\end{cases}, \\
&\Coker \begin{bmatrix} 1_{2k}+X_{2k}^6 & 1_{2k}+X_{2k}^{k-6} \end{bmatrix} 
\cong \begin{cases}
(\Z/2\Z)^d & (d=1,3,2,6)  \\
\Z^{d/2}   & (d=4,12) \\
\end{cases}, \\
&\Coker \begin{bmatrix} 1_{2k}+X_{2k}^2 & 1_{2k}+X_{2k}^{k-6} \end{bmatrix} 
\cong \begin{cases}
(\Z/2\Z)   & (d=1,3)  \\
(\Z/2\Z)^2 & (d=2,6)  \\
\Z^2       & (d=4,12) \\
\end{cases}.
\end{align*}

(IX: $\Lambda_{E_7,k,1}$)
By Lemmas \ref{basic_coker} (2) and \ref{coker_s}, we have
\begin{align*}
\Coker (1_k+X_k^9) \cong \begin{cases}
(\Z/2\Z)^d & (d=1,3,9)     \\
\Z^{d/2}   & (d=2,6,18)
\end{cases}, \quad
\Coker (1_k-X_k+X_k^2) \cong \begin{cases}
0          & (d=1)    \\
(\Z/2\Z)^2 & (d=3,9)  \\
\Z/3\Z     & (d=2)    \\
\Z^2       & (d=6,18)
\end{cases}.
\end{align*}

(X: $\Lambda_{E_8,k,1}$)
By Lemmas \ref{basic_coker} (2) and \ref{coker_X}, we have
\begin{align*}
&\Coker (1_k+X_k^{15}) \cong \begin{cases}
(\Z/2\Z)^d    & (d=1,3,5,15)  \\
\Z^{d/2}      & (d=2,6,10,30) 
\end{cases}, \\
&\Coker ((1_k-X_k+X_k^2)(1_k+X_k^5)) \cong \begin{cases}
(\Z/2\Z)^d    & (d=1,3,5)  \\
(\Z/2\Z)^7    & (d=15)     \\
\Z^{d/2}      & (d=2,6,10) \\
\Z^{7}        & (d=30)
\end{cases}. 
\end{align*}

The proof is completed.
\end{proof}

\subsection{Calculation of summands in Proposition \ref{tanninsi}}\label{calc_coker}

We calculate each cokernel in Proposition \ref{tanninsi}.
First, we state general properties of the cokernels of matrices.

\begin{Def}\label{perm_matrix_orbit}
Let $m \ge 1$, $p \in \Z$, $d=\gcd(p,m)$ and
$\sigma \in \mathfrak{S}_m$ be the unique permutation such that
$X_m^p$ is the permutation matrix of $\sigma$.
We can deduce that $\sigma$ can be decomposed into $d$ cyclic permutations as 
\begin{align*}
\sigma=(1,\sigma(1),\sigma^2(1),\ldots,\sigma^{q-1}(1)) \cdots 
(d,\sigma(d),\sigma^2(d),\ldots,\sigma^{q-1}(d)),
\end{align*}
where $q = m/d$.
So we can define a permutation $\eta \in \mathfrak{S}_m$ by
\begin{align*}
\eta(uq+v)=\sigma^{v-1}(u+1)
\quad (u \in \{0,\ldots,d-1\}, \ v \in \{1,\ldots,q\}).
\end{align*}
Now we define $Y_{m,p}$ as the permutation matrix of $\eta$.
\end{Def}

\begin{Lem}\label{basic_coker}
Let $m,l \ge 1$, $p \in \Z$ and $d = \gcd(p,m)$, $q=m/d$ and $f(x),g(x) \in \Z[x]$.
\begin{itemize}
\item[(1)]
We have
$\Coker f(X_m^p) \cong (\Coker f(X_q))^d$ and
$\Coker \begin{bmatrix} f(X_m^p) & g(X_m^p) \end{bmatrix} \cong 
(\Coker \begin{bmatrix} f(X_q) & g(X_q) \end{bmatrix})^d$.
\item[(2)]
We have
\begin{align*}
&\Coker (1_m-X_m^p) \cong \Z^d, \quad
\Coker (l \cdot (1_m-X_m^p)) \cong \Z^d \oplus (\Z/l\Z)^{m-d}, \\
&\Coker \begin{bmatrix} 1_m-X_m^p & l \cdot 1_{m} \end{bmatrix} \cong (\Z/l\Z)^d, \quad
\Coker (1_m+X_m^p) \cong \begin{cases}
(\Z/2\Z)^d & (q \notin 2\Z) \\
\Z^d       & (q \in 2\Z)    
\end{cases}.
\end{align*}
\item[(3)]
If $m \in 2\Z$, then $\Coker f(X_m) \cong \Coker f(-X_m)$ and
$\Coker \begin{bmatrix} f(X_m) & g(X_m) \end{bmatrix}
\cong \Coker \begin{bmatrix} f(-X_m) & g(-X_m) \end{bmatrix}$.
\item[(4)]
We have $\Im (1 \pm X_m^p) = \Im (1 \pm X_m^d)$ and
\begin{align*}
\Coker \begin{bmatrix} f(X_m) & 1-X_m^p \end{bmatrix} \cong \Coker f(X_d), \quad
\Coker \begin{bmatrix} f(X_m) & g(X_m) & 1-X_m^p \end{bmatrix} 
\cong \Coker \begin{bmatrix} f(X_d) & g(X_d) \end{bmatrix}.
\end{align*} 
\item[(5)]
We have
$\Coker (l \cdot s_m(-X_m)) \cong \Z^{m-1} \oplus (\Z/l\Z)$ and 
$\Coker \begin{bmatrix} s_m(-X_m) & l \cdot 1_m \end{bmatrix} \cong (\Z/l\Z)^{m-1}$.
\end{itemize}
\end{Lem}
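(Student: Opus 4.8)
The plan is to pass to the group-ring model: identify $\Z^m$ with $\Z[x]/(x^m-1)$ by sending the $i$-th standard basis vector to $x^{i-1}$, so that $X_m$ becomes multiplication by $x$. Under this identification, for any $f_1,\dots,f_r \in \Z[x]$ one has $\Coker \begin{bmatrix} f_1(X_m) & \cdots & f_r(X_m) \end{bmatrix} \cong \Z[x]/(x^m-1,\,f_1(x),\dots,f_r(x))$, because the image of the map is exactly the ideal generated by the $f_i(x)$. All the assertions then reduce either to computing such explicit quotients of $\Z[x]/(x^m-1)$, or to computing the Smith normal form of a small circulant matrix. Throughout I will use the standard fact that $(x^m-1,\,x^p-1)=(x^d-1)$ as ideals of $\Z[x]$, where $d=\gcd(p,m)$ (subtractive Euclid on the exponents), and hence $\Z[x]/(x^m-1,x^p-1)\cong\Z[x]/(x^d-1)$.

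For part (1) I would use the matrix $Y_{m,p}$ from Definition \ref{perm_matrix_orbit}: conjugation by this permutation matrix carries $X_m^p$ to the block-diagonal matrix $X_q^{\oplus d}$, and conjugating a square matrix does not change its cokernel, so $\Coker f(X_m^p)\cong\Coker(f(X_q)^{\oplus d})=(\Coker f(X_q))^d$; the two-argument statement follows after additionally permuting columns. (Equivalently, $\Z[x]/(x^m-1)$ is free of rank $d$ over the subring generated by $X_m^p$, which is $\cong\Z[y]/(y^q-1)$, and $f(X_m^p)$ acts as $d$ copies of $f(y)\cdot(-)$.) Part (2) then follows by reducing along part (1) to the $q$-case and combining: $\Z[x]/(x^d-1)\cong\Z^d$ for the first isomorphism; the Smith normal form of $1_q-X_q$ is $\mathrm{diag}(1,\dots,1,0)$ because its cokernel $\Z$ is torsion-free of rank one (so scaling by $l$ gives $\mathrm{diag}(l,\dots,l,0)$); reducing $\Coker(1_m-X_m^p)\cong\Z^d$ modulo $l$ for the third; and evaluating $\Z[y]/(y^q-1,1+y)$ at $y=-1$ (getting $\Z$ if $q$ is even, $\Z/2\Z$ if $q$ is odd) for the last.

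Part (3) is conjugation by the diagonal matrix $D=\mathrm{diag}(1,-1,1,\dots,-1)$, which satisfies $D X_m D^{-1}=-X_m$ precisely because $m$ is even; hence $f(-X_m)=D f(X_m) D^{-1}$ and likewise for the block matrix. The cokernel identities in part (4) are immediate from $(x^m-1,x^p-1)=(x^d-1)$: quotienting $\Z^m$ by $\Im(1_m-X_m^p)$ yields $\Z[x]/(x^d-1)\cong\Z^d$ on which $X_m$ acts as $X_d$, so modding out the remaining generators gives $\Coker f(X_d)$, resp. $\Coker\begin{bmatrix} f(X_d) & g(X_d)\end{bmatrix}$. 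For the image equality I would first note $X_m^p=(X_m^d)^{p/d}$ with $\gcd(p/d,q)=1$, so it suffices to compare $1\pm y^{p/d}$ with $1\pm y$ in $\Z[y]/(y^q-1)$; the minus case is divisibility both ways (geometric series, using that $p/d$ is invertible modulo $q$), and for the plus case one uses that the exponent $p/d$ may be replaced by an exponent of odd parity modulo $q$, after which $1+y^{\text{odd}}=(1+y)\sum_i(-y)^i$ gives divisibility in both directions. Part (5): since $s_m(-X_m)=1+X_m+\cdots+X_m^{m-1}$ is the all-ones matrix $J_m$, whose cokernel $\Z^{m-1}$ is torsion-free, its Smith normal form is $\mathrm{diag}(1,0,\dots,0)$; hence $\Coker(l\cdot s_m(-X_m))\cong\Z^{m-1}\oplus\Z/l\Z$, and $\Coker\begin{bmatrix} s_m(-X_m) & l\cdot 1_m\end{bmatrix}\cong\bigl(\Z^m/\Z\mathbf 1\bigr)/l\bigl(\Z^m/\Z\mathbf 1\bigr)\cong(\Z/l\Z)^{m-1}$.

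\textbf{Expected main obstacle.} Essentially every step is routine once the group-ring dictionary is in place; the only point that needs genuine care is the plus-sign case of the image equality in part (4), since $1+x^p$ need not be divisible by $1+x^d$ in $\Z[x]$ and one must exploit the relation $x^m=1$ to adjust the exponent's parity. A secondary bookkeeping burden is verifying carefully that conjugation by $Y_{m,p}$ (in part (1)) and the quotient by $\Im(1_m-X_m^d)$ (in part (4)) send $X_m$-matrices to $X_q$- resp.\ $X_d$-matrices \emph{functorially in the polynomial}, so that the reductions apply uniformly to all the block matrices appearing later.
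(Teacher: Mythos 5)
Your proposal is correct and follows essentially the same route as the paper: part (1) via conjugation by $Y_{m,p}$, part (2) by reduction to (1), part (3) via the alternating diagonal matrix, with (4) and (5) (which the paper dismisses as straightforward) filled in by the standard identification of $\Coker\begin{bmatrix} f_1(X_m) & \cdots & f_r(X_m)\end{bmatrix}$ with $\Z[x]/(x^m-1,f_1,\dots,f_r)$ and the exponent-gcd identity $(x^m-1,x^p-1)=(x^d-1)$. The group-ring packaging and the parity adjustment for $\Im(1+X_m^p)=\Im(1+X_m^d)$ are exactly the routine details the paper omits, and they check out.
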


\begin{proof}
(1)
By definition, we have 
$Y_{m,p}^{-1}f(X_m^p)Y_{m,p} = f(X_q)^{\oplus d}$.
The proof for the latter assertion is similar.

(2)
If $p=1$, they are obvious.
The remained case can be reduced to direct sums of the case $p=1$ by (1).

(3)
Because $m \in 2\Z$, we can consider the diagonal matrix
$J_m=\operatorname{diag}(1,-1,1,-1,\ldots,1,-1) \in \GL_m(\Z)$.
We have $J_m X_m J_m = -X_m$.

(4), (5)
Straightforward.
\end{proof}

Note that the following lemma can be used only if $d \ge 2$.

\begin{Lem}\label{d>=2}
Let $m \ge 1$, $p \in \Z$ and $d = \gcd(p,m)$ and $f(x), g(x) \in \Z[x]$.
If $d \ge 2$, we have
\begin{align*}
\Coker ((1_m-X_m)f(X_m^p)) 
&\cong (\Coker f(X_q))^{d-1} \oplus \Coker ((1_q-X_q)f(X_q)), \\
\Coker \begin{bmatrix} (1_m-X_m)f(X_m^p) & g(X_m^p) \end{bmatrix}
&\cong (\Coker \begin{bmatrix} f(X_q) & g(X_q)\end{bmatrix} )^{d-1}
\oplus \Coker \begin{bmatrix}(1_q-X_q)f(X_q) & g(X_q) \end{bmatrix}.
\end{align*}
\end{Lem}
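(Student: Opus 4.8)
The plan is to recast everything as commutative algebra over the ring $R=\Z[x]/(x^m-1)$, in which $X_m$ is multiplication by $x$, so that $\Coker h(X_m)=R/h(x)R$ and $\Coker\begin{bmatrix}h_1(X_m)&h_2(X_m)\end{bmatrix}=R/(h_1(x)R+h_2(x)R)$ for $h,h_1,h_2\in\Z[x]$. The key structural observation is that, since $\gcd(p,m)=d$, the subring of $R$ generated by $x^p$ is exactly $S=\Z[x^d]/(x^m-1)$, and $y\mapsto x^d$ identifies $S$ with $\Z[y]/(y^q-1)$, under which $x^p$ becomes $y^{p'}$, where $p'=p/d$ and $\gcd(p',q)=1$; moreover $1,x,\dots,x^{d-1}$ is an $S$-basis of $R$. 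In this basis I would record two facts about the $S$-module $R$: (a) multiplication by $x^p$ is the scalar matrix $y^{p'}\cdot 1_d\in\Mat_{d,d}(S)$; (b) multiplication by $x$ is the matrix $Z\in\Mat_{d,d}(S)$ with $Z_{i+1,i}=1$ for $1\le i\le d-1$, $Z_{1,d}=y$ and all other entries $0$, and elementary row and column operations over $S$ --- clearing the subdiagonal from top to bottom, then eliminating the resulting last column --- transform $1_d-Z$ into $\operatorname{diag}(1,\dots,1,1-y)$. Both follow at once from $x\cdot x^i=x^{i+1}$ for $i<d-1$ and $x\cdot x^{d-1}=x^d=y\cdot1$.

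Granting (a) and (b), multiplication by $(1_m-X_m)f(X_m^p)$ on $R\cong S^{\oplus d}$ is given by $f(y^{p'})(1_d-Z)$, which over $S$ is equivalent to $\operatorname{diag}\bigl(f(y^{p'}),\dots,f(y^{p'}),\,f(y^{p'})(1-y)\bigr)$. Since $g(X_m^p)=g(y^{p'})\cdot1_d$ is scalar, it commutes with every invertible $S$-matrix $P$, so left multiplication by $P$ replaces the $g$-column by one with the same image, while a right multiplication confined to the first column-block does not alter its image either; hence the same operations put $\begin{bmatrix}(1_m-X_m)f(X_m^p)&g(X_m^p)\end{bmatrix}$ into block-diagonal form with $d-1$ blocks $\begin{bmatrix}f(y^{p'})&g(y^{p'})\end{bmatrix}$ and one block $\begin{bmatrix}f(y^{p'})(1-y)&g(y^{p'})\end{bmatrix}$. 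Reading $S$ as $\Z[y]/(y^q-1)$ with $y$ acting as $X_q$, and running the same argument with the $g$-column deleted for the first assertion, this yields
\begin{align*}
\Coker\begin{bmatrix}(1_m-X_m)f(X_m^p)&g(X_m^p)\end{bmatrix}
&\cong\bigl(\Coker\begin{bmatrix}f(X_q^{p'})&g(X_q^{p'})\end{bmatrix}\bigr)^{d-1}\\
&\qquad\oplus\Coker\begin{bmatrix}f(X_q^{p'})(1_q-X_q)&g(X_q^{p'})\end{bmatrix},\\
\Coker\bigl((1_m-X_m)f(X_m^p)\bigr)
&\cong\bigl(\Coker f(X_q^{p'})\bigr)^{d-1}\oplus\Coker\bigl(f(X_q^{p'})(1_q-X_q)\bigr).
\end{align*}

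It remains to erase the exponent $p'$. For the first $d-1$ summands this is exactly Lemma \ref{basic_coker}(1) applied with $m,p$ there replaced by $q,p'$ (so that the relevant gcd is $1$). For the last summand, $\Im\bigl(f(X_q^{p'})(1_q-X_q)\bigr)=f(X_q^{p'})\cdot\Im(1_q-X_q)=f(X_q^{p'})\cdot\Im(1_q-X_q^{p'})$, where the middle equality is Lemma \ref{basic_coker}(4) (again using $\gcd(p',q)=1$); therefore replacing $1_q-X_q$ by $1_q-X_q^{p'}$ leaves the cokernel unchanged, and a final application of Lemma \ref{basic_coker}(1) transforms $\Coker\begin{bmatrix}f(X_q^{p'})(1_q-X_q^{p'})&g(X_q^{p'})\end{bmatrix}$ into $\Coker\begin{bmatrix}(1_q-X_q)f(X_q)&g(X_q)\end{bmatrix}$, as desired (and likewise without the $g$-column).

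I expect the crux to be fact (b): identifying multiplication by $x$ over the subring $S$ as the companion-type matrix $Z$ and reducing $1_d-Z$ to diagonal form, plus the bookkeeping required to carry the invertible $S$-operations through the two-column block without perturbing the scalar $g$-column. The passages replacing $X_q^{p'}$ by $X_q$ are routine once Lemma \ref{basic_coker} is available.
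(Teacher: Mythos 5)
Your proof is correct and is essentially the paper's own argument in ring-theoretic clothing: writing $\Z^m$ as $R=\Z[x]/(x^m-1)$, free of rank $d$ over $S\cong\Z[y]/(y^q-1)$, is exactly the block decomposition the paper effects by conjugating with the permutation matrix $Y_{m,p}$, and the reduction of $1_d-Z$ together with the commutation trick for the scalar $g$-block are the same manipulations. The only (harmless) divergence is that your basis makes $x^d$ rather than $x^p$ act as the standard shift, so your blocks carry $f(X_q^{p'})$ and you need the extra, correctly executed, appeal to Lemma \ref{basic_coker} (1) and (4) to remove the exponent $p'$, whereas the paper's choice of $Y_{m,p}$ produces $f(X_q)$ directly.
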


\begin{proof}
Because $d \ge 2$, we have
\begin{align*}
Y_{m,p}^{-1} (1_m-X_m)f(X_m^p) Y_{m,p} 
=\begin{bmatrix}
 f(X_q) &         &         &         & -X_q f(X_q) \\
-f(X_q) &  f(X_q) &                                 \\
        & \cdots  & \cdots  &                       \\
        &         & -f(X_q) &  f(X_q) &             \\
        &         &         & -f(X_q) &  f(X_q) 
\end{bmatrix}
\end{align*}
and this can be transformed into
\begin{align*}
\begin{bmatrix}
f(X_q)^{\oplus (d-1)} & \\
& (1-X_q)f(X_q)
\end{bmatrix}.
\end{align*}
preserving the blocks. We have the first assertion.

Now, we have shown that
there exists $P_d(x),Q_d(x) \in \GL_d(\Z[x])$ such that 
\begin{align*}
P_d(X_q) Y_{m,p}^{-1} (1_m-X_m)f(X_m^p) Y_{m,p} Q_d(X_q) 
= \begin{bmatrix}
f(X_q)^{\oplus (d-1)} & \\
& (1-X_q)f(X_q)
\end{bmatrix}. 
\end{align*}
Take such $P_d(x),Q_d(x)$.
Then $Y_{m,p}^{-1}g(X_m^p)Y_{m,p}=g(X_q)^{\oplus d}$ and 
$P_d(x) (g(x) \cdot 1_{d}) P_d(x)^{-1}=g(x) \cdot 1_{d}$ imply
\begin{align*}
P_d(X_q)Y_{m,p}^{-1}g(X_m^p)Y_{m,p}P_d(X_q)^{-1}=g(X_q)^{\oplus d}.
\end{align*}
Thus, the matrix
\begin{align*}
P_d(X_q) Y_{m,p}^{-1} 
\begin{bmatrix} (1_m-X_m)f(X_m^p) & g(X_m^p) \end{bmatrix}
\begin{bmatrix} Y_{m,p} & 0 \\ 0 & Y_{m,p} \end{bmatrix}
\begin{bmatrix} Q_d(X_q) & 0 \\ 0 & P_d(X_q)^{-1} \end{bmatrix}
\end{align*}
is equal to
\begin{align*}
\begin{bmatrix}
f(X_q)^{\oplus (d-1)} & 0 & g(X_q)^{\oplus (d-1)} & 0 \\
0 & (1-X_q)f(X_q) & 0 & g(X_q)
\end{bmatrix}
\end{align*}
and it verifies the second assertion.
\end{proof}

\begin{Lem}\label{coker_s}
Let $m \ge 1$, $p \in \Z$ and put $d=\gcd(p,k)$, $r=p/d$.
Then we have
\begin{align*}
\Coker s_p(X_m) &\cong \begin{cases}
\Z^{d-1}   \oplus (\Z/r\Z) & (m \in    2\Z) \\
\Z^{d}                     & (m \notin 2\Z, \ p \in    2\Z) \\
(\Z/2\Z)^{d-1}             & (m \notin 2\Z, \ p \notin 2\Z) \\
\end{cases}.
\end{align*}
\end{Lem}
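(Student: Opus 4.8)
The plan is to use the factorisation $s_p(x)(1+x)=1+(-1)^{p-1}x^p$ in $\Z[x]$, which follows by telescoping; evaluating at $x=X_m$ it reads
\begin{align*}
s_p(X_m)(1_m+X_m)=1_m+(-1)^{p-1}X_m^p .
\end{align*}
I will combine this with the elementary observation that for abelian group homomorphisms $f\colon A\to B$ and $g\colon B\to C$ we have $\Im(gf)\subseteq\Im g$, hence a natural isomorphism $\Coker g\cong\Coker(gf)/\overline{g}(\Coker f)$ with $\overline{g}\colon\Coker f\to\Coker(gf)$ the map induced by $g$. Write $d=\gcd(p,m)$, $q=m/d$, $r=p/d$. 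Since $\langle p\rangle=\langle d\rangle$ in $\Z/m\Z$, the orbits of the permutation underlying $X_m^p$ are exactly the $d$ residue classes modulo $d$, so the isomorphisms $\Coker(1_m-X_m^p)\cong\Z^d$ and $\Coker(1_m+X_m^p)\cong(\Z/2\Z)^d$ (for $q$ odd) of Lemma~\ref{basic_coker}~(2) can be made explicit: send a vector to the tuple of its sums over the $d$ orbits, the latter reduced modulo $2$.

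First I would treat the case $m$ even. Here Lemma~\ref{basic_coker}~(3) gives $\Coker s_p(X_m)\cong\Coker s_p(-X_m)$, and as $s_p(-x)=1+x+\dots+x^{p-1}$ I may work with $\overline{s}_p(X_m):=1_m+X_m+\dots+X_m^{p-1}$, which satisfies $\overline{s}_p(X_m)(1_m-X_m)=1_m-X_m^p$. Taking $f=1_m-X_m$, $g=\overline{s}_p(X_m)$ in the observation above yields
\begin{align*}
\Coker s_p(X_m)\cong\Coker(1_m-X_m^p)\big/\overline{g}\bigl(\Coker(1_m-X_m)\bigr).
\end{align*}
Now $\Coker(1_m-X_m)\cong\Z$ is generated by the class of $1$, whose image $\overline{g}(1)$ is the class of $\overline{s}_p(x)$ in $\Coker(1_m-X_m^p)\cong\Z^d$; since each residue class modulo $d$ contains exactly $r$ of the exponents $0,\dots,p-1$, this image is $(r,\dots,r)$, so $\Coker s_p(X_m)\cong\Z^d/\langle(r,\dots,r)\rangle\cong\Z^{d-1}\oplus\Z/r\Z$.

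Next, $m$ odd; then $d$ and $q$ are odd, so $r$ has the same parity as $p$. If $p$ is even the factorisation reads $s_p(X_m)(1_m+X_m)=1_m-X_m^p$, so $\Coker s_p(X_m)\cong\Coker(1_m-X_m^p)/\overline{g}(\Coker(1_m+X_m))$; here $\Coker(1_m+X_m)\cong\Z/2\Z$ is torsion while $\Coker(1_m-X_m^p)\cong\Z^d$ is free, hence $\overline{g}=0$ and $\Coker s_p(X_m)\cong\Z^d$. If $p$ is odd the factorisation reads $s_p(X_m)(1_m+X_m)=1_m+X_m^p$, so $\Coker s_p(X_m)\cong\Coker(1_m+X_m^p)/\overline{g}(\Coker(1_m+X_m))$ with $\Coker(1_m+X_m)\cong\Z/2\Z$ and $\Coker(1_m+X_m^p)\cong(\Z/2\Z)^d$; the image $\overline{g}(1)$ is the class of $s_p(x)$, whose orbit-sum over the residue class $j_0$ modulo $d$ is $\sum_{t=0}^{r-1}(-1)^{j_0+td}=(-1)^{j_0}$ (using $d$ and $r$ odd), so $\overline{g}(1)=(1,\dots,1)$ in $(\Z/2\Z)^d$ and $\Coker s_p(X_m)\cong(\Z/2\Z)^d/\langle(1,\dots,1)\rangle\cong(\Z/2\Z)^{d-1}$.

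The only step requiring care is identifying $\overline{g}(1)$ — i.e.\ the class of $\overline{s}_p(x)$ or of $s_p(x)$ — inside the explicit cokernel groups of Lemma~\ref{basic_coker}~(2); this is the orbit-sum bookkeeping above, and the parity remark ($m$ odd forces $d$ odd, hence $r$ and $p$ have the same parity) is needed to separate the two odd subcases. I expect this identification to be the main, though elementary, obstacle. If one prefers, the same three answers also follow from the six-term kernel--cokernel exact sequence attached to the factorisation, after computing $\Ker$ of $\overline{s}_p(X_m)$ or $s_p(X_m)$ (which the same argument shows to be $\{(c_j):\sum_j c_j=0\}\cong\Z^{d-1}$, $\Z^d$, or $0$ in the three cases); but the computation above seems the shortest route.
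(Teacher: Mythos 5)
Your proof is correct, but it follows a genuinely different route from the paper's. The paper first replaces $s_p(X_m)$ by $s_p(-X_m)$ (when $m$ is even) or adjoins the redundant columns $1_m-X_m^p$ resp.\ $2\cdot 1_m$, so that Lemma \ref{basic_coker} (4) collapses the matrix from size $m$ down to size $d$; the result is $r\cdot s_d(-X_d)$ or $\begin{bmatrix} s_d(-X_d) & 2\cdot 1_d\end{bmatrix}$, which is then read off from Lemma \ref{basic_coker} (5). You instead keep the matrix at size $m$ and exploit the telescoping identity $s_p(x)(1+x)=1+(-1)^{p-1}x^p$ together with the general fact $\Coker g\cong \Coker(gf)/\overline{g}(\Coker f)$, reducing everything to locating the class of $s_p(x)$ (or $\overline{s}_p(x)$) inside the explicitly described groups $\Coker(1_m\mp X_m^p)$. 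Your orbit-sum bookkeeping is right in all three cases: $(r,\dots,r)$ in $\Z^d$ gives $\Z^{d-1}\oplus\Z/r\Z$; the map from the torsion group $\Z/2\Z$ into the free group $\Z^d$ vanishes; and $(1,\dots,1)$ in $(\Z/2\Z)^d$ gives $(\Z/2\Z)^{d-1}$, the parity argument ($m$ odd forces $d$ odd, hence $r\equiv p \bmod 2$) being exactly what is needed to separate the last two cases. The paper's route has the advantage of staying entirely within the toolbox of Lemma \ref{basic_coker} and so is uniform with the other cokernel computations of Subsection \ref{calc_coker}; yours bypasses Lemma \ref{basic_coker} (5) altogether and makes the origin of the $\Z/r\Z$ torsion more transparent, at the modest cost of needing the isomorphisms of Lemma \ref{basic_coker} (2) in their explicit orbit-sum form (which does follow from the conjugation by $Y_{m,p}$ used in its proof). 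Incidentally, your reading $d=\gcd(p,m)$ silently corrects the typo $d=\gcd(p,k)$ in the statement.
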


\begin{proof}
If $m \in 2\Z$, then by Lemma \ref{basic_coker} (3),
we have $\Coker s_p(X_m) \cong \Coker s_p(-X_m)$.
Using the fact $1-x^p$ can be divided by $s_p(-x)$ and Lemma \ref{basic_coker} (4), 
we have
\begin{align*}
\Coker s_p(-X_m) \cong \Coker \begin{bmatrix} s_p(-X_m) & 1-X_m^p \end{bmatrix}
\cong \Coker s_p(-X_d) 
= \Coker (r \cdot s_d(-X_d)).
\end{align*}
By Lemma \ref{basic_coker} (5), 
$\Coker (r \cdot s_d(-X_d)) \cong \Z^{d-1} \oplus (\Z/r\Z)$.
The proof for $m \in 2\Z$ is completed.

If $m \notin 2\Z$ and $p \in 2\Z$, then $1-x^p$ can be divided by $s_p(x)$.
Using this fact and Lemma \ref{basic_coker} (4), we have
\begin{align*}
\Coker s_p(X_m) \cong \Coker \begin{bmatrix} s_p(X_m) & 1-X_m^p \end{bmatrix}
\cong \Coker s_p(X_d).
\end{align*}
Because $d=\gcd(m,p) \notin 2\Z$ and $p \in 2\Z$, 
we have $\Coker s_p(X_d)=\Coker 0_d=\Z^d$.
The proof for the case $m \notin 2\Z$ and $p \in 2\Z$ is completed.

If $m \notin 2\Z$ and $p \notin 2\Z$,
then $s_p(x)$ divides $1+x^p$, and $1+x^p$ divides $1+x^{pm}$.
Thus we have
\begin{align*}
\Coker s_p(X_m) 
\cong \Coker \begin{bmatrix} s_p(X_m)  & 1+X_m^{pm}  \end{bmatrix} 
\cong \Coker \begin{bmatrix} s_p(X_m)  & 2 \cdot 1_m \end{bmatrix}.
\end{align*}
The polynomial 
$s_p(x)-s_p(-x)$ can be divided by 2 and $1-x^p$ can be divided by $s_p(-x)$.
Therefore, we have 
\begin{align*}
\Coker \begin{bmatrix} s_p(X_m)  & 2 \cdot 1_m \end{bmatrix}
\cong \Coker \begin{bmatrix} s_p(-X_m) & 2 \cdot 1_m \end{bmatrix} 
\cong \Coker \begin{bmatrix} s_p(-X_m) & 1-X_m^p & 2 \cdot 1_m \end{bmatrix} 
\end{align*}
By Lemma \ref{basic_coker} (4) and the facts 
$s_p(-X_d)=r \cdot s_d(-X_d)$ and $r=p/d \notin 2\Z$,
\begin{align*}
\Coker \begin{bmatrix} s_p(-X_m) & 1-X_m^p & 2 \cdot 1_m \end{bmatrix}
&\cong \Coker \begin{bmatrix} s_p(-X_d) & 2 \cdot 1_d \end{bmatrix}\\
&\cong \Coker \begin{bmatrix} r \cdot s_d(-X_d) & 2 \cdot 1_d \end{bmatrix}
\cong \Coker \begin{bmatrix} s_d(-X_d) & 2 \cdot 1_d \end{bmatrix}.
\end{align*}
By Lemma \ref{basic_coker} (5), it is isomorphic to $(\Z/2\Z)^{d-1}$.
The proof for the case $k \notin 2\Z$ and $p \notin 2\Z$ is completed.
\end{proof}

We calculate the remained cokernels appearing in Proposition \ref{tanninsi}
using the previous lemmas.

\begin{Lem}[type I]\label{coker_I}
Let $n,k \ge 1$ be integers and $n \notin 2\Z$.
Put $d = \gcd(n+1,k)$, $r=(n+1)/d$. Then we have
\begin{align*}
\Coker ((1_k-X_k)(1_k+X_k^{(n+1)/2})) \cong \begin{cases}
\Z \oplus (\Z/2\Z)^{d-1}   & (r \in    2\Z) \\
\Z^{(d+2)/2}               & (r \notin 2\Z) 
\end{cases}.
\end{align*}
\end{Lem}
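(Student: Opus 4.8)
The plan is to first enlarge the defining matrix by a column that is redundant over $\Z$, so that Lemma \ref{basic_coker} (4) becomes applicable and replaces $k$ by $d=\gcd(n+1,k)$; after that the remaining $d\times d$ problem is handled directly, splitting on the parity of $r$.

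First I would observe that, since $n$ is odd,
$$1-x^{n+1}=(1-x^{(n+1)/2})(1+x^{(n+1)/2})=(1-x)(1+x+\cdots+x^{(n-1)/2})(1+x^{(n+1)/2}),$$
so $(1-x)(1+x^{(n+1)/2})$ divides $1-x^{n+1}$ in $\Z[x]$. Hence every column of $1_k-X_k^{n+1}$ lies in $\Im\bigl((1_k-X_k)(1_k+X_k^{(n+1)/2})\bigr)$, and adjoining this block does not change the cokernel:
$$\Coker\bigl((1_k-X_k)(1_k+X_k^{(n+1)/2})\bigr)\cong\Coker\begin{bmatrix}(1_k-X_k)(1_k+X_k^{(n+1)/2})&1_k-X_k^{n+1}\end{bmatrix}.$$
Applying Lemma \ref{basic_coker} (4) with the single polynomial $f(x)=(1-x)(1+x^{(n+1)/2})$ and $p=n+1$ then yields
$$\Coker\bigl((1_k-X_k)(1_k+X_k^{(n+1)/2})\bigr)\cong\Coker\bigl((1_d-X_d)(1_d+X_d^{(n+1)/2})\bigr),\qquad d=\gcd(n+1,k).$$

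Next I would reduce the exponent $(n+1)/2$ modulo $d$. Write $n+1=dr$, so $(n+1)/2=dr/2$. If $r$ is even this is a multiple of $d$, hence $X_d^{(n+1)/2}=1_d$ and the matrix is $2\cdot(1_d-X_d)$; by Lemma \ref{basic_coker} (2), with $l=2$ and $\gcd(1,d)=1$, its cokernel is $\Z\oplus(\Z/2\Z)^{d-1}$, as claimed. If $r$ is odd, then $n+1=dr$ even forces $d$ even, say $d=2e$; since $r$ is odd, $(n+1)/2=er$ reduces to $e$ modulo $2e$, so we must compute $\Coker\bigl((1_{2e}-X_{2e})(1_{2e}+X_{2e}^{e})\bigr)$. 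When $e=1$ this is $\Coker(1_2-X_2^2)\cong\Z^2$. When $e\ge 2$, Lemma \ref{d>=2} (with $p=e$, $f(x)=1+x$, noting $\gcd(e,2e)=e\ge 2$) reduces it to $(\Coker(1_2+X_2))^{e-1}\oplus\Coker(1_2-X_2^2)$, which is $\Z^{e-1}\oplus\Z^2\cong\Z^{e+1}$ because $\Coker(1_2+X_2)\cong\Z$ and $(1_2-X_2)(1_2+X_2)=1_2-X_2^2$. In both sub-cases $e+1=(d+2)/2$, so the claim follows.

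The argument is routine once the reduction to the $d\times d$ matrix is made; the only points demanding care are the divisibility check in the first step, the correct reduction of $(n+1)/2$ modulo $d$ in each parity case (in particular the observation that $r$ odd forces $d$ even), and the separate treatment of the degenerate case $e=1$, where Lemma \ref{d>=2} cannot be invoked. I do not foresee a genuine obstacle.
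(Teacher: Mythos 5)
Your proof is correct. It uses the same toolkit as the paper (Lemma \ref{basic_coker} (2) and (4) and Lemma \ref{d>=2}) and the same case split on the parity of $r$, but organizes the reduction in the opposite order. The paper keeps the matrix at size $k$ throughout: it first replaces the exponent $(n+1)/2$ by $\gcd((n+1)/2,k)$ (which is $d$ or $d/2$ according to the parity of $r$) using $\Im(1+X_k^p)=\Im(1+X_k^{\gcd(p,k)})$, and then invokes Lemma \ref{d>=2} with quotient $k/d$ (odd) or $2k/d$ (even), handling $d=1$ and $d/2=1$ as degenerate subcases. You instead shrink the matrix to size $d$ at the outset by adjoining the redundant block $1_k-X_k^{n+1}$ and applying Lemma \ref{basic_coker} (4) with $p=n+1$, and only afterwards reduce the exponent modulo $d$. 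This buys you a genuinely shorter $r$-even case --- the matrix collapses to $2\cdot(1_d-X_d)$ and Lemma \ref{basic_coker} (2) finishes it in one step, with no need for Lemma \ref{d>=2} or a separate $d=1$ subcase --- while in the $r$-odd case your application of Lemma \ref{d>=2} takes place at size $d=2e$ with quotient $2$ rather than at size $k$. All the delicate points (the divisibility of $1-x^{n+1}$ by $(1-x)(1+x^{(n+1)/2})$, the observation that $r$ odd forces $d$ even, and the degenerate case $e=1$ where Lemma \ref{d>=2} is not applicable) are handled correctly.
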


\begin{proof}
Put $q=k/d$.
We can deduce
\begin{align*}
\gcd((n+1)/2,k)=\begin{cases}
d   & (r \in 2\Z) \\
d/2 & (r \notin 2\Z).
\end{cases}
\end{align*}

Assume $r \in 2\Z$ first, then we have $q \notin 2\Z$. 
Therefore, Lemma \ref{basic_coker} (4) yields
\begin{align*}
\Coker ((1_k-X_k)(1_k+X_k^{(n+1)/2})) \cong \Coker ((1_k-X_k)(1_k+X_k^d)).
\end{align*}
If $d=1$, then we have $k \notin 2\Z$ and $\gcd(2,k)=1$. 
Thus the cokernel is $\Coker (1_k-X_k^2) \cong \Z$.
If $d \ge 2$, Lemma \ref{d>=2} and $q \notin 2\Z$ yield
\begin{align*}
\Coker ((1_k-X_k)(1_k+X_k^d)) 
\cong \Coker ((1_q+X_q))^{d-1} \oplus \Coker (1_q-X_q^2) 
\cong (\Z/2\Z)^{d-1} \oplus \Z.
\end{align*}
The assertion is proved for the case $r \in 2\Z$.

Assume $r \notin 2\Z$ next. 
Lemma \ref{basic_coker} (4) yields
\begin{align*}
\Coker ((1_k-X_k)(1_k+X_k^{(n+1)/2})) \cong \Coker ((1_k-X_k)(1_k+X_k^{d/2})).
\end{align*}
If $d/2=1$, then we have $k \in 2\Z$ and $\gcd(2,k)=2$. 
Thus the cokernel is $\Coker (1_k-X_k^2) \cong \Z^2$.
If $d/2 \ge 2$, Lemma \ref{d>=2} and $k/(d/2)=2q \in 2\Z$ yield
\begin{align*}
\Coker ((1_k-X_k)(1_k+X_k^{d/2})) 
\cong (\Coker (1_{2q}+X_{2q}))^{(d-2)/2} \oplus \Coker (1_{2q}-X_{2q}^2) 
\cong \Z^{(d-2)/2} \oplus \Z^2=\Z^{(d+2)/2}.
\end{align*}
The assertion is proved for the case $r \notin 2\Z$.
\end{proof}

\begin{Lem}[type II]\label{coker_II}
Let $n,k \ge 1$ be integers, and $n \notin 2\Z$.
Put $d = \gcd(n+1,k)$, $r=(n+1)/d$.
Then we have
\begin{align*}
\Coker \begin{bmatrix} (1_{2k}-X_{2k})(1_{2k}+X_{2k}^{(n+1)/2}) & 
1_{2k}+X_{2k}^{k-(n+1)/2} \end{bmatrix} 
\cong \begin{cases}
(\Z/2\Z)^{d-1} \oplus (\Z/4\Z) & (r \in 4\Z) \\
(\Z/2\Z)^{d+1}                 & (r \in 2+4\Z) \\
\Z^{d/2}                       & (r \notin 2\Z) 
\end{cases}.
\end{align*}
\end{Lem}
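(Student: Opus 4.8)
The plan is to follow the strategy of the proof of Lemma~\ref{coker_I}, carrying the extra block $1_{2k}+X_{2k}^{k-(n+1)/2}$ along. Write $s=(n+1)/2$ and $k=dq_0$; since $d=\gcd(2s,k)=\gcd(rd,dq_0)=d\gcd(r,q_0)$, we have $\gcd(r,q_0)=1$, which forces $q_0$ odd when $r$ is even and $d$ even when $r$ is odd. The first step is the elementary computation of $\gcd(s,2k)$ and $\gcd(k-s,2k)$: using $\gcd(r,q_0)=1$ and the parities just noted, one finds $\gcd(s,2k)=\gcd(k-s,2k)=d/2$ when $r\notin 2\Z$; $\gcd(s,2k)=2d$ and $\gcd(k-s,2k)=d$ when $r\in 4\Z$; and $\gcd(s,2k)=d$ and $\gcd(k-s,2k)=2d$ when $r\in 2+4\Z$. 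This gcd bookkeeping --- in particular evaluating $\gcd(q_0-r/2,2q_0)$ and the analogous quantities --- is the most error-prone part of the argument, so I would carry it out carefully at the outset.

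Next, since $\Im\big((1_{2k}-X_{2k})M\big)=(1_{2k}-X_{2k})(\Im M)$ depends only on $\Im M$, Lemma~\ref{basic_coker}(4) lets me replace $1_{2k}+X_{2k}^{s}$ by $1_{2k}+X_{2k}^{\gcd(s,2k)}$ inside the product and $1_{2k}+X_{2k}^{k-s}$ by $1_{2k}+X_{2k}^{\gcd(k-s,2k)}$ without changing the cokernel, because the cokernel of a two-block matrix depends only on the sum of the images of the two blocks. So it remains to compute $\Coker \begin{bmatrix}(1_{2k}-X_{2k})(1_{2k}+X_{2k}^{e}) & 1_{2k}+X_{2k}^{e'}\end{bmatrix}$ for the three pairs $(e,e')$ above. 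When $r\notin 2\Z$ we have $e=e'=d/2$ and $\Im\big((1_{2k}-X_{2k})(1_{2k}+X_{2k}^{d/2})\big)\subseteq\Im(1_{2k}+X_{2k}^{d/2})$, so the cokernel is $\Coker (1_{2k}+X_{2k}^{d/2})$, which is $\Z^{d/2}$ by Lemma~\ref{basic_coker}(2) because $2k/(d/2)=4q_0$ is even.

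In the remaining two cases I would apply Lemma~\ref{d>=2} with $p=d$ (note $\gcd(d,2k)=d$ and $2k/d=2q_0$), treating the degenerate case $d=1$ by a direct computation. For $r\in 4\Z$, writing $1_{2k}+X_{2k}^{2d}=f(X_{2k}^d)$ with $f(x)=1+x^2$ and $1_{2k}+X_{2k}^d=g(X_{2k}^d)$ with $g(x)=1+x$ reduces the cokernel to $(d-1)$ copies of $\Coker \begin{bmatrix}1_{2q_0}+X_{2q_0}^2 & 1_{2q_0}+X_{2q_0}\end{bmatrix}$ together with one copy of $\Coker \begin{bmatrix}(1_{2q_0}-X_{2q_0})(1_{2q_0}+X_{2q_0}^2) & 1_{2q_0}+X_{2q_0}\end{bmatrix}$; both are evaluated by passing to the quotient $\Coker (1_{2q_0}+X_{2q_0})\cong\Z$ ($2q_0$ even), on which $X_{2q_0}$ acts as $-1$, so $1+X^2$ acts as $2$ and $(1-X)(1+X^2)$ acts as $4$, giving $\Z/2\Z$ and $\Z/4\Z$ and hence the total $(\Z/2\Z)^{d-1}\oplus(\Z/4\Z)$. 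For $r\in 2+4\Z$ the same method --- now with $f(x)=1+x$, $g(x)=1+x^2$, and $(1_{2q_0}-X_{2q_0})(1_{2q_0}+X_{2q_0})=1_{2q_0}-X_{2q_0}^2$ --- gives $(d-1)$ copies of $\Coker \begin{bmatrix}1_{2q_0}+X_{2q_0} & 1_{2q_0}+X_{2q_0}^2\end{bmatrix}\cong\Z/2\Z$ and one copy of $\Coker \begin{bmatrix}1_{2q_0}-X_{2q_0}^2 & 1_{2q_0}+X_{2q_0}^2\end{bmatrix}$; the latter is $(\Z/2\Z)^2$, since on $\Coker (1_{2q_0}-X_{2q_0}^2)\cong\Z^2$ the operator $X_{2q_0}^2$ acts as the identity, so $1+X^2$ acts as multiplication by $2$. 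This yields $(\Z/2\Z)^{d+1}$, and assembling the three cases gives the claimed isomorphism.
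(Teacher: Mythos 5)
Your proposal is correct and follows essentially the same route as the paper: the same gcd bookkeeping, the reduction of exponents via Lemma \ref{basic_coker}(4), the splitting into $d$ pieces via Lemma \ref{d>=2}, and the same three case analyses. The only (harmless) difference is in the final evaluation of the small cokernels --- you read off the induced scalar action of $X$ on $\Coker(1\pm X_{2q_0})$ directly, whereas the paper first replaces one block by a constant modulo the image of the other and then uses the sign-conjugation trick of Lemma \ref{basic_coker}(3); both give the same answers.
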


\begin{proof}
Put $q=k/d$.
We can deduce that 
\begin{align*}
\gcd((n+1)/2,2k)=\begin{cases}
2d   & (r \in 4\Z) \\
 d   & (r \in 2+4\Z) \\
 d/2 & (r \notin 2\Z) 
\end{cases}, 
\quad
%\frac{2k}{\gcd((n+1)/2,2k)}=\begin{cases}
% q & (r \in 4\Z) \\
%2q & (r \in 2+4\Z) \\
%4q & (r \notin 2\Z) 
%\end{cases}, \\ 
\gcd(k-(n+1)/2,2k)=\begin{cases}
 d   & (r \in 4\Z) \\
2d   & (r \in 2+4\Z) \\
 d/2 & (r \notin 2\Z) 
\end{cases}.
%\quad
%\frac{2k}{\gcd(k-(n+1)/2,2k)}=\begin{cases}
%2q & (r \in 4\Z) \\
% q & (r \in 2+4\Z) \\
%4q & (r \notin 2\Z) 
%\end{cases}.
\end{align*}

Consider the case $r \in 4\Z$ first.
Then Lemma \ref{basic_coker} (4) yields
\begin{align*}
\Coker \begin{bmatrix} 
(1_{2k}-X_{2k})(1_{2k}+X_{2k}^{(n+1)/2}) & 1_{2k}+X_{2k}^{k-(n+1)/2} 
\end{bmatrix} 
&\cong \Coker \begin{bmatrix} 
(1_{2k}-X_{2k})(1_{2k}+X_{2k}^{2d}) & 1_{2k}+X_{2k}^d 
\end{bmatrix} \\
&\cong \Coker \begin{bmatrix} 
2 \cdot (1_{2k}-X_{2k}) & 1_{2k}+X_{2k}^d 
\end{bmatrix}. 
\end{align*}
Assume $d=1$, then we have
\begin{align*}
\Coker \begin{bmatrix} 
2 \cdot (1_{2k}-X_{2k}) & 1_{2k}+X_{2k}^d
\end{bmatrix}
= \Coker \begin{bmatrix} 
2 \cdot (1_{2k}-X_{2k}) & 1_{2k}+X_{2k}
\end{bmatrix}
\cong \Coker \begin{bmatrix} 
4 \cdot 1_{2k} & 1_{2k}+X_{2k} 
\end{bmatrix} .
\end{align*}
Using Lemma \ref{basic_coker} (3) and then (2), we can deduce that
\begin{align*}
\Coker \begin{bmatrix} 
4 \cdot 1_{2k} & 1_{2k}+X_{2k} 
\end{bmatrix} 
 \cong \Coker \begin{bmatrix} 
4 \cdot 1_{2k} & 1_{2k}-X_{2k} 
\end{bmatrix} \cong \Z/4\Z.
\end{align*}
If $d \ge 2$, from Lemma \ref{d>=2}, 
$\Coker \begin{bmatrix} 
2 \cdot (1_{2k}-X_{2k}) & 1_{2k}+X_{2k}^d 
\end{bmatrix}$ is isomorphic to
\begin{align*}
(\Coker \begin{bmatrix} 2 \cdot 1_{2q} & 1_{2q}+X_{2q} \end{bmatrix})^{d-1}
\oplus (\Coker \begin{bmatrix} 2\cdot(1_{2q}-X_{2q}) & 1_{2q}+X_{2q} \end{bmatrix}).
\end{align*}
The first summand is calculated as 
\begin{align*}
\Coker \begin{bmatrix} 2 \cdot 1_{2q} & 1_{2q}+X_{2q} \end{bmatrix}
& \cong \Coker \begin{bmatrix} 2 \cdot 1_{2q} & 1_{2q}-X_{2q} \end{bmatrix}
\cong \Z/2\Z.
\end{align*}
Similarly to the case $d=1$,
the second summand can be calculated as
\begin{align*}
\Coker \begin{bmatrix} 2\cdot(1_{2q}-X_{2q}) & 1_{2q}+X_{2q} \end{bmatrix}
\cong \Z/4\Z.
\end{align*}
Now the proof for the case $r \in 4\Z$ is completed.

Second, we assume $r \in 2+4\Z$.
Then Lemma \ref{basic_coker} (4) yields
\begin{align*}
\Coker \begin{bmatrix} 
(1_{2k}-X_{2k})(1_{2k}+X_{2k}^{(n+1)/2}) & 1_{2k}+X_{2k}^{k-(n+1)/2} 
\end{bmatrix} 
&\cong \Coker \begin{bmatrix} 
(1_{2k}-X_{2k})(1_{2k}+X_{2k}^d) & 1_{2k}+X_{2k}^{2d} 
\end{bmatrix} \\
&\cong \Coker \begin{bmatrix} 
(1_{2k}-X_{2k})(1_{2k}+X_{2k}^d) & 2 \cdot 1_{2k} 
\end{bmatrix}, 
\end{align*}
where the last equality comes from $1-x^{2d}$ can be divided by $(1-x)(1+x^d)$.
Assume $d=1$. Then we have
\begin{align*}
\Coker \begin{bmatrix} 
(1_{2k}-X_{2k})(1_{2k}+X_{2k}^d) & 2 \cdot 1_{2k} 
\end{bmatrix}
&\cong \Coker \begin{bmatrix} 
1_{2k}-X_{2k}^2 & 2 \cdot 1_{2k} 
\end{bmatrix} \cong (\Z/2\Z)^2.
\end{align*}
If $d \ge 2$, Lemma \ref{d>=2} implies 
\begin{align*}
\Coker \begin{bmatrix} 
(1_{2k}-X_{2k})(1_{2k}+X_{2k}^d) & 2 \cdot 1_{2k} 
\end{bmatrix}
&\cong (\Coker \begin{bmatrix} 
1_{2q}+X_{2q} & 2 \cdot 1_{2q} 
\end{bmatrix})^{d-1} \oplus \Coker \begin{bmatrix} 
1_{2q}-X_{2q}^2 & 2 \cdot 1_{2q} 
\end{bmatrix}.
\end{align*}
Using Lemma \ref{basic_coker} (2), 
each summand can be calculated as below;
\begin{align*}
\Coker \begin{bmatrix} 1_{2q}+X_{2q} & 2 \cdot 1_{2q} \end{bmatrix} 
\cong \Coker \begin{bmatrix} 1_{2q}-X_{2q} & 2 \cdot 1_{2q} \end{bmatrix}
\cong \Z/2\Z, \quad
\Coker \begin{bmatrix} 1_{2q}-X_{2q}^2 & 2 \cdot 1_{2q} \end{bmatrix}
\cong (\Z/2\Z)^2.
\end{align*}
The proof for the case $r \in 2+4\Z$ is completed.

The remained case is $r \notin 2\Z$.
Then Lemma \ref{basic_coker} (4) yields
\begin{align*}
\Coker \begin{bmatrix} 
(1_{2k}-X_{2k})(1_{2k}+X_{2k}^{(n+1)/2}) & 1_{2k}+X_{2k}^{k-(n+1)/2} 
\end{bmatrix} 
&\cong \Coker \begin{bmatrix} 
(1_{2k}-X_{2k})(1_{2k}+X_{2k}^{d/2}) & 1_{2k}+X_{2k}^{d/2} 
\end{bmatrix} \\
&\cong \Coker (1_{2k}+X_{2k}^{d/2}) \cong \Z^{d/2}. 
\end{align*}
The proof for the case $r \notin 2\Z$ is completed.
\end{proof}

\begin{Lem}[type V]\label{coker_V_1}
Let $n \ge 4$ and $k \ge 1$ be integers.
Put $d=\gcd(2n-2,k)$.
Then we have
\begin{align*}
\Coker \begin{bmatrix} 1_{2k}+X_{2k}^{n-1} & 1_{2k}-X_{2k}^k \end{bmatrix}
&\cong \begin{cases}
\Z^{d/2}   & (k \in 2\Z, \ r \notin 2\Z) \\
(\Z/2\Z)^d & \mathrm{(otherwise)}
\end{cases}.
\end{align*}
\end{Lem}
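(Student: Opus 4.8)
The plan is to strip the two-column matrix down to a single polynomial in one permutation matrix, and then read off the answer from the elementary cokernel computations already recorded in Lemma \ref{basic_coker}. First I would apply Lemma \ref{basic_coker} (4) with $m=2k$ and $p=k$, noting $\gcd(k,2k)=k$, to the second column $1_{2k}-X_{2k}^k$. This gives
\[
\Coker \begin{bmatrix} 1_{2k}+X_{2k}^{n-1} & 1_{2k}-X_{2k}^k \end{bmatrix}
\cong \Coker (1_k+X_k^{n-1}).
\]
Then, putting $d'=\gcd(n-1,k)$ and $q'=k/d'$, Lemma \ref{basic_coker} (2) evaluates the right-hand side directly: it is $(\Z/2\Z)^{d'}$ when $q'$ is odd and $\Z^{d'}$ when $q'$ is even.

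The only remaining work is the arithmetic bookkeeping needed to rewrite the pair $(d',q')$ in terms of the data $(d,r)$ of the statement, where $d=\gcd(2n-2,k)$ and $r=(2n-2)/d$. Writing $n-1=d'a$ and $k=d'q'$ with $\gcd(a,q')=1$, one computes $d=\gcd(2d'a,d'q')=d'\gcd(2a,q')=d'\gcd(2,q')$, so $d=d'$ if $q'$ is odd and $d=2d'$ if $q'$ is even. In the first case $r=2a$ is even, and this case always occurs when $k$ is odd (a divisor of an odd number is odd); hence we are in the ``otherwise'' branch and the cokernel is $(\Z/2\Z)^{d'}=(\Z/2\Z)^d$. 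In the second case $k=d'q'$ is even, and since $q'$ even forces $v_2(k)>v_2(d')$ we must have $v_2(d')=\min(v_2(n-1),v_2(k))=v_2(n-1)$, so $r=a$ is odd; thus we are in the branch $k\in 2\Z$, $r\notin 2\Z$ and the cokernel is $\Z^{d'}=\Z^{d/2}$. Since the dichotomy ``$q'$ odd / $q'$ even'' matches exactly the split ``otherwise / ($k\in 2\Z$, $r\notin 2\Z$)'', this completes the identification.

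The step I expect to be the only real obstacle is this last translation between the two gcd's: one has to track the $2$-adic valuations carefully to see that the two parity branches coming out of Lemma \ref{basic_coker} (2) line up precisely with the two cases in the statement. Everything else is a formal consequence of the reduction via Lemma \ref{basic_coker} (4), and there is no further structural input required.
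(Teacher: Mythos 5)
Your proposal is correct and follows essentially the same route as the paper: reduce via Lemma \ref{basic_coker} (4) (with $\gcd(k,2k)=k$) to $\Coker(1_k+X_k^{n-1})$, evaluate it by Lemma \ref{basic_coker} (2), and match the parity of $k/\gcd(n-1,k)$ with the case split in $(d,r)$. Your $2$-adic bookkeeping relating $\gcd(n-1,k)$ to $d=\gcd(2n-2,k)$ is exactly the computation the paper leaves as ``we can deduce,'' and it checks out.
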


\begin{proof}
Put $q=(2n-2)/d$.
We can deduce
\begin{align*}
\gcd(n-1,k) = \begin{cases}
d/2   & (k \in 2\Z, \ r \notin 2\Z) \\
d     & \mathrm{(otherwise)}
\end{cases}, \quad
\frac{k}{\gcd(n-1,k)} = \begin{cases}
2q \in    2\Z & (k \in 2\Z, \ r \notin 2\Z) \\
 q \notin 2\Z & \mathrm{(otherwise)}
\end{cases}.
\end{align*}
From Lemma \ref{basic_coker} (4) and then (2),
\begin{align*}
\Coker \begin{bmatrix} 1_{2k}+X_{2k}^{n-1} & 1_{2k}-X_{2k}^k \end{bmatrix}
\cong \Coker (1_k+X_k^{n-1}) 
\cong \begin{cases}
\Z^{d/2}   & (k \in 2\Z, \ r \notin 2\Z) \\
(\Z/2\Z)^d & \mathrm{(otherwise)}
\end{cases}.
\end{align*}
The assertion is proved.
\end{proof}

\begin{Lem}[type V]\label{coker_V_2}
Let $n \ge 4$ and $k \ge 1$ be integers and $n \notin 2\Z$.
Put $d=\gcd(2n-2,k)$.
Then we have
\begin{align*}
\Coker \begin{bmatrix} s_{2n-2}(X_{2k}) & 1_{2k}+X_{2k}^{k-(n-1)} \end{bmatrix}
&\cong \begin{cases}
\Z^d                      & (k \in    2\Z, \ r \in    4\Z) \\
(\Z/2\Z)^{2d-1}           & (k \in    2\Z, \ r \in    2+4\Z) \\
\Z^{d/2}                  & (k \in    2\Z, \ r \notin 2\Z) \\
\Z^{d-1} \oplus (\Z/r\Z)  & (k \notin 2\Z)
\end{cases}.
\end{align*}
\end{Lem}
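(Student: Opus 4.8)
The plan is to reuse the machinery of the proofs of Lemmas~\ref{coker_s}, \ref{coker_I} and \ref{coker_II}: work in $R=\Z[x]/(x^{2k}-1)$, exploit the factorisation $(1-x)\,s_{2n-2}(-x)=1-x^{2n-2}$, and repeatedly absorb columns of the shape $1-X_{2k}^{\,p}$ by means of Lemma~\ref{basic_coker}~(4) until the matrix lives over a much smaller group ring. Throughout, $r=(2n-2)/d$.

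First I would normalise the matrix. Set $e=k-(n-1)$; since $2e\equiv-(2n-2)\pmod{2k}$ we have $X_{2k}^{2e}=X_{2k}^{-(2n-2)}$ and hence $(1_{2k}-X_{2k}^{e})(1_{2k}+X_{2k}^{e})=1_{2k}-X_{2k}^{2e}=-X_{2k}^{2e}\bigl(1_{2k}-X_{2k}^{2n-2}\bigr)$. By Lemma~\ref{basic_coker}~(3) I may replace $X_{2k}$ by $-X_{2k}$; writing $g(x)=s_{2n-2}(-x)=1+x+\cdots+x^{2n-3}$, so that $(1-x)g(x)=1-x^{2n-2}$, and using $n\notin2\Z$ (whence $(-1)^{e}=(-1)^{k}$), the matrix becomes $\begin{bmatrix}g(X_{2k})&1_{2k}+(-1)^{k}X_{2k}^{e}\end{bmatrix}$. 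The previous identities now give
\begin{align*}
\Im\bigl(1_{2k}-X_{2k}^{\,2\gcd(n-1,k)}\bigr)=\Im\bigl((1_{2k}-X_{2k})g(X_{2k})\bigr)=\Im\bigl((1_{2k}-X_{2k}^{e})(1_{2k}+X_{2k}^{e})\bigr)\subseteq\Im\bigl(1_{2k}+X_{2k}^{e}\bigr),
\end{align*}
the first equality by $(1-x)g(x)=1-x^{2n-2}$ and Lemma~\ref{basic_coker}~(4); and by the computation in the proof of Lemma~\ref{coker_V_1}, $\gcd(n-1,k)=d/2$ when $k\in2\Z$ and $r\notin2\Z$, and $\gcd(n-1,k)=d$ otherwise.

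If $k\notin2\Z$, then $e$ is odd and $\gcd(e,2k)=\gcd(n-1,k)=d$, so after a sign change the second column is $1_{2k}-X_{2k}^{e}$, and Lemma~\ref{basic_coker}~(4) reduces the cokernel to $\Coker g(X_{d})$; since $d\mid2n-2$, reducing $g$ modulo $x^{d}-1$ gives $g(X_{d})=r\,s_{d}(-X_{d})$, whence $\Coker(r\,s_{d}(-X_{d}))\cong\Z^{d-1}\oplus\Z/r\Z$ by Lemma~\ref{basic_coker}~(5). If $k\in2\Z$, then $e$ is even, the second column is $1_{2k}+X_{2k}^{e}$, and I adjoin for free the column $1_{2k}-X_{2k}^{\,2\gcd(n-1,k)}$ — redundant by the display above — and apply Lemma~\ref{basic_coker}~(4) to descend to $\Z[x]/(x^{2d}-1)$ when $r$ is even and to $\Z[x]/(x^{d}-1)$ when $r$ is odd. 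On the smaller ring $g$ becomes $\tfrac r2\,s_{2d}(-X_{2d})$ (resp.\ $r\,s_{d}(-X_{d})$), and a $2$-adic valuation count identifies the residue of the column $1+X^{e}$: it is $1_{2d}+X_{2d}^{d}$ when $4\mid r$, it is $2\cdot1_{2d}$ when $r\equiv2\pmod4$, and (up to replacing the exponent by $d/2$) it is $1_{d}+X_{d}^{d/2}$ when $r$ is odd. In the first and third cases the identity $s_{2m}(-X_{2m})=(1_{2m}+X_{2m}^{m})(1_{2m}+X_{2m}+\cdots+X_{2m}^{m-1})$, applied with $2m=2d$ resp.\ $2m=d$, shows that the $g$-column is absorbed by the $(1+X^{\cdot})$-column, leaving $\Coker(1_{2d}+X_{2d}^{d})\cong\Z^{d}$ resp.\ $\Coker(1_{d}+X_{d}^{d/2})\cong\Z^{d/2}$ by Lemma~\ref{basic_coker}~(2); in the middle case the matrix is $\begin{bmatrix}\tfrac r2\,s_{2d}(-X_{2d})&2\cdot1_{2d}\end{bmatrix}$, whose cokernel depends only on the reduction modulo $2$ (where $\tfrac r2$ is a unit), so it is $(\Z/2\Z)^{2d-1}$ by Lemma~\ref{basic_coker}~(5).

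The one genuinely delicate point is the case $k\in2\Z$ with $r$ even, where the stray column $1+X^{e}$ cannot be removed by Lemma~\ref{basic_coker}~(4) directly: the device of first adjoining the redundant column coming from $(1-x)g(x)=1-x^{2n-2}$ is what lets everything collapse. Getting the $2$-adic valuations right is the main source of care, as it is exactly the distinction between $4\mid r$ and $r\equiv2\pmod4$ that separates the free answer $\Z^{d}$ from the torsion answer $(\Z/2\Z)^{2d-1}$.
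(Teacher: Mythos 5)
Your proof is correct and follows essentially the same route as the paper's: the same case split governed by $\gcd(k-(n-1),2k)\in\{d,2d,d/2\}$, the same appeals to Lemma \ref{basic_coker} (2)--(5), and the same divisibility facts about $s_{2n-2}$ and $1+x^{(\cdot)}$. The only differences are organizational — you flip the sign uniformly at the outset and descend to matrices of size $2d$ (resp.\ $d$) via an adjoined redundant column before absorbing the $s$-column, whereas the paper absorbs it at size $2k$ — and these do not change the substance of the argument.
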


\begin{proof}
We can deduce
\begin{align*}
\gcd(k-(n-1),2k) &= \begin{cases}
 d   & (k \in    2\Z, \ r \in    4\Z) \\
2d   & (k \in    2\Z, \ r \in    2+4\Z) \\
 d/2 & (k \in    2\Z, \ r \notin 2\Z) \\
 d   & (k \notin 2\Z)
\end{cases}. 
%\frac{2n-2}{\gcd(2n-2,2k)} &= \begin{cases}
% r   \in    4\Z               & (k \in    2\Z, \ r \in    4\Z) \\
% r/2 \notin 2\Z               & (k \in    2\Z, \ r \in    2+4\Z) \\
%2r   \in    2+4\Z & (k \in    2\Z, \ r \notin 2\Z) \\
% r                            & (k \notin 2\Z)
%\end{cases}.
\end{align*}

If $k \in 2\Z$ and $r \in 4\Z$, then we have $(2n-2)/d \in 2\Z$.
By Lemma \ref{basic_coker} (4) and the fact $s_{2n-2}(x)$ can be divided by $1+x^d$ and 
then Lemma \ref{basic_coker} (2),
we have
\begin{align*}
\Coker \begin{bmatrix} s_{2n-2}(X_{2k}) & 1_{2k}+X_{2k}^{k-(n-1)} \end{bmatrix}
\cong \Coker \begin{bmatrix} s_{2n-2}(X_{2k}) & 1_{2k}+X_{2k}^d \end{bmatrix} 
\cong \Coker (1_{2k}+X_{2k}^d)
\cong \Z^d.
\end{align*}
The proof for the case $k \in 2\Z$ and $r \in 4\Z$ is completed.

If $k \in 2\Z$ and $r \in 2+4\Z$, then we have $(2n-2)/2d \in 1+2\Z$.
We can deduce $s_{2n-2}(x)-s_{2d}(x)$ can be divided by $1+x^{2d}$.
Therefore, by Lemma \ref{basic_coker} (4), the previous fact and 
Lemma \ref{basic_coker} (3), 
we have
\begin{align*}
\Coker \begin{bmatrix} s_{2n-2}(X_{2k}) & 1_{2k}+X_{2k}^{k-(n-1)} \end{bmatrix}
&\cong \Coker \begin{bmatrix} s_{2n-2}(X_{2k}) & 1_{2k}+X_{2k}^{2d} \end{bmatrix} \\
&\cong \Coker \begin{bmatrix} s_{2d}(X_{2k}) & 1_{2k}+X_{2k}^{2d} \end{bmatrix}
\cong \Coker \begin{bmatrix} s_{2d}(-X_{2k}) & 1_{2k}+X_{2k}^{2d} \end{bmatrix}.
\end{align*}
Using the fact $1-x^{2d}$ can be divided by $s_{2d}(-x)$
and Lemma \ref{basic_coker} (5), we can deduce
\begin{align*}
\Coker \begin{bmatrix} s_{2d}(-X_{2k}) & 1_{2k}+X_{2k}^{2d} \end{bmatrix}
&\cong \Coker 
\begin{bmatrix} s_{2d}(-X_{2k}) & 2 \cdot 1_{2k} \end{bmatrix} 
\cong (\Z/2\Z)^{2d-1}.
\end{align*}
The proof for the case $k \in 2\Z$ and $r \in 2+4\Z$ is completed.

If $k \in 2\Z$ and $r \notin 2\Z$, then we have $(2n-2)/(d/2) \in 2\Z$.
By Lemma \ref{basic_coker} (4) and the fact $s_{2n-2}(x)$ can be divided by $1+x^{d/2}$ 
and then Lemma \ref{basic_coker} (2),
we have
\begin{align*}
\Coker \begin{bmatrix} s_{2n-2}(X_{2k}) & 1_{2k}+X_{2k}^{k-(n-1)} \end{bmatrix}
\cong \Coker \begin{bmatrix} s_{2n-2}(X_{2k}) & 1_{2k}+X_{2k}^{d/2} \end{bmatrix} 
\cong \Coker (1_{2k}+X_{2k}^{d/2}) 
\cong \Z^{d/2}.
\end{align*}
The proof for the case $k \in 2\Z$ and $r \notin 2\Z$ is completed.

If $k \notin 2\Z$, then we have $k-(n-1) \notin 2\Z$.
By Lemma \ref{basic_coker} (3) and then (4), we have
\begin{align*}
\Coker \begin{bmatrix} s_{2n-2}(X_{2k}) & 1_{2k}+X_{2k}^{k-(n-1)} \end{bmatrix}
&\cong \Coker \begin{bmatrix} s_{2n-2}(-X_{2k}) & 1_{2k}-X_{2k}^{k-(n-1)} \end{bmatrix} 
\cong \Coker (s_{2n-2}(-X_d)).
\end{align*}
Now $s_{2n-2}(-X_d)=r \cdot s_d(-X_d)$ and Lemma \ref{basic_coker} (5) imply
\begin{align*}
\Coker (s_{2n-2}(-X_d))
= \Coker (r \cdot s_d(-X_d)) 
\cong \Z^{d-1} \oplus (\Z/r\Z).
\end{align*}
The proof for the case $k \notin 2\Z$ is completed.
\end{proof}

\begin{Lem}[type V]\label{coker_V_3}
Let $n \ge 4$ and $k \ge 1$ be integers and $n \in 2\Z$.
Put $d=\gcd(2n-2,k)$.
Then we have
\begin{align*}
\Coker \begin{bmatrix} 1_{2k}+X_{2k}^{n-1} & (1-X_{2k}^k)s_{n-1}(X_{2k}) \end{bmatrix}
&\cong \begin{cases}
\Z^{d/2}                  & (k \in    2\Z) \\
\Z^{d-1} \oplus (\Z/r\Z)  & (k \notin 2\Z)
\end{cases}.
\end{align*}
\end{Lem}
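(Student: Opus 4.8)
The plan is to read $\Coker\begin{bmatrix} f(X_{2k}) & g(X_{2k})\end{bmatrix}$ as the quotient ring $R/(f(x),g(x))$, where $R=\Z[x]/(x^{2k}-1)$ and $X_{2k}$ acts as multiplication by $x$. Since $n$ is even, $n-1$ is odd and one has the polynomial identity $(1+x)s_{n-1}(x)=1+x^{n-1}$ in $\Z[x]$, so the ideal attached to our matrix factors as
\[
\bigl(1+x^{n-1},\ (1-x^k)s_{n-1}(x)\bigr)=s_{n-1}(x)\cdot\bigl(1+x,\ 1-x^k\bigr).
\]
I would then simplify $(1+x,1-x^k)$ according to the parity of $k$: if $k$ is even, $1+x$ divides $1-x^k$, so $(1+x,1-x^k)=(1+x)$; if $k$ is odd, $1+x$ divides $1+x^k$, and writing $1+x^k=(1+x)h(x)$ yields $1-x^k=2-(1+x)h(x)$, hence $(1+x,1-x^k)=(1+x,2)$. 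Throughout I also use that $n-1$ is odd to write $d=\gcd(2n-2,k)=\gcd(2,k)\gcd(n-1,k)$.

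If $k$ is even, the ideal is just $(1+x^{n-1})$, so the cokernel is $\Coker(1_{2k}+X_{2k}^{n-1})$. Here $d=2\gcd(n-1,k)$, so $\gcd(n-1,2k)=\gcd(n-1,k)=d/2$ and $2k/\gcd(n-1,2k)=2k/(d/2)=2\bigl(k/\gcd(n-1,k)\bigr)$ is even; Lemma~\ref{basic_coker}(2) then gives $\Coker(1_{2k}+X_{2k}^{n-1})\cong\Z^{d/2}$, as desired.

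If $k$ is odd, the ideal is $(1+x^{n-1},2s_{n-1}(x))$, so the cokernel equals $\Coker\begin{bmatrix}1_{2k}+X_{2k}^{n-1} & 2s_{n-1}(X_{2k})\end{bmatrix}$. I would apply Lemma~\ref{basic_coker}(3) (the substitution $x\mapsto-x$, legitimate since $m=2k$ is even), turning this into $\Coker\begin{bmatrix}1_{2k}-X_{2k}^{n-1} & 2t_{n-1}(X_{2k})\end{bmatrix}$, where $t_{n-1}(x):=s_{n-1}(-x)=1+x+\cdots+x^{n-2}$ satisfies $(1-x)t_{n-1}(x)=1-x^{n-1}$. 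Reordering the columns and invoking Lemma~\ref{basic_coker}(4) with $p=n-1$ (note $\gcd(n-1,2k)=\gcd(n-1,k)=d$ here) reduces this to $\Coker\bigl(2t_{n-1}(X_d)\bigr)$. Since $d\mid n-1$ with quotient $(n-1)/d=r/2$, one has $t_{n-1}(x)\equiv(r/2)\,t_d(x)\pmod{x^d-1}$, so $2t_{n-1}(X_d)=r\,t_d(X_d)=r\,s_d(-X_d)$, and Lemma~\ref{basic_coker}(5) finishes with $\Coker(r\,s_d(-X_d))\cong\Z^{d-1}\oplus\Z/r\Z$.

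The hard part will be purely organizational: carefully verifying the two divisibility facts about $(1+x,1-x^k)$, and the elementary number theory pinning down $\gcd(n-1,2k)$ and the parities of $2k/\gcd(n-1,2k)$ and $(n-1)/d$ in terms of $d$ and $r$. The one genuine idea is to notice, via $1+x^{n-1}=(1+x)s_{n-1}(x)$, that the whole ideal is $s_{n-1}(x)$ times an ideal as simple as $(1+x)$ or $(1+x,2)$; after that, everything is an application of Lemma~\ref{basic_coker} already available in the paper.
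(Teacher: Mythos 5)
Your argument is correct, and it is essentially the paper's own proof: the same case split on the parity of $k$, the same reduction of the second column to nothing (for $k$ even) or to $2s_{n-1}$ (for $k$ odd, via $1+x\mid 1+x^k$), and the same terminal applications of Lemma \ref{basic_coker} (2)--(5) with $s_{n-1}(-X_d)=(r/2)s_d(-X_d)$. Your reformulation as the ideal $s_{n-1}(x)\cdot(1+x,\,1-x^k)$ in $\Z[x]/(x^{2k}-1)$ is only a cleaner packaging of the column operations the paper performs directly.
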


\begin{proof}
We can deduce
\begin{align*}
\gcd(n-1,2k) &= \begin{cases}
d/2 & (k \in    2\Z) \\
d   & (k \notin 2\Z)
\end{cases}.
\end{align*}

If $k \in 2\Z$, then we have $k/(d/2) \in 2\Z$.
Lemma \ref{basic_coker} (4) and the fact $1-x^k$ can be divided by $1+x^{d/2}$ yield
\begin{align*}
\Coker \begin{bmatrix} 1_{2k}+X_{2k}^{n-1} & (1-X_{2k}^k)s_{n-1}(X_{2k}) \end{bmatrix}
&\cong \Coker 
\begin{bmatrix} 1_{2k}+X_{2k}^{d/2} & (1-X_{2k}^k)s_{n-1}(X_{2k}) \end{bmatrix} \\
&\cong \Coker (1_{2k}+X_{2k}^{d/2}) \cong \Z^{d/2}.
\end{align*}
The proof for the case $k \in 2\Z$ is completed.

If $k \notin 2\Z$, then we have $d \notin 2\Z$.
Lemma \ref{basic_coker} (4) and the fact $1+x^k$ can be divided by $1+x^d$ yield
\begin{align*}
\Coker \begin{bmatrix} 1_{2k}+X_{2k}^{n-1} & (1-X_{2k}^k)s_{n-1}(X_{2k}) \end{bmatrix}
&\cong 
\Coker \begin{bmatrix} 1_{2k}+X_{2k}^d & (1-X_{2k}^k)s_{n-1}(X_{2k}) \end{bmatrix} \\
&\cong
\Coker \begin{bmatrix} 1_{2k}+X_{2k}^d & 2 \cdot s_{n-1}(X_{2k}) \end{bmatrix}.
\end{align*}
By Lemma \ref{basic_coker} (3) and then (4), we have
\begin{align*}
\Coker \begin{bmatrix} 1_{2k}+X_{2k}^d & 2 \cdot s_{n-1}(X_{2k}) \end{bmatrix}
&\cong \Coker \begin{bmatrix} 1_{2k}-X_{2k}^d & 2 \cdot s_{n-1}(-X_{2k}) \end{bmatrix} 
\cong \Coker \begin{bmatrix} 2 \cdot s_{n-1}(-X_d) \end{bmatrix},
\end{align*}
and then $s_{n-1}(-X_d)=(r/2)s_d(-X_d)$ and Lemma \ref{basic_coker} (5) imply
\begin{align*}
\Coker \begin{bmatrix} 2 \cdot s_{n-1}(-X_d) \end{bmatrix}
= \Coker \begin{bmatrix} 2 \cdot (r/2)s_d(-X_d) \end{bmatrix} 
= \Coker \begin{bmatrix} r \cdot s_d(-X_d) \end{bmatrix} 
\cong \Z^{d-1} \oplus (\Z/r\Z).
\end{align*}
The proof for the case $k \notin 2\Z$ is completed.
\end{proof}

\begin{Lem}[type VII]\label{coker_VII}
Let $k \ge 1$ be an integer. Put $d=\gcd(12,k)$.
Then we have
\begin{align*}
\Coker ((1_k-X_k)(1_k+X_k^3+X_k^6+X_k^9)) \cong \begin{cases}
\Z \oplus (\Z/4\Z)^{d-1}               & (d=1,3) \\
\Z^{(d+2)/2} \oplus (\Z/2\Z)^{(d-2)/2} & (d=2,6) \\
\Z^{(3d+4)/4}                          & (d=4,12) \\
\end{cases}.
\end{align*}
\end{Lem}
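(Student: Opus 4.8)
The plan is to split into the two cases $3\nmid k$ (equivalently $d\in\{1,2,4\}$) and $3\mid k$ (equivalently $d\in\{3,6,12\}$), since $\gcd(12,k)=\gcd(4,k)\cdot\gcd(3,k)$.

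For $3\nmid k$ I would argue that the cokernel is forced to be free. Here $d=\gcd(12,k)=\gcd(4,k)$. The polynomial identity $1-x^{12}=(1+x+x^2)(1-x)(1+x^3+x^6+x^9)$ in $\Z[x]$ gives the matrix identity $1_k-X_k^{12}=(1_k+X_k+X_k^2)\,(1_k-X_k)(1_k+X_k^3+X_k^6+X_k^9)$, so $\Im(1_k-X_k^{12})$ is contained in the image of our matrix and hence $\Coker((1_k-X_k)(1_k+X_k^3+X_k^6+X_k^9))$ is a quotient of $\Coker(1_k-X_k^{12})\cong\Z^d$ (Lemma \ref{basic_coker} (2)), which is free of rank $d$. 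On the other hand, the $\Z$-rank of the cokernel equals the number of $k$-th roots of unity $\zeta$ with $(1-\zeta)(1+\zeta^3+\zeta^6+\zeta^9)=0$; since $3\nmid k$, a $k$-th root with $\zeta^3=1$ must be $1$, so this set is exactly $\{\zeta:\zeta^d=1\}$ and the rank is $d$. A rank-$d$ quotient of $\Z^d$ is $\Z^d$, giving the asserted answer for $d=1,2,4$.

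For $3\mid k$, set $g(y)=1+y+y^2+y^3$, so our matrix is $(1_k-X_k)g(X_k^3)$ with $\gcd(3,k)=3$ and $d=3\gcd(4,k/3)$. Lemma \ref{d>=2} applied with $p=3$ gives
\[
\Coker\bigl((1_k-X_k)g(X_k^3)\bigr)\cong\bigl(\Coker g(X_{k/3})\bigr)^{2}\oplus\Coker\bigl((1_{k/3}-X_{k/3})g(X_{k/3})\bigr).
\]
Using $(1-y)g(y)=1-y^4$, the last summand is $\Coker(1_{k/3}-X_{k/3}^4)\cong\Z^{d/3}$ by Lemma \ref{basic_coker} (2); the same identity together with commutativity of circulant matrices shows $\Im(1_{k/3}-X_{k/3}^4)\subseteq\Im(g(X_{k/3}))$, so $\Coker g(X_{k/3})\cong\Coker\begin{bmatrix}g(X_{k/3})&1_{k/3}-X_{k/3}^4\end{bmatrix}\cong\Coker g(X_{d/3})$ by Lemma \ref{basic_coker} (4). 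A short computation with Lemma \ref{basic_coker} (2),(3) (or directly, noting $g(X_1)=[4]$, $g(X_2)=2(1_2+X_2)$, and $g(X_4)$ is the all-ones matrix) gives $\Coker g(X_1)\cong\Z/4\Z$, $\Coker g(X_2)\cong\Z\oplus\Z/2\Z$, $\Coker g(X_4)\cong\Z^3$; plugging in $d/3=1,2,4$ produces the three remaining cases $d=3,6,12$.

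The only mildly delicate points are pinning down the rank count exactly in the first case, so that "a rank-$d$ quotient of $\Z^d$" genuinely forces freeness, and observing $\Im(1-X^4)\subseteq\Im(g(X))$ so that Lemma \ref{basic_coker} (4) applies to the bare cokernel $\Coker g(X_{k/3})$; the rest is bookkeeping with divisors of $12$ and $k$.
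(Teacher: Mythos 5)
Your proof is correct. The branch $3\mid k$ follows the paper's route almost exactly: the key step in both is Lemma \ref{d>=2} applied with $p=3$ to split off $(\Coker g)^{2}\oplus\Coker((1-X)g)$ with $g(y)=1+y+y^2+y^3$, followed by the identity $(1-y)g(y)=1-y^4$; the only cosmetic difference is that the paper first shrinks the whole matrix from size $k$ to size $d$ via Lemma \ref{basic_coker} (4) and then identifies $g(X_{d/3})$ with $(12/d)\,s_{d/3}(-X_{d/3})$ so as to quote Lemma \ref{basic_coker} (5), whereas you apply Lemma \ref{d>=2} at size $k$ and then shrink $\Coker g(X_{k/3})$ to $\Coker g(X_{d/3})$ by the same append-a-redundant-column trick, finishing by direct inspection of the three small matrices. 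Where you genuinely diverge is the branch $3\nmid k$: the paper reduces to the $d\times d$ matrix and simply observes that $(1-x)(1+x^3+x^6+x^9)\equiv 0 \bmod (1-x^d)$ for $d=1,2,4$, so the cokernel is $\Coker 0_d=\Z^d$; you instead sandwich the cokernel as a quotient of $\Z^d$ and pin down its free rank by counting the $k$-th roots of unity annihilated by the polynomial, concluding freeness from "a rank-$d$ quotient of $\Z^d$ is $\Z^d$." Your eigenvalue count is a technique the paper never uses, and it is a robust general-purpose way to read off the free rank of $\Coker f(X_k)$; the paper's explicit reduction is shorter here and additionally exhibits the zero matrix, but both arguments are complete and the bookkeeping in all six cases of $d$ checks out against the stated answer.
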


\begin{proof}
From Lemma \ref{basic_coker} (4),
we can deduce 
\begin{align*}
\Coker ((1_k-X_k)(1_k+X_k^3+X_k^6+X_k^9))
&= \Coker \begin{bmatrix} (1_k-X_k)(1_k+X_k^3+X_k^6+X_k^9) & 1_k-X_k^{12} \end{bmatrix} \\
&\cong \Coker ((1_d-X_d)(1_d+X_d^3+X_d^6+X_d^9)).
\end{align*}

Assume $d=1,2,4$.
Then $\Coker ((1_d-X_d)(1_d+X_d^3+X_d^6+X_d^9))=\Coker 0_d \cong \Z^d$.

Assume $d=3,6,12$.
Lemma \ref{d>=2} can be used and then the cokernel is isomorphic to
\begin{align*}
&(\Coker (1_{d/3}+X_{d/3}+X_{d/3}^2+X_{d/3}^3))^2
\oplus \Coker ((1_{d/3}-X_{d/3})(1_{d/3}+X_{d/3}+X_{d/3}^2+X_{d/3}^3)) \\
&= (\Coker ((12/d) \cdot s_{d/3}(-X_{d/3})))^2 \oplus \Coker 0_{d/3}.
\end{align*}
From Lemma \ref{coker_s}, it is isomorphic to
\begin{align*}
(\Z^{d/3-1} \oplus (\Z/(12/d)\Z))^2 \oplus \Z^{d/3} 
= \Z^{d-2} \oplus (\Z/(12/d)\Z))^2.
\end{align*}

We can easily check that the assertion holds.
\end{proof}

\begin{Lem}[type VIII]\label{coker_VIII_1}
Let $k \ge 1$ be an integer. Put $d=\gcd(12,k)$.
Then we have
\begin{align*}
\Coker \begin{bmatrix} (1_{2k}-X_{2k})(1_{2k}+X_{2k}^3+X_{2k}^6+X_{2k}^9) 
& 1_{2k}+X_{2k}^{k-6} \end{bmatrix} 
&\cong \begin{cases}
\Z^d                & (d=1,3)  \\
(\Z/2\Z)^{(3d+2)/2} & (d=2,6)  \\
\Z^{d/2}            & (d=4,12) \\
\end{cases}
\end{align*}
\end{Lem}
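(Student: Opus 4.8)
The plan is to follow the pattern of the proof of Lemma~\ref{coker_VII}, carrying the extra column $1_{2k}+X_{2k}^{k-6}$ along. First I would use the factorization $1-x^{12}=(1+x+x^2)(1-x)(1+x^3+x^6+x^9)$, which shows $\Im(1_{2k}-X_{2k}^{12})\subseteq\Im\bigl((1_{2k}-X_{2k})(1_{2k}+X_{2k}^3+X_{2k}^6+X_{2k}^9)\bigr)$, so that the column $1_{2k}-X_{2k}^{12}$ may be adjoined to the matrix of the statement without changing its cokernel. Then Lemma~\ref{basic_coker}(4), applied with $p=12$, reduces the ambient size from $2k$ to $d'=\gcd(12,2k)$, giving
\begin{align*}
\Coker\begin{bmatrix}(1_{2k}-X_{2k})(1_{2k}+X_{2k}^3+X_{2k}^6+X_{2k}^9) & 1_{2k}+X_{2k}^{k-6}\end{bmatrix}
\cong\Coker\begin{bmatrix}(1_{d'}-X_{d'})(1_{d'}+X_{d'}^3+X_{d'}^6+X_{d'}^9) & 1_{d'}+X_{d'}^{k-6}\end{bmatrix}.
\end{align*}
A case analysis over the six divisors $d=\gcd(12,k)$ of $12$ shows that, for $d=1,2,3,4,6,12$, one has $d'=2,4,6,4,12,12$ and $k-6\equiv 1,0,3,2,0,6$ modulo $d'$, so it remains to evaluate six explicit cokernels of small integer matrices.

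For $d\in\{1,3,12\}$ the residue $k-6\bmod d'$ is $1,3,6$, and in each case $1+x^{k-6}$ divides $1+x^3+x^6+x^9$ in $\Z[x]/(x^{d'}-1)$ (namely $1+x^3+x^6+x^9\equiv 2(1+x),\ 2(1+x^3),\ (1+x^3)(1+x^6)$ respectively), so the polynomial block factors through $1_{d'}+X_{d'}^{k-6}$ and the cokernel is $\Coker(1_{d'}+X_{d'}^{k-6})$, which by Lemma~\ref{basic_coker}(2) is $\Z,\Z^3,\Z^6$. For $d\in\{2,4\}$ we have $d'=4$ and $(1_4-X_4)(1_4+X_4^3+X_4^6+X_4^9)=(1_4-X_4)(1_4+X_4+X_4^2+X_4^3)=1_4-X_4^4=0$, while the remaining column is $2\cdot 1_4$ for $d=2$ and $1_4+X_4^2$ for $d=4$; the cokernels are thus $(\Z/2\Z)^4$ and, by Lemma~\ref{basic_coker}(2), $\Z^2$. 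Finally, for $d=6$ we have $d'=12$ and column $2\cdot 1_{12}$; writing the first block as $(1_{12}-X_{12})s_4(-X_{12}^3)$ and using $\gcd(3,12)=3\ge 2$, Lemma~\ref{d>=2} gives $\bigl(\Coker\begin{bmatrix}s_4(-X_4) & 2\cdot 1_4\end{bmatrix}\bigr)^{2}\oplus\Coker\begin{bmatrix}(1_4-X_4)s_4(-X_4) & 2\cdot 1_4\end{bmatrix}$, which equals $(\Z/2\Z)^3\oplus(\Z/2\Z)^3\oplus(\Z/2\Z)^4=(\Z/2\Z)^{10}$ by Lemma~\ref{basic_coker}(5) and $(1_4-X_4)s_4(-X_4)=0$. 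Rewriting the six answers in terms of $d$ — so $\Z$ and $\Z^3$ become $\Z^d$, the groups $(\Z/2\Z)^4$ and $(\Z/2\Z)^{10}$ become $(\Z/2\Z)^{(3d+2)/2}$, and $\Z^2$ and $\Z^6$ become $\Z^{d/2}$ — yields the stated formula.

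The routine part is the arithmetic bookkeeping: reading off $d'=\gcd(12,2k)$ and the residue of $k-6$ modulo $d'$ from $d=\gcd(12,k)$, which one simply checks through the six divisors of $12$. The only genuinely delicate case is $d=6$, where neither the extra column nor a divisibility argument annihilates the polynomial block, so one must use the block decomposition of Lemma~\ref{d>=2} together with the computation $\Coker\begin{bmatrix}s_4(-X_4) & 2\cdot 1_4\end{bmatrix}\cong(\Z/2\Z)^3$ from Lemma~\ref{basic_coker}(5) to see that the $2$-torsion rank comes out to $10=(3\cdot 6+2)/2$ rather than something larger. This is exactly the mechanism already used in the proof of Lemma~\ref{coker_VII} for type VII, so no new ideas are needed beyond tracking the extra column.
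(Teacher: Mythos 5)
Your proposal is correct and follows essentially the same route as the paper: reduce via Lemma \ref{basic_coker}(4), absorb the polynomial block into the remaining column by divisibility in the easy cases, and handle $d=6$ with Lemma \ref{d>=2} together with Lemma \ref{basic_coker}(5), arriving at the same six cokernels $\Z,(\Z/2\Z)^4,\Z^3,\Z^2,(\Z/2\Z)^{10},\Z^6$. The only cosmetic difference is that you uniformly adjoin the column $1_{2k}-X_{2k}^{12}$ and shrink the ambient size to $\gcd(12,2k)$ in all six cases, whereas the paper reduces the column $1_{2k}+X_{2k}^{k-6}$ directly for $d\in\{1,3,4,12\}$ and only adjoins $1_{2k}-X_{2k}^{12}$ for $d\in\{2,6\}$.
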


\begin{proof}
If $d=1,3$, then we have $\gcd(k-6,2k)=d$ and $1+x^d$ can divide $(1-x)(1+x^3+x^6+x^9)$.
From Lemma \ref{basic_coker} (4), we can deduce
\begin{align*}
&\Coker \begin{bmatrix} (1_{2k}-X_{2k})(1_{2k}+X_{2k}^3+X_{2k}^6+X_{2k}^9) 
& 1_{2k}+X_{2k}^{k-6} \end{bmatrix} \\
&\cong \Coker \begin{bmatrix} (1_{2k}-X_{2k})(1_{2k}+X_{2k}^3+X_{2k}^6+X_{2k}^9)  
& 1_{2k}+X_{2k}^d \end{bmatrix} 
\cong \Coker (1_{2k}+X_{2k}^d) \cong \Z^d.
\end{align*}
The proof for the case $d=1,3$ is completed.

If $d=2,6$, then we have $\gcd(12,2k)=2d$.
From Lemma \ref{basic_coker} (4), we can deduce
\begin{align*}
&\Coker \begin{bmatrix} (1_{2k}-X_{2k})(1_{2k}+X_{2k}^3+X_{2k}^6+X_{2k}^9) 
& 1_{2k}+X_{2k}^{k-6} \end{bmatrix} \\
&\cong \Coker \begin{bmatrix} (1_{2k}-X_{2k})(1_{2k}+X_{2k}^3+X_{2k}^6+X_{2k}^9) 
& 1_{2k}-X_{2k}^{12} & 1_{2k}+X_{2k}^{k-6} \end{bmatrix} \\
&\cong \Coker \begin{bmatrix} (1_{2k}-X_{2k})(1_{2k}+X_{2k}^3+X_{2k}^6+X_{2k}^9) 
& 1_{2k}-X_{2k}^{2d} & 1_{2k}+X_{2k}^{k-6} \end{bmatrix}.
\end{align*}
From Lemma \ref{basic_coker} (4) and the fact that $2d$ divides $k-6$, it is isomorphic to
\begin{align*}
&\Coker \begin{bmatrix} (1_{2d}-X_{2d})(1_{2d}+X_{2d}^3+X_{2d}^6+X_{2d}^9) 
& 1_{2k}+X_{2d}^{k-6} \end{bmatrix} \\
&\cong \Coker \begin{bmatrix} (1_{2d}-X_{2d})(1_{2d}+X_{2d}^3+X_{2d}^6+X_{2d}^9) 
& 2 \cdot 1_{2d} \end{bmatrix}.
\end{align*}
If $d=2$, this can be calculated as
\begin{align*}
\Coker \begin{bmatrix} (1_{2d}-X_{2d})(1_{2d}+X_{2d}^3+X_{2d}^6+X_{2d}^9)
& 2 \cdot 1_{2d} \end{bmatrix} 
&=\Coker \begin{bmatrix} (1_4-X_4)(1_4+X_4^3+X_4^6+X_4^9)
& 2 \cdot 1_4 \end{bmatrix} \\
&\cong \Coker \begin{bmatrix} 0_4 & 2 \cdot 1_4 \end{bmatrix} 
\cong (\Z/2\Z)^4.
\end{align*}
If $d=6$, we have
\begin{align*}
\Coker \begin{bmatrix} (1_{2d}-X_{2d})(1_{2d}+X_{2d}^3+X_{2d}^6+X_{2d}^9)
& 2 \cdot 1_{2d} \end{bmatrix} 
=\Coker \begin{bmatrix} (1_{12}-X_{12})(1_{12}+X_{12}^3+X_{12}^6+X_{12}^9)
& 2 \cdot 1_{12} \end{bmatrix}.
\end{align*}
Apply Lemmas \ref{d>=2} and \ref{coker_s}, then it is isomoprhic to 
\begin{align*}
&(\Coker \begin{bmatrix} 1_4+X_4+X_4^2+X_4^3
& 2 \cdot 1_4 \end{bmatrix})^2
\oplus \Coker \begin{bmatrix} (1_4-X_4)(1_4+X_4+X_4^2+X_4^3)
& 2 \cdot 1_4 \end{bmatrix} \\
&= (\Coker \begin{bmatrix} s_4(-X_4)
& 2 \cdot 1_4 \end{bmatrix})^2
\oplus \Coker \begin{bmatrix} 0_4
& 2 \cdot 1_4 \end{bmatrix} 
\cong ((\Z/2\Z)^3)^2 \oplus (\Z/2\Z)^4 
=(\Z/2\Z)^{10}.
\end{align*}
The proof for the case $d=2,6$ is completed.

If $d=4,12$, then we have 
$\gcd(k-6,6)=d/2$ and $1+x^{d/2}$ can divide $(1-x)(1+x^3+x^6+x^9)$.
From Lemma \ref{basic_coker} (4), we can deduce
\begin{align*}
&\Coker \begin{bmatrix} (1_{2k}-X_{2k})(1_{2k}+X_{2k}^3+X_{2k}^6+X_{2k}^9) 
& 1_{2k}+X_{2k}^{k-6} \end{bmatrix} \\
&\cong \Coker \begin{bmatrix} (1_{2k}-X_{2k})(1_{2k}+X_{2k}^3+X_{2k}^6+X_{2k}^9) 
& 1_{2k}+X_{2k}^{d/2} \end{bmatrix} 
\cong \Coker (1_{2k}+X_{2k}^{d/2}) \cong \Z^{d/2}.
\end{align*}
The proof for the case $d=4,12$ is completed.
\end{proof}

\begin{Lem}[type VIII]\label{coker_VIII_2}
Let $k \ge 1$ be an integer. Put $d=\gcd(12,k)$.
Then we have
\begin{align*}
\Coker \begin{bmatrix} 1_{2k}+X_{2k}^6 & 1_{2k}+X_{2k}^{k-6} \end{bmatrix} 
&\cong \begin{cases}
(\Z/2\Z)^d & (d=1,3,2,6)  \\
\Z^{d/2}   & (d=4,12) \\
\end{cases}, \\
\Coker \begin{bmatrix} 1_{2k}+X_{2k}^2 & 1_{2k}+X_{2k}^{k-6} \end{bmatrix} 
&\cong \begin{cases}
(\Z/2\Z)   & (d=1,3)  \\
(\Z/2\Z)^2 & (d=2,6)  \\
\Z^2       & (d=4,12) \\
\end{cases}.
\end{align*}
\end{Lem}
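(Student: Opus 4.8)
The plan is to reduce both cokernels to the normal forms computed in Lemma \ref{basic_coker}, exactly in the style of the preceding lemmas. The first step is purely arithmetic: for each value of $d=\gcd(12,k)$ one reads off $\gcd(6,2k)$ and $\gcd(k-6,2k)$ from the residue of $k$ modulo $12$, obtaining $(\gcd(6,2k),\gcd(k-6,2k))=(2,1)$ for $d=1$, $(2,4)$ for $d=2$, $(6,3)$ for $d=3$, $(2,2)$ for $d=4$, $(6,12)$ for $d=6$ and $(6,6)$ for $d=12$; also $\gcd(2,2k)=2$ always. Using the identity $\Im(1_m\pm X_m^p)=\Im(1_m\pm X_m^{\gcd(p,m)})$ of Lemma \ref{basic_coker} (4), applied to one block at a time, I replace $X_{2k}^6$ by $X_{2k}^{a}$ and $X_{2k}^{k-6}$ by $X_{2k}^{b}$ with $(a,b)$ the pair above for the given $d$; in the second cokernel $X_{2k}^2$ is already reduced.

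After this step each cokernel has the form $\Coker \begin{bmatrix} 1_{2k}+X_{2k}^a & 1_{2k}+X_{2k}^b \end{bmatrix}$ with $a\mid 2k$ and $b\mid 2k$, and in all but one case one of $a,b$ divides the other; after permuting the two columns if necessary, assume $a\mid b$. I then use two elementary divisibility facts in $\Z[x]/(1-x^{2k})$: if $b/a$ is odd, then $1+x^a$ divides $1+x^b$, so the matrix reduces to $1_{2k}+X_{2k}^a$; if $b/a$ is even, then $1+x^a$ divides $1-x^b$, whence $1+x^b\equiv 2\pmod{(1+x^a)}$ and the matrix reduces to $\begin{bmatrix} 1_{2k}+X_{2k}^a & 2\cdot 1_{2k} \end{bmatrix}$. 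The first situation occurs only for $d=4,12$, where $2k/a$ is even and Lemma \ref{basic_coker} (2) gives $\Coker(1_{2k}+X_{2k}^a)\cong\Z^a$; in the second situation I apply Lemma \ref{basic_coker} (3) to flip the sign of $X_{2k}^a$ and then Lemma \ref{basic_coker} (2) to get $(\Z/2\Z)^{\gcd(a,2k)}=(\Z/2\Z)^a$. Running through $d=1,2,3,4,6,12$ with the pairs $(a,b)$ listed above reproduces every entry of the claimed table; the sole $d$ for which neither of $a,b$ divides the other is $d=3$ in the second cokernel, where $(a,b)=(2,3)$. There I adjoin the column $1_{2k}-X_{2k}^4$ — legitimate since $1+x^2$ divides $1-x^4$ — and apply Lemma \ref{basic_coker} (4) with $p=4$ (here $\gcd(4,2k)=2$ because $k$ is odd) to reduce to $\Coker \begin{bmatrix} 1_2+X_2^2 & 1_2+X_2^3 \end{bmatrix}=\Coker \begin{bmatrix} 2\cdot 1_2 & 1_2+X_2 \end{bmatrix}$, which is $\Z/2\Z$ by a short Smith-normal-form computation.

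This is bookkeeping rather than a single hard step. The points needing care are that each substitution $X_{2k}^p\mapsto X_{2k}^{\gcd(p,2k)}$ is legitimate on the combined matrix — it alters only the image of one block, and the cokernel depends only on the total image — and that the parity of $2k/a$, which decides whether $\Coker(1_{2k}+X_{2k}^a)$ is free or $2$-torsion, is tracked against $d$ correctly. The one case that is not purely formal is $d=3$ of the second cokernel, where $a=2$ and $b=3$ are coprime so Lemma \ref{basic_coker} does not apply directly; the auxiliary relation $1_{2k}-X_{2k}^4\in\Im(1_{2k}+X_{2k}^2)$ is the device that resolves it.
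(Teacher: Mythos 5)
Your computation is correct and reaches all the stated values, but it takes a more case-by-case route than the paper. The paper handles both cokernels uniformly with a single column operation: multiplying the block $1_{2k}+X_{2k}^{k-6}$ by the unit $X_{2k}^6$ and subtracting the other block turns it into $1_{2k}-X_{2k}^k$ (e.g.\ $(1+x^{k-6})x^6-(1+x^6)=x^k-1$), after which Lemma \ref{basic_coker} (4) with $p=k$ collapses the whole matrix to $\Coker(1_k+X_k^6)$, resp.\ $\Coker(1_k+X_k^2)$, and Lemma \ref{basic_coker} (2) finishes in one stroke. You instead reduce each block's exponent to its gcd with $2k$ and run through the six values of $d$; this is legitimate (the blockwise substitution preserves the total image, as you note) and your gcd bookkeeping is accurate, including the exceptional pair $(a,b)=(2,3)$ at $d=3$, which you resolve correctly by adjoining $1_{2k}-X_{2k}^4$. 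The paper's argument is shorter and avoids the case analysis entirely, but yours is self-contained and exposes where the answer switches between free and $2$-torsion.

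One justification needs repair. In your ``second situation'' you reduce to $\begin{bmatrix}1_{2k}+X_{2k}^a & 2\cdot 1_{2k}\end{bmatrix}$ and invoke Lemma \ref{basic_coker} (3) to ``flip the sign of $X_{2k}^a$''. That lemma substitutes $X_{2k}\mapsto -X_{2k}$, so it sends $1+X_{2k}^a$ to $1+(-X_{2k})^a$, which equals $1+X_{2k}^a$ again whenever $a$ is even --- and $a$ is even in several of your cases ($a=2$ for $d=2$, $a=6$ for $d=6$ in the first cokernel, and $a=2$ for $d=2,6$ in the second). The conclusion $\Coker\begin{bmatrix}1_{2k}+X_{2k}^a & 2\cdot 1_{2k}\end{bmatrix}\cong(\Z/2\Z)^a$ is still true, but the correct justification is the column operation $(1_{2k}+X_{2k}^a)-X_{2k}^a\cdot(2\cdot 1_{2k})=1_{2k}-X_{2k}^a$ (equivalently, the two blocks agree modulo $2$), followed by Lemma \ref{basic_coker} (2). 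With that substitution your proof is complete.
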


\begin{proof}
Using Lemma \ref{basic_coker} (4) and then (2), it is easy to see
\begin{align*}
\Coker \begin{bmatrix} 1_{2k}+X_{2k}^6 & 1_{2k}+X_{2k}^{k-6} \end{bmatrix} 
&\cong \Coker \begin{bmatrix} 1_{2k}+X_{2k}^6 & 1_{2k}-X_{2k}^k \end{bmatrix}
\cong \Coker (1_k+X_k^6) \\
&\cong \begin{cases}
(\Z/2\Z)^d & (d=1,3,2,6)  \\
\Z^{d/2}   & (d=4,12) \\
\end{cases}.
\end{align*}
The remained assertion can be also proved similarly.
\end{proof}

\begin{Lem}[type X]\label{coker_X}
Let $k \ge 1$ be an integer. Put $d=\gcd(30,k)$.
Then we have
\begin{align*}
\Coker ((1_k-X_k+X_k^2)(1_k+X_k^5))
&\cong \begin{cases}
(\Z/2\Z)^d   & (d=1,3,5)  \\
(\Z/2\Z)^7   & (d=15)     \\
\Z^{d/2}     & (d=2,6,10) \\
\Z^7         & (d=30)  
\end{cases}.
\end{align*}
\end{Lem}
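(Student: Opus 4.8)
The plan is to follow the pattern of the earlier lemmas in this subsection: first reduce $k$ to a divisor of $30$, then dispose of the finitely many remaining values by reducing the polynomial $(1-x+x^2)(1+x^5)$ modulo $x^k-1$ and invoking Lemmas \ref{basic_coker}, \ref{d>=2} and \ref{coker_s}.

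For the reduction, the point is that $(1-x+x^2)(1+x^5)$ divides $1-x^{30}$ in $\Z[x]$: since $(1-x+x^2)(1+x)=1+x^3$ divides $1+x^{15}$, since $1+x^5$ divides $1+x^{15}$, and since $1-x+x^2$ and $1+x^5$ are coprime in $\Z[x]$ (a common root would have order dividing $\gcd(6,10)=2$, but neither $1$ nor $-1$ is a root of $1-x+x^2$), the product $(1-x+x^2)(1+x^5)$ divides $1+x^{15}$, hence $1-x^{30}$. Thus $\Im(1_k-X_k^{30})\subseteq\Im((1_k-X_k+X_k^2)(1_k+X_k^5))$, and because $\Im(1_k-X_k^{30})=\Im(1_k-X_k^d)$ by Lemma \ref{basic_coker} (4), the same lemma yields
\begin{align*}
\Coker((1_k-X_k+X_k^2)(1_k+X_k^5))\cong\Coker((1_d-X_d+X_d^2)(1_d+X_d^5)),
\end{align*}
with $d=\gcd(30,k)\in\{1,2,3,5,6,10,15,30\}$, so it remains to treat these eight cases.

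Most of them are immediate. For $d=1,2,3$ one reduces the polynomial modulo $x^d-1$ by hand to $2$, $1_2+X_2$, and $2X_3^2$, giving $\Z/2\Z$, $\Z$ and $(\Z/2\Z)^3$. For $d=5$ one has $1_5+X_5^5=2\cdot1_5$ while $1_5-X_5+X_5^2\in\GL_5(\Z)$ (its determinant is the resultant of $1-x+x^2$ with $x^5-1$, which is $1$), so the cokernel is $\Coker(2\cdot1_5)\cong(\Z/2\Z)^5$. For $d=6$ the reduction gives $X_6^2(1_6+X_6^3)$, whose cokernel is $\Z^3$ by Lemma \ref{basic_coker} (2). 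For $d=30$ the polynomial $(1-x+x^2)(1+x^5)$ itself divides $x^{30}-1$ (all its roots are $30$th roots of unity), so the cokernel is free of rank $\deg((1-x+x^2)(1+x^5))=7$.

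The genuinely delicate cases, and the main obstacle, are $d=10$ and $d=15$, where the polynomial does not collapse. For $d=10$, Lemma \ref{basic_coker} (3) lets one replace it by $(1+x+x^2)(1-x^5)=(1+x+x^2)(1-x)(1+x+x^2+x^3+x^4)$; since multiplication by $1_{10}+X_{10}+X_{10}^2$ is injective, the cokernel is an extension of $\Coker(1_{10}+X_{10}+X_{10}^2)$ (which one computes to be $\Z/3\Z$ via Lemma \ref{coker_s}) by $\Coker(1_{10}-X_{10}^5)\cong\Z^5$, and a dimension count modulo $3$ shows this extension is non-split, so the cokernel is $\Z^5$. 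For $d=15$ the polynomial is coprime to $x^{15}-1$ in $\Q[x]$, so the cokernel is finite of order $|\det(1_{15}-X_{15}+X_{15}^2)|\cdot|\det(1_{15}+X_{15}^5)|=4\cdot32=2^7$; to see it is $(\Z/2\Z)^7$ one checks it is killed by $2$, namely that modulo $2$ the polynomial $(1+x+x^2)(1+x^5)$ is squarefree (its irreducible factors $1+x+x^2$, $1+x$, $1+x+x^2+x^3+x^4$ are distinct) and divides $x^{15}-1$, so the reduction modulo $2$ of the cokernel has $\Z/2\Z$-dimension $7$, forcing the $2$-group of order $2^7$ to be elementary abelian. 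Pinning down the torsion in these two cases is the only real work; everything else is a routine application of the lemmas already proved.
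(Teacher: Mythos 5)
Your proof is correct, but it takes a genuinely different route from the paper's for the hard cases. The paper splits on $\gcd(5,k)$: when $5\nmid k$ it replaces $1_k+X_k^5$ by $1_k+X_k$ via Lemma \ref{basic_coker} (4) so that the product collapses to $1_k+X_k^3$, and when $5\mid k$ it conjugates by $Y_{k,5}$ and performs explicit block column/row operations to turn $(1_k-X_k+X_k^2)(1_k+X_k^5)$ into $(1_{k/5}+X_{k/5})^{\oplus 4}\oplus(1_{k/5}+X_{k/5}^3)$, after which everything is read off from Lemma \ref{basic_coker} (2). That decomposition works uniformly in $k$ and never requires any torsion analysis. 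You instead first reduce to $k=d\in\{1,2,3,5,6,10,15,30\}$ by adjoining the column $1_k-X_k^{30}$ (legitimate, since $(1-x+x^2)(1+x^5)=\Phi_2\Phi_6\Phi_{10}$ divides $1-x^{30}$, and this is exactly the move used in Lemma \ref{coker_VII}), and then treat eight concrete matrices, which forces you to do real work at $d=10$ and $d=15$: an extension argument $0\to\Coker(1-X^5)\to M\to\Coker(1+X+X^2)\to 0$ plus a reduction mod $3$ in the first case, and a determinant/resultant count plus a squarefreeness check mod $2$ in the second. I checked these: $\det(1_{15}-X_{15}+X_{15}^2)=4$, $\det(1_{15}+X_{15}^5)=2^5$, the mod-$2$ factorization is as you say, and over $\mathbb{F}_3$ one has $\gcd\bigl((1-x+x^2)(1+x^5),\,x^{10}-1\bigr)=(x+1)\Phi_{10}$ of degree $5$, so the mod-$3$ dimension is $5$ and there is indeed no $3$-torsion. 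The one place you should tighten the write-up is precisely that last point: you assert ``a dimension count modulo $3$ shows this extension is non-split'' without performing it, and since the whole issue in that case is whether a $\Z/3\Z$ survives, the computation of $\gcd$ mod $3$ should be displayed. Your approach buys a self-contained, conceptually transparent case analysis (and the slick observation that $\Coker f(X_m)\cong\Z[x]/(f)$ when $f\mid x^m-1$, which handles $d=30$ in one line); the paper's buys uniformity and the complete avoidance of resultants and mod-$p$ arguments.
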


\begin{proof}
Assume $d=1,3,2,6$, then we have $\gcd(5,k)=1$.
Lemma \ref{basic_coker} (4) yields
\begin{align*}
\Coker ((1_k-X_k+X_k^2)(1_k+X_k^5)) &\cong \Coker ((1_k-X_k+X_k^2)(1_k+X_k)) \\
&\cong \Coker (1_k+X_k^3) 
\cong \begin{cases}
(\Z/2\Z)^d   & (d=1,3) \\
\Z^{d/2}     & (d=2,6) \\
\end{cases}.
\end{align*}

Assume $d=5,15,10,30$, then we have $\gcd(5,k)=5$.
We have $Y_{k,5}^{-1}(1_k-X_k+X_k^2)(1_k+X_k^5)Y_{k,5}$ is equal to
\begin{align*}
\begin{bmatrix}
 1_{k/5}+X_{k/5}&                &                & X_{k/5}+X_{k/5}^2&-X_{k/5}-X_{k/5}^2\\ 
-1_{k/5}-X_{k/5}& 1_{k/5}+X_{k/5}&                &                  & X_{k/5}+X_{k/5}^2\\
 1_{k/5}+X_{k/5}&-1_{k/5}-X_{k/5}& 1_{k/5}+X_{k/5}&                  &                  \\
                & 1_{k/5}+X_{k/5}&-1_{k/5}-X_{k/5}& 1_{k/5}+X_{k/5}  &                  \\
                &                & 1_{k/5}+X_{k/5}&-1_{k/5}-X_{k/5}  & 1_{k/5}+X_{k/5}
\end{bmatrix}.
\end{align*}
It is transformed into
\begin{align*}
\begin{bmatrix}
(1_{k/5}+X_{k/5})^{\oplus 3} &                   &                   \\
                             & 1_{k/5}-X_{k/5}^2 & X_{k/5}+X_{k/5}^2 \\
                             &-1_{k/5}-X_{k/5}   & 1_{k/5}-X_{k/5}^2
\end{bmatrix}
\ \mbox{and} \ 
\begin{bmatrix}
(1_{k/5}+X_{k/5})^{\oplus 3} &                   &                   \\
                             &                   & 1_{k/5}+X_{k/5}^3 \\
                             &-1_{k/5}-X_{k/5}   & 
\end{bmatrix}.
\end{align*}
Thus we have
\begin{align*}
\Coker ((1_k-X_k+X_k^2)(1_k+X_k^5)) 
&\cong (\Coker (1_{k/5}+X_{k/5}))^4 \oplus \Coker (1_{k/5}+X_{k/5}^3) \\
&\cong \begin{cases}
(\Z/2\Z)^4 \oplus (\Z/2\Z)^{d/5}  & (d=5,15) \\
\Z^4       \oplus \Z^{d/10}       & (d=10,30)
\end{cases}.
\end{align*}

Now, the assertion can be proved easily.
\end{proof}

\section{Maximal rigid and cluster-tilting objects}

In this section, 
we deal with maximal rigid and cluster-tilting objects of 
the stable module categories of finite-dimensional mesh algebras
and use them as invariants of stable equivalences.
In the rest, we assume that $K$ is an algebraically closed field.
Let $Q=Q_{\Delta,l,t}$ be a translation quiver in Definition \ref{quiver_def}
and $\Lambda=\Lambda_{\Delta,l,t}$ be the corresponding finite-dimensional mesh algebra.

%Let us recall and slightly extend the notion of orbit categories.
We consider an automorphism $\mu \colon \Lambda \to \Lambda$
and construct an autoequivalence $\mu_*$ on $\mod \Lambda$ from $\mu$ as follows.

\begin{Def}
Let $Q=Q_{\Delta,l,t}$, $\Lambda=\Lambda_{\Delta,l,t}$, 
and $\mu \colon \Lambda \to \Lambda$ be an automorphism on the $K$-algebra $\Lambda$.
We define an autoequivalence $\mu_* \colon \mod \Lambda \to \mod \Lambda$
as $\mu_*=(? \otimes_\Lambda ({_1 \Lambda _{\mu^{-1}}}))$,
where the right action of $\Lambda$ on ${_1 \Lambda _{\mu^{-1}}}$ is defined by
$x \cdot \lambda = x\mu^{-1}(\lambda)$ 
in $x \in {_1 \Lambda _{\mu^{-1}}}$ and $\lambda \in \Lambda$.
%If an object $M \in \mod \Lambda$ satisfies $\mu_*(M) \cong M$ in $\mod \Lambda$,
%then we say $M$ is \textit{$\mu_*$-stable}.

If functor $\mu_*$ is restricted to the projective $\Lambda$-modules,
then $\mu_*$ also acts on $\umod \Lambda$.
%and it is a triangulated functor.
%In this case, 
%we also use the term \textit{$\mu_*$-stable} for the objects in $\umod \Lambda$.
\end{Def}

If $\rho \in \Aut Q$ is an automorphism on a translation quiver $Q$, 
we naturally extend $\rho$ to a natural automorphism 
$\rho \colon \Lambda \to \Lambda$,
and define the functor $\rho_* \colon \mod \Lambda \to \mod \Lambda$ as above.
The functor $\rho_*$ is restricted to the projective $\Lambda$-modules.
We consider the quotient quiver of $Q$ by $\rho$,
and define the push-down funtor and the pull-up functor.

\begin{Def}
Let $Q=Q_{\Delta,l,t}$, $\Lambda=\Lambda_{\Delta,l,t}$, 
and $\rho \in \Aut Q$ such that its order on $Q$ is $m$.
We say $\rho$ is \textit{free} 
if $\rho^j(u) \ne u$ for any $u \in Q_0$ and $j=1,\ldots,m-1$.
If $\rho$ is free, we write $Q/\langle \rho \rangle$ 
for the quotient translation quiver of $Q$ by $\rho$,
and $\Lambda/\langle \rho \rangle$ for the corresponding quotient mesh algebra.
%We denote by $q_\rho$ the quotient map $Q \to Q/\langle \rho \rangle$
%and its extension $\Lambda \to \Lambda/\langle \rho \rangle$.

The \textit{push-down} functor 
$\Phi_\rho \colon \mod \Lambda \to \mod (\Lambda/\langle \rho \rangle)$
is defined as follows; 
for $M \in \mod \Lambda$, $\Phi_\rho(M)$ is a $\Lambda/\langle \rho \rangle$-module
such that $\Phi_\rho(M)e_{\bar{u}}=\bigoplus_{j=0}^{m-1}Me_{\rho^j(u)}$ 
for $\bar{u} \in (Q/\langle \rho \rangle)_0$ and that
the action of $\bar{\alpha} \in Q_1$ on $\Phi_\rho(M)$ is the direct sum of
the actions of $\alpha,\rho(\alpha),\ldots,\rho^{m-1}(\alpha) \in Q_1$ on $M$.
The \textit{pull-up} functor 
$\Psi_\rho \colon \mod (\Lambda/\langle \rho \rangle) \to \mod \Lambda$
is defined as follows; 
for $M' \in \mod (\Lambda/\langle \rho \rangle)$, 
$\Psi_\rho(M')$ is a $\Lambda$-module
such that $\Psi_\rho(M')e_u=M'e_{\bar{u}}$ for $u \in Q_0$ and that
the action of $\alpha \in Q_1$ on $\Psi_\rho(M')$ coincides with
the action of $\bar{\alpha} \in (Q/\langle \rho \rangle)_1$ on $M'$.

The functors $\Phi_\rho$ and $\Psi_\rho$ are restricted to the projective modules,
thus they induce the functors 
between $\umod \Lambda$ and $\umod (\Lambda/\langle \rho \rangle)$.
\end{Def}

We recall the Serre functor of a triangulated category here. 
The Serre functor of a Hom-finite $K$-linear additive 
triangulated category $\mathcal{T}$ is a functor $\bS$ 
such that $\Hom_\mathcal{T}(X,Y) \cong D \Hom_\mathcal{T}(Y,\bS X)$ 
holds functorially for $X,Y \in \mathcal{C}$ ($D$ denotes the $K$-dual $\Hom_K(?,K)$).
It is unique up to functorial isomorphisms,
and commutes with triangle equivalences.
Explicitly, $\umod \Lambda_{\Delta,l,t}$ has $\bS=[-1] \circ \nu$ as the Serre functor,
where $\nu$ is the Nakayama functor 
$({?} \otimes_{\Lambda} D\Lambda_{\Delta,l,t}) \colon \umod \Lambda_{\Delta,l,t}
\to \umod \Lambda_{\Delta,l,t}$ (see \cite[IV.2.4, IV.2.13]{ASS}).

We have the following properties. 
%These mean that we can use the order of the translation as an invariant under stable 
%equivalences of mesh algebras.

\begin{Prop}\label{l=l'}
Let $\Lambda=\Lambda_{\Delta,l,t}, \Lambda'=\Lambda_{\Delta',l',t'}$ be 
finite-dimensional mesh algebras
and $F \colon \umod \Lambda \to \umod \Lambda'$ 
be a stable equivalence as triangulated categories.
\begin{itemize}
\item[(1)]
Let $\bS,\bS'$ be the Serre functors of 
$\umod \Lambda, \umod \Lambda'$, and
$[1],[1]'$ be the shifts of 
$\umod \Lambda, \umod \Lambda'$.
We have $F([-2] \circ \bS) \cong ([-2]' \circ \bS')F$ on $\umod \Lambda'$.
\item[(2)]
Let $Q=\Lambda_{\Delta,l,t}$.
Then $[-2] \circ \bS$ satisfies the following.
\begin{itemize}
\item[(i)]
If $\Lambda$ is type II, i.e. $\Lambda=\Lambda_{A_n,2k,2}$,
then $[-2] \circ \bS \cong (\tau_*\theta_*)^{-1}$ on $\umod \Lambda$
for some automorphism $\theta \colon \Lambda \to \Lambda$
such that $\theta(e_u)=e_u$ for $u \in Q_0$
and that $\theta(\alpha)$ is $\alpha$ or $-\alpha$ for $\alpha \in Q_1$.
\item[(ii)]
If $\Lambda$ is type III, i.e. $\Lambda=\Lambda_{A_n,2k-1,2}$,
then $[-2] \circ \bS \cong (\tau_*\kappa_*)^{-1}$ on $\umod \Lambda$
for the automorphism $\kappa \colon \Lambda \to \Lambda$
such that $\kappa(e_u)=e_u$ for $u \in Q_0$
and that $\kappa(\alpha)=-\alpha$ for $\alpha \in Q_1$.
\item[(iii)]
In the other cases, we have $[-2] \circ \bS \cong \tau_*^{-1}$ on $\umod \Lambda$. 
\end{itemize}
%\item[(1)]
%The functors
%$\tau_*: \umod \Lambda_{\Delta,l,t} \to \umod \Lambda_{\Delta,l,t}$ 
%and $\tau'_*: \umod \Lambda_{\Delta,l',t'} \to \umod \Lambda_{\Delta,l',t'}$ 
%satisfy $F \tau_* \cong \tau'_* F$ as additive functors.
\item[(3)]
Assume $\Delta \ne A_1$.
The order of the functor $[-2] \circ \bS$ on $\umod \Lambda$ 
up to functorial isomorphisms is $l$
if $\Lambda$ is not type III, and is $l$ or $2l$ if $\Lambda$ is type III.
\end{itemize}
\end{Prop}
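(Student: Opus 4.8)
The plan is to reduce everything to (2): part (1) is formal, and (3) follows by reading off the order of the twisting functor produced in (2). For (1), the Serre functor of a Hom-finite $K$-linear triangulated category is unique up to functorial isomorphism, hence commutes with every triangle equivalence, and so does the shift; thus $F\bS\cong\bS'F$, $F[-2]\cong[-2]'F$, and therefore $F([-2]\circ\bS)\cong([-2]'\circ\bS')F$.

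For (2), I would realize $[-2]\circ\bS$ as tensoring with an invertible bimodule. Since $\bS=[-1]\circ\nu$, we have $[-2]\circ\bS=[-3]\circ\nu$, where $[-3]$ is the third syzygy functor $\Omega^3$ on $\umod\Lambda$. For $\Lambda=\Lambda_{\Delta,k,1}$, tensoring the bimodule resolution $0\to{}_1\Lambda_\mu\to U_2\to U_1\to U_0\to\Lambda\to 0$ of Proposition \ref{proj_resol}(1) with a right module $M$ gives a projective resolution of $M$ (each $U_i$ and ${}_1\Lambda_\mu$ is free as a left module, so the relevant $\operatorname{Tor}$-groups vanish), whence $\Omega^3M\cong M\otimes_\Lambda{}_1\Lambda_\mu$ functorially, i.e.\ $\Omega^3\cong(\mu^{-1})_*$ with $\mu^{-1}(e_u)=e_{\pi\tau^{-1}u}$; similarly $\nu=(?\otimes_\Lambda D\Lambda)\cong\eta_*$ for the Nakayama automorphism $\eta$ of the self-injective algebra $\Lambda$, permuting idempotents by $\pi^{\pm1}$. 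Composing, $[-2]\circ\bS\cong\sigma_*$ for an explicit $\sigma$ acting on idempotents as $\pi^{\pm1}\tau\pi^{\mp1}=\tau$ (using that $\pi$, a product of powers of $\tau$ and of $\psi,\phi,\chi$, commutes with $\tau$, and $(\pi\tau^{-1})^2=\tau^{-c}$ from the Remark after Proposition \ref{proj_resol}) and on arrows as $\sigma(\alpha)=\pm\tau(\alpha)$ for a sign function obtained by comparing mesh relations. For types I, IV--X this sign function is a coboundary, so $\sigma$ differs from $\tau$ by an inner automorphism, ${}_1\Lambda_\sigma\cong{}_1\Lambda_\tau$, and $[-2]\circ\bS\cong\tau_*^{-1}$; this is (iii). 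For types II, III I would run the same argument after presenting $\Lambda_{A_n,2k,2}$ (resp.\ $\Lambda_{A_n,2k-1,2}$) as the quotient of $\Lambda_{A_n,2k,1}$ (resp.\ $\Lambda_{A_n,2k-1,1}$) by the free action of $\tau^k\psi$ (resp.\ $\tau^k\phi$) as in Lemma \ref{torsion}, transporting the identification along the push-down $\Phi_\rho$ (which commutes with $\bS$, $[1]$ and $\tau_*$ since $\rho$ commutes with $\tau$ and Serre functors are canonical); the folding makes the sign function essential, and comparing the quotient mesh relations with the unfolded ones identifies it with a $\theta$ as in (i) (which one may take $\tau$-invariant, hence commuting with $\tau$), resp.\ with the total-sign automorphism $\kappa$ of (ii).

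For (3), write $[-2]\circ\bS\cong\gamma_*^{-1}$ with $\gamma=\tau$ outside types II, III, $\gamma=\theta\tau$ in type II, $\gamma=\kappa\tau$ in type III; in each case $\gamma$ acts on vertices as $\tau$, which has order $l$ in $\Aut Q_{\Delta,l,t}$ by construction of the quotients and extends canonically to $\tau^l=\id_\Lambda$. Since $\Delta\ne A_1$, every simple $S_u$ is non-projective and distinct simples stay non-isomorphic in $\umod\Lambda$, and $\gamma_*^{-1}$ sends $S_u$ to $S_{\tau^{-1}u}$; hence $(\gamma_*^{-1})^j\not\cong\id$ for $0<j<l$, so the order of $[-2]\circ\bS$ is a multiple of $l$. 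Now $\gamma^l$ equals $\theta^l\tau^l=\theta^{2k}=\id_\Lambda$ in type II (as $\theta^2=\id$, $l=2k$), equals $\kappa^l\tau^l=\kappa\ne\id_\Lambda$ with $\gamma^{2l}=\id_\Lambda$ in type III ($l$ odd, $\kappa^2=\id$), and equals $\tau^l=\id_\Lambda$ otherwise; so the order is exactly $l$ outside type III, and $l$ or $2l$ in type III according to whether $\kappa$ is trivial on $\umod\Lambda$.

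The main obstacle is the sign bookkeeping in (2): one must fix explicit representatives for the arrows of each $\Z\Delta/\langle\rho\rangle$, compute the automorphism obtained by composing the cosyzygy twist with the Nakayama twist, and decide case by case whether its sign function on arrows is inner. This is exactly the step where types II and III diverge from the rest, and where the specific shapes $\theta$ and $\kappa$ of the statement arise.
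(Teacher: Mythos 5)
Your parts (1) and (3) are essentially the paper's argument: (1) is the formal uniqueness of Serre functor and shift under triangle equivalences, and (3) combines an upper bound on the order of $[-2]\circ\bS$ with the lower bound obtained by letting the functor act on simple modules (nonzero and pairwise nonisomorphic in $\umod\Lambda$ since $\Delta\ne A_1$), exactly as in the paper. The only difference in (3) is that the paper gets the upper bound from (2) together with \cite[Proposition 4.4]{AS}, whereas you argue $\gamma^l=\id$ directly; your version additionally needs that $\theta$ commutes with $\tau$, which you only assert parenthetically ("one may take $\theta$ $\tau$-invariant").

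The genuine gap is in part (2), and you have named it yourself: the entire nontrivial content of (2) is the determination of the automorphism twisting $\Omega^3$, i.e.\ deciding for types I, IV--X that the sign function on arrows is a coboundary (so that $\sigma_*\cong\tau_*^{-1}$), and for types II and III identifying it with $\theta$ resp.\ $\kappa$. Your proposal reduces $[-2]\circ\bS$ to $\Omega^3\circ\nu$ and correctly observes that the bimodule resolution of Proposition \ref{proj_resol}(1) realizes $\Omega^3$ as tensoring with a twisted bimodule, but then defers the sign computation entirely ("the main obstacle is the sign bookkeeping"), so no proof of (i)--(iii) is actually given. There are two further obstructions to completing your route as sketched: Proposition \ref{proj_resol}(1) is stated only for $t=1$, so for types II, III, VI you must first establish the analogous twisted-bimodule description of $\Omega^3$ for the quotient algebras (transporting along the push-down is not automatic, since the push-down identifies $\Omega^3$ only up to the action of the covering group and does not by itself pin down the sign twist); and the claim that the residual sign function is inner in types I, IV--X is precisely what must be computed case by case. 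The paper avoids all of this by quoting \cite[Proposition 5.5]{Dugas}, which gives $[3]\cong\nu\circ\tau_*\circ\theta_*$ (and its analogues) directly for all finite-dimensional mesh algebras; the two-line derivation $(\tau_*\theta_*)^{-1}\cong[-2]\circ\bS$ then follows from $\bS=[-1]\circ\nu$. To repair your proof you should either invoke that result of Dugas as the paper does, or actually carry out the sign analysis you describe.
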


\begin{proof}
(1)
It is well-known.

(2)
We only prove (i).
The other assertions are similarly proved.
We have $[3] \cong \nu \circ \tau_* \circ \theta_*$ 
from \cite[Proposition 5.5]{Dugas}.
We also have $\bS=[-1] \circ \nu$.
It is easy to see $[1] \circ \tau_* \cong \tau_* \circ [1]$, 
thus we have $(\tau_*\theta_*)^{-1} \cong [-2] \circ \bS$.

(3)
By (2) and \cite[Proposition 4.4]{AS},
it is easy to see that $([-2] \circ \bS)^l \cong \mathrm{id}$
if $\Lambda$ is not type III
and that $([-2] \circ \bS)^{2l} \cong \mathrm{id}$ if $\Lambda$ is type III. 

On the other hand, a simple $\Lambda_{\Delta,l,t}$-module $S_u$ is a nonzero object
in $\umod \Lambda_{\Delta,l,t}$ by the assumption $\Delta \ne A_1$.
It is easy to see that $([-2] \circ \bS)^i(S_u) \cong S_{\tau^{-i} u}$ and that
$S_{\tau^{-i} u} \cong S_u$ in $\umod \Lambda_{\Delta,l,t}$ implies $\tau^{-i} u=u$.
By the construction, $l$ is the minimal integer $i \ge 1$ such that $\tau^{-i} u=u$ holds
for every $u \in (Q_{\Delta,l,t})_0$.

Now the assertion is easily obtained.
\end{proof}

Now we state the definition of cluster-tilting objects.

\begin{Def}\label{rigid}
Let $\Lambda$ be a finite-dimensional self-injective $K$-algebra, 
$\mathcal{C}$ be $\mod \Lambda$ or $\umod \Lambda$.
Assume that $T$ is an object in $\mathcal{C}$ and let
$\add_{\mathcal{C}} T \subset \mathcal{C}$ be the full subcategory 
of objects which are direct summands of $T^m$ for some $m$.
\begin{itemize}
\item[(1)]
We say $T$ is \textit{rigid} if $\Ext_\Lambda^1(T,T)=0$.
We say $T$ is \textit{maximal rigid} 
if $T$ satisfies the following; 
$T$ is rigid, and if $U \in \mod \Lambda$ satisfies that $T \oplus U$ is rigid
then $U \in \add_{\mathcal{C}}T$.
We say $T$ is \textit{cluster-tilting} if 
$\add_{\mathcal{C}} T = \{ M \in \mathcal{C} \mid \Ext_\Lambda^1(M,T)=0 \}
= \{ M \in \mathcal{C} \mid \Ext_\Lambda^1(T,M)=0 \}$.
\item[(2)]
Let $F \colon \mathcal{C} \to \mathcal{C}$ be an autoequivalence.
We say $T$ is \textit{$F$-stable} if $F(T) \cong T$ in $\mathcal{C}$.
We say $T$ is \textit{$F$-stable rigid} if $T$ is $F$-stable and rigid.
We say $T$ is \textit{maximal $F$-stable rigid} 
if $T$ satisfies the following; 
$T$ is $F$-stable rigid, and 
if $U \in \mod \Lambda$ satisfies that $T \oplus U$ is $F$-stable rigid
then $U \in \add_{\mathcal{C}}T$.
We say $T$ is \textit{$F$-stable cluster-tilting} if 
$T$ is $F$-stable and cluster-tilting.
\end{itemize} 
\end{Def}

It is clear that a cluster-tilting object is always maximal rigid.
We also define the symbols of the number of indecomposable direct summands.

\begin{Def}
Let $\Lambda$ be a finite-dimensional self-injective $K$-algebra and 
$T$ be an object in $\mod \Lambda$.
Decompose $T$ as 
$T \cong \bigoplus_{i=1}^l T_i^{m_i}$ in $\mod \Lambda$
with $T_1,\ldots,T_l$ pairwise nonisomorphic indecomposable direct summands and
$m_i \ge 1$ for all $i$.
Then we write $|T|=l$.
Moreover, $T$ is called \textit{basic} if $m_i=1$ for all $i$.
If exactly $l'$ of $l$ modules $T_1,\ldots,T_l$ are nonprojective,
then we write $|T|_{\mathrm{np}}=l'$.
\end{Def}

Note that if $T$ is a cluster-tilting object in $\mod \Lambda$, 
then $T$ contains $\Lambda$ as a direct summand,
and thus $|T|_{\mathrm{np}}=|T|-m$, 
where $m$ is the number of the isomorphic classes of indecomposable projective 
$\Lambda$-modules.
Clearly, the basic cluster-tilting objects in $\mod \Lambda$ correspond bijectively to 
the basic cluster-tilting objects in $\umod \Lambda$.
 
The following proposition on the number of the indecomposable direct summands of
a cluster-tilting object is very important.

\begin{Prop}\label{ind_coincide}
\cite[5.3.3, Corollary]{Iyama}
Let $\Lambda$ be a finite-dimensional self-injective $K$-algebra.
If $T_1$ and $T_2$ are cluster-tilting objects in $\mod \Lambda$,
then we have $|T_1| = |T_2|$ 
and $|T_1|_{\mathrm{np}}=|T_2|_{\mathrm{np}}$.
\end{Prop}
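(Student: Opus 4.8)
The plan is to reduce everything to the stable category $\umod\Lambda$. A cluster-tilting object $T$ in $\mod\Lambda$ always contains $\Lambda$ — the direct sum of all indecomposable projective-injective modules — as a direct summand, since $\Ext^1_\Lambda(\Lambda,T)=0=\Ext^1_\Lambda(T,\Lambda)$ because $\Lambda$ is projective-injective. Writing $T\cong\Lambda^{\oplus\ast}\oplus T'$ with $T'$ having no projective summand, we get $|T|=m+|T'|$ and $|T|_{\mathrm{np}}=|T'|$, where $m$ is the fixed number of indecomposable projectives. Hence it is enough to show that the image $\bar T'$ of $T'$ in $\umod\Lambda$ — which is a basic cluster-tilting object of the triangulated category $\umod\Lambda$ — has a number of indecomposable summands independent of $T$; equivalently, that any two basic cluster-tilting objects of $\umod\Lambda$ have the same number of indecomposable summands.

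For the latter I would use \emph{mutation}. Given a basic cluster-tilting object $U=U_1\oplus\cdots\oplus U_s$ of $\umod\Lambda$ and an index $i$, take a minimal right $\add(U/U_i)$-approximation $B\to U_i$, complete it to a triangle $U_i^{*}\to B\to U_i\to U_i^{*}[1]$, and set $\mu_i(U)=(U/U_i)\oplus U_i^{*}$. The facts to establish, in the style of Iyama--Yoshino on mutations in triangulated categories, are that $\mu_i(U)$ is again a basic cluster-tilting object, that $U_i^{*}$ is indecomposable, and that $U_i^{*}\not\cong U_j$ for $j\ne i$; granting these, $|\mu_i(U)|=|U|=s$, so the number of indecomposable summands is constant along any chain of mutations. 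The technical heart — and the step I expect to be the main obstacle — is to rule out the possibility of two basic cluster-tilting objects lying in different ``mutation classes'': either one proves connectedness of the mutation graph of $\umod\Lambda$ (not automatic for a general triangulated category, so one must exploit self-injectivity of $\Lambda$, Krull--Schmidt-ness and Hom-finiteness of $\umod\Lambda$, and the presence of the Serre functor $\bS=[-1]\circ\nu$), or one runs a direct rank/bilinear-form argument on the rigid objects that forces the count to be the same without connectedness.

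An alternative route that sidesteps connectedness is via endomorphism algebras: for a cluster-tilting object $T$ in $\mod\Lambda$ one shows that $\Gamma:=\End_\Lambda(T)$ has $\operatorname{gldim}\Gamma\le 3$, so its Cartan matrix is invertible over $\Z$, and by Lemma \ref{quot_by_P}(1) the number of indecomposable summands of $T$ equals $\operatorname{rank}_{\Z}K_0(D^\rb(\mod\Gamma))$, i.e.\ the number of simple $\Gamma$-modules. One then only has to compare these ranks for different cluster-tilting objects; the natural way is to show that a single mutation induces a derived equivalence (or at least a bijection on simples) between the two endomorphism algebras, which again returns, in disguise, to mutation. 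In either approach the final statement is precisely \cite[5.3.3, Corollary]{Iyama}, and I would ultimately invoke that reference; the sketch above records the shape of the argument, with the well-definedness of mutation and the comparison of mutation classes being where the real work lies.
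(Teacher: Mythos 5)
The paper offers no argument for this proposition at all: it is stated purely as a citation of \cite[5.3.3, Corollary]{Iyama}, so your final move of invoking that reference is exactly what the paper does, and your preliminary reduction is also the one the paper records just above the proposition --- every indecomposable projective is a summand of a cluster-tilting object because $\Lambda$ is projective--injective, hence $|T|=m+|T|_{\mathrm{np}}$ and only the stable count needs to be compared. To that extent the proposal is fine.

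That said, if you intended the sketch to stand as a proof rather than as a gloss on the citation, the mutation route has a genuine unresolved gap that you yourself flag: connectedness of the mutation graph of cluster-tilting objects in $\umod\Lambda$ is not known in this generality, and nothing in self-injectivity, Krull--Schmidt-ness, or the existence of the Serre functor hands it to you; an argument that only shows $|{\cdot}|$ is constant along mutation chains proves nothing about two cluster-tilting objects in different mutation classes. The actual argument behind \cite[5.3.3, Corollary]{Iyama} is your second route, but done \emph{without} mutation: for two arbitrary cluster-tilting objects $T_1,T_2$ one shows directly that $\Hom_\Lambda(T_1,T_2)$ is a tilting module over $\Gamma_1=\End_\Lambda(T_1)$ (using $\operatorname{gldim}\Gamma_i\le 3$ and the approximation triangles coming from the cluster-tilting property), so $\Gamma_1$ and $\Gamma_2=\End_\Lambda(T_2)$ are derived equivalent and have the same number of simple modules, which is $|T_1|$ and $|T_2|$ respectively. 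This direct pairwise comparison is what makes connectedness irrelevant; your remark that the comparison ``again returns, in disguise, to mutation'' is where the sketch goes astray.
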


Now we recall an important result 
on cluster-tilting objects for preprojective algebras.
This associates \textit{reduced expressions} of the \textit{longest element} of
the \textit{Coxeter group} to cluster-tilting objects.

\begin{Def}
Let $\Delta$ be a Dynkin diagram with $n$ vertices. 
We assume that the vertices are numbered as in Subsection 2.3.

The \textit{Coxeter group} $W=W_\Delta$ associated to $\Delta$ is defined as follows;
the generators are $s_1,\ldots,s_n$ and the relations are
(i) $s_i^2=1_W$, 
(ii) $s_is_j=s_js_i$ if there exists no edge between vertices $i$ and $j$ in $\Delta$,
(iii) $s_is_js_i=s_js_is_j$ if there exists exactly one edge between vertices $i$ and $j$
in $\Delta$.

For an element $w \in W$, the \textit{length} of $w$ is a minimum number $l$
such that there exists a sequence $(i_1,\ldots,i_l)$ such that
$w=s_{i_1} \cdots s_{i_l}$.
An element $w \in W$ with the maximum length is called a \textit{longest element}.
\end{Def}

The Coxeter group $W=W_\Delta$ associated to a Dynkin diagram is a finite group,
and in this case, there uniquely exists a longest element in $W$.
The length of the longest element is $nc/2$,
where $c=c_\Delta$ is the Coxeter number of $\Delta$.
For the detail of Coxeter groups, see \cite{BBa}.

\begin{Prop}\label{long_elem}\cite[Theorem III.3.5, Corollary III.3.6]{BIRS}
Let $\Delta$ be a Dynkin diagram with its vertices $\{1,\ldots,n\}$,
$W=W_\Delta$ be the Coxeter group,
$c=c_\Delta$ be the Coxeter number,
and $\Lambda=\Lambda_{\Delta,1,1}$ be the preprojective algebra.
Put the ideal $I_i=\Lambda(1-e_i)\Lambda \subset \Lambda$ for $i=1,\ldots,n$.

Let $s_{i_1}s_{i_2} \cdots s_{i_{nc/2}}$ be
a reduced expression of the longest element of $W$
and put $T'_m=e_{i_m} (\Lambda/I_{i_m} \cdots I_{i_2} I_{i_1})$
for $m=1,\ldots,nc/2$.
Then we have a basic cluster-tilting object
$T'=\bigoplus_{m=1}^{nc/2} T'_m$
in $\mod \Lambda$,
where each $T'_m$ has a simple top and is indecomposable for $m=1,\ldots,nc/2$.
Especially, we have $|T'|=nc/2$ and $|T'|_{\mathrm{np}}=n(c-2)/2$.  
\end{Prop}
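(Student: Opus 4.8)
The plan is to follow \cite[\S III.3]{BIRS}. Write $\Lambda=\Lambda_{\Delta,1,1}$; this is the preprojective algebra of $\Delta$, it is self-injective, and by Proposition \ref{l=l'}(3) (the case $\Delta=A_1$ being trivial) one has $\bS\cong[2]$, so $\umod\Lambda$ is $2$-Calabi--Yau, i.e. $\Ext^1_\Lambda(X,Y)\cong D\Ext^1_\Lambda(Y,X)$ functorially. First I would record the braid-type identities for the two-sided ideals $I_i=\Lambda(1-e_i)\Lambda$: one has $I_iI_j=I_jI_i$ if there is no edge $i-j$ in $\Delta$ and $I_iI_jI_i=I_jI_iI_j$ if there is exactly one edge, both being direct computations with paths in the quiver $Q_{\Delta,1,1}$ modulo the mesh relations. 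With Matsumoto's theorem this shows that for a reduced expression $w=s_{j_1}\cdots s_{j_p}$ the ideal $I_w:=I_{j_p}\cdots I_{j_1}$ depends only on $w$, that $w\mapsto I_w$ is injective, and that $I_{w_0}=0$. Putting $w_m=s_{i_1}\cdots s_{i_m}$ and $\tilde J_m=I_{w_m}=I_{i_m}\cdots I_{i_1}$, we obtain a strictly decreasing chain of two-sided ideals $\Lambda=\tilde J_0\supsetneq\tilde J_1\supsetneq\cdots\supsetneq\tilde J_{nc/2}=0$, with $T'_m=e_{i_m}\Lambda/e_{i_m}\tilde J_m$. The statement about simple tops is then immediate: since $\tilde J_m=I_{i_m}\tilde J_{m-1}\subseteq I_{i_m}$ and $e_{i_m}I_{i_m}\subseteq e_{i_m}\rad\Lambda$ (every element $e_{i_m}x(1-e_{i_m})y$ is a combination of paths of positive length out of $i_m$), we get $T'_m/\rad T'_m=e_{i_m}\Lambda/e_{i_m}\rad\Lambda=S_{i_m}$, so each $T'_m$ is indecomposable.

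The heart of the proof is rigidity, $\Ext^1_\Lambda(T',T')=0$, which I would prove by induction on $m$, showing that $T^{(m)}:=\bigoplus_{j\le m}T'_j$ is rigid. For the inductive step, apply $\Hom_\Lambda(-,T^{(m-1)})$ and $\Hom_\Lambda(T^{(m-1)},-)$ to the short exact sequence $0\to e_{i_m}\tilde J_{m-1}/e_{i_m}\tilde J_m\to T'_m\to e_{i_m}\Lambda/e_{i_m}\tilde J_{m-1}\to 0$; one identifies the sub- and quotient terms in terms of the earlier summands $T'_j$ and their syzygies, using that left multiplication by $I_{i_m}$ is, on the part supported at the vertex $i_m$, closely tied to the syzygy functor, so that the induction hypothesis kills most of the resulting $\Ext$- and $\Hom$-terms, and one uses the $2$-Calabi--Yau duality to reduce the two Ext-vanishings to one. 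Carrying out this identification of the \emph{layers} and checking that the connecting homomorphisms behave as required is the main obstacle; it is precisely what \cite[Theorem III.3.5]{BIRS} does, and an alternative (also in \cite{BIRS}) is to realise $T'$ as the terminus of a sequence of single-summand mutations, each of which preserves rigidity and the cluster-tilting property.

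It remains to collect the numerical statements. The $T'_m$ are pairwise non-isomorphic: two of them have the same simple top only if $i_j=i_m$, and then a refinement of the injectivity of $w\mapsto I_w$ (tracking the relevant dimension vectors) separates them; hence $T'$ is basic with $|T'|=nc/2$. By \cite{BIRS} the rigid object $T'$ is in fact cluster-tilting in $\mod\Lambda$ (via the mutation sequence above), and since $\Lambda$ is self-injective we have $\Ext^1_\Lambda(\Lambda,-)=0$, so the cluster-tilting property forces $\Lambda\in\add T'$; thus the $n$ indecomposable projectives occur among the $T'_m$, and by basic-ness no other $T'_m$ is projective (concretely, $T'_m$ is projective exactly when $m$ is the last occurrence of the letter $i_m$, since $\tilde J_{nc/2}=I_{w_0}=0$). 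Therefore $|T'|_{\mathrm{np}}=|T'|-n=nc/2-n=n(c-2)/2$. Alternatively, once any cluster-tilting object is known to have $nc/2$ indecomposable summands, both counts follow from Proposition \ref{ind_coincide}.
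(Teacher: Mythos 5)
Your outline is correct and coincides with what the paper does: the paper offers no proof of this proposition at all, simply citing \cite[Theorem III.3.5, Corollary III.3.6]{BIRS}, and your sketch is a faithful reconstruction of that cited argument (braid relations for the ideals $I_i$, Matsumoto's theorem, the chain of ideals $I_{w_m}$, rigidity by induction on the layers, and the count of projective summands via $I_{w_0}=0$), deferring the same hard steps---the rigidity induction and the cluster-tilting property---to the same source.
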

%
%It is well known the stable category of a preprojective algebra 
%of Dynkin type $\Delta$ is 2-Calabi--Yau; 
%namely, if $\Lambda=\Lambda_{\Delta,1,1}$ is the preprojective algebra 
%of a Dynkin diagram $\Delta$, 
%$\Ext_\Lambda^1(X,Y) \cong D \Ext_\Lambda^1(Y,X)$ holds for $X,Y \in \mod \Lambda$.
%Note that the stable category of a mesh algebra is not generally 2-Calabi--Yau,
%and the condition to be 2-Calabi--Yau is mentioned in \cite{Dugas} and \cite{AS}.
%

We will extend Proposition \ref{long_elem} for general mesh algebras.

\begin{Lem}\label{orbit_mult}
Let $Q=Q_{\Delta,l,t}$, $\Lambda=\Lambda_{\Delta,l,t}$, 
and $\rho \in \Aut Q$ be free with its order $m$.
Suppose $\eta \in \Aut Q$ and $\bar{\eta} \in \Aut (Q/\langle \rho \rangle)$
satisfy $q_\rho \eta=\bar{\eta} q_\rho$, where
$q_\rho \colon Q \to Q/\langle \rho \rangle$ 
is the quotient morphism of translation quivers.
\begin{itemize}
\item[(1)] 
Assume that $T$ is a $\rho_*$-stable, $\eta_*$-stable rigid (resp. cluster-tilting) object
with $T=\bigoplus_{i=0}^{m-1}\rho_*^i(U)$ in $\mod \Lambda$.
Then $T':=\Phi_\rho(U)$ is 
$\bar{\eta}_*$-stable rigid (resp. cluster-tilting) object 
in $\mod (\Lambda/\langle \rho \rangle)$ and 
we have $|T'| \ge |T|/m$ and $|T'|_{\mathrm{np}} \ge |T|_{\mathrm{np}}/m$.
Moreover, if $T$ is basic, then $T'$ is basic and  
$|T'| = |T|/m$ and $|T'|_{\mathrm{np}} = |T|_{\mathrm{np}}/m$.
\item[(2)]
Assume $T'$ is an $\bar{\eta}_*$-stable rigid (resp. cluster-tilting) object 
in $\mod (\Lambda/\langle \rho \rangle)$
with $T' \cong \Phi_\rho(U)$ for some $U \in \mod \Lambda$. 
Let $T:=\Psi_\rho(T')$,
then $T \cong \bigoplus_{i=0}^{m-1} \rho_*^i(U)$ and 
$T$ is a $\rho_*$-stable, $\eta_*$-stable rigid (resp. cluster-tilting) object 
in $\mod \Lambda$.
Moreover, if $T$ is basic, then we have $|T|=m|T'|$ and 
$|T|_{\mathrm{np}}=m|T'|_{\mathrm{np}}$.
\end{itemize}
\end{Lem}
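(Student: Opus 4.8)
The plan is to read this as a statement about the finite Galois covering $q_\rho\colon Q\to Q/\langle\rho\rangle$ with cyclic deck group $\langle\rho\rangle$ of order $m$, and to use the standard dictionary between $\mod\Lambda$ and $\mod\bar\Lambda$, $\bar\Lambda=\Lambda/\langle\rho\rangle$, afforded by the push-down $\Phi_\rho$ and pull-up $\Psi_\rho$. First I would collect the tools: $\Phi_\rho$ and $\Psi_\rho$ are exact and preserve projectives (the latter is noted in the text), $\Psi_\rho\Phi_\rho\cong\bigoplus_{i=0}^{m-1}\rho_*^i$, $\Phi_\rho\Psi_\rho(M')\cong (M')^{\oplus m}$, and $\Phi_\rho\rho_*\cong\Phi_\rho$, all immediate from the formulas on the $e_u$; the covering formula $\Ext^j_{\bar\Lambda}(\Phi_\rho X,\Phi_\rho Y)\cong\bigoplus_{i=0}^{m-1}\Ext^j_\Lambda(X,\rho_*^iY)$, together with the biadjunction between $\Phi_\rho$ and $\Psi_\rho$ (the covering being finite Galois), giving $\Ext^j_\Lambda(X,\Psi_\rho Y')\cong\Ext^j_{\bar\Lambda}(\Phi_\rho X,Y')$ and its transpose; the fact that $\Phi_\rho X$ is indecomposable when $X$ is indecomposable with $\rho_*^iX\not\cong X$ for $1\le i\le m-1$, that then $\Phi_\rho X\cong\Phi_\rho Y$ iff $Y\cong\rho_*^iX$ for some $i$, and that $\Phi_\rho X$ is projective iff $X$ is (using $\Psi_\rho\Phi_\rho X=\bigoplus_i\rho_*^iX$ and that $\Psi_\rho$ detects non-projectivity); and finally the remark that $q_\rho\eta=\bar\eta q_\rho$ forces $\eta^{-1}\langle\rho\rangle\eta=\langle\rho\rangle$, whence $\Phi_\rho\circ\eta_*\cong\bar\eta_*\circ\Phi_\rho$ and $\eta_*\circ\Psi_\rho\cong\Psi_\rho\circ\bar\eta_*$.

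For (1), set $T'=\Phi_\rho(U)$, so $T=\bigoplus_i\rho_*^i(U)=\Psi_\rho(T')$ and $\Phi_\rho T\cong(T')^{\oplus m}$. Rigidity of $T'$ is immediate from the covering formula: $\Ext^1_{\bar\Lambda}(T',T')\cong\bigoplus_i\Ext^1_\Lambda(U,\rho_*^iU)$, each summand a direct summand of $\Ext^1_\Lambda(T,T)=0$. For $\bar\eta_*$-stability, apply $\Phi_\rho$ to $\eta_*(T)\cong T$: using $\Phi_\rho\eta_*\cong\bar\eta_*\Phi_\rho$ one gets $\bar\eta_*(T')^{\oplus m}\cong(T')^{\oplus m}$, hence $\bar\eta_*(T')\cong T'$ by Krull--Schmidt. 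If moreover $T$ is cluster-tilting and $\Ext^1_{\bar\Lambda}(T',N')=0$, then by adjunction $\Ext^1_\Lambda(T,\Psi_\rho N')\cong\Ext^1_{\bar\Lambda}(\Phi_\rho T,N')\cong\Ext^1_{\bar\Lambda}(T',N')^{\oplus m}=0$, so $\Psi_\rho N'\in\add_\Lambda T$; applying $\Phi_\rho$ gives $(N')^{\oplus m}\in\add\Phi_\rho T=\add T'$, so $N'\in\add T'$, and the containment for $\Ext^1_{\bar\Lambda}(N',T')=0$ follows from the transposed adjunction; thus $T'$ is cluster-tilting. For the index counts, decompose $U$ into indecomposables: the indecomposable summands of $T=\bigoplus_{i,j}\rho_*^i(U_j)$ are exactly the elements of the $\rho_*$-orbits through the $U_j$, an orbit has at most $m$ elements, is non-projective iff $U_j$ is, and $\Phi_\rho(U_j)$ has a non-projective summand iff $U_j$ is non-projective; hence $|T|\le m|T'|$ and $|T|_{\mathrm{np}}\le m|T'|_{\mathrm{np}}$. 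If $T$ is basic, each $U_j$ has trivial $\rho_*$-stabiliser and the $U_j$ lie in pairwise distinct orbits (else some $\rho_*^i(U_j)$ would repeat in $T$), so $T'=\bigoplus_j\Phi_\rho(U_j)$ is basic with $|T'|=|T|/m$ and $|T'|_{\mathrm{np}}=|T|_{\mathrm{np}}/m$.

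Part (2) is the same argument run backwards. From $T'\cong\Phi_\rho(U)$ one gets $T=\Psi_\rho(T')\cong\Psi_\rho\Phi_\rho(U)\cong\bigoplus_{i=0}^{m-1}\rho_*^i(U)$, which is visibly $\rho_*$-stable; it is $\eta_*$-stable because $\eta_*(T)\cong\eta_*\Psi_\rho(T')\cong\Psi_\rho\bar\eta_*(T')\cong\Psi_\rho(T')=T$. Rigidity follows since each $\Ext^1_\Lambda(\rho_*^iU,\rho_*^jU)\cong\Ext^1_\Lambda(U,\rho_*^{j-i}U)$ is a summand of $\Ext^1_{\bar\Lambda}(T',T')=0$. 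If $T'$ is cluster-tilting and $\Ext^1_\Lambda(T,N)=0$, then by the covering formula and $\rho_*$-stability of $T$ one has $\Ext^1_{\bar\Lambda}(\Phi_\rho T,\Phi_\rho N)\cong\bigoplus_i\Ext^1_\Lambda(T,\rho_*^iN)=0$, so $\Phi_\rho N\in\add T'$, hence $\bigoplus_i\rho_*^iN=\Psi_\rho\Phi_\rho N\in\add\Psi_\rho T'=\add T$ and in particular $N\in\add T$; the complementary containment is symmetric. The equalities $|T|=m|T'|$ and $|T|_{\mathrm{np}}=m|T'|_{\mathrm{np}}$ for basic $T$ follow exactly as in (1).

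The main obstacle is the careful set-up of the covering machinery rather than any single hard step: one must verify the compatibilities of $\Phi_\rho,\Psi_\rho$ with $\rho_*$, $\eta_*$, $\bar\eta_*$, check that $\eta$ normalises the deck group, and establish that $\Phi_\rho$ sends an indecomposable with free $\rho_*$-orbit to an indecomposable whose $\Phi_\rho$-fibre is exactly that orbit -- all classical Galois-covering theory (Gabriel, Riedtmann) but needing to be stated in this concrete mesh-algebra setting. One must also be careful that ``$T$ basic'' genuinely forces the $\rho_*$-orbits of the indecomposable summands of $U$ to be free and pairwise disjoint, which is precisely what upgrades the inequalities to equalities.
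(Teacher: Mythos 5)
Your proposal is correct, and for most of the lemma it runs along exactly the lines of the paper's proof: the covering formula $\Ext^1_{\Lambda/\langle\rho\rangle}(\Phi_\rho X,\Phi_\rho Y)\cong\bigoplus_{i}\Ext^1_\Lambda(X,\rho_*^iY)$ for rigidity, the observation that a common indecomposable summand of $\Phi_\rho(M_1)$ and $\Phi_\rho(M_2)$ forces $M_1\cong\rho_*^i(M_2)$ for the inequalities, Gabriel's lemma on push-downs of indecomposables with free orbit for the basic case, and in part (2) the same push-down--pull-up shuttle to verify the cluster-tilting property. The one genuine divergence is the cluster-tilting step in part (1): you use the biadjunction $\Ext^1_\Lambda(T,\Psi_\rho N')\cong\Ext^1_{\Lambda/\langle\rho\rangle}(\Phi_\rho T,N')$ to show directly that any $N'$ with $\Ext^1(T',N')=0$ lies in $\add T'$, whereas the paper instead invokes Iyama's characterization (a rigid generator $T'$ is cluster-tilting iff $\End(T')$ has global dimension at most $3$) and pushes down a length-$3$ projective resolution over $\End_\Lambda(T)$. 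Your route is more symmetric with part (2) and avoids the global-dimension detour, at the cost of having to justify that $\Phi_\rho$ and $\Psi_\rho$ are biadjoint (which does hold here, the deck group being finite and acting freely, since both functors are exact and preserve projectives); the paper's route needs only the covering formula between two pushed-down modules. A small point in your favour: you actually prove the $\bar\eta_*$-stability of $T'$ (via $\Phi_\rho\eta_*\cong\bar\eta_*\Phi_\rho$ and Krull--Schmidt), a step the paper's written proof passes over in silence.
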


\begin{proof}
(1) 
Assume that $T$ is a $\rho_*$-stable, $\eta_*$-stable rigid object
with $T=\bigoplus_{i=0}^{m-1}\rho_*^i(U)$ in $\mod \Lambda$.
Note that the functor 
$\Phi_\rho \colon \mod \Lambda \to \mod (\Lambda/\langle \rho \rangle)$ sends
the projective $\Lambda$-modules to 
the projective $\Lambda/\langle \rho \rangle$-modules and that it is exact,
we have
\begin{align*}
\Ext_{\Lambda/\langle \rho \rangle}^1(T',T')
=\Ext_{\Lambda/\langle \rho \rangle}^1(\Phi_\rho(U),\Phi_\rho(U)) \cong 
\bigoplus_{i=0}^{m-1}\Ext_{\Lambda}^1(\rho_*^i(U),U) = \Ext_{\Lambda}^1(T,U)=0,
\end{align*}
and $T'$ is rigid.

Now we consider numbers of indecomposable direct summands.
Note that $\Phi_\rho(M)$ may not be indecomposable even 
if $M$ is indecomposable in $\mod \Lambda$.

Assume that two indecomposable modules $M_1,M_2$ in $\mod \Lambda$
satisfy that $\Phi_\rho(M_1)$ and $\Phi_\rho(M_2)$ have
a common indecomposable direct summand $M'$.
Then $\Psi_\rho(\Phi_\rho(M_1)) \cong \bigoplus_{i=0}^{m-1} \rho_*^i (M_1)$
and $\Psi_\rho(\Phi_\rho(M_2)) \cong \bigoplus_{i=0}^{m-1} \rho_*^i (M_2)$
have a common nonzero direct summand $\Psi_\rho(M')$.
Thus $M_1$ is isomorphic to $\rho_*^i(M_2)$ for some $i=0,1,\ldots,m-1$,
and we have $\Phi_\rho(M_1) \cong \Phi_\rho(M_2)$.
Therefore, we have $|T'| \ge |T|/m$ and 
$|T|_{\mathrm{np}} \ge |T|_{\mathrm{np}}/m$.

If $T$ is basic, then $U$ is basic and 
any two modules of $U, \rho_*(U), \ldots, \rho_*^{m-1}(U)$ 
have no nonzero common direct summands.
In this case, $\Phi_\rho$ sends the pairwise 
nonisomorphic indecomposable direct summands of $U$ 
to the pairwise nonisomorphic indecomposable direct summands of $T$, 
see \cite[3.5, Lemma]{Gabriel}.
Therefore, $T'$ must be also basic and 
we have $|T'| = |U| = |T|/m$ and 
$|T'|_{\mathrm{np}} = |U|_{\mathrm{np}}/m = |T|_{\mathrm{np}}/m$.

Now we additionally assume that $T$ is cluster-tilting.
It is easy to see that $T'$ contains $\Lambda/\langle \rho \rangle$ as a direct summand.
By \cite[5.1, Theorem]{Iyama}, the global dimension of $\End_\Lambda T$ is at most 3.
Thus there exists a projective resolution 
\begin{align*}
0 \to \Hom_{\Lambda}(T,U_3) \to 
\Hom_{\Lambda}(T,U_2) \to \Hom_{\Lambda}(T,U_1) \to \Hom_{\Lambda}(T,U)
\to \Hom_{\Lambda}(T,U)/{\rad_{\Lambda}(T,U)} \to 0.
\end{align*}
Put $T'_j=\Phi_\rho(U_j)$ for $j=1,2,3$. 
We have
\begin{align*}
0 \to \Hom_{\Lambda/\langle \rho \rangle}(T',T'_3) \to 
\Hom_{\Lambda/\langle \rho \rangle}(T',T'_2) \to 
\Hom_{\Lambda/\langle \rho \rangle}(T',T'_1) 
& \to \Hom_{\Lambda/\langle \rho \rangle}(T',T') \\ 
& \to \Hom_{\Lambda/\langle \rho \rangle}(T',T')
/{\rad_{\Lambda/\langle \rho \rangle}(T',T')} \to 0,
\end{align*}
because $T=\bigoplus_{i=0}^{m-1}\rho_*^i(U)$.
This sequence shows the global dimension of 
$\End_{\Lambda/\langle \rho \rangle} T'$ is at most 3.
Using \cite[5.1, Theorem]{Iyama} again, $T'$ is cluster-tilting.

(2) 
Let $T'$ be an $\bar{\eta}_*$-stable rigid object 
in $\mod (\Lambda/\langle \rho \rangle)$
with $T' \cong \Phi_\rho(U)$ for some $U \in \mod \Lambda$. 
By the construction of the functor,
it is easy to see that 
$\Psi_\rho(\Phi_\rho(U)) \cong \bigoplus_{i=0}^{m-1} \rho_*^i(U) \cong T$.
We have
\begin{align*}
\Ext_{\Lambda}^1(T,T)
= \bigoplus_{i=0}^{m-1}\Ext_{\Lambda}^1(\rho_*^i(U),T)
\cong \Ext_{\Lambda/\langle \rho \rangle}^1(\Phi_\rho(U),\Phi_\rho(T))
\cong \Ext_{\Lambda/\langle \rho \rangle}^1(T',(T')^m) = 0
\end{align*}
and $T$ is rigid.

Now we additionally assume that $T'$ is cluster-tilting.
Let $M$ be an object in $\mod \Lambda$ with $\Ext_\Lambda^1(T,M)=0$.
We have 
\begin{align*}
\Ext_{\Lambda/\langle \rho \rangle}^1(T',\Phi_\rho(M))
\cong \Ext_{\Lambda/\langle \rho \rangle}^1(\Phi_\rho(U),\Phi_\rho(M)) \cong 
\bigoplus_{i=0}^{m-1}\Ext_\Lambda^1(\rho_*^i(U),M)=\Ext_\Lambda^1(T,M)=0.
\end{align*}
Because $T'$ is cluster-tilting,
$\Phi_\rho(M)$ is in $\add_{\mod (\Lambda/\langle \rho \rangle)}T'$.
This implies that $\Psi_\rho(\Phi_\rho(M)) \cong \bigoplus_{i=0}^{m-1}\rho_*^i(M)$ 
is in $\add_{\mod \Lambda}T$,
and especially, $M$ is in $\add_{\mod \Lambda}T$.
We can similarly show that 
if an object $X$ in $\mod \Lambda$ satisfies $\Ext_\Lambda^1(X,T)=0$,
then $X$ is a direct summand of $T$.
Therefore, $T$ is cluster-tilting.

The remained part is deduced by the part (1).
\end{proof}

\begin{Lem}\label{grade}
In the setting of Proposition \ref{long_elem},
let $k \ge 1$ be an integer and consider the two functors
$\Phi_\tau \colon \mod \Lambda_{\Delta,k,1} \to \mod \Lambda_{\Delta,1,1}$
and $\Psi_\tau \colon \mod \Lambda_{\Delta,1,1} \to \mod \Lambda_{\Delta,k,1}$.
Then there exists an object $U$ in $\mod \Lambda_{\Delta,k,1}$ satisfying 
that $\Phi_\tau(U) \cong T'$ and that $\Psi_\tau(T')$ is basic
and that each indecomposable direct summand of $\Psi_\tau(T')$ has a simple top.
\end{Lem}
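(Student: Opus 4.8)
The plan is to lift $T'$ through the locally bounded mesh algebra $K(\Z\Delta)$, using that both $\Lambda_{\Delta,k,1}$ and $\Lambda_{\Delta,1,1}$ are orbit algebras of $K(\Z\Delta)$. Since $\tau$ acts freely on $\Z\Delta$ with infinite order, we have $\Lambda_{\Delta,k,1}=K(\Z\Delta)/\langle\tau^k\rangle$ and $\Lambda_{\Delta,1,1}=K(\Z\Delta)/\langle\tau\rangle$, both Galois coverings with the \emph{torsion-free} groups $\langle\tau^k\rangle\cong\Z$ and $\langle\tau\rangle\cong\Z$. Write $F_k\colon\mod K(\Z\Delta)\to\mod\Lambda_{\Delta,k,1}$ and $F_1\colon\mod K(\Z\Delta)\to\mod\Lambda_{\Delta,1,1}$ for the push-down functors; then $F_1\cong\Phi_\tau\circ F_k$, the functor $F_k$ intertwines the $\tau$-actions on source and target, and $\Phi_\tau\circ\tau_*\cong\Phi_\tau$.

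First I would show that each $T'_m$ lifts along $F_1$. The $\Z$-grading of $\Lambda_{\Delta,1,1}$ attached to the covering $F_1$ has every idempotent $e_u$ in degree $0$, so each ideal $I_i=\Lambda_{\Delta,1,1}(1-e_i)\Lambda_{\Delta,1,1}$ is homogeneous; hence $\Lambda_{\Delta,1,1}/I_{i_m}\cdots I_{i_1}$ is a graded algebra and $T'_m=e_{i_m}(\Lambda_{\Delta,1,1}/I_{i_m}\cdots I_{i_1})$ is a graded module, i.e.\ $T'_m\cong F_1(\widehat{T'_m})$ for some $\widehat{T'_m}\in\mod K(\Z\Delta)$. (Alternatively one may invoke the classical gradability of the preprojective algebras of Dynkin type, i.e.\ the density of $F_1$.) Because $F_1$ is faithful and $T'_m$ is indecomposable with simple top, $\widehat{T'_m}$ is again indecomposable with simple top, and it is unique up to a shift by $\langle\tau\rangle$. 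Now put $U_m:=F_k(\widehat{T'_m})$ and $U:=\bigoplus_{m=1}^{nc/2}U_m$. The same reasoning applied to $F_k$ shows each $U_m$ is indecomposable with simple top, and $\Phi_\tau(U)=\bigoplus_m\Phi_\tau F_k(\widehat{T'_m})=\bigoplus_m F_1(\widehat{T'_m})\cong T'$, which is the first assertion.

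For $\Psi_\tau(T')$: since $T'_m\cong\Phi_\tau(U_m)$, the construction of $\Phi_\tau,\Psi_\tau$ gives $\Psi_\tau\Phi_\tau(V)\cong\bigoplus_{j=0}^{k-1}\tau_*^j(V)$ (cf.\ Lemma~\ref{orbit_mult}), so $\Psi_\tau(T')\cong\bigoplus_{m=1}^{nc/2}\bigoplus_{j=0}^{k-1}\tau_*^j(U_m)$, and each summand $\tau_*^j(U_m)$ is indecomposable with simple top. To see these are pairwise non-isomorphic, note $\tau_*^j(U_m)\cong F_k(\tau^j\widehat{T'_m})$; if $\tau_*^j(U_m)\cong\tau_*^{j'}(U_{m'})$, then applying $\Phi_\tau$ (which kills the $\tau$-action) yields $T'_m\cong T'_{m'}$, hence $m=m'$ since $T'$ is basic by Proposition~\ref{long_elem}, and then $F_k(\tau^j\widehat{T'_m})\cong F_k(\tau^{j'}\widehat{T'_m})$, together with the torsion-freeness of $\langle\tau^k\rangle$ and the finite support of $\widehat{T'_m}$, forces $j=j'$. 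Thus $\Psi_\tau(T')$ is basic and each of its indecomposable summands has a simple top, which are the remaining two assertions.

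The delicate point is the very first step, that $T'_m$ lifts to $K(\Z\Delta)$; this is where the homogeneity of the ideals $I_i$ (equivalently, the gradability of the preprojective algebra) is really used. Everything after that is routine bookkeeping with the push-down and pull-up functors and their compatibility with the torsion-free group actions $\langle\tau\rangle$ and $\langle\tau^k\rangle$.
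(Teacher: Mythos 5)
Your proposal is correct and is essentially the paper's own argument: the crucial step in both is that the ideals $I_i$ are homogeneous for the covering $\Z$-grading of $\Lambda_{\Delta,1,1}$, so each $T'_m$ is gradable and hence lifts, after which the simple tops give indecomposability and the basicness of $\Psi_\tau(T')$. The only difference is packaging — you route the lift through $K(\Z\Delta)$ and push down, while the paper builds the $\Lambda_{\Delta,k,1}$-module directly from the graded pieces — but these are the same construction.
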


\begin{proof}
We show that there exists an object $U$ in $\mod \Lambda_{\Delta,k,1}$ satisfying that 
(i) $\Phi_\tau(U) \cong T'$
and that (ii) $U$ is basic and that 
(iii) each indecomposable direct summand of $U$ has a simple top
and that (iv) 
any two modules of $U,\tau_*(U),\ldots,\tau_*^{k-1}(U)$
have no common indecomposable direct summand.
If it is shown, the assertion is proved by 
$\Psi_\tau(T') \cong \bigoplus_{i=0}^{k-1} \tau_*^i(U)$.

We can define a $\Z$-grading on the preprojective algebra $\Lambda_{\Delta,1,1}$
as follows;
the degree of the idempotent $e_i$ for each vertex $i \in (Q_{\Delta,1,1})_0$ is 0
and the degree of each arrow $(i \to j) \in (Q_{\Delta,1,1})_1$ is 0 if $i < j$ and
1 if $i > j$.
For a finite-dimensional $\Z$-graded $\Lambda_{\Delta,1,1}$-module $M'$,
we associate the following
(non-graded) finite-dimensional $\Lambda_{\Delta,k,1}$-module $M$;
the vector space $Me_{(i,a+k\Z)}=\bigoplus_{b \in a+k\Z}(M'e_i)_b$
for each $(i,a+k\Z) \in (Q_{\Delta,k,1})_0$,
where $(M'e_i)_b$ is the degree $b$ part of the $K$-vector space $M'e_i$;
and the action of each arrow in $Q_{\Delta,k,1}$ on $M$ is naturally defined
by taking the direct sum.
Then we have $\Phi_\tau(M) \cong M'$ as non-graded $\Lambda_{\Delta,1,1}$-modules.
It is easy to see that if $M'$ has a simple top then $M$ also has a simple top.

Therefore, we show that $T'_m$ can be a $\Z$-graded $\Lambda_{\Delta,1,1}$-module.
Because the idempotents $e_1,\ldots,e_n$ and the ideals $I_1,\ldots,I_n$ are $\Z$-graded
by the $\Z$-grading on $\Lambda_{\Delta,1,1}$,
the module $T'_m$ can be also $\Z$-graded.
Thus, we can take an object $U$ in $\mod \Lambda_{\Delta,k,1}$ such that 
$\Phi_\tau(U)$ is isomorphic to 
$\bigoplus_{m=1}^{nc/2} T'_m = T'$.
By Proposition \ref{long_elem}, $T'$ is basic and 
each indecomposable direct summand of $T'$ has a simple top,
so $U$ is also basic and each indecomposable direct summand of $U$ has a simple top.
Therefore, (i), (ii), and (iii) are proved.

We prove the claim (iv).
Assume that $\tau_*^i(U)$ and 
$\tau_*^j(U)$ have a common indecomposable direct summand $X$ 
for some $i,j \in \{0,1,\ldots,k-1\}$ with $i \ne j$.
Then $\tau_*^{-i}(X)$ and $\tau_*^{-j}(X)$
are nonisomorphic indecomposable direct summands of $U$,
because $X$ has a simple top.
Therefore, $\Phi_\tau(X) ^2$ is a direct summand of $\Phi_\tau(U) \cong T'$,
but it is impossible because $T'$ is basic. 
The claim (iv) is proved.
\end{proof}

Now we can show the existence of a basic $([-2] \circ \bS)$-stable 
(see Proposition \ref{l=l'}) cluster-tilting object in $\mod \Lambda$
if $\Lambda$ is a finite-dimensional mesh algebra except of type III: 
$\Lambda_{A_n,2k-1,2}$ ($n \in 2\Z$)
and count the number of its indecomposable direct summands.

\begin{Th}\label{CT_not_III}
Let $\Lambda=\Lambda_{\Delta,l,t}$ be a finite-dimensional mesh algebra 
except of type III
and $n$ be the number of the vertices of $\Delta$, 
$c=c_\Delta$ and $k=l/t$.
Then $\mod \Lambda$ has a basic $\tau_*$-stable cluster-tilting object $T$
with $|T|_{\mathrm{np}}=n(c-2)k/2$.
Moreover, if $\Lambda$ is type II, we can take this $T$ 
as a $\tau_* \theta_*$-stable object. 
\end{Th}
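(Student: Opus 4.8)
The plan is to reduce everything to the preprojective algebra, where Proposition~\ref{long_elem} supplies a basic cluster-tilting object $T'$ of $\mod\Lambda_{\Delta,1,1}$ with $|T'|_{\mathrm{np}}=n(c-2)/2$, and then to transport it along the covering functors $\Phi_\rho,\Psi_\rho$ using Lemma~\ref{orbit_mult}. By Lemma~\ref{torsion} the algebras to treat split into two families: the \emph{linear} ones $\Lambda=\Lambda_{\Delta,k,1}$ (types I, IV, VII, IX, X), covered by $\Lambda_{\Delta,1,1}$ through $\tau$; and the \emph{folded} ones $\Lambda=\Lambda_{\Delta,l,1}/\langle\sigma\rangle$ with $\sigma=\tau^{l/t}\rho_0$ free of order $t\in\{2,3\}$ on $Q_{\Delta,l,1}$, where $\rho_0=\psi$ (types II, V, VIII) or $\rho_0=\chi$ (type VI). In every case $k=l/t$.

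For the linear case, Lemma~\ref{grade} gives $U\in\mod\Lambda_{\Delta,k,1}$ with $\Phi_\tau(U)\cong T'$ such that $\Psi_\tau(T')\cong\bigoplus_{i=0}^{k-1}\tau_*^i(U)$ is basic with every indecomposable summand having simple top. Apply Lemma~\ref{orbit_mult}(2) with $\rho=\tau$ (free of order $k$), $\eta=\tau\in\Aut Q_{\Delta,k,1}$ and $\bar\eta=\id$ (the translation of $Q_{\Delta,1,1}$ is trivial, so $q_\tau\circ\tau=q_\tau$): since $T'$ is cluster-tilting and $\id_*$-stable, $T:=\Psi_\tau(T')$ is a basic $\tau_*$-stable cluster-tilting object of $\mod\Lambda_{\Delta,k,1}$ with $|T|_{\mathrm{np}}=k\,|T'|_{\mathrm{np}}=n(c-2)k/2$. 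As $[-2]\circ\bS\cong\tau_*^{-1}$ here by Proposition~\ref{l=l'}(2)(iii), this $T$ is $([-2]\circ\bS)$-stable.

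For the folded case, the linear case applied to the parameter $l$ yields a basic $\tau_*$-stable cluster-tilting object $T_1=\Psi_\tau(T')\cong\bigoplus_{i=0}^{l-1}\tau_*^i(U)$ of $\mod\Lambda_{\Delta,l,1}$ (with $U$ the parameter-$l$ lift of Lemma~\ref{grade}). The crucial point is that $T'$ may be taken $\rho_{0*}$-stable: $\rho_0$ descends to an automorphism of $\Lambda_{\Delta,1,1}$ acting on vertices by the diagram automorphism $\sigma_0$ of $\Delta$, which fixes the longest element $w_0$ of $W_\Delta$, so one chooses the reduced expression of $w_0$ defining $T'$ (equivalently, a $\sigma_0$-fixed cluster) with $\sigma_{0*}(T')\cong T'$; for type II one additionally arranges $\theta_*(T')\cong T'$, which is harmless because the arrow-sign twist fixes the isomorphism class of any module with tree-shaped support, in particular of each summand of the type $A_n$ module $T'$. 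Granting this, $\rho_{0*}(T_1)\cong\Psi_\tau(\sigma_{0*}(T'))\cong T_1$; since $\sigma_*=\tau_*^{l/t}\rho_{0*}$ and $T_1$ is $\tau_*$-stable we get $\sigma_*(T_1)\cong T_1$, and with $V=\bigoplus_{i=0}^{l/t-1}\tau_*^i(U)$ one checks $T_1\cong\bigoplus_{j=0}^{t-1}\sigma_*^j(V)$, the $t$ constituents being pairwise disjoint by Lemma~\ref{grade}. Then Lemma~\ref{orbit_mult}(1), with $\rho=\sigma$ (free of order $t$ by Lemma~\ref{torsion}), $\eta=\tau$ and $\bar\eta$ the translation of $Q_{\Delta,l,t}$ (so $q_\sigma\circ\tau=\bar\eta\circ q_\sigma$), shows $T:=\Phi_\sigma(V)$ is a basic $\bar\eta_*$-stable cluster-tilting object of $\mod\Lambda_{\Delta,l,t}$ with $|T|_{\mathrm{np}}=|T_1|_{\mathrm{np}}/t=n(c-2)l/(2t)=n(c-2)k/2$. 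For types V, VI, VIII this $\bar\eta_*$-stability is $([-2]\circ\bS)$-stability by Proposition~\ref{l=l'}(2)(iii); for type II, lifting $\theta$ to a sign twist on $\Lambda_{\Delta,l,1}$ fixing the tree-shaped summands of $V$ gives $\theta_*(T)\cong T$ as well, so $T$ is $\tau_*\theta_*$-stable, hence $([-2]\circ\bS)$-stable by Proposition~\ref{l=l'}(2)(i).

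The main obstacle is the equivariance used in the folded case: that the preprojective cluster-tilting object $T'$ of Proposition~\ref{long_elem} can be chosen stable under $\rho_{0*}$, and for type II under the sign twist $\theta_*$. Concretely this means selecting a reduced expression of $w_0$ adapted to the diagram symmetry $\sigma_0$ carried by $\rho_0$ and checking that the construction $T'_m=e_{i_m}(\Lambda/I_{i_m}\cdots I_{i_1})$ respects it; the sign-twist invariance for type II is the most delicate piece. Everything else is routine bookkeeping with $\Phi_\rho,\Psi_\rho$ and the counting clauses of Lemmas~\ref{orbit_mult} and \ref{grade}.
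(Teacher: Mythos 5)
Your overall strategy is the same as the paper's: take the basic cluster-tilting object $T'$ of $\mod\Lambda_{\Delta,1,1}$ from Proposition \ref{long_elem}, lift it along $\Psi_\tau$ using Lemma \ref{grade} and Lemma \ref{orbit_mult}(2), and, for $t\ge 2$, push it down along $\Phi_{\tau^{k}\rho_0}$ using Lemma \ref{orbit_mult}(1). The $t=1$ case is complete and matches the paper. But in the folded case you have left the actual crux unproved: you assert that $T'$ ``may be taken $\rho_{0*}$-stable'' by choosing a reduced expression of $w_0$ adapted to the diagram automorphism, and you yourself flag this as ``the main obstacle.'' That assertion is precisely the content the proof has to supply. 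The paper closes it by writing down explicit reduced expressions of the longest element for $A_n$ ($n$ odd), $D_n$, $E_6$ and $D_4$ that are compatible with $\psi$ resp.\ $\chi$, and invoking \cite[Lemma 3.4.2]{BIRS} to conclude that the resulting cluster-tilting object is $\psi_*$- resp.\ $\chi_*$-stable. Without exhibiting such an expression (or giving some other argument), types II, V, VI and VIII are not established.

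Two further points. First, your justification of $\theta_*$-stability in type II --- that ``the arrow-sign twist fixes the isomorphism class of any module with tree-shaped support'' --- is not a proof: the relevant modules live over the double quiver of $A_n$ (or over $Q_{A_n,2k,2}$), neither of which is a tree, and you do not show that their coefficient quivers are trees. The paper's argument is simpler and airtight: each summand is of the form $e_{i_m}(\Lambda/I_{i_m}\cdots I_{i_1})$ with $I_i=\Lambda(1-e_i)\Lambda$, and these ideals are visibly $\theta$-stable because $\theta$ fixes all idempotents. Second, your specific choice $V=\bigoplus_{i=0}^{l/t-1}\tau_*^i(U)$ need not satisfy $T_1\cong\bigoplus_{j=0}^{t-1}\sigma_*^j(V)$: one only knows $\rho_{0*}(U)$ agrees with $U$ up to $\tau_*$-twists of individual summands, and such a twist can move a summand out of the index range $0\le i<l/t$. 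The paper avoids this by observing that no indecomposable summand of the lifted object is $\sigma_*$-fixed (each has a simple top and $\sigma$ acts freely on vertices), so one may take $V$ to be any choice of $\sigma_*$-orbit representatives; with that correction your application of Lemma \ref{orbit_mult}(1) goes through.
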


\begin{proof}
First, we consider the case $t=1$. We have $l=k$.
It is easy to see that $Q_{\Delta,1,1}=Q_{\Delta,k,1}/\langle \tau \rangle$.
From Proposition \ref{long_elem}, 
there exists a basic $\tau_*$-stable cluster-tilting object $T'$.
The functor 
$\Psi_\tau \colon \mod \Lambda_{\Delta,1,1} \to \mod \Lambda_{\Delta,k,1}$
sends $T'$ to a basic $\tau_*$-stable cluster-tilting object $T=\Psi_\tau(T')$
with $|T|_{\mathrm{np}}=n(c-2)k/2$ from Lemmas \ref{orbit_mult} (2) and \ref{grade}.

Second, we consider the case $t=2$. We have $l=2k$.
It is easy to see that $Q_{\Delta,1,1}=Q_{\Delta,2k,1}/\langle \tau \rangle$.

By assumption, $\Delta$ is $A_n$ ($n \notin 2\Z$), $D_n$, or $E_6$.
From \cite{BBa}, the following sequence $\gamma$ with its length $nc/2$ 
gives the longest element of 
the Coxeter group of $\Delta$;
if $\Delta$ is $A_n$ ($n \notin 2\Z$), we define
\begin{align*}
&\alpha_m = ((n+1)/2-(m-1),(n+1)/2+(m-1)) \quad (m=2,\ldots,(n+1)/2), \\ 
&\beta_1 = ((n+1)/2), \quad
\beta_m=\alpha_m \cdot \beta_{m-1} \cdot \alpha_m \quad (m=2,\ldots,(n+1)/2), \quad
\gamma=\beta_1 \cdot \beta_2 \cdots \beta_{(n+1)/2};
\end{align*}
if $\Delta$ is $D_n$, we define
\begin{align*}
\beta_1 = (n-1,n), \quad
\beta_m=(n-m) \cdot \beta_{m-1} \cdot (n-m) \quad (m=2,\ldots,n-1), \quad
\gamma=\beta_1 \cdot \beta_2 \cdots \beta_{n-1};
\end{align*}
if $\Delta$ is $E_6$, we define $\beta=(1,2,5,4,6,3)$ and $\gamma=\beta^6$,
where $(a_1,\ldots,a_p)\cdot(b_1,\ldots,b_q)$ denotes 
the composition $(a_1,\ldots,a_p,b_1,\ldots,b_q)$.
The corresponding cluster-tilting object 
in $\mod \Lambda_{\Delta,1,1}$ constructed by Proposition \ref{long_elem} 
is $\psi_*$-stable by \cite[Lemma 3.4.2]{BIRS} 
and each of its indecomposable direct summands has a simple top.
We have a basic $\psi_*$-stable cluster-tilting object 
$T'$ in $\mod \Lambda_{\Delta,1,1}$
with $|T'|_{\mathrm{np}}=n(c-2)/2$.

By Lemma \ref{orbit_mult} (2) and \ref{grade}, 
the functor $\Psi_\tau \colon \mod \Lambda_{\Delta,1,1} \to \mod \Lambda_{\Delta,2k,1}$
sends $T'$ to a basic $\psi_*$-stable, $\tau_*$-stable
(especially $(\tau^k \psi)_*$-stable) cluster-tilting object $T=\Psi_\tau(T')$, and
we have $|T|_{\mathrm{np}}=2k|T'|_{\mathrm{np}}=n(c-2)k$.
Lemma \ref{grade} tells us also that 
each indecomposable direct summand of $T$ has a simple top,
and thus $T$ has no $(\tau^k \psi)_*$-stable indecomposable direct summand. 
Therefore, there exists a direct summand $V$ of $T$ 
such that $T=V \oplus (\tau^k\psi)_*(V)$.

Next, it is easy to see that $Q_{\Delta,2k,2}=Q_{\Delta,2k,1}/\langle \tau^k \psi \rangle$.
By Lemma \ref{orbit_mult} (1), the functor $\Phi_{\tau^k \psi} \colon 
\mod \Lambda_{\Delta,2k,1} \to \mod \Lambda_{\Delta,2k,2}$ sends 
$V$ to a basic $\tau_*$-stable cluster-tilting object 
$T''=\Phi_{\tau^k \psi}(V)$, and
we have $|T''|_{\mathrm{np}}=|T|_{\mathrm{np}}/2=n(c-2)k/2$.

Finally, we consider the case $t=3$. We have $\Delta=D_4$.
The sequence $(3,4,1,2,3,4,1,2,3,4,1,2)$ with its length 12 gives 
the longest element of the Coxeter group of $\Delta=D_4$, 
and the corresponding cluster tilting module is $\chi_*$-stable
and each of its indecomposable direct summands has a simple top.
From these, we can similarly construct 
a basic $\tau_*$-stable cluster-tilting object 
in $\mod \Lambda_{\Delta,3k,3}$
and count the number of indecomposable direct summands to the case $t=2$.

If $\Lambda$ is type II, it is straightforward to see 
that this $T''$ is also $\theta_*$-stable,
because every indecomposable direct summands of $T''$ 
is the quotient of some indecomposable projective $\Lambda$-module
by a product of ideals of the form $\Lambda(1-e_i)\Lambda$,
and because the automorphism $\theta \colon \Lambda \to \Lambda$
satisfies that $\theta(e_u)=e_u$ for $u \in Q_0$
and that $\theta(\alpha)$ is $\alpha$ or $-\alpha$ for $\alpha \in Q_1$.
\end{proof}

The remained task is on basic $([-2] \circ \bS)$-stable 
cluster-tilting objects 
for the type III: $\Lambda_{A_n,2k-1,2}$ ($n \in 2\Z)$,
and the answer is the following proposition.  

\begin{Th}\label{no_CT_III}
Let $n,k \ge 1$ be integers and assume $n \in 2\Z$.
Then we have 
\begin{align*}
\max \{ |T|_{\mathrm{np}} \mid 
\mbox{$T$ is a $\tau_*\kappa_*$-stable rigid object in $\mod \Lambda_{A_n,2k-1,2}$}\} 
= n(n-2)(2k-1)/4.
\end{align*}
Moreover, there is no $\tau_*\kappa_*$-stable cluster-tilting object 
in $\mod \Lambda_{A_n,2k-1,2}$.
\end{Th}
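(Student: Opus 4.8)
The plan is to descend first to the covering algebra $\Lambda_{A_n,2k-1,1}$ and then to the preprojective algebra $\Lambda_{A_n,1,1}$. By Lemma~\ref{torsion}(2) we have $Q_{A_n,2k-1,2}=Q_{A_n,2k-1,1}/\langle\sigma\rangle$ with $\sigma=\tau^k\phi$; since $n$ is even, $\sigma$ fixes no vertex and $\sigma^2=\tau^{2k-1}=\id$ on $Q_{A_n,2k-1,1}$, so $\sigma$ is free of order $2$. Two preliminary remarks are used throughout. First, by Proposition~\ref{l=l'}(2)(ii), being $\tau_*\kappa_*$-stable is the same as being $([-2]\circ\bS)$-stable. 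Second, each mesh relation is homogeneous of degree $2$ for the path-length grading, so every $\Lambda_{A_n,l,t}$ is $\Z_{\geq 0}$-graded and $\kappa_*$ (which scales the degree-$d$ part by $(-1)^d$) is isomorphic to the identity on every gradable module, in particular on every quotient of a graded projective; hence on any rigid object all of whose indecomposable summands have simple top, ``$\tau_*\kappa_*$-stable'' simply means ``$\tau_*$-stable''. As $|T|_{\mathrm{np}}$ is unchanged on passing to the basic version, we may restrict to basic $T$.

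Applying Lemma~\ref{orbit_mult} to the covering $\sigma$ (with $\eta=\bar\eta=\tau$, which commutes with $\sigma$), and using that a simple-top module is never $\sigma_*$-stable because $\sigma$ is free, one sees that basic $\tau_*\kappa_*$-stable rigid (resp.\ cluster-tilting) objects $T$ of $\Lambda_{A_n,2k-1,2}$ with simple-top summands correspond via $\Phi_\sigma,\Psi_\sigma$ to basic $\sigma_*$-stable and $\tau_*$-stable rigid (resp.\ cluster-tilting) objects $\widehat T=V\oplus\sigma_*(V)$ of $\Lambda_{A_n,2k-1,1}$ with simple-top summands, with $|T|_{\mathrm{np}}=\frac12|\widehat T|_{\mathrm{np}}$. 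A second descent, via $\Psi_\tau\colon\mod\Lambda_{A_n,1,1}\to\mod\Lambda_{A_n,2k-1,1}$ and the grading trick of Lemma~\ref{grade}, writes each such $\widehat T$ as $\Psi_\tau(R)$ for a rigid $\Lambda_{A_n,1,1}$-module $R$ with simple-top summands, carrying a path-length grading whose twist by the diagram flip $\bar\phi\colon i\mapsto n+1-i$ (the reduction of $\sigma$ modulo $\tau$) is compatible with the degree-reflection that $\bar\phi$ induces, and with $|\widehat T|_{\mathrm{np}}=(2k-1)|R|_{\mathrm{np}}$. Everything thus reduces to the claim that the maximum of $|R|_{\mathrm{np}}$ over such $R$ is $n(n-2)/2$. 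Since a cluster-tilting object of $\Lambda_{A_n,1,1}$ has $n(c-2)/2=n(n-1)/2$ non-projective summands by Theorem~\ref{CT_not_III}, and $n(n-1)/2>n(n-2)/2$, this bound will also give the non-existence of $\tau_*\kappa_*$-stable cluster-tilting objects; the point distinguishing $A_n$ with $n$ even from the type-II situations of Theorem~\ref{CT_not_III} is exactly that $\bar\phi$ has no fixed vertex, so \cite[Lemma~3.4.2]{BIRS} does not apply and $\Lambda_{A_n,1,1}$ carries no $\bar\phi_*$-stable cluster-tilting object.

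For the lower bound I would produce the extremal module $R$ by hand, in the spirit of Proposition~\ref{long_elem}: choose a reduced word $(i_1,\dots,i_r)$ for a $\bar\phi$-invariant element of $W_{A_n}$ that is shorter than the longest element by exactly $n/2$ generators --- one from the ``centre'' of each of the $n/2$ orbits of $\bar\phi$ on the vertices, namely the generator that would otherwise break the path-length grading under $\bar\phi$ --- so that $R=\bigoplus_m e_{i_m}(\Lambda_{A_n,1,1}/I_{i_m}\cdots I_{i_1})$ has simple-top summands, is $\bar\phi_*$-stable, admits a grading compatible with $\bar\phi$, and has $n(n-2)/2$ non-projective summands. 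Pulling $R$ up by $\Psi_\tau$ (Lemma~\ref{grade}) and pushing the direct summand $V$ down by $\Phi_\sigma$ (Lemma~\ref{orbit_mult}(1)) then yields a $\tau_*\kappa_*$-stable rigid object of $\Lambda_{A_n,2k-1,2}$ with $n(n-2)(2k-1)/4$ non-projective summands, as required. Verifying that such a word exists and has the stated properties is routine type-$A$ combinatorics.

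The hard part will be the upper bound: no $\bar\phi_*$-stable rigid $\Lambda_{A_n,1,1}$-module has more than $n(n-2)/2$ non-projective summands (grading-compatibility only strengthens the hypothesis, and reducing a general $\tau_*\kappa_*$-stable rigid object of $\Lambda_{A_n,2k-1,2}$ to the simple-top case --- or handling it directly in the representation-finite category $\mod\Lambda_{A_n,2k-1,2}$ --- must also be dealt with). Here I would use that $\umod\Lambda_{A_n,1,1}$ is Hom-finite $2$-Calabi--Yau of finite representation type, so its maximal rigid objects are cluster-tilting and all have $n(c-2)/2$ indecomposable summands (Proposition~\ref{ind_coincide}), while $\bar\phi_*$ acts on the poset of basic rigid objects. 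Taking a maximal element $R$ of its $\bar\phi_*$-stable part and completing it to a cluster-tilting object $\widetilde T\supsetneq R$ (proper, since $R$ is not cluster-tilting), the objects $\widetilde T$ and $\bar\phi_*(\widetilde T)$ are distinct cluster-tilting objects with common complement $R$, and the $2$-Calabi--Yau reduction at $R$ is a category of finite type on which the induced $\bar\phi_*$ acts with no nonzero $\bar\phi_*$-stable rigid object. Pinning down this reduction from the explicit $A_n$ combinatorics controlling rigidity, and showing its rank --- the corank of $R$ --- must be exactly $n/2$, one per $\bar\phi$-orbit of vertices, is the crux; equivalently, a flip-symmetric maximal non-crossing configuration for $A_n$ with $n$ even falls exactly $n/2$ short of a full triangulation. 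This combinatorial fact, transported back through the two coverings, is where the substance of the argument sits.
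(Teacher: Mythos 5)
Your lower-bound construction is essentially the paper's (pull a $\bar\phi_*$-stable rigid object up from $\Lambda_{A_n,1,1}$ by $\Psi_\tau$ and push it down to $\Lambda_{A_n,2k-1,2}$), but your upper bound has a genuine gap, and it is exactly the one you flag as ``the crux.'' You reduce everything to the claim that every $\bar\phi_*$-stable rigid object of $\Lambda_{A_n,1,1}$ has at most $n(n-2)/2$ nonprojective summands, and propose to prove it by taking a maximal $\bar\phi_*$-stable rigid $R$, performing the $2$-Calabi--Yau reduction at $R$, and showing the reduction has rank exactly $n/2$ with no nonzero $\bar\phi_*$-stable rigid object. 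Nothing in your sketch establishes this, and there is no general theorem asserting that all \emph{maximal $G$-stable} rigid objects in a $2$-CY category have the same number of summands --- the uniformity results (Proposition \ref{ind_coincide}, Proposition \ref{2-CY_prop}(3)) apply only to maximal rigid objects without an equivariance constraint. In addition, your preliminary reduction of a general $\tau_*\kappa_*$-stable rigid object to one with simple-top, gradable summands is asserted but not proved (a quotient of a graded projective by a non-graded submodule need not be gradable), and you acknowledge this too.

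The paper closes precisely this gap by descending in the \emph{other} direction, to the algebra with loops $\Lambda' = \Lambda_{A_n,1,2}$, where the equivariance constraint dissolves: by Lemma \ref{loop_3}(1), which rests on $\umod\Lambda'$ being $5$-Calabi--Yau (Proposition \ref{loop_mesh}), \emph{every} maximal rigid object of $\mod\Lambda'$ is automatically $\kappa_*$-stable, so the problem becomes one about ordinary maximal rigid objects. Since $\umod\Lambda'$ is not $2$-CY, the paper must first extend the Zhou--Zhu uniformity theorem to this setting (Proposition \ref{2-CY_prop_loop}, via the support $\tau$-tilting correspondence of \cite{AIR}); it then exhibits a single maximal rigid object $T'=\Phi_\phi(V_1)$ with $|T'|_{\mathrm{np}}=n(n-2)/4$ by transporting an explicitly constructed maximal $\phi_*$-stable rigid $V=V_1\oplus\phi_*(V_1)$ of $\Lambda_{A_n,1,1}$ through Proposition \ref{preproj_loop} (maximal $\phi_*$-stable rigid upstairs $\Rightarrow$ maximal rigid downstairs). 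The maximality of $V$ itself is proved not by your proposed rank computation but by a mutation-closure argument: the $2^{n/2}$ cluster-tilting objects $T^\epsilon$ sharing the complement $V$ are closed under mutation in the $2$-CY reduction at $V$, hence exhaust all cluster-tilting objects there (Proposition \ref{2-CY_prop}(2)), and none contains a $\phi_*$-stable summand. If you want to salvage your route, you would need to supply both the uniformity statement for maximal $\bar\phi_*$-stable rigid objects and the gradability reduction; as written, neither is available.
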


\begin{Rem}\label{caution_III}
Note that $\tau_*\kappa_*$ commutes with stable equivalences of mesh algebras of type III
by Proposition \ref{l=l'},
and thus Theorem \ref{no_CT_III} gives an invariant under stable equivalences.
\end{Rem}

For the proof of Theorem \ref{no_CT_III}, 
we use some results on $\Lambda_{A_n,1,2}$ from \cite{AS} for $n \in \{4,6,8,\ldots\}$.
In the part (3), \cite[Corollary 5.18]{AS} says $\umod \Lambda_{A_n,1,2}$ that 
2-Calabi--Yau if the characteristic of $K$ is 2,
but in this case, 
we have $[3] \cong \id$ by the part (1), 
so it is also 5-Calabi--Yau.

\begin{Prop}\label{loop_mesh}
Let $n \in \{4,6,8,\ldots\}$.
\begin{itemize}
\item[(1)] \cite[Corollary 5.5]{AS}
We have $[-3] \cong \kappa_*$ on $\umod \Lambda_{A_n,1,2}$.
\item[(2)] \cite[Theorem 5.10]{AS}
We have $[6] \cong \mathrm{id}$ on $\umod \Lambda_{A_n,1,2}$.
\item[(3)] \cite[Corollary 5.18, Theorem 5.19]{AS}
The stable module category $\umod \Lambda_{A_n,1,2}$ is $5$-Calabi--Yau.
\end{itemize}
\end{Prop}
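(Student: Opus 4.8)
The plan is to derive all three parts from a single structural input---the third-syzygy behavior of the simple modules over $\Lambda_{A_n,1,2}$---combined with the relations among $[1]$, the Nakayama functor $\nu$, $\tau_*$ and $\kappa_*$ already assembled in Proposition \ref{l=l'}. The key observation, special to this algebra, is that its quiver is $Q_{A_n,1,2}=\Z A_n/\langle\tau\phi\rangle$ and $(\tau\phi)^2=\tau$, so the translation $\tau=(\tau\phi)^2$ lies in the very group $\langle\tau\phi\rangle$ by which we quotient. Consequently $\tau$ induces the identity automorphism of $Q_{A_n,1,2}$, whence $\tau_*\cong\id$ on $\umod\Lambda_{A_n,1,2}$; moreover $\tau\equiv\id$ and $\tau\phi\equiv\id$ force $\phi\equiv\id$, so the Nakayama permutation $\pi=\phi^{n-1}$ is also trivial on the quotient. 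This collapse of all the relevant vertex permutations is what pins the period and the Calabi--Yau dimension to the small values claimed.

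For part (1) I would compute $\Omega^3 S_u$ directly. Adapting the bimodule projective resolution of Proposition \ref{proj_resol}(1) to this mesh algebra (equivalently, invoking Dugas's description used in the proof of Proposition \ref{l=l'}(2)), one obtains $\Omega^3_{\Lambda^e}\Lambda\cong{_1\Lambda_\mu}$ for an algebra automorphism $\mu$ whose action on idempotents is governed by $\pi\tau^{-1}$. By the previous paragraph this vertex action is trivial, so $\mu$ fixes every $e_u$ and reduces to a sign twist on the arrows; identifying that sign with the automorphism $\kappa$ of Proposition \ref{l=l'}(2)(ii) yields $\Omega^3\cong\kappa_*$ at the bimodule level, i.e. $[-3]\cong\kappa_*$ as triangle autoequivalences of $\umod\Lambda_{A_n,1,2}$. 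Part (2) is then formal: since $\kappa^2=\id$ as an algebra automorphism, $[-6]\cong([-3])^2\cong\kappa_*^2\cong\id$, and therefore $[6]\cong\id$.

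For part (3) I would feed (1), (2) and $\tau_*\cong\id$ into Proposition \ref{l=l'}(2)(ii), which gives $[-2]\circ\bS\cong(\tau_*\kappa_*)^{-1}$ for type III. With $\tau_*\cong\id$ this reads $[-2]\circ\bS\cong\kappa_*^{-1}\cong\kappa_*$, and by (1) $\kappa_*\cong[-3]$, so $[-2]\circ\bS\cong[-3]$ and hence $\bS\cong[-1]$ after applying $[2]$. Finally $[-1]\cong[5]$ because $[6]\cong\id$ by (2), so $\bS\cong[5]$, i.e. $\umod\Lambda_{A_n,1,2}$ is $5$-Calabi--Yau.

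The main obstacle will be the sign bookkeeping in part (1): once the vertex permutation is seen to be trivial, the whole content of $\Omega^3\cong\kappa_*$ (rather than $\Omega^3\cong\id$) lies in showing that the twist $\mu$ negates the arrows instead of fixing them---equivalently, that $\nu\cong\id$ and not $\nu\cong\kappa_*$. This is exactly where the characteristic of $K$ enters: in characteristic $2$ the sign is invisible, $\kappa_*=\id$, and (1) degenerates to $[3]\cong\id$. As the remark preceding the statement observes, this is consistent with the $2$-Calabi--Yau conclusion of \cite{AS}, since $[3]\cong\id$ upgrades $\bS\cong[2]$ to $\bS\cong[5]$; thus the $5$-Calabi--Yau conclusion of part (3) holds uniformly in all characteristics, and I would record this compatibility as the final step.
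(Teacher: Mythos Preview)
The paper does not give its own proof of this proposition: each part is quoted directly from \cite{AS}, with only a one-line remark (just before the statement) arguing that in characteristic $2$ the $2$-Calabi--Yau conclusion of \cite{AS} upgrades to $5$-Calabi--Yau because $[3]\cong\id$. So there is no argument in the paper to compare your plan against beyond that remark.

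Your reductions of (2) and (3) to (1) are correct. The observation that $(\tau\phi)^2=\tau$ forces $\tau_*\cong\id$ on $\umod\Lambda_{A_n,1,2}$ is the right structural point, and feeding it into Proposition~\ref{l=l'}(2)(ii) gives $[-2]\circ\bS\cong\kappa_*$; combining with $\kappa_*\cong[-3]$ and $[6]\cong\id$ then yields $\bS\cong[5]$ exactly as you wrote. Your characteristic-$2$ paragraph is precisely the argument the paper sketches in its pre-proposition remark, so that part matches.

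For (1) itself, however, your proposal is a plan rather than a proof. You correctly reduce the question to identifying the twist $\mu$ in $\Omega^3_{\Lambda^e}\Lambda\cong{_1\Lambda_\mu}$ once the vertex permutation $\pi\tau^{-1}$ is seen to be trivial, and you correctly name the residual sign computation (``$\mu$ negates the arrows'' / ``$\nu\cong\id$'') as the obstacle---but you do not carry it out. That computation is the entire content of \cite[Corollary~5.5]{AS}; neither the paper nor your proposal supplies it. So in effect your treatment of (1) relies on the same citation the paper does, and your contribution is the clean in-house derivation of (2) and (3) from (1) via Proposition~\ref{l=l'}, which the paper leaves implicit.
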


\begin{Lem}\label{loop_3}
Let $n \in \{4,6,8,\ldots\}$ and $Q=Q_{A_n,1,1}$, $\Lambda=\Lambda_{A_n,1,1}$
and $Q'=Q_{A_n,1,2}$, $\Lambda'=\Lambda_{A_n,1,2}$.
Consider the functors $\Phi_\phi \colon \mod \Lambda \to \mod \Lambda'$
and $\Psi_\phi \colon \mod \Lambda' \to \mod \Lambda$.
\begin{itemize}
\item[(1)]
If a $\Lambda'$-module $M'$ is rigid, then $M' \oplus \kappa_*(M')$ is also rigid. 
Especially, every maximal rigid $\Lambda'$-module is $\kappa_*$-stable.
\item[(2)]
For any $\Lambda'$-module $M'$,
then we have a short exact sequence 
$0 \to M' \to \Phi_\phi(\Psi_\phi(M')) \to \kappa_*(M') \to 0$.
\item[(3)]
If a $\Lambda'$-module $M'$ is rigid, 
then we have $\Phi_\phi(\Psi_\phi(M')) \cong M' \oplus \kappa_*(M')$.
\item[(4)]
Let $M'$ be a $\Lambda'$-module with $M' \cong \Phi_\phi(M)$ for
some $\Lambda$-module $M$.
Then we have $M' \cong \kappa_*(M')$ in $\mod \Lambda'$.
\end{itemize}
\end{Lem}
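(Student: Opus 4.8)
The plan is to establish the four parts in turn, using the covering functors $\Phi_\phi,\Psi_\phi$ together with the results of Proposition \ref{loop_mesh}.

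\emph{Part (1).} Since $\kappa^2=\id_{\Lambda'}$, the functor $\kappa_*$ is an exact autoequivalence of $\umod\Lambda'$ with $\kappa_*^2\cong\id$; hence $\Ext^1_{\Lambda'}(\kappa_* M',\kappa_* M')\cong\Ext^1_{\Lambda'}(M',M')=0$ and $\Ext^1_{\Lambda'}(\kappa_* M',M')\cong\Ext^1_{\Lambda'}(M',\kappa_* M')$, so it is enough to prove $\Ext^1_{\Lambda'}(M',\kappa_* M')=0$. By Proposition \ref{loop_mesh} one has $\kappa_*\cong[-3]$ and $\bS\cong[-1]$ on $\umod\Lambda'$ (the latter since $\bS\cong[5]$ and $[6]\cong\id$). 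Then, using $\Ext^1_{\Lambda'}(X,Y)\cong\Hom_{\umod\Lambda'}(X,Y[1])$ and the Serre duality $\Hom_{\umod\Lambda'}(X,Y)\cong D\Hom_{\umod\Lambda'}(Y,\bS X)$,
\begin{align*}
\Ext^1_{\Lambda'}(M',\kappa_* M')
&\cong\Hom_{\umod\Lambda'}(M',M'[-2])
\cong D\Hom_{\umod\Lambda'}(M'[-2],\bS M')\\
&\cong D\Hom_{\umod\Lambda'}(M',M'[1])
=D\Ext^1_{\Lambda'}(M',M')=0.
\end{align*}
For the final clause: if $M'$ is maximal rigid then $M'\oplus\kappa_* M'$ is rigid, so $\kappa_* M'\in\add M'$; applying $\kappa_*$ gives $M'\in\add\kappa_* M'$, whence $\add M'=\add\kappa_* M'$, i.e.\ $\kappa_* M'\cong M'$ for basic $M'$.

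\emph{Parts (2) and (3).} The covering $Q_{A_n,1,1}\to Q_{A_n,1,2}$ is Galois with group $\langle\phi\rangle\cong\Z/2\Z$, and I would identify it with the covering of $\Lambda_{A_n,1,2}$ attached to the $\Z/2\Z$-grading in which every arrow has degree $1$: the associated cover has vertex set $(Q_{A_n,1,2})_0\times\Z/2\Z$ and is again a doubled line on $n$ vertices — the two middle copies of the loop vertex being joined by the two lifts of the loop — hence is $Q_{A_n,1,1}$. Under this identification $\Psi_\phi$ becomes the functor $M'\mapsto M'\otimes_K K[\Z/2\Z]$ with its natural grading (the adjoint of forgetting the grading) and $\Phi_\phi$ forgets the grading, so $\Phi_\phi\Psi_\phi(M')\cong M'\otimes_K K[\Z/2\Z]$ as a $\Lambda'$-module. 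Tensoring $M'$ with the short exact sequence $0\to K\to K[\Z/2\Z]\to K_-\to0$ of $K[\Z/2\Z]$-modules (trivial submodule, regular module, sign quotient) and using $M'\otimes_K K_-\cong\kappa_* M'$ — negating the degree-$1$ elements of $\Lambda'$ is exactly applying $\kappa$ — yields the sequence of (2). Part (3) is then immediate: by (1) the module $M'\oplus\kappa_* M'$ is rigid, so $\Ext^1_{\Lambda'}(\kappa_* M',M')=0$ and the sequence splits (and in characteristic $\neq2$ it splits already because $K[\Z/2\Z]$ is semisimple, without using rigidity).

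\emph{Part (4) and the main obstacle.} If $M'\cong\Phi_\phi(M)$, then in the grading picture $\kappa_*\circ\Phi_\phi\cong\Phi_\phi$, a natural isomorphism $\Phi_\phi(N)\to\kappa_*\Phi_\phi(N)$ being $\id$ on the degree-$0$ part and $-\id$ on the degree-$1$ part of a graded $\Lambda'$-module $N$; hence $\kappa_* M'\cong M'$. (Equivalently one may combine (2) with $\Psi_\phi\Phi_\phi(M)\cong M\oplus\phi_* M$ to get $\Phi_\phi\Psi_\phi(M')\cong\Phi_\phi(M)\oplus\Phi_\phi(\phi_* M)\cong M'\oplus M'$ and then compare with (2), splitting being automatic in characteristic $\neq2$ while $\kappa_*=\id$ in characteristic $2$.) The step I expect to be hardest is the identification used in (2): checking that the Galois cover of $\Lambda_{A_n,1,2}$ for the ``all arrows in degree $1$'' grading is indeed the preprojective algebra $\Lambda_{A_n,1,1}$ — equivalently, that the signs forced on the mesh relations of $\Lambda_{A_n,1,2}$ near the loop vertex are exactly those making $\Phi_\phi\Psi_\phi(M')$ the asserted extension of $\kappa_* M'$ by $M'$ rather than the trivial $M'\oplus M'$. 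Everything else is formal once Proposition \ref{loop_mesh} is granted.
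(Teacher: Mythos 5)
Your parts (1) and (3) follow the paper's own route almost verbatim: both rest on Proposition \ref{loop_mesh} (3) (the $5$-Calabi--Yau property) together with $\kappa_*\cong[\pm3]$, and (3) is the splitting of the sequence from (2) forced by (1). Where you genuinely diverge is in (2) and (4). For (2) the paper does not invoke coverings or smash products at all: it unwinds the definitions of $\Phi_\phi$ and $\Psi_\phi$ to write $\Phi_\phi(\Psi_\phi(M'))$ explicitly as the module with spaces $M'e_i\oplus M'e_i$, on which a non-loop arrow acts by $\left[\begin{smallmatrix}f&0\\0&f\end{smallmatrix}\right]$ and the loop by $\left[\begin{smallmatrix}0&f\\f&0\end{smallmatrix}\right]$, embeds $M'$ diagonally, and computes the cokernel to be $\kappa_*(M')$ (the quotient twists only the loop by $-1$, which agrees with $\kappa_*$ after conjugating by $\sum_i(-1)^ie_i$). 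Your $K[\Z/2\Z]$-grading picture is an equivalent repackaging --- your ``all arrows in degree $1$'' grading differs from the one implicit in the paper's labelling (loop in degree $1$, all other arrows in degree $0$) only by a degree shift at the vertices --- but you explicitly leave the decisive verification (that the graded cover of $\Lambda'$, with its relations and signs, really is $\Lambda_{A_n,1,1}$, equivalently that the extension is by $\kappa_*M'$ rather than the trivial twist) as a flagged to-do; the paper closes exactly this point by the direct matrix computation, so you should carry that check out rather than defer it. For (4) your argument is different from, and cleaner than, the paper's: your sign-flip natural transformation ($+\id$ on the degree-$0$ part, $-\id$ on the degree-$1$ part) gives $\kappa_*\Phi_\phi\cong\Phi_\phi$ directly in $\mod\Lambda'$, whereas the paper first proves $\kappa_*(M')\cong M'$ only in $\umod\Lambda'$ using $[-3]\cong\phi_*$ on $\umod\Lambda$ from \cite[Corollary 5.5]{AS} and then upgrades to a module isomorphism by comparing dimensions of the indecomposable nonprojective $M'$; your route avoids that external input and the reduction to the indecomposable case. (Your parenthetical alternative for (4) via Krull--Schmidt and the characteristic-$2$ case is also fine, but it is no simpler than the direct natural isomorphism.)
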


\begin{proof}
(1)
We show that $\Ext_{\Lambda'}^1(M',\kappa_*(M'))=0$ first.
By Proposition \ref{loop_mesh} (3), $\umod \Lambda'$ is 5-Calabi--Yau.
Therefore, we have
$\Ext_{\Lambda'}^1(M',\kappa_*(M')) \cong
\Ext_{\Lambda'}^1(M',M'[3]) \cong \Ext_{\Lambda'}^4(M',M')\cong
D\Ext_{\Lambda'}^1(M',M')=0$.
We have $\Ext_{\Lambda'}^1(M',M')=0$ and $\Ext_{\Lambda'}^1(M',\kappa_*(M'))=0$.
Applying the involutive functor $\kappa_*$ to them, we have 
$\Ext_{\Lambda'}^1(\kappa_*(M'),\kappa_*(M'))=0$ and 
$\Ext_{\Lambda'}^1(\kappa_*(M'),M')=0$.
Thus $M' \oplus \kappa_*(M')$ is rigid.

(2)
By the definition of functors, $\Phi_\phi(\Psi_\phi(M'))$
is the following $\Lambda'$-module $N'$;
\begin{itemize}
\item for each vertex $i \in (Q')_0$, $N'e_i=M'e_i \oplus M'e_i$ holds, and
\item
for each arrow $(\alpha \colon i \to j) \in (Q')_1$,
let $f \colon M'e_i \to M'e_j$ the linear map defined by the action of $\alpha$ on $M'$,
then the action of $\alpha$ on $N'$ is given by
the matrix 
\begin{align*}
\begin{bmatrix} 0 & f \\ f & 0 \end{bmatrix} \ (\mbox{if $\alpha$ is the unique loop}),
\quad
\begin{bmatrix} f & 0 \\ 0 & f \end{bmatrix} \ (\mbox{otherwise}).
\end{align*}
\end{itemize}
We can construct a monomorphism from $M'$ to $N'=\Phi_\phi(\Psi_\phi(M'))$
as $M'e_i$ is embedded diagonally to $N'e_i=M'e_i \oplus M'e_i$.
By straightforward calculations, we can see that 
the cokernel of this monomorphism is isomorphic to $\kappa_*(M')$.

(3)
It is easily seen by (1) and (2).

(4)
If $M'$ is indecomposable projective, 
then $M' \cong \kappa_*(M')$ in $\mod \Lambda$ is easy to see.
We may assume that $M'$ is indecomposable and not projective.
By \cite[Corollary 5.5]{AS},
the functor $[-3]$ in $\umod \Lambda$ is given 
by the automorphism $\phi \colon \Lambda \to \Lambda$
coming from the quiver automorphism $\phi \colon Q \to Q$,
and $[3]$ satisfies the same property.
We have $M[3] \cong \phi_*(M)$ in $\umod \Lambda$ and
the assertion is proved as
$\kappa_*(M') \cong M'[3] \cong \Phi_\phi(M)[3] 
\cong \Phi_\phi(M[3]) \cong \Phi_\phi(\phi_*(M)) \cong M'$
in $\umod \Lambda$.
Because the dimensions of $M'$ and $\kappa_*(M')$ coincide and 
$M'$ is indecomposable and not projective, we have $M' \cong \kappa_*(M')$ 
in $\mod \Lambda$.
\end{proof}

The following proposition gives a way to obtain a maximal rigid object in 
$\mod \Lambda_{A_n,1,2}$. 

\begin{Prop}\label{preproj_loop}
Let $n \in \{4,6,8,\ldots\}$ and $\Lambda=\Lambda_{A_n,1,1}$
and $\Lambda'=\Lambda_{A_n,1,2}$.
We consider the functors $\Phi_\phi \colon \mod \Lambda \to \mod \Lambda'$
and $\Psi_\phi \colon \mod \Lambda' \to \mod \Lambda$.

Let $T$ be a maximal $\phi_*$-stable rigid object in $\mod \Lambda$
with $T \cong U \oplus \phi_*(U)$ for some $U$.
Then $T':=\Phi_\phi(U)$ is a maximal rigid object in $\mod \Lambda'$
and is $\kappa_*$-stable.
\end{Prop}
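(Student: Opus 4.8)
The plan is to verify in turn that $T'=\Phi_\phi(U)$ is rigid, that it is $\kappa_*$-stable, and that it is maximal among rigid objects of $\mod\Lambda'$. Throughout I will use that, since $n\in2\Z$, the quiver automorphism $\phi$ is free of order $2$ on $Q_{A_n,1,1}$ (there is no fixed vertex) and $Q_{A_n,1,2}=Q_{A_n,1,1}/\langle\phi\rangle$, so that Lemmas \ref{orbit_mult} and \ref{loop_3} apply with $\rho=\phi$; in particular $\phi_*^2\cong\id$ on $\mod\Lambda$, whence $\phi_*(T)\cong\phi_*(U)\oplus U\cong T$. I will also use the standard properties of the push-down and pull-up functors: both $\Phi_\phi$ and $\Psi_\phi$ are exact, $\Phi_\phi$ is simultaneously a left and a right adjoint of $\Psi_\phi$ (so that these adjunctions pass to all $\Ext^i$), $\Phi_\phi\circ\phi_*\cong\Phi_\phi$, $\phi_*\circ\Psi_\phi\cong\Psi_\phi$, and $\Psi_\phi\circ\Phi_\phi(M)\cong M\oplus\phi_*(M)$ for $M\in\mod\Lambda$; all of these are immediate from the explicit module-theoretic descriptions of the functors. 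Granting these, the first two assertions are quick: since $T=U\oplus\phi_*(U)$ is $\phi_*$-stable rigid, Lemma \ref{orbit_mult} (1) shows $T'$ is rigid in $\mod\Lambda'$, and Lemma \ref{loop_3} (4), applied to $T'=\Phi_\phi(U)$, shows $T'\cong\kappa_*(T')$.

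It remains to prove maximality, and the plan there is to pull the situation back to $\mod\Lambda$ and exploit the maximality of $T$. So let $M'\in\mod\Lambda'$ be such that $T'\oplus M'$ is rigid, put $N:=\Psi_\phi(M')\in\mod\Lambda$, and aim to show that $T\oplus N$ is $\phi_*$-stable rigid. It is $\phi_*$-stable since $\phi_*(N)=\phi_*\Psi_\phi(M')\cong\Psi_\phi(M')=N$ and $\phi_*(T)\cong T$. For rigidity one computes the four $\Ext^1$-blocks of $T\oplus N$: we have $\Ext_\Lambda^1(T,T)=0$ by hypothesis, and using the two adjunctions together with $\Phi_\phi(T)=\Phi_\phi(U)\oplus\Phi_\phi(\phi_*U)\cong(T')^{2}$, and (for the last line) Lemma \ref{loop_3} (3), which gives $\Phi_\phi\Psi_\phi(M')\cong M'\oplus\kappa_*(M')$ because $M'$ is rigid,
\begin{align*}
\Ext_\Lambda^1(T,N) &\cong \Ext_{\Lambda'}^1(\Phi_\phi(T),M') \cong \Ext_{\Lambda'}^1(T',M')^{2}=0, \\
\Ext_\Lambda^1(N,T) &\cong \Ext_{\Lambda'}^1(M',\Phi_\phi(T)) \cong \Ext_{\Lambda'}^1(M',T')^{2}=0, \\
\Ext_\Lambda^1(N,N) &\cong \Ext_{\Lambda'}^1(M',\Phi_\phi\Psi_\phi(M')) \cong \Ext_{\Lambda'}^1(M',M'\oplus\kappa_*(M'))=0,
\end{align*}
where the vanishing in the last line also uses $\Ext_{\Lambda'}^1(M',\kappa_*(M'))=0$, itself contained in Lemma \ref{loop_3} (1). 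Hence $T\oplus N$ is $\phi_*$-stable rigid.

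By the maximality of $T$ as a $\phi_*$-stable rigid object we then get $N\in\add_{\mod\Lambda}T=\add_{\mod\Lambda}(U\oplus\phi_*U)$; applying the additive functor $\Phi_\phi$ and using $\Phi_\phi(\phi_*U)\cong\Phi_\phi(U)=T'$ yields $\Phi_\phi(N)\in\add_{\mod\Lambda'}T'$. But $\Phi_\phi(N)=\Phi_\phi\Psi_\phi(M')\cong M'\oplus\kappa_*(M')$ by Lemma \ref{loop_3} (3), so $M'\oplus\kappa_*(M')\in\add_{\mod\Lambda'}T'$ and in particular $M'\in\add_{\mod\Lambda'}T'$. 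This proves that $T'$ is maximal rigid and, together with the first paragraph, finishes the argument. The step I expect to be the main obstacle is the rigidity of $T\oplus N$: one must keep careful track of on which side of each $\Ext$ the functor $\Psi_\phi$ sits, so as to invoke the correct (left- or right-) adjunction, and one must bring in Lemma \ref{loop_3} (1) and (3) — which ultimately rest on the $5$-Calabi--Yau property of $\umod\Lambda_{A_n,1,2}$ — precisely for the self-extension term $\Ext_\Lambda^1(N,N)$.
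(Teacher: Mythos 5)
Your proof is correct and follows the same overall strategy as the paper's: pull $M'$ up to $\mod\Lambda$, invoke the maximality of $T$ as a $\phi_*$-stable rigid object to get $\Psi_\phi(M')\in\add_{\mod\Lambda}T$, then push back down to conclude $M'\oplus\kappa_*(M')\in\add_{\mod\Lambda'}T'$. The difference lies in how the intermediate rigidity is verified. The paper stays in $\mod\Lambda'$ first: it enlarges $T'\oplus M'$ to the rigid object $T'\oplus M'\oplus\kappa_*(M')$ using Lemma \ref{loop_3} (1), identifies this with $\Phi_\phi(U\oplus\Psi_\phi(M'))$ via Lemma \ref{loop_3} (3), and only then applies $\Psi_\phi$ together with Lemma \ref{orbit_mult} (2) to see that $T\oplus\Psi_\phi(M')^2$ is rigid. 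You instead work upstairs from the start and compute the four $\Ext^1$-blocks of $T\oplus\Psi_\phi(M')$ by means of the two adjunctions between push-down and pull-up. Both arguments rest on the same essential input --- Lemma \ref{loop_3} (1), hence the $5$-Calabi--Yau property, enters exactly at the self-extension term of the pulled-up object --- so the difference is largely one of packaging; what your version buys is that it isolates where each vanishing comes from, at the cost of invoking the isomorphisms $\Ext_\Lambda^1(M,\Psi_\phi(M'))\cong\Ext_{\Lambda'}^1(\Phi_\phi(M),M')$ and $\Ext_\Lambda^1(\Psi_\phi(M'),M)\cong\Ext_{\Lambda'}^1(M',\Phi_\phi(M))$, which the paper never states. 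These are standard for a Galois covering given by a free action of a finite cyclic group (both functors are exact, preserve projectives, and are adjoint to each other on both sides), and an isomorphism of this kind is already used implicitly in the proof of Lemma \ref{orbit_mult}, so this is not a gap. Two further minor divergences: you obtain the $\kappa_*$-stability of $T'$ from Lemma \ref{loop_3} (4) rather than from (1), and you explicitly record the rigidity of $T'$ via Lemma \ref{orbit_mult} (1), which the paper leaves tacit; both are fine.
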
  

\begin{proof}
Assume that $M'$ in $\mod \Lambda'$ satisfies that $T' \oplus M'$ is rigid.
Apply the functor $\kappa_*$, then 
$\kappa_*(T') \oplus \kappa_*(M') \cong T' \oplus \kappa_*(M')$ is rigid,
where $\kappa_*(T') \cong T'$ holds in $\mod \Lambda'$ by Lemma \ref{loop_3} (4).
By Lemma \ref{loop_3} (1), $T' \oplus M' \oplus \kappa_*(M')$ is also rigid,
and by Lemma \ref{loop_3} (3), 
it is isomorphic to $\Phi_\phi(U \oplus \Psi_\phi(M'))$.
Applying $\Psi_\phi$, 
we have a rigid object
$U \oplus \phi_*(U) \oplus \Psi_\phi(M')^2 \cong T \oplus \Psi_\phi(M')^2$ 
in $\mod \Lambda$ by Lemma \ref{orbit_mult} (2).
Because $T$ is maximal rigid in $\mod \Lambda$, 
$\Psi_\phi(M')$ must be in $\add_{\mod \Lambda} T$.
Apply the functor $\Phi_\phi$,
then $\Phi_\phi(\Psi_\phi(M')) \cong M' \oplus \kappa_*(M')$ is in 
$\add_{\mod \Lambda'} \Phi_\phi(T)
=\add_{\mod \Lambda'} \Phi_\phi(U)=\add_{\mod \Lambda'} T'$.
Therefore, $M'$ must be in $\add_{\Lambda'}T'$,
and the assertion is proved.
The $\kappa_*$-stableness is deduced by Lemma \ref{loop_3} (1).
\end{proof}

We also recall some results on 2-Calabi--Yau triangulated categories.
It is well-known that $\umod \Lambda_{A_n,1,1}$ is 2-Calabi--Yau.
In the part (2), if $T_1$ and $T_2$ are basic cluster-tilting objects in $\umod \Lambda$
and satisfy $T_1 \cong U \oplus V_1$ and $T_2 \cong U \oplus V_2$ 
with $V_1 \not \cong V_2$ indecomposable,
then we say that $T_2$ is the \textit{mutation} of $T_1$ at $V_1$.

\begin{Prop}\label{2-CY_prop}
Let $\Lambda$ be a finite-dimensional self-injective $K$-algebra
with $\umod \Lambda$ is 2-Calabi--Yau.
%namely $\Ext_\Lambda^1(X,Y) \cong D \Ext_\Lambda^1(Y,X)$,
%or equivalently, the Serre functor on $\umod \Lambda$ is isomorphic to $[2]$.
\begin{itemize}
\item[(1)]
\cite[Theorem 4.7, Theorem 4.9]{IY}
Let $V$ be a rigid object in $\umod \Lambda$.
We define two full subcategories $\cD \subset \cZ \subset \umod \Lambda$ as 
$\cD=\add_{\umod \Lambda} V$, $\cZ=\{ X \in \umod \Lambda \mid \Ext_\Lambda^1(V,X)=0 \}$
and $[\cD]$ as the ideal of $\umod \Lambda$ consisting of all morphisms
factoring through some object in $\cD$.
Then the category $\cZ/[\cD]$ has 
a natural structure of a 2-Calabi--Yau triangulated category
and the natural triangulated functor $\cZ \to \cZ/[\cD]$ gives 
one-to-one correspondence between 
the basic cluster-tilting (resp. rigid) objects of $\umod \Lambda$ containing $V$ and 
the basic cluster-tilting (resp. rigid) objects of $\cZ/[\cD]$.
\item[(2)]\cite[Corollary 4.9]{AIR}
If there exists a finite set of cluster-tilting objects in $\umod \Lambda$ closed 
under mutations, then the finite set contains 
all cluster-tilting objects in $\umod \Lambda$.
\item[(3)]\cite[Corollary 3.7]{ZZ}
If $T_1$ and $T_2$ are maximal rigid objects in $\umod \Lambda$,
we have $|T_1|_{\mathrm{np}}=|T_2|_{\mathrm{np}}$.
\item[(4)]\cite[Theorem 2.6]{ZZ}
If there exists a cluster-tilting object in $\umod \Lambda$, then 
any maximal rigid object is cluster-tilting.
\end{itemize}
\end{Prop}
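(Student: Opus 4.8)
The plan is to record that all four parts are standard facts in the theory of 2-Calabi--Yau triangulated categories and to indicate the construction behind each; since the paper cites \cite{IY}, \cite{AIR}, \cite{ZZ}, the proof amounts to transcribing these results, but I sketch the essential ideas. Throughout write $\mathcal{T}=\umod \Lambda$, which by hypothesis is Hom-finite, Krull--Schmidt, and 2-Calabi--Yau, so that $\Ext_\Lambda^1(X,Y) \cong D\Ext_\Lambda^1(Y,X)$ functorially. The first task is part (1), from which the mutation and counting statements (2)--(4) follow formally.

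For part (1) I would follow Iyama--Yoshino. The subcategory $\cD=\add_{\umod \Lambda} V$ is functorially finite in $\mathcal{T}$, since $V$ is a single object, so every object admits minimal left and right $\cD$-approximations. Given $X \in \cZ$, I choose a triangle $X \xrightarrow{f} D^0 \xrightarrow{g} X^{\langle 1 \rangle} \xrightarrow{h} X[1]$ with $f$ a left $\cD$-approximation; one checks $X^{\langle 1 \rangle} \in \cZ$ and that $X \mapsto X^{\langle 1 \rangle}$ descends to an autoequivalence $\langle 1 \rangle$ of $\cZ/[\cD]$. Declaring a sequence in $\cZ/[\cD]$ to be a triangle precisely when it is the image of a triangle in $\mathcal{T}$ with all three terms in $\cZ$, the rotation and octahedral axioms are verified by lifting to $\mathcal{T}$ and using that morphisms factoring through $\cD$ vanish in the quotient. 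The 2-Calabi--Yau property of $\cZ/[\cD]$ then follows by comparing $\Hom_{\cZ/[\cD]}(X, Y\langle 1 \rangle)$ with $\Ext_\Lambda^1(X,Y)$ for $X,Y \in \cZ$ and invoking the 2-CY duality on $\mathcal{T}$. Finally, a basic rigid (resp. cluster-tilting) object $T$ of $\mathcal{T}$ containing $V$ lies in $\cZ$ because $\Ext_\Lambda^1(V,T)=0$; its image in $\cZ/[\cD]$ is rigid (resp. cluster-tilting) there, and conversely every such object of $\cZ/[\cD]$ lifts to one containing $V$, which gives the asserted bijection.

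For parts (2)--(4) I would invoke mutation of (maximal) rigid objects, which exists because $\mathcal{T}$ is 2-CY: for a basic maximal rigid $U \oplus V$ with $V$ indecomposable, the 2-CY duality produces an exchange triangle $V \to E \to V^* \to V[1]$ such that $U \oplus V^*$ is again maximal rigid and $V \not\cong V^*$. Part (3) follows by showing that any two maximal rigid objects are connected by a finite chain of such mutations, each preserving the number of non-projective indecomposable summands, which is \cite[Corollary 3.7]{ZZ}. For part (4), if a cluster-tilting object $T_0$ exists then by (3) every maximal rigid $M$ has $|M|_{\mathrm{np}}=|T_0|_{\mathrm{np}}$, and a maximal rigid object attaining this maximal summand count must itself be cluster-tilting, since otherwise its cluster-tilting completion would be strictly larger; this is \cite[Theorem 2.6]{ZZ}. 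Part (2) is the Adachi--Iyama--Reiten connectedness result \cite[Corollary 4.9]{AIR}: a finite mutation-closed set of cluster-tilting objects is a union of connected components of the exchange graph, and connectedness of that graph forces the set to be everything.

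The main obstacle is part (1): verifying that $\cZ/[\cD]$ is genuinely triangulated --- that the candidate shift $\langle 1 \rangle$ is well defined modulo the ideal $[\cD]$ and that the octahedral axiom holds after passing to the quotient --- is the delicate point, and the 2-CY inheritance must be checked compatibly with this shift. Once (1) is established, parts (2)--(4) are formal consequences of mutation theory together with the exchange triangles furnished by the 2-CY duality.
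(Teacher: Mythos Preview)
Your sketches are reasonable, but the paper gives no proof of this proposition at all: it is stated purely as a collection of cited results from \cite{IY}, \cite{AIR}, and \cite{ZZ}, with no accompanying argument. So there is nothing to compare against beyond the citations themselves.

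That said, a couple of your sketches drift from how the cited papers actually argue. For (3), the proof in \cite{ZZ} does not proceed by showing that any two maximal rigid objects are connected by mutations; rather, it passes through an equivalence between a suitable quotient of $\umod \Lambda$ and a module category over the endomorphism algebra of a maximal rigid object, and then compares with support $\tau$-tilting theory (or uses the index/coindex machinery) to extract the numerical equality. Mutation-connectedness of maximal rigid objects is not known in general 2-Calabi--Yau categories, so your sketch for (3) has a genuine gap as written. Similarly, for (2) the point of \cite[Corollary 4.9]{AIR} is not that the exchange graph is a priori connected; it is that the bijection with support $\tau$-tilting modules, together with the finiteness hypothesis, forces connectedness a posteriori. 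Your summary of (1) and (4) is accurate.
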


Though $\umod \Lambda_{A_n,1,2}$ is not necessarily 2-Calabi--Yau,
we can show the following.

\begin{Prop}\label{2-CY_prop_loop}
Proposition \ref{2-CY_prop} (3) and (4) also hold 
even if $\Lambda=\Lambda_{A_n,1,2}$ with $n \in 2\Z$.
\end{Prop}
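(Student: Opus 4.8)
The plan is to transport both statements to the preprojective algebra $\Lambda=\Lambda_{A_n,1,1}$, whose stable module category is $2$-Calabi--Yau and therefore satisfies Proposition \ref{2-CY_prop}, using the covering functors $\Phi_\phi\colon\mod\Lambda\to\mod\Lambda'$ and $\Psi_\phi\colon\mod\Lambda'\to\mod\Lambda$ (here $\Lambda'=\Lambda_{A_n,1,2}$), together with Lemmas \ref{orbit_mult}, \ref{loop_3} and Proposition \ref{preproj_loop}. The point is to set up a correspondence between maximal rigid (resp. cluster-tilting) objects of $\umod\Lambda'$ and maximal rigid (resp. cluster-tilting) objects of $\umod\Lambda$ of the form $U\oplus\phi_*(U)$, and then invoke the known Calabi--Yau statements on the $\Lambda$-side.

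Concretely, I would argue as follows. Let $T'$ be a maximal rigid object of $\umod\Lambda'$. By Lemma \ref{loop_3}(1) it is $\kappa_*$-stable, so Lemma \ref{loop_3}(3) gives $\Phi_\phi(\Psi_\phi(T'))\cong T'\oplus\kappa_*(T')\cong (T')^2$. Put $S:=\Psi_\phi(T')$. Since $\Psi_\phi\circ\Phi_\phi\cong\mathrm{id}\oplus\phi_*$, the isomorphism $\Phi_\phi(S)\cong(T')^2$ forces $\phi_*(S)\cong S$ and, via Krull--Schmidt, a decomposition $S\cong U\oplus\phi_*(U)$ with $\Phi_\phi(U)\cong T'$ (the one delicate input being that a $\phi_*$-stable indecomposable summand $M$ of $S$ must occur with even multiplicity, which one reads off by comparing $\Phi_\phi(M)$ with $T'$ and using $\kappa_*$-stability of $T'$). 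Then Lemma \ref{orbit_mult}(2) shows $S$ is $\phi_*$-stable and rigid, and cluster-tilting exactly when $T'$ is, with $|S|_{\mathrm{np}}=2|T'|_{\mathrm{np}}$ in the basic case; Proposition \ref{preproj_loop} and Lemma \ref{orbit_mult}(1) give the reverse passage. The crucial step is then to promote "$S$ maximal rigid in $\umod\Lambda'$" to "$S$ maximal rigid in $\umod\Lambda$": if $S\oplus W$ is rigid with $W$ indecomposable, applying $\phi_*$ and using $\phi_*(S)\cong S$ kills all extension groups between $S$ and $W\oplus\phi_*(W)$; checking in addition that $\Ext^1_\Lambda(W,\phi_*(W))=0$ (equivalently that $\Phi_\phi(W)$ is rigid over $\Lambda'$), the covering isomorphism for $\Ext^1$ gives $\Ext^1_{\Lambda'}(T',\Phi_\phi(W))=\Ext^1_{\Lambda'}(\Phi_\phi(W),T')=0$, so maximality of $T'$ yields $\Phi_\phi(W)\in\add T'$ and hence $W\in\add\Psi_\phi(T')=\add S$.

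Granting this, part (3) is immediate from Proposition \ref{2-CY_prop}(3) applied to the $2$-Calabi--Yau category $\umod\Lambda$: all maximal rigid objects $S$ of $\umod\Lambda$ have the same $|S|_{\mathrm{np}}$, hence all maximal rigid objects $T'$ of $\umod\Lambda'$ have the same $|T'|_{\mathrm{np}}=|S|_{\mathrm{np}}/2$. For part (4), suppose $\umod\Lambda'$ has a cluster-tilting object $T'_0$; it is maximal rigid, so $\kappa_*$-stable, so $T'_0\cong\Phi_\phi(U_0)$ by the previous paragraph, and Lemma \ref{orbit_mult}(2) makes $\Psi_\phi(T'_0)\cong U_0\oplus\phi_*(U_0)$ a cluster-tilting object of $\umod\Lambda$. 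Since $\umod\Lambda$ is $2$-Calabi--Yau and now carries a cluster-tilting object, Proposition \ref{2-CY_prop}(4) says every maximal rigid object of $\umod\Lambda$ is cluster-tilting; applying this to $S=\Psi_\phi(T')$ for an arbitrary maximal rigid $T'$ of $\umod\Lambda'$ and pushing down via Lemma \ref{orbit_mult}(1) shows $T'\cong\Phi_\phi(U)$ is cluster-tilting.

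The main obstacle I expect is the claim $\Ext^1_\Lambda(W,\phi_*(W))=0$ (rigidity of $\Phi_\phi(W)$) for an indecomposable $W$ with $S\oplus W$ rigid, and, relatedly, the careful bookkeeping of how $\Phi_\phi$, $\Psi_\phi$ and $\phi_*$ interact with Krull--Schmidt decompositions and with $\phi_*$-fixed indecomposables; this is where one must exploit the $2$-Calabi--Yau duality on $\umod\Lambda$ and the finite representation type of the preprojective algebra of $A_n$, and where the characteristic-$2$ case (in which $\kappa_*$ is the identity but the covering group has order $2$) needs separate attention, using Proposition \ref{loop_mesh}(1)--(3).
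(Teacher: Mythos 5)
Your route is genuinely different from the paper's --- the paper never transports the problem to the preprojective algebra, but instead works entirely inside $\umod \Lambda_{A_n,1,2}$, using Lemma \ref{loop_3} (1) (every maximal rigid object is $\kappa_*$-stable, i.e.\ $[3]$-stable) together with Proposition \ref{loop_mesh} (2), (3) to adapt the arguments of Zhou--Zhu and the support $\tau$-tilting correspondence of Adachi--Iyama--Reiten to this $5$-Calabi--Yau setting. Unfortunately your version has a fatal gap exactly at the step you flag as the ``main obstacle'': the promotion of ``$S=\Psi_\phi(T')$ is maximal $\phi_*$-stable rigid in $\mod\Lambda_{A_n,1,1}$'' to ``$S$ is maximal rigid''. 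The missing verification $\Ext^1_{\Lambda}(W,\phi_*(W))=0$ for an indecomposable $W$ with $S\oplus W$ rigid is false in general, and the paper's own proof of Theorem \ref{no_CT_III} contains the counterexample: the module $V$ constructed there is maximal $\phi_*$-stable rigid with $|V|_{\mathrm{np}}=n(n-2)/2$, yet it is a proper summand of each cluster-tilting object $T^\epsilon=V\oplus\bigoplus_m U_m^{\epsilon_m}$, which has $|T^\epsilon|_{\mathrm{np}}=n(n-1)/2$. Here $V\oplus U_m^{+}$ is rigid but $\Ext^1(U_m^{+},\phi_*(U_m^{+}))=\Ext^1(U_m^{+},U_m^{-})\neq 0$ (otherwise $T^\epsilon\oplus U_m^{-}$ would be rigid, contradicting maximality of $T^\epsilon$). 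So maximal $\phi_*$-stable rigid objects of $\umod\Lambda_{A_n,1,1}$ need not be maximal rigid, and Proposition \ref{2-CY_prop} (3), (4) cannot be applied to them.

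The breakdown is not cosmetic: if your step were valid, part (3) would give $|T'|_{\mathrm{np}}=\tfrac12\,|S|_{\mathrm{np}}$ with $S$ maximal rigid, hence cluster-tilting, in $\umod\Lambda_{A_n,1,1}$, i.e.\ $|T'|_{\mathrm{np}}=n(n-1)/4$ --- which contradicts the value $n(n-2)/4$ of Theorem \ref{no_CT_III} and is not even an integer when $n\equiv 2\pmod 4$. The same false step is used in your argument for (4). Repairing the strategy would require an analogue of Proposition \ref{2-CY_prop} (3), (4) for maximal \emph{$\phi_*$-stable} rigid objects of a $2$-Calabi--Yau category, which is neither available nor established by the covering lemmas you cite; this is precisely why the paper proves the statement intrinsically, showing that every rigid object of $\umod\Lambda_{A_n,1,2}$ lies in the subcategory $\mathcal{C}$ generated by a fixed maximal rigid $T$ and then using the equivalence $\Hom_{\umod\Lambda}(T,?)\colon \mathcal{C}/[\add_{\umod\Lambda}T[1]]\to\mod\End_{\umod\Lambda}(T)$. (Your reduction of $T'$ to the form $\Phi_\phi(U)$ via $\Phi_\phi(\Psi_\phi(T'))\cong (T')^{2}$ also leaves unaddressed the case of a $\phi_*$-stable indecomposable summand of $\Psi_\phi(T')$ occurring with odd multiplicity, but that is a secondary issue compared with the one above.)
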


\begin{proof}
We may assume that there exists a maximal rigid object $T$ in $\umod \Lambda$.

By Lemma \ref{loop_3} (1),
every maximal rigid object in $\umod \Lambda_{A_n,1,2}$ is $\kappa_*$-stable,
or equivalently, $[3]$-stable.
With Proposition \ref{loop_mesh} (2) and (3),
we can show \cite[Corollary 2.5]{ZZ} similarly in this situation;
namely, every rigid object belongs to the full subcategory 
$\mathcal{C} \subset \umod \Lambda$, 
where $\mathcal{C}$ consists of the objects $M$ satisfing that
there exists a triangle of the form
$T' \to M \to T''[1] \to T'[1]$ with $T',T'' \in \add_{\umod \Lambda} T$ 
in $\umod \Lambda$.

By this property, we can prove the functor 
$\Hom_{\umod \Lambda}(T,?) \colon \mathcal{C}/[\add_{\umod \Lambda} T[1]] \to 
\mod E$ is an equivalence
similarly to \cite[Subsection 2.1]{KR},
where $[\add_{\umod \Lambda} T[1]]$ 
is the ideal of $\umod \Lambda$ consisting of all morphisms
factoring through some object in $\add_{\umod \Lambda} T[1]$,
and $E=\End_{\umod \Lambda}(T)$.
Now, the proof and the result of \cite[Theorem 4.1]{AIR} are valid in this situation.

Let $T_1$ be another maximal rigid object in $\umod \Lambda$,
then $\Hom_{\umod \Lambda}(T,T_1)$ is a support $\tau$-tilting object in $\mod E$
by \cite[Theorem 4.1]{AIR}, 
and we have $|T_1|_{\mathrm{np}} \le |T|_{\mathrm{np}}$.
By exchanging the roles of $T$ and $T_1$, 
we also have $|T|_{\mathrm{np}} \le |T_1|_{\mathrm{np}}$.
Thus $|T_1|_{\mathrm{np}} = |T|_{\mathrm{np}}$ is proved,
and this is the statement of Proposition \ref{2-CY_prop} (3).

We additionally assume that $T$ is cluster-tilting. 
Then we have $\mathcal{C} = \umod \Lambda$,
see \cite[Subsection 2.1]{KR}.
In this case, \cite[Theorem 4.1]{AIR} implies that 
every maximal rigid object in $\umod \Lambda$ is cluster-tilting.
The statement of Proposition \ref{2-CY_prop} (4) is true.
\end{proof}

Now, we begin the proof of Theorem \ref{no_CT_III}.

\begin{proof}[Proof of Theorem \ref{no_CT_III}]
If $n=2$, then it is easy to see that 
every rigid object in $\mod \Lambda_{A_n,1,2}$ is projective.
Thus the assertion is clear.
From now on, we assume $n \ge 4$.

First, we prove the case $k=1$.
The translation $\tau \colon Q_{A_n,1,2} \to Q_{A_n,1,2}$ is identity.
By Lemma \ref{loop_3},
every maximal rigid object $M$ in $\umod \Lambda_{A_n,1,2}$ is $\kappa_*$-stable.
Thus by Proposition \ref{2-CY_prop_loop}, 
it is enough to show that $|T'|_{\mathrm{np}}=n(n-2)/4$ 
for some maximal rigid object $T'$ in $\mod \Lambda_{A_n,1,2}$
which is not cluster-tilting.

We show that there exists a maximal $\phi_*$-stable rigid object $V$ 
in $\mod \Lambda_{A_n,1,1}$ with $|V|_{\mathrm{np}}=n(n-2)/2$. 

We define a sequence $\gamma^\epsilon$ 
for $\epsilon=(\epsilon_1,\ldots,\epsilon_{n/2}) \in \{ \pm \}^{n/2}$ 
as follows;
\begin{align*}
& \alpha_m=(n/2-(m-1),(n+2)/2+(m-1)) \quad (m=2,\ldots,n/2), \\
& \beta_1^+=(n/2,(n+2)/2,n/2), \quad \beta_1^-=((n+2)/2,n/2,(n+2)/2), \\
& \beta_m^\pm = \alpha_m \cdot \beta_{m-1}^\pm \cdot \alpha_m \quad (m=2,\ldots,n/2), \quad
\gamma^\epsilon
=\beta_1^{\epsilon_1}\cdot\beta_2^{\epsilon_2}\cdots\beta_{n/2}^{\epsilon_{n/2}}.
\end{align*}
We write $T^\epsilon$ for the corresponding basic cluster-tilting object
in $\mod \Lambda_{A_n,1,1}$ for the sequence $\gamma^\epsilon$
constructed in Proposition \ref{long_elem}.

Fix $m \in \{1,\ldots,n/2\}$.
Suppose $\epsilon,\epsilon' \in \{ \pm \}^{n/2}$ satisfy that
$\epsilon_m=1$, $\epsilon'_m=-1$, and $\epsilon_i = \epsilon'_i$ for $i \ne m$.
Then we have the following things;
\begin{itemize}
\item $T^\epsilon$ and $T^{\epsilon'}$ are different 
in exactly one indecomposable direct summand;
\item The unique indecomposable module 
that is a direct summand of $T^\epsilon$ and not of $T^{\epsilon'}$
depends on only $m$, not on the choice of $\epsilon$ and $\epsilon'$,
which is denoted by $U_m^+$;
\item The unique indecomposable module 
that is a direct summand of $T^{\epsilon'}$ and not of $T^\epsilon$
depends on only $m$, not on the choice of $\epsilon$ and $\epsilon'$,
which is denoted by $U_m^-$;
\item The Loewy lengths of $U_m^+$ and $U_m^-$ are $2m-1$, and 
$\phi_*(U_m^+)=U_m^- \not \cong U_m^+$.
\end{itemize}
Thus, there exists a unique basic rigid $\Lambda_{A_n,1,1}$-module $V$
such that, for all $\epsilon \in \{ \pm \}^{n/2}$,
$T^\epsilon=U_1^{\epsilon_1} \oplus \cdots \oplus U_{n/2}^{\epsilon_{n/2}} \oplus V$.
We can deduce that $V$ is $\phi_*$-stable and containing $\Lambda_{A_n,1,1}$
as a direct summand.

Now we show $V$ is a maximal $\phi_*$-stable rigid object 
in $\umod \Lambda_{A_n,1,1}$.
%From the proof of Proposition \ref{l=l'}, we have $[-2] \circ S \cong (\tau_*)^{-1}=\id$
%as triangulated functors,
%where $S$ is the Serre functor.
It is well-known that $\umod \Lambda_{A_n,1,1}$ is 2-Calabi--Yau.
Let $\cD \subset \cZ \subset \umod \Lambda_{A_n,1,1}$ 
as in Proposition \ref{2-CY_prop} (1)
and $F \colon \mathcal{Z} \to \mathcal{Z}/[\mathcal{D}]$
be the canonical functor.
Because $V$ is $\phi_*$-stable, $\phi_*$ also acts on $\cZ/[\cD]$.

For $\epsilon \in \{ \pm \}^{n/2}$, it is easy to see that
$F(U_1^{\epsilon_1} \oplus \cdots \oplus U_{n/2}^{\epsilon_{n/2}})$ 
does not contain any nonzero $\phi_*$-stable direct summand.
We can also deduce that a finite set 
$\{ F(U_1^{\epsilon_1} \oplus \cdots \oplus U_{n/2}^{\epsilon_{n/2}}) 
\mid \epsilon \in \{ \pm \}^{n/2} \}$
of cluster-tilting objects in $\cZ/[\cD]$ is closed under mutations,
and thus all cluster-tilting objects in $\cZ/[\cD]$
are contained in this finite set by Proposition \ref{2-CY_prop} (2).
Therefore, there is no cluster-tilting object in $\cZ/[\cD]$
containing a nonzero $\phi_*$-stable direct summand.
This implies that $V$ is a basic maximal $\phi_*$-stable rigid object 
in $\umod \Lambda_{A_n,1,1}$ and $\mod \Lambda_{A_n,1,1}$.
It is clear that $|V|_{\mathrm{np}}=n(n-2)/2$.

We can take some $V_1$ such that $V=V_1 \oplus \phi_*(V_1)$,
because each indecomposable direct summands of $V$ has a simple top.
By Proposition \ref{preproj_loop}, 
$T':=\Phi_\phi(V_1)$ is a maximal rigid object in $\mod \Lambda_{A_n,1,2}$.
Because $V$ is basic, we have $|T'|_{\mathrm{np}}=|V|_{\mathrm{np}}/2=n(n-2)/4$
by Lemma \ref{orbit_mult} (1).
If this $T'$ is cluster-tilting,
$\Psi_\phi(T') \cong V$ is a cluster-tilting object in $\mod \Lambda_{A_n,1,1}$
by Lemma \ref{orbit_mult} (2), but it is a contradiction.

Now, we have shown that 
$|T'|_{\mathrm{np}}=n(n-2)/4$ 
for any maximal $\kappa_*$-stable rigid object $T'$ in $\mod \Lambda_{A_n,1,2}$,
and that $\mod \Lambda_{A_n,1,2}$ has no $\kappa_*$-stable cluster-tilting object.
The proof for the case $k=1$ is completed.

Now, let $k \ge 1$ be general.
It is easy to see that
$Q_{A_n,1,2}=Q_{A_n,2k-1,2}/\langle \phi \rangle$ 
with $\phi \in \Aut Q_{A_n,2k-1,2}$ free, and
that $\tau_*\kappa_*$-stableness in $\mod \Lambda_{A_n,2k-1,2}$ 
implies $\phi_*$-stableness in $\mod \Lambda_{A_n,2k-1,2}$.
 
Let $T$ be a $\phi_*$-stable rigid object in $\mod \Lambda_{A_n,2k-1,2}$.
We prove that $|T|_{\mathrm{np}} \le n(n-2)(2k-1)/4$.
We may assume that there exists an object $U$ in $\mod \Lambda_{A_n,2k-1,2}$
such that $T=\bigoplus_{i=0}^{2k-2} \phi_*^i(U)$.
By Lemma \ref{orbit_mult} (1),
$T'=\Phi_\phi(U)$ is a rigid object in $\mod \Lambda_{A_n,1,2}$,
where $\Phi_\phi \colon \mod \Lambda_{A_n,2k-1,2} 
\to \mod \Lambda_{A_n,1,2}$.
We have 
$|T|_{\mathrm{np}} \le (2k-1)|T'|_{\mathrm{np}} \le n(n-2)(2k-1)/4$ 
by Lemma \ref{orbit_mult} (1) and the first statement for $k=1$.
We have seen that $T'$ is not cluster-tilting
and so $T$ cannot be cluster-tilting by Lemma \ref{orbit_mult} (1). 

On the other hand, we have shown that
there exists a basic $\phi_*$-stable rigid object $V$ in $\mod \Lambda_{A_n,1,1}$ with
$|V|_{\mathrm{np}} = n(n-2)/2$
as a direct summand of a cluster-tilting object in $\mod \Lambda_{A_n,1,1}$
obtained by Proposition \ref{long_elem}.
Similarly to the proof for the case $t=2$ in Theorem \ref{CT_not_III},
using Lemmas \ref{orbit_mult} (2) and \ref{grade},
we can construct a basic $\tau_*$-stable rigid object $\Phi_{\tau^k \psi}(\Psi_\tau(V))$ 
in $\mod \Lambda_{A_n,2k-1,2}$
with $|\Phi_{\tau^k \psi}(\Psi_\tau(V))|_{\mathrm{np}}=n(n-2)(2k-1)/4$,
where $\Phi_{\tau^k \psi} \colon \mod \Lambda_{A_n,2k-1,1} \to \mod \Lambda_{A_n,2k-1,2}$.
We can straightforwardly check this rigid object is $\tau_*\kappa_*$-stable.
The proof is completed.
\end{proof}

Theorems \ref{CT_not_III} and \ref{no_CT_III} imply the following.

\begin{Cor}\label{CT_iff}
Let $\Lambda=\Lambda_{\Delta,l,t}$ be a finite-dimensional mesh algebra.
Then $\mod \Lambda$ has a $([-2] \circ \bS)$-stable cluster-tilting object if and only if 
$\Lambda$ is not type III.
\end{Cor}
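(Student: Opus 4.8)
The plan is to reduce Corollary \ref{CT_iff} to Theorems \ref{CT_not_III} and \ref{no_CT_III} via Proposition \ref{l=l'}(2), which identifies $[-2]\circ\bS$ on $\umod\Lambda$ with (the inverse of) the concrete autoequivalence $\tau_*$, $\tau_*\theta_*$, or $\tau_*\kappa_*$ according to the type of $\Lambda$. Here I read ``a $([-2]\circ\bS)$-stable cluster-tilting object in $\mod\Lambda$'' as a cluster-tilting object $T$ of $\mod\Lambda$ whose image in $\umod\Lambda$ satisfies $([-2]\circ\bS)(T)\cong T$; I will take $T$ basic, so that $T=\Lambda\oplus T_{\mathrm{np}}$ with $T_{\mathrm{np}}$ the basic nonprojective part, and use that basic cluster-tilting objects of $\mod\Lambda$ and of $\umod\Lambda$ correspond.

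For the ``if'' direction I would argue as follows. Suppose $\Lambda=\Lambda_{\Delta,l,t}$ is not type III. By Theorem \ref{CT_not_III} there is a basic cluster-tilting object $T$ in $\mod\Lambda$ which is $\tau_*$-stable, and which can be chosen $\tau_*\theta_*$-stable when $\Lambda$ is type II. Since $\tau_*$ and $\theta_*$ come from $K$-algebra automorphisms that restrict to the projectives, $T$ remains cluster-tilting and $\tau_*$-stable (resp.\ $\tau_*\theta_*$-stable) in $\umod\Lambda$. Then Proposition \ref{l=l'}(2) gives $[-2]\circ\bS\cong\tau_*^{-1}$ on $\umod\Lambda$ outside types II and III, and $[-2]\circ\bS\cong(\tau_*\theta_*)^{-1}$ in type II, so applying the relevant functor to $\tau_*(T)\cong T$ (resp.\ $\tau_*\theta_*(T)\cong T$) yields $([-2]\circ\bS)(T)\cong T$; thus $T$ is the desired object.

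For the ``only if'' direction I would argue by contradiction. Let $\Lambda=\Lambda_{A_n,2k-1,2}$ be of type III and suppose a basic cluster-tilting object $T=\Lambda\oplus T_{\mathrm{np}}$ of $\mod\Lambda$ satisfies $([-2]\circ\bS)(T)\cong T$ in $\umod\Lambda$. By Proposition \ref{l=l'}(2)(ii) this gives $\tau_*\kappa_*(T)\cong T$ in $\umod\Lambda$. Since $\kappa$ fixes every idempotent and $\tau$ permutes the vertices, $\tau_*\kappa_*$ permutes the indecomposable projectives, so $\tau_*\kappa_*(\Lambda)\cong\Lambda$ in $\mod\Lambda$; comparing nonprojective parts gives $\tau_*\kappa_*(T_{\mathrm{np}})\cong T_{\mathrm{np}}$ in $\umod\Lambda$, hence in $\mod\Lambda$ (no projective summands), and so $\tau_*\kappa_*(T)\cong T$ in $\mod\Lambda$. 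Then $T$ is a $\tau_*\kappa_*$-stable cluster-tilting object in $\mod\Lambda_{A_n,2k-1,2}$, contradicting Theorem \ref{no_CT_III}.

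The argument is essentially bookkeeping, so I do not expect a substantial obstacle; the only point that needs a little care is upgrading stability in $\umod\Lambda$ (stability up to projectives) to honest stability in $\mod\Lambda$, which works because $\tau_*$, $\theta_*$ and $\kappa_*$ all restrict to the projectives and therefore only permute the single projective summand $\Lambda$ of a basic cluster-tilting object.
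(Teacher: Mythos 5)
Your proposal is correct and is essentially the paper's argument: the paper derives Corollary \ref{CT_iff} directly from Theorems \ref{CT_not_III} and \ref{no_CT_III} via the identification of $[-2]\circ\bS$ with $\tau_*^{-1}$, $(\tau_*\theta_*)^{-1}$, or $(\tau_*\kappa_*)^{-1}$ in Proposition \ref{l=l'}(2), exactly as you do. Your extra care in passing between stability in $\umod\Lambda$ and in $\mod\Lambda$ (using that these functors permute the indecomposable projectives and that nonprojective modules are stably isomorphic iff isomorphic) is sound and fills in bookkeeping the paper leaves implicit.
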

\section{Proof of Theorem \ref{not_stable_eq}}
 
The aim of this section is to prove Theorem \ref{not_stable_eq}.
As in the previous section, $K$ is supposed to be an algebraically closed field.
For simplicity, we call a triangle equivalence between stable module categories
a \textit{stable equivalence}.
First, we deduce the part (2) from the part (1) of Theorem \ref{not_stable_eq}.

\begin{proof}[Proof of $(1) \Rightarrow (2)$]
Assume that $\Lambda$ and $\Lambda'$ are derived equivalent.
Then they are stable equivalent \cite[Corollary 2.2]{Rickard-DS}.
From (1), it remains to show that $Q \cong Q'$ holds also in the case $\Delta=\Delta'=A_1$.
We can write $\rho=\tau^k$ and $\rho'=\tau^{k'}$ because $\psi=\id$.
In this case, 
$\Z^k \cong K_0(D^\rb(\mod \Lambda)) \cong K_0(D^\rb(\mod \Lambda')) \cong \Z^{k'}$
holds. 
We have $k=k'$ and thus $Q=\Z A_1/\langle \tau^k \rangle
=\Z A_1/\langle \tau^{k'} \rangle= Q'$.
\end{proof}

Now we begin the proof of Theorem \ref{not_stable_eq} (1).
For each mesh algebra, we have defined its \textit{type} I,\ldots,X in Definition 
\ref{quiver_def}.
We may exclude the case $\Delta=A_1$,
because $\umod \Lambda_{\Delta,l,t}$ is a zero category
if and only if $\Delta=A_1$.
We first use three kinds of invariants of mesh algebras under stable equivalences.
The values of these invariants are written in Table \ref{type}.

\begin{itemize}
\item[(a)]
The maximal number of pairwise nonisomorphic indecomposable nonprojective direct summands 
of a $([-2] \circ \bS)$-stable rigid object in $\mod \Lambda_{\Delta,l,t}$
(Theorems \ref{CT_not_III}, \ref{no_CT_III} and Remark \ref{caution_III}).
\item[(b)]
The order of the autoequivalence $[-2] \circ \bS$ on $\umod \Lambda_{\Delta,l,t}$
(Proposition \ref{l=l'}).
\item[(c)]
The quotient (a)/(b).
\end{itemize}

\begin{table}
\begin{tabular}{cl|ccc}
type & $(\Delta,l   ,t)$                  & (a)              & (b)    & (c)        \\
\hline
I    & $(A_n,   k,   1)$                  & $n(n-1)k/2$      & $k$    & $n(n-1)/2$ \\
II   & $(A_n,   2k,  2)$ ($n \notin 2\Z$) & $n(n-1)k/2$      & $2k$   & $n(n-1)/4$ \\
III  & $(A_n,   2k-1,2)$ ($n \in    2\Z$) & $n(n-2)(2k-1)/4$ & $2k-1$ or $4k-2$ &
$n(n-2)/4$ or $n(n-2)/8$ \\
IV   & $(D_n,   k,   1)$                  & $n(n-2)k$        & $k$    & $n(n-2)$   \\
V    & $(D_n,   2k,  2)$                  & $n(n-2)k$        & $2k$   & $n(n-2)/2$ \\
VI   & $(D_4,   3k,  3)$                  & $8k$             & $3k$   & $8/3$      \\
VII  & $(E_6,   k,   1)$                  & $30k$            & $k$    & $30$       \\
VIII & $(E_6,   2k,  2)$                  & $30k$            & $2k$   & $15$       \\
IX   & $(E_7,   k,   1)$                  & $56k$            & $k$    & $56$       \\
X    & $(E_8,   k,   1)$                  & $112k$           & $k$    & $112$      
\end{tabular}
\caption{The types and the invariants}
\label{type}
\end{table}

The following proposition is easy.

\begin{Prop}\label{same_type}
Assume that $\umod \Lambda_{\Delta,l,t} \cong \umod \Lambda_{\Delta',l',t'}$
with $\Delta, \Delta' \ne A_1$.
If $(\Delta,l,t)$ and $(\Delta',l',t')$ are the same type,
then we have $(\Delta,l,t)=(\Delta',l',t')$.
\end{Prop}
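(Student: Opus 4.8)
The plan is to read off the data $(\Delta,l,t)$ from the three invariants (a), (b), (c) of Table \ref{type}, all of which are preserved by stable equivalences. The ``type'' of a mesh algebra already fixes $t$ (it is $1$ for types I, IV, VII, IX, X, it is $2$ for types II, III, V, VIII, and it is $3$ for type VI) and fixes the ratio $l/k$ (namely $l=k$ for types I, IV, VII, IX, X, $l=2k$ for types II, V, VIII, $l=3k$ for type VI, and $l=2k-1$ for type III). Hence, once the common type is known, it suffices to show that a stable equivalence forces the integer $k$ and the number $n$ of vertices of $\Delta$ to agree; the diagram $\Delta$ itself is then determined, being $D_4, E_6, E_7$ or $E_8$ for types VI--X and $A_n$ or $D_n$ for types I--V.

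First I would recover $k$ from invariant (b), the order of $[-2]\circ\bS$ on $\umod\Lambda$. If the common type is not III, then by Proposition \ref{l=l'} (3) this order is exactly $l$; since $l$ is a fixed positive multiple of $k$ within the type, the equality of (b) yields $l=l'$ and hence $k=k'$. If the common type is III, say $\Lambda=\Lambda_{A_n,2k-1,2}$ and $\Lambda'=\Lambda_{A_{n'},2k'-1,2}$, then by Proposition \ref{l=l'} (3) the order $o$ of $[-2]\circ\bS$ is a single well-defined integer with $o\in\{2k-1,\,4k-2\}$ and simultaneously $o\in\{2k'-1,\,4k'-2\}$. As $2k-1$ is odd and $4k-2$ is even, the value of $o$ determines which of the two possibilities occurs on each side, and in both cases ($o=2k-1=2k'-1$ or $o=4k-2=4k'-2$) we conclude $k=k'$.

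Next I would recover $n$ from invariant (c) $=$ (a)$/$(b), or directly from (a). For the exceptional types VI--X nothing is needed, since $\Delta$ is already fixed. For types I, II, IV, V the value of (c) equals, respectively, $n(n-1)/2$, $n(n-1)/4$, $n(n-2)$, $n(n-2)/2$, each a strictly increasing function of $n$ on its admissible range, so the equality of (c) forces $n=n'$. For type III, where $k=k'$ has already been shown, the equality of invariant (a) reads $n(n-2)(2k-1)/4=n'(n'-2)(2k-1)/4$, hence $n(n-2)=n'(n'-2)$, and since $x\mapsto x(x-2)$ is injective on $\{4,6,8,\dots\}$ we again get $n=n'$.

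Putting the pieces together, $(\Delta,l,t)$ and $(\Delta',l',t')$ share $t$ (from the type), $k$ (Step 1), and $n$, so $\Delta=\Delta'$ and therefore $l=l'$; thus $(\Delta,l,t)=(\Delta',l',t')$. The only genuinely delicate point is the type III case, where one must use that Proposition \ref{l=l'} (3) provides a single well-defined order (either $l$ or $2l$, not an ambiguous pair) and then exploit its parity; everything else is an immediate comparison of strictly monotone entries in Table \ref{type}.
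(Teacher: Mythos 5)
Your proposal is correct and is essentially the paper's own argument: the paper's proof is the one-line observation that the invariants (a), (b), (c) of Table \ref{type} determine $(\Delta,l,t)$ once the type is fixed, and you have simply spelled out the details (recovering $k$ from (b), with the parity trick handling the ``$l$ or $2l$'' ambiguity in type III, and then $n$ from the monotonicity of (c) or (a)). No gaps; the elaboration, including the type III parity argument, is sound.
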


\begin{proof}
The values of (a), (b) and (c) determine $(\Delta,l,t)$.
\end{proof}

We will show the following proposition.
This and Proposition \ref{same_type} imply Theorem \ref{not_stable_eq}.

\begin{Prop}\label{detect}
Assume that $\umod \Lambda_{\Delta,l,t} \cong \umod \Lambda_{\Delta',l',t'}$
with $\Delta, \Delta' \ne A_1$.
Then $(\Delta,l,t)$ and $(\Delta',l',t')$ are the same type.
\end{Prop}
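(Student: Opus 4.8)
The plan is to play off the stable-equivalence invariants already assembled: the triple $(\mathrm{a})$, $(\mathrm{b})$, $(\mathrm{c})$ of Table \ref{type}, the existence of a $([-2]\circ\bS)$-stable cluster-tilting object in the module category (Corollary \ref{CT_iff}), the Grothendieck group computed in Theorem \ref{main}, and the order of the shift $[1]$ (Proposition \ref{shift_order}). Each of these is preserved by a triangle equivalence $\umod\Lambda_{\Delta,l,t}\cong\umod\Lambda_{\Delta',l',t'}$: for $(\mathrm{a})$ and $(\mathrm{b})$ this is Proposition \ref{l=l'}(1) together with Theorems \ref{CT_not_III} and \ref{no_CT_III}, while cluster-tilting objects, $F$-stability, the Grothendieck group and the shift are categorical. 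So I fix such an equivalence, assume $\Delta,\Delta'\neq A_1$, and aim to show that $(\Delta,l,t)$ and $(\Delta',l',t')$ belong to the same one of the types I--X; Proposition \ref{same_type} then upgrades this to $(\Delta,l,t)=(\Delta',l',t')$, which is Theorem \ref{not_stable_eq}.

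First I would isolate type III. By Corollary \ref{CT_iff}, a finite-dimensional mesh algebra has a $([-2]\circ\bS)$-stable cluster-tilting object in its module category if and only if it is not of type III; since this property is a stable-equivalence invariant, $\Lambda_{\Delta,l,t}$ is of type III precisely when $\Lambda_{\Delta',l',t'}$ is. If both are of type III we are done, so from now on both lie among the nine remaining types, and in particular $(\mathrm{b})$ equals $l$ on both sides.

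For the non-type-III case I would use the rational number $(\mathrm{c})=(\mathrm{a})/(\mathrm{b})$, noting that $(\mathrm{c})=n(c_\Delta-2)/(2t)$ depends only on $\Delta$ and $t$. Reading off Table \ref{type}, type VI is the only one for which $(\mathrm{c})$ lies outside $\tfrac12\Z$, since it equals $8/3$; hence VI is detected. Moreover $(\mathrm{c})$ is a genuine half-integer only for type II with $n\equiv3\pmod4$ and for type V with $n$ odd, so a half-integral value of $(\mathrm{c})$ confines the pair to those two subfamilies, where one still has to rule out type II against type V. In the remaining cases $(\mathrm{c})$ is an integer of one of the forms $n(n-1)/2$, $n(n-1)/4$, $n(n-2)$, $n(n-2)/2$, $30$, $56$, $112$, and these values genuinely overlap: for example $\Lambda_{A_6,k,1}$, $\Lambda_{D_5,k,1}$ and $\Lambda_{E_6,2k,2}$ all have $(\mathrm{c})=15$; $\Lambda_{D_n,k,1}$ and $\Lambda_{D_{n'},2k',2}$ have equal $(\mathrm{c})$ whenever $(n'-1)^2=2(n-1)^2-1$, a Pell equation with infinitely many solutions; and $\Lambda_{D_{16},2k',2}$ matches the value $112$ of $\Lambda_{E_8,k,1}$. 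For each such family of coincidences, equality of $(\mathrm{a})$ and $(\mathrm{b})$ forces $l=l'$ and reduces the pair to the listed type ambiguity, so it remains to compare the Grothendieck groups supplied by Theorem \ref{main}. Their free rank $a$, their number $b$ of $\Z/2\Z$-summands and their cyclic part $H$ are explicit functions of $n$, of the Dynkin type, and of $d=\gcd(c_\Delta,\cdot)$, and one verifies that these data cannot all agree; in the few residual cases the order of the shift $[1]$ from Proposition \ref{shift_order} completes the separation. Together with Proposition \ref{same_type} this yields $(\Delta,l,t)=(\Delta',l',t')$.

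I expect the last step to be the main obstacle. The coincidences of $(\mathrm{c})$ among types I, II, IV, V, VIII are not a finite list but several infinite (Pell-type) families, so they cannot be checked one algebra at a time and must be treated uniformly in the parameters; this forces a careful case-by-case number-theoretic analysis of the $\gcd$-dependent formulas of Table \ref{Grothendieck}, best organised according to the underlying Dynkin type, though it remains entirely elementary.
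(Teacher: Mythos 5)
Your overall strategy is the paper's: isolate type III via Corollary \ref{CT_iff}, use the ratio $(\mathrm{c})=(\mathrm{a})/(\mathrm{b})$ to cut down the possible type collisions, and then separate the surviving overlaps with the Grothendieck groups of Theorem \ref{main} and the shift order of Proposition \ref{shift_order}. You have also correctly identified the genuine difficulty (the $(\mathrm{c})$-coincidences between types I, II, IV, V, VIII form infinite, Pell-type families). But the proof is not actually there: the entire separation step is compressed into ``one verifies that these data cannot all agree,'' and that verification is the mathematical content of the proposition --- in the paper it occupies Lemmas \ref{detect_1}, \ref{detect_2_1} and \ref{detect_2_2}, the latter being a six-case, roughly twenty-subcase analysis organised by the subtype Table \ref{subtype}. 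Asserting that the $\gcd$-dependent formulas of Table \ref{Grothendieck} never match along these infinite families is exactly the claim to be proved, so as written this is a plan rather than a proof.

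There is also one concrete tool you are missing that makes the case analysis feasible. You propose to extract information from ``equality of $(\mathrm{a})$ and $(\mathrm{b})$ forces $l=l'$,'' but $l=l'$ alone does not relate $d=\gcd(c,k)$ to $d'=\gcd(c',k')$, and it is $d$ and $d'$ that enter the rank formulas of Theorem \ref{main}. The paper instead introduces the invariant $(\mathrm{f})=(\mathrm{a})/(\mathrm{e})$, where $(\mathrm{e})$ is the order of the shift from Proposition \ref{shift_order}; per subtype this quotient equals $d$, $2d$, or (in characteristic $2$) $4d$, so comparing $(\mathrm{f})$ immediately yields relations such as $d=d'$ or $d=2d'$, which can then be substituted into the expressions for the free rank and the $\Z/2\Z$-multiplicity to produce Diophantine contradictions uniformly in $n$, $n'$, $d$. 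Without some such device your ``residual cases'' do not reduce to a finite check. Two smaller points: you should dispose of $\Delta=A_2,A_3$ first (the paper does this with $(\mathrm{c})$ alone in Lemma \ref{detect_2_1}), because Proposition \ref{shift_order} is only stated for $\Delta\ne A_1,A_2,A_3$; and for types VI--X the value of $(\mathrm{c})$ already reduces everything to the three sporadic collisions $(A_6,k,1)$ vs.\ $(E_6,2k',2)$, $(D_5,k,1)$ vs.\ $(E_6,2k',2)$, and $(D_{16},2k,2)$ vs.\ $(E_8,k',1)$, which are then killed by Theorem \ref{main} --- worth stating explicitly rather than folding into the general discussion.
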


We first prove that Proposition \ref{detect} holds 
if one of two mesh algebras are type III or VI--X.

\begin{Lem}\label{detect_1}
Assume that $\umod \Lambda_{\Delta,l,t} \cong \umod \Lambda_{\Delta',l',t'}$
with $\Delta, \Delta' \ne A_1$.
If $(\Delta',l',t')$ is type III or VI--X, 
then $(\Delta,l,t)$ is the same type as $(\Delta',l',t')$.
\end{Lem}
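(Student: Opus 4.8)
The plan is to play the three numerical invariants already tabulated in Table \ref{type} --- the quantities (a), (b) and their quotient (c) --- off against the Grothendieck group $K_0(\umod\Lambda_{\Delta,l,t})$ from Theorem \ref{main} and against the dichotomy of Corollary \ref{CT_iff}. All of these are invariants of $\umod\Lambda_{\Delta,l,t}$ as a triangulated category: invariance of (a) and (b) is Proposition \ref{l=l'} combined with Theorems \ref{CT_not_III}, \ref{no_CT_III} and Remark \ref{caution_III}; and since Proposition \ref{l=l'} (1) transports $[-2]\circ\bS$ along any triangle equivalence, the existence of a $([-2]\circ\bS)$-stable cluster-tilting object is transported as well.

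First I would dispatch the case where $(\Delta',l',t')$ is type III. By Corollary \ref{CT_iff} the category $\umod\Lambda_{\Delta',l',t'}$ admits no $([-2]'\circ\bS')$-stable cluster-tilting object, hence neither does $\umod\Lambda_{\Delta,l,t}$, and Corollary \ref{CT_iff} then forces $(\Delta,l,t)$ to be type III again. When $(\Delta',l',t')$ is of type VI--X the same reasoning shows $(\Delta,l,t)$ is \emph{not} type III, so it belongs to types I, II, IV--X, and I would compare the value of (c), which is the same for $\Lambda_{\Delta,l,t}$ and $\Lambda_{\Delta',l',t'}$. If $(\Delta',l',t')$ is type VI then (c) $=8/3$, whereas (c) lies in $\tfrac12\Z$ for each of the types I, II, IV, V, VII--X; hence $(\Delta,l,t)$ is type VI. If $(\Delta',l',t')$ is type VII or IX, so (c) $\in\{30,56\}$, then the four equations $n(n-1)/2=$ (c), $n(n-1)/4=$ (c), $n(n-2)=$ (c), $n(n-2)/2=$ (c) governing types I, II, IV, V have no admissible integer solution, and the constant values $8/3,15,56,112$ of (c) for the remaining sporadic types all miss $30$, while $8/3,30,15,112$ all miss $56$; so $(\Delta,l,t)$ has the same type as $(\Delta',l',t')$.

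The two remaining cases require the Grothendieck group. If $(\Delta',l',t')$ is type VIII then (c) $=15$, whose only other realisations are type I with $\Delta=A_6$ and type IV with $\Delta=D_5$; if $(\Delta',l',t')$ is type X then (c) $=112$, whose only other realisation is type V with $\Delta=D_{16}$. In each such case I would list, reading off Theorem \ref{main} (equivalently Table \ref{Grothendieck}), the finite set of abelian groups realised as $K_0(\umod\Lambda)$ by each competing family as its parameter varies --- keeping in mind that the relevant invariant $d$ is a divisor of the Coxeter number ($7$, $8$, $12$ or $30$) subject to the parity conditions in Table \ref{Grothendieck} --- and check these sets are disjoint up to isomorphism. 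One finds that $K_0(\umod\Lambda_{E_6,2k,2})$ always has free rank in $\{0,2,6,8,20\}$ and no torsion above exponent $2$, so it is never $\Z^3$ or $\Z^{15}$ (the only groups $K_0(\umod\Lambda_{A_6,k,1})$ can be) nor any of the finitely many groups realised by $\Delta=D_5$ in type IV; and that $K_0(\umod\Lambda_{E_8,k,1})$ is always free or elementary abelian of exponent $2$ of rank in $\{8,24,40,112\}$, whereas $K_0(\umod\Lambda_{D_{16},2k,2})$ has this pure shape only for ranks in $\{14,42,70,210\}$ (free) or $\{14,44,74,224\}$ (exponent $2$), and otherwise carries odd torsion or mixes free and torsion parts. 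Hence the type is pinned down in every case, and together with Proposition \ref{same_type} this is exactly what is needed.

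The main obstacle is precisely this last bookkeeping with $K_0$: one must run through every applicable row of Table \ref{Grothendieck} for $\Delta\in\{A_6,D_5,E_6,D_{16},E_8\}$, write out the resulting group for each admissible $d$ and each parity of $k$, and verify that the lists attached to the two families being compared have no group in common. This is elementary but must be carried out completely; the rest of the argument is formal, resting only on the invariance statements of Proposition \ref{l=l'} and Corollary \ref{CT_iff} together with the explicit tables.
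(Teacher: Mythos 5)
Your proposal follows exactly the paper's route: dispose of type III via Corollary \ref{CT_iff}, use the invariant (c) to reduce the types VI--X comparisons to the three residual coincidences $(A_6,k,1)$ vs.\ $(E_6,2k',2)$, $(D_5,k,1)$ vs.\ $(E_6,2k',2)$, and $(D_{16},2k,2)$ vs.\ $(E_8,k',1)$, and then separate those by the Grothendieck groups of Theorem \ref{main}. Your $K_0$ bookkeeping (free ranks $\{0,2,6,8,20\}$ for type VIII, the $\Z^3$/$\Z^{15}$ list for $A_6$, and the rank lists $\{8,24,40,112\}$ vs.\ $\{14,42,70,210\}$/$\{14,44,74,224\}$ for $E_8$ vs.\ $D_{16}$) checks out and merely makes explicit what the paper dismisses as "straightforward."
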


\begin{proof}
By Proposition \ref{l=l'} (1),
existence of a $([-2] \circ \bS)$-stable cluster-tilting object is 
invariant under stable equivalences.
By Corollary \ref{CT_iff}, 
if $(\Delta',l',t')$ is type III, then $(\Delta,l,t)$ is type III.

Let $(\Delta',l',t')$ be type VI--X,
and assume $(\Delta,l,t)$ is not the same type as $(\Delta',l',t')$.
Comparing the values of (c), there are only three kinds of possibilities;
(i) $(\Delta,l,t)=(A_6,k,1)$ and $(\Delta',l',t')=(E_6,2k',2)$,
(ii) $(\Delta,l,t)=(D_5,k,1)$ and $(\Delta',l',t')=(E_6,2k',2)$,
(iii) $(\Delta,l,t)=(D_{16},2k,2)$ and $(\Delta',l',t')=(E_8,k',1)$.
From Theorem \ref{main},
%we know the Grothendieck groups of the stable categories of mesh algebras appearing here
%as follows;
%\begin{align*}
%K_0(\umod \Lambda_{A_6,k,1}) &\cong \Z^{2d+1} \quad (d=\gcd(7,k)=1,7), \\
%K_0(\umod \Lambda_{D_5,k,1}) &\cong \begin{cases} 
%\Z \oplus (\Z/2\Z)^2 & (d=1) \\
%\Z \oplus (\Z/2\Z)^4 \oplus (\Z/4\Z) & (d=2) \\
%\Z^3 \oplus (\Z/2\Z)^9 & (d=4) \\
%\Z^{15} & (d=8)
%\end{cases}\quad (d=\gcd(8,k)), \\
%K_0(\umod \Lambda_{D_{16},2k,2}) &\cong \begin{cases} 
%\Z^{d-1} \oplus (\Z/2\Z)^{13d} \oplus (\Z/4\Z) & (d=1,3,5,15) \\
%\Z^{7d} & (d=2,6,10,30)
%\end{cases}\quad (d=\gcd(30,k)),\\
%K_0(\umod \Lambda_{E_6,2k,2}) &\cong
%\begin{cases}
%\Z^{2d}        \oplus (\Z/2\Z)^{d+1}      & (d=1,3) \\
%                      (\Z/2\Z)^{(9d+6)/2} & (d=2,6) \\
%\Z^{(3d+4)/2}                             & (d=4,12)
%\end{cases} \quad (d=\gcd(12,k)),\\
%K_0(\umod \Lambda_{E_8,k,1}) &\cong
%\begin{cases}
%(\Z/2\Z)^{8d}  & (d=1,3,5)   \\
%(\Z/2\Z)^{112} & (d=15) \\
%\Z^{4d}        & (d=2,6,10)   \\
%\Z^{112}       & (d=30)
%\end{cases} \quad (d=\gcd(30,k)).
%\end{align*}
%Using these, 
it is straightforward to show that the Grothendieck groups of the 
stable categories do not coincide
in the possibilities (i)--(iii).
\end{proof}

We next prove that Proposition \ref{detect} holds 
if $\Delta'=A_2,A_3$.

\begin{Lem}\label{detect_2_1}
Assume that $\umod \Lambda_{\Delta,l,t} \cong \umod \Lambda_{\Delta',l',t'}$
with $\Delta, \Delta' \ne A_1$.
If $\Delta'$ is $A_2$ or $A_3$, then $(\Delta,l,t)$ is the same type as 
$(\Delta',l',t')$.
\end{Lem}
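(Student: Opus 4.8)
The plan is to handle the type-III subcase via Lemma \ref{detect_1} and to dispose of the rest using the numerical invariants recorded in Table \ref{type}. First I would observe that, since $A_3$ has an odd number of vertices, the quiver $Q_{A_3,l',t'}$ is of type I (if $t'=1$) or of type II (if $t'=2$), while $Q_{A_2,l',t'}$ is of type I (if $t'=1$) or of type III (if $t'=2$). The type-III case is already settled by Lemma \ref{detect_1}, so only the three possibilities $(\Delta',l',t')\in\{(A_2,k',1),(A_3,k',1),(A_3,2k',2)\}$ remain, and none of these is of type III.

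For each of these, by Corollary \ref{CT_iff} the category $\umod\Lambda_{\Delta',l',t'}$ admits a $([-2]\circ\bS)$-stable cluster-tilting object; since existence of such an object is invariant under stable equivalences (Proposition \ref{l=l'} (1)), the category $\umod\Lambda_{\Delta,l,t}$ admits one as well, and hence $(\Delta,l,t)$ is not of type III, again by Corollary \ref{CT_iff}. Next I would use that the invariant (a) (Theorems \ref{CT_not_III}, \ref{no_CT_III}, Remark \ref{caution_III}) and the invariant (b), the order of $[-2]\circ\bS$ (Proposition \ref{l=l'}), are preserved by the triangle equivalence, hence so is their quotient (c); by Table \ref{type} the value of (c) is $1$, $3$, and $3/2$ in the three subcases respectively.

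It then remains to inspect the (c)-column of Table \ref{type}: among the quivers not of type III, the value $1$ is attained only in type I (forcing $\Delta=A_2$), the value $3$ only in type I (forcing $\Delta=A_3$), and $3/2$ only in type II (forcing $\Delta=A_3$). Combining this with the fact that $(\Delta,l,t)$ is not of type III, we conclude that $(\Delta,l,t)$ has the same type as $(\Delta',l',t')$ in each case. The most delicate point is precisely this last inspection: one must check that the small values $1$, $3$, $3/2$ are not accidentally realized by the $D_n$, $E_6$, $E_7$, $E_8$ families --- or, were the cluster-tilting reduction omitted, by type III with doubled order $2l$, which can indeed produce $1$ and $3$. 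Since the relevant functions of the rank are increasing, this is a bounded finite verification, and I do not anticipate any serious obstacle: the whole argument is carried by the results already in hand, the only care required being the bookkeeping of Table \ref{type}.
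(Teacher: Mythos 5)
Your proposal is correct and follows essentially the same route as the paper: dispose of the type III subcase via the cluster-tilting invariant (which is exactly the content of Lemma \ref{detect_1} for type III), and then distinguish the remaining three subcases by the value of the invariant (c), checking that $1$, $3$, $3/2$ are realized only by type I with $A_2$, type I with $A_3$, and type II with $A_3$ respectively among the non-III types. Your explicit remark that type III (with the doubled order $2l$) could also produce the values $1$ and $3$, so that it must be excluded \emph{before} the (c)-comparison, is a point the paper leaves implicit but is exactly why its proof is organized the same way.
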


\begin{proof}
If $\Delta'=A_2$ and $t'=2$ (type III), 
the assertion is proved by Lemma \ref{detect_1}.

The remained cases are (i) $\Delta'=A_2$ and $t=1$ (type I), 
(ii) $\Delta'=A_3$ and $t=1$ (type I),
(iii) $\Delta'=A_3$ and $t=2$ (type II).
In these cases, the assertion is obtained by comparing the values of (c).
\end{proof}

Now, we only have to consider the types I, II, IV, V
with $\Delta \ne A_1,A_2,A_3$.

We can also use the order of the shift 
$[1] \colon \umod \Lambda_{\Delta,l,t} \to \umod \Lambda_{\Delta,l,t}$ as
an invariant. 
The following result follows from results in \cite{AS}.

\begin{Prop}\label{shift_order}
Let $\Lambda_{\Delta,l,t}$ be a finite-dimensional mesh algebra with 
$\Delta \ne A_1,A_2,A_3$,
and $p$ be the characteristic of the field $K$
and $\pi$ be the Nakayama permutation on $Q_{\Delta,l,t}$. 
Put $u$ is the order of $\pi \tau^{-1} \in \Aut Q_{\Delta,l,t}$.
Then the minimal integer $i \ge 1$ such that  
$[i] \colon \umod \Lambda_{\Delta,l,t} \to \umod \Lambda_{\Delta,l,t}$ 
is isomorphic to the identity functor (as additive functors) is
$3u$ if $p=2$, and $\operatorname{lcm}(3u,2)$ if $p \ne 2$. 
\end{Prop}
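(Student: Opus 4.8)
The plan is to reduce the order of the shift $[1]$ to that of $[3]$, whose structure is pinned down by Proposition \ref{proj_resol}, and then to read off the answer from the order computations of Andreu Juan and Saor\'{i}n in \cite{AS}. The only genuinely new work is matching parameters: the $3$ comes from the $3$-periodicity of simple modules in $\umod\Lambda_{\Delta,l,t}$, the $u$ is the order of the quiver automorphism $\pi\tau^{-1}$ governing $[3]$, and the dichotomy $p=2$ versus $p\ne 2$ reflects a sign-twist automorphism that collapses in characteristic $2$.

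First I would analyse $[3]$. From the minimal $\Lambda$-$\Lambda$-bimodule resolution $0\to{}_1\Lambda_\mu\to U_2\to U_1\to U_0\to\Lambda\to 0$ of Proposition \ref{proj_resol}(1) one gets $[-3]\cong({?}\otimes_\Lambda{}_1\Lambda_\mu)$ with $\mu\in\Aut_K(\Lambda)$, $\mu^{-1}(e_v)=e_{\pi\tau^{-1}v}$; the explicit form of $\mu$ (from \cite{Dugas}, and from \cite{AS} in the type II and III cases) shows $\mu$ permutes the idempotents by the quiver automorphism $\pi\tau^{-1}$ and scales each arrow by a sign $\pm 1$. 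Hence $[-3]$ is, up to the autoequivalence induced by a sign-change automorphism, the autoequivalence coming from $\pi\tau^{-1}$, which has order $u$, and $[3u]\cong[3]^u$ is induced by a single sign-change automorphism $\theta$ with $\theta(e_v)=e_v$ and $\theta(\alpha)=\pm\alpha$. For the lower bound, the exclusion $\Delta\ne A_1,A_2,A_3$ guarantees (as one checks on Loewy lengths of the indecomposable projectives, or invokes from \cite{AS}) that $[\pm1]$ and $[\pm2]$ never send a simple module to a simple module, whereas $[3]$ does by Proposition \ref{proj_resol}(2); therefore $[i]\cong\id$ forces $3\mid i$. Writing the order of $[1]$ as $3m$, the action of $[3]^m$ on isomorphism classes of simples is the permutation $(\pi\tau^{-1})^{-m}$, and since distinct vertices give non-isomorphic simples this forces $u\mid m$. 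If $p=2$ the signs are trivial, so $\theta=\id_\Lambda$, $[3u]\cong\id$, and the order is exactly $3u$. If $p\ne 2$ one has $\theta^2=\id_\Lambda$, so $[6u]\cong\id$, the order is $3u$ or $6u$, and it remains to decide whether the sign-change autoequivalence $\theta_*$ is isomorphic to the identity on $\umod\Lambda$ — equivalently whether the sign system defining $\mu^u$ is a coboundary on the cyclic-type quiver $\Z\Delta/\langle\rho\rangle$. This, together with the evaluation of the order in all cases, is what is carried out in \cite{AS}, if necessary after reducing via the push-down and pull-up functors of Section 4 to the preprojective algebras $\Lambda_{\Delta,1,1}$ and the loop mesh algebras $\Lambda_{A_n,1,2}$ of Proposition \ref{loop_mesh} (cf.\ also \cite[Proposition 4.4]{AS}); the outcome is $m=u$ when $u$ is even and $m=2u$ when $u$ is odd, that is, the order equals $\operatorname{lcm}(3u,2)$.

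The hard part will be exactly the sign question in characteristic $\ne 2$: deciding whether $[3u]\cong\id$ or only $[6u]\cong\id$ amounts to computing a $2$-torsion invariant of the explicit automorphism $\mu^u$ on a quiver containing cycles, and here I would rely entirely on \cite{AS} rather than redo their computation. Everything else — the factor $3$ from the $3$-periodicity of simples, the divisibility $u\mid m$ from the permutation action on simples, the collapse of the signs when $p=2$, and the verification that $A_1,A_2,A_3$ are precisely the Dynkin types for which this generic analysis fails — is routine.
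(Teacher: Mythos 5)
Your proposal is correct and follows essentially the same route as the paper: force $3\mid i$ by observing that $[\pm 1]$, $[\pm 2]$ cannot fix a simple module (this is where $\Delta\ne A_1,A_2$ enters), identify $[3]$ with the autoequivalence of the twisted bimodule ${}_1\Lambda_{\mu}$ from Proposition \ref{proj_resol}, and delegate the final order computation to \cite{AS}. The only difference is bookkeeping: the paper invokes \cite[Lemma 5.11]{AS} (using $\Delta\ne A_3$) to reduce ``$[i]\cong\id$ on $\umod\Lambda$'' to ``$\mu^{i/3}$ is inner'' and then quotes \cite[Theorem 5.10]{AS} wholesale, whereas you extract the divisibility $u\mid m$ by hand from the permutation action on simples and delegate only the residual sign question — make sure that when you ``rely on \cite{AS}'' for that step you also cite the stable-to-module-category reduction, since that is precisely where the hypothesis $\Delta\ne A_3$ is used.
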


\begin{proof}
For a simple $\Lambda$-module $S$, the condition $S[i] \cong S$ 
in $\umod \Lambda$ implies $i \in 3\Z$
(see Proposition \ref{proj_resol}), due to $\Delta \ne A_1,A_2$.
Thus $[i]$ is not isomorphic to the identity on $\umod \Lambda$ if $i \notin 3\Z$.
Let $i \in 3\Z$.
Proposition \ref{proj_resol} (1) means that 
the 3rd syzygy of $\Lambda_{\Delta,l,t}$ as a $\Lambda$-$\Lambda$-bimodule
is isomorphic to a twisted bimodule ${_1(\Lambda_{\Delta,l,t})_\mu}$,
where $\mu$ is a $K$-algebra automorphism on $\Lambda_{\Delta,l,t}$.
By the assumption $\Delta \ne A_1,A_2,A_3$ and \cite[Lemma 5.11]{AS},
$[i]=(? \otimes _1(\Lambda_{\Delta,l,t})_{\mu^{i/3}})$ is
isomorphic to the identity functor on $\umod \Lambda$
if and only if 
$(? \otimes _1(\Lambda_{\Delta,l,t})_{\mu^{i/3}})$ is
isomorphic to the identity functor on $\mod \Lambda$. 
From \cite[Theorem 5.10]{AS}, the minimal such $i$ satisfying the latter condition 
is $3u$ if $p=2$, and $\operatorname{lcm}(3u,2)$ if $p \ne 2$.
The assertion is proved. 
\end{proof}

Let $(\Delta,l,t)$ be type I, II, IV, or V.
Moreover, let $c$ be the Coxeter number of $\Delta$, 
$k=l/t$, $d=\gcd(c,k)$, $r=c/d$, and $q=k/d$. 
We divide types I, II, IV, V into subtypes shown in Table \ref{subtype}.
We consider the following invariants in Table \ref{subtype}.

\begin{itemize}
\item[(d)] The Grothendieck group $K_0(\umod \Lambda_{\Delta,l,t})$.  
The columns 
``$\Z$'', ``$\Z/2\Z$'', and ``other'' indicate the multiplicity of $\Z$, $\Z/2\Z$,
and the other direct summands of $K_0(\umod \Lambda_{\Delta,l,t})$.
Here, the values of the nonempty cells are positive
(Theorem \ref{main}).
\item[(e)] The order of the shift $[1]$ on $\umod \Lambda_{\Delta,l,t}$
up to functorial isomorphisms as additive functors
(Proposition \ref{shift_order}).
\item[(f)] The quotient (a)/(e).
\end{itemize}  

\begin{table}
\begin{tabular}{lcl|ccc|cc}
$(\Delta,l,t)$
&subtype & condition                        & $\Z$          & $\Z/2\Z$   & other    &
(e)         & (f)         \\
\hline
$(A_n,k,1)$
&I-1     & $r\in 2\Z$, $d=1$                & $(nd-3d+2)/2$ &            &          &
$6q$        & $d$         \\
($n \ge 4$)
&I-2     & $r\in 2\Z$, $d\ne 1$             & $(nd-3d+2)/2$ & $d-1$      &          &
$6q$        & $d$         \\
&I-3     & $r\notin 2\Z$                    & $(nd-2d+2)/2$ &            &          &
$6q$        & $d$         \\
\hline
$(A_n,2k,2)$
&II-1    & $r\in 4\Z$, $d=1$                & $(nd-3d)/2$   &            & $\Z/4\Z$ &
$6q$        & $2d$        \\
($n=5,7,\ldots$)
&II-2    & $r\in 4\Z$, $d\ne 1$             & $(nd-3d)/2$   & $d-1$      & $\Z/4\Z$ &
$6q$        & $2d$        \\
&II-3    & $r\in 2+4\Z$                     &               & $nd-2d+1$  &          &
$6q$ ($3q$) & $2d$ ($4d$) \\
&II-4    & $r\notin 4\Z$                    & $(nd-d)/4$    &            &          &
$12q$       & $d$         \\
\hline
$(D_n,k,1)$
&IV-1    & $k\in 2\Z$, $r=2$                 & $d-1$         & $nd-3d+1$  &          &
$6q$        & $d$         \\
&IV-2    & $k\in 2\Z$, $r=4,6,\ldots$        & $d-1$         & $nd-3d$    & $\Z/r\Z$ &
$6q$        & $d$         \\
&IV-3    & $k\in 2\Z$, $r=1$                 & $(nd-d-2)/2$  &            &          &
$6q$        & $d$         \\
&IV-4    & $k\in 2\Z$, $r=3,5,\ldots$        & $(nd-d-2)/2$  &            & $\Z/r\Z$ &
$6q$        & $d$         \\
&IV-5    & $k\notin 2\Z$, $r\in 4\Z$         & $d$           & $nd-3d$    &          &
$6q$        & $d$         \\
&IV-6    & $k\notin 2\Z$, $r\notin 4\Z$      &               & $nd-d-1$   &          &
$6q$ ($3q$) & $d$ ($2d$)  \\
\hline
$(D_n,2k,2)$
&V-1     & $k\in 2\Z$, $r\in 4\Z$            & $d$           & $nd-3d$    &          &
$6q$        & $2d$        \\
&V-2     & $k\in 2\Z$, $r\in 2+4\Z$          &               & $nd-d-1$   &          &
$6q$ ($3q$) & $2d$ ($4d$) \\
&V-3     & $k\in 2\Z$, $r\notin 2\Z$         & $(nd-2d)/2$   &            &          &
$12q$       & $d$         \\
&V-4     & $k\notin 2\Z$, $r=2$              & $d-1$         & $nd-3d+1$  &          &
$6q$        & $2d$        \\
&V-5     & $k\notin 2\Z$, $r\ne 2$, $d\ne 1$ & $d-1$         & $nd-3d$    & $\Z/r\Z$ &
$6q$        & $2d$        \\
&V-6     & $k\notin 2\Z$, $d=1$              &               & $nd-3d$    & $\Z/r\Z$ &
$6q$        & $2d$        
\end{tabular}
\caption{The subtypes and the invariants}
\label{subtype}
\end{table}

The invariants (e) and (f) sometimes depend on the characteristic of $K$.
In fact if $K$ has characteristic 2, then these invariants are written inside of
parentheses.

The remaining cases in our proof of Proposition \ref{detect}
are shown by the following lemma.

\begin{Lem}\label{detect_2_2}
Assume that $\umod \Lambda_{\Delta,l,t} \cong \umod \Lambda_{\Delta',l',t'}$
with $\Delta,\Delta' \notin \{A_1,A_2,A_3\}$.
If $(\Delta',l',t')$ is type I, II, IV, or V,
then $(\Delta,l,t)$ is the same type as $(\Delta',l',t')$. 
\end{Lem}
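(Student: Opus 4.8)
The plan is to argue by comparing the stable-equivalence invariants already assembled in Tables \ref{type} and \ref{subtype}, in the spirit of Lemmas \ref{detect_1} and \ref{detect_2_1}: the numbers (a), (b), (c) of Theorems \ref{CT_not_III} and \ref{no_CT_III} and Proposition \ref{l=l'}, the Grothendieck group of Theorem \ref{main}, and the shift order (e) of Proposition \ref{shift_order}. First I would record the reduction. Since the relation $\umod\Lambda_{\Delta,l,t}\cong\umod\Lambda_{\Delta',l',t'}$ is symmetric, Lemma \ref{detect_1} applied with the two triples interchanged shows that $(\Delta,l,t)$ cannot be of type III or VI--X (otherwise $(\Delta',l',t')$ would be of the same type, against the hypothesis); hence $(\Delta,l,t)$ too is of type I, II, IV, or V, and together with the standing assumption $\Delta,\Delta'\notin\{A_1,A_2,A_3\}$ both triples now occur among the nineteen subtypes of Table \ref{subtype}. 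By Proposition \ref{same_type} it then remains only to exclude a stable equivalence between two subtypes belonging to \emph{different} members of the list I, II, IV, V; together with Lemmas \ref{detect_1} and \ref{detect_2_1} (and their versions with the two triples interchanged) this also finishes the proof of Proposition \ref{detect} and hence of Theorem \ref{not_stable_eq}.

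I would run this case check organised by the invariant (c), which equals $n(n-1)/2$, $n(n-1)/4$, $n(n-2)$, $n(n-2)/2$ for types I, II, IV, V respectively, $n$ being the rank of the underlying diagram. For many cross-type pairs the equation obtained by equating the two expressions for (c) has no solution with $n,n'\ge 4$, and such a pair is eliminated at once. The resistant pairs are I--II, I--IV, I--V, II--IV, II--V and IV--V: for each of these the two formulas for (c) coincide along an infinite, Pell-type family of ranks, so (c) alone is not decisive, and there I would invoke the Grothendieck group. Several coarse features of $K_0(\umod\Lambda)$ extracted from Theorem \ref{main} do most of the work --- a summand $\Z/4\Z$ occurs only in subtypes II-1, II-2; a summand $\Z/r\Z$ with $r$ odd and $r\ge 3$ only for the $D$-types; the free rank is $0$ exactly in subtypes II-3, IV-6, V-2, V-6; and so on --- and, matched against the ``$\Z$ / $\Z/2\Z$ / other'' columns of Table \ref{subtype}, they kill nearly all remaining pairs.

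For the genuinely tight subcases --- those in which both Grothendieck groups already have the common shape $\Z^a\oplus(\Z/2\Z)^b$ with no further torsion --- I would use simultaneously the equality of (c), the equalities of the ranks $a$ and $b$ (all expressible through Theorem \ref{main} and the $|T|_{\mathrm{np}}$-formulas of Theorems \ref{CT_not_III} and \ref{no_CT_III}), the equality of the order $l$ of $[-2]\circ\bS$, and the equality of the shift order (e). Substituting the rank identities into the (c)-equation collapses the infinite family to a one-parameter relation among the divisors $d\mid c$ and $d'\mid c'$, which a short parity-and-size estimate then shows has no solution with $n,n'\ge 4$; the model for such an argument is already visible in the reduction steps of the previous lemmas, e.g.\ comparing a type I subtype with subtype II-4 is forced, after substitution, to an equation of the shape $n(d'^2-2)=d'-2$, impossible for $d'\ge 1$. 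The characteristic-$2$ values recorded in parentheses in Table \ref{subtype} split only a few of these subcases; since a stable equivalence lives over a single field, both sides have the same characteristic, so the split cases are treated in parallel.

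The hard part will be these tight subcases for IV--V, II--V and I--IV, where the Coxeter numbers $n+1$ and $2n-2$, the orders $l$ of $[-2]\circ\bS$ differing only by the factor $t$, and the Grothendieck-group shapes all overlap along infinite families: no single invariant separates these types, only the conjunction of (c), the Grothendieck group, $l$, and (e), so one is forced to carry the full Diophantine system far enough to read off the contradiction. Once the nineteen subtypes are tabulated against the invariants (a)--(f), the rest is long but routine, the one new ingredient being the substitution that turns each Pell family into a bounded Diophantine problem.
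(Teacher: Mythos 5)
Your plan coincides with the paper's proof: after the same reduction (via Lemma \ref{detect_1} applied symmetrically and Proposition \ref{same_type}) to the six cross-type pairs among I, II, IV, V, the paper eliminates each pair by exactly the conjunction you describe --- the Grothendieck-group columns (d), the quotient (f), and the invariant (c) from Tables \ref{type} and \ref{subtype} --- yielding small Diophantine contradictions. The only (harmless) inaccuracy is that two of the pairs you label as resistant Pell families, I versus V and II versus IV, are in fact disposed of by the invariant (c) alone via the consecutive-squares estimate, so the Grothendieck-group comparison is only needed for the remaining four pairs.
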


\begin{proof}
Let $c$ be the Coxeter number of $\Delta$, 
$k=l/t$, $d=\gcd(c,k)$, $r=c/d$, and
$c'$ be the Coxeter number of $\Delta'$, 
$k'=l'/t'$, $d'=\gcd(c',k')$, $r'=c'/d'$.

(1) If $(\Delta,l,t)$ is type I and $(\Delta',l',t')$ is type II,
comparing (d), there are only two possibilities.

(1-1: I-1 and II-4)
We have $1=d=d'$ from (f), and substituting it for (d), 
we have $(n-1)/2=(n'-1)/4$ and thus $n'=2n-1$.
Substituting it for (c), we can deduce $n(n-1)/2=(2n-1)(2n-2)/4$, 
but there exists no such $n \ge 4$.

(1-2: I-3 and II-4)
We have $d=d'$ from (f), and substituting it for (d), 
we have $(nd-2d+2)/2=(n'd-d)/4$ and thus $d(n'-2n+3)=4$.
Because $n' \notin 2\Z$, we have $(d,n')=(2,2n-1),(1,2n+1)$.
If $(d,n')=(2,2n-1)$, we can deduce $n(n-1)/2=(2n-1)(2n-2)/4$ from (c),
but there exists no such $n \ge 4$.
If $(d,n')=(1,2n+1)$, we can deduce $n(n-1)/2=(2n+1)(2n)/4$ from (c),
but there exists no such $n \ge 4$.

(2) If $(\Delta,l,t)$ is type I and $(\Delta',l',t')$ is type IV,
comparing (d), there are only four possibilities.

(2-1: I-1 and IV-3)
We have $1=d=d'=2n'-2$ from (f), a contradiction.

(2-2: I-2 and IV-1)
We have $d=d'$ from (f), and substituting it for (d), 
we have $d-1=n'd-3d+1$ and thus $d(n'-4)=-2$.
It is a contradiction because $n' \ge 4$.

(2-3: I-2 and IV-5)
We have $d=d'$ from (f), and substituting it for (d), 
we have $d-1=n'd-3d$ and thus $d(n'-4)=-1$.
It is a contradiction because $n' \ge 4$.

(2-4: I-3 and IV-3)
We have $d=d'=2n'-2$ from (f), and substituting $d=d'$ for (d), 
we have $(nd-2d+2)/2=(n'd-d-2)/2$ and thus $d(n'-n+1)=4$.
It is a contradiction because $d=d'=2n'-2 \ge 6$.

(3) If $(\Delta,l,t)$ is type I and $(\Delta',l',t')$ is type V,
comparing (c), we have $n(n-1)/2=n'(n'-2)/2$.
It is easy to see that $(n-1)^2<n(n-1)<n^2$ and that $(n'-2)^2<n'(n'-2)<(n'-1)^2$,
thus it is necessary that $n=n'-1$.
Substituting it for (c), we have $(n'-1)(n'-2)=n'(n'-2)$.
It is a contradiction because $n' \ge 4$.

(4) If $(\Delta,l,t)$ is type II and $(\Delta',l',t')$ is type IV,
comparing (c), we have $n(n-1)/4=n'(n'-2)$.
It is easy to see that $(n-1)^2<n(n-1)<n^2$ and that $(2n'-3)^2<4n'(n'-2)<(2n'-2)^2$ 
because $n' \ge 4$,
thus it is necessary that $n=2n'-2$.
Substituting it for (c), we have $(2n'-2)(2n'-3)/4=n'(n'-2)$.
It is a contradiction because $n' \ge 4$.

(5) If $(\Delta,l,t)$ is type II and $(\Delta',l',t')$ is type V,
comparing (d), there are only three possibilities.

(5-1: II-2 and V-5 with $r'=4$)
We have $d=d'$ from (f), and substituting it for (d), 
we have $d-1=n'd-3d$ and thus $d(n'-4)=-1$.
It is a contradiction because $n' \ge 4$.

(5-2: II-3 and V-2)
We have $d=d' \in 2\Z$ from (f), and substituting it for (d), 
we have $nd-2d+1=n'd-d-1$ and thus $d(n'-n+1)=2$.
Because $d \in 2\Z$, we have $d=2$ and $n'=n$.
Substituting it for (c), we can deduce $n(n-1)/4=n(n-2)/2$.
It is a contradiction because $n=n' \ge 4$.

(5-3: II-4 and V-3)
We have $d=d'$ from (f), and substituting it for (d), 
we have $(nd-d)/4=(n'd-2d)/2$ and thus $n=2n'-3$.
Substituting it for (c), we can deduce $(2n'-3)(2n'-4)/4=n'(n'-2)/2$.
It is a contradiction because $n' \ge 4$.

(6) If $(\Delta,l,t)$ is type IV and $(\Delta',l',t')$ is type V,
comparing (d), there are only seven possibilities.

(6-1: IV-1 and V-1)
We have $d=2d'$ from (f), and substituting it for (d), 
we have $2d'-1=d'$, a contradiction because $d' \in 2\Z$.

(6-2: IV-1 and V-4)
We have $d=2d'$ from (f), and substituting it for (d), 
we have $2d'-1=d'-1$, a contradiction.

(6-3: IV-2 and V-5)
We have $d=2d'$ from (f), and substituting it for (d), 
we have $2d'-1=d'-1$, a contradiction.

(6-4: IV-3 and V-3)
We have $2n-2=d=d'$ from (f), and substituting $d=d'$ for (d), 
we have $(nd-d-2)/2=(n'd-2d)/2$ and thus $d(n'-n-1)=-2$.
It is a contradiction because $d=2n-2 \ge 6$.

(6-5: IV-5 and V-1)
We have $d=2d'$ from (f), and substituting it for (d), 
we have $2d'=d'$, a contradiction.

(6-6: IV-5 and V-4)
We have $d=2d'$ from (f), and substituting it for (d), 
we have $2d'=d'-1$, a contradiction.

(6-7: IV-6 and V-2)
We have $d=2d'$ from (f), and substituting it for (d), 
we have $2nd'-2d'-1=n'd'-d'-1$ and thus $n'=2n-1$.
Substituting it for (c), we can deduce $n(n-2)=(2n-1)(2n-3)/4$, 
a contradiction.

From (1)--(6), we have the assertion.
\end{proof}

Now, Proposition \ref{detect} follows 
from Lemmas \ref{detect_1}, \ref{detect_2_1}, and \ref{detect_2_2}.
Consequently, Theorem \ref{not_stable_eq} follows 
from Propositions \ref{same_type} and \ref{detect}.
\qed

\section*{Acknowledgments}

The author is a Research Fellow of Japan Society for the Promotion of Science (JSPS).
This work was supported by JSPS KAKENHI Grant Number JP16J02249.

The author thanks his supervisor Osamu Iyama for thorough instruction,
and Manuel Saor\'{i}n for answering my questions and giving me useful advice.

\end{document}